%
%
%
%
\documentclass[12pt]{amsart}


\usepackage{tikz-cd}
\usepackage{graphicx}
\usepackage{amssymb}
\usepackage{tikz}
\usetikzlibrary{matrix,arrows.meta,bending}
\usepackage{color}
\usepackage[numbers]{natbib}
\usetikzlibrary{arrows.meta,positioning,fit,calc}
\usepackage{hyperref} 
\usepackage[a4paper, left=3cm, right=3cm, top=3cm, bottom=3cm, footskip=15mm]{geometry}

\newtheorem{theorem}{Theorem}[section]

\newtheorem{lemma}[theorem]{Lemma}
\newtheorem{proposition}[theorem]{Proposition}
\newtheorem{corollary}[theorem]{Corollary}

\theoremstyle{definition}
\newtheorem{definition}[theorem]{Definition}

\theoremstyle{remark}

\numberwithin{equation}{section}



\makeindex

\usepackage{fancyhdr}
\pagestyle{fancy}
\fancyhf{}                      
\cfoot{\thepage}                

\usepackage{calc} 

\AtBeginDocument{%
  \setlength{\textwidth}{\dimexpr\paperwidth - 6cm\relax}%
  \setlength{\oddsidemargin}{\dimexpr 3cm - 1in\relax}%
  \setlength{\evensidemargin}{\dimexpr 3cm - 1in\relax}%
  \setlength{\marginparwidth}{2.5cm}%
}

\setlength{\textwidth}{15.00cm}            
\setlength{\oddsidemargin}{0.46cm}         
\setlength{\evensidemargin}{0.46cm}

\begin{document}

\title{Multivariate expansivity theory and Pierce-Birkhoff conjecture}

\author{T. Agama}
\address{Department of Mathematics, African Institute for Mathematical science, Ghana
}
\email{theophilus@aims.edu.gh/emperordagama@yahoo.com}

\subjclass[2010]{Primary 14P10; Secondary 06B20}

\date{\today}


\keywords{mixed, polynomial, diagonalization, Doppler effect, destabilization, ring}

\footnote{
\par
.}%
.

\begin{abstract}
Motivated by the Pierce-Birkhoff conjecture, we launch an extension program for single variable expansivity theory. We study this notion for tuples of polynomials in the ring $\mathbb{R}[x_1,x_2,\ldots,x_n]$. As an application, we develop some class of inequalities to study the Pierce-Birkhoff conjecture
\end{abstract}


\maketitle

\begingroup
  \setlength{\parskip}{6pt} 
  \tableofcontents
\endgroup

\section{Introduction}

The Pierce--Birkhoff conjecture is a classical problem in real algebraic geometry and the theory of lattice-ordered rings. In its modern form, it asks whether every continuous piecewise-polynomial function on $\mathbb{R}^n$ can be written as a finite supremum of finite infima of polynomial functions; equivalently, whether the ring of piecewise-polynomial functions is generated from the polynomial ring by iterating the lattice operations $\sup$ and $\inf$ \cite{BirkhoffPierce1956,HenriksenIsbell1962,Mahé1984,Delzell1989}. This problem has guided a substantial literature connecting semialgebraic geometry, the real spectrum, separating ideals, and local representation theory \cite{Delzell1989,Marshall1992,Wagner2010,LucasSchaubSpivakovsky2012}. The present paper is motivated by that circle of ideas, but it develops a different language: instead of working directly with piecewise-polynomial functions, we introduce a formal calculus of directional expansions on tuples of polynomials and study the discrete invariants that arise from their iterates.\\

The central object of the paper is a multivariate expansion operator built from three ingredients: a directional differentiation map, a linear combination operator on tuples, and a coordinate packaging map. Applied repeatedly, this operator produces a hierarchy of expansions in specific directions, mixed directions, and hybrid configurations. The resulting theory is organized around invariants that measure how an expansion behaves under iteration. Among these are the \emph{totient} and \emph{residue} of an expansion, the \emph{Doppler intensity} induced by one expansion on another, the \emph{destabilization stage}, the notions of \emph{diagonalization} and \emph{exactness}, and the \emph{sub-expansion}, \emph{index}, \emph{dominating number}, \emph{kernel}, and \emph{singularity} associated to an expansion. Although the terminology is new, the guiding principle is simple: repeated directional expansion should be tracked by a family of numerical and structural invariants that record when information is lost, when it stabilizes, and when one expansion can be recovered from another.\\

A key theme of the paper is that multivariate and single-variable expansivity theory are not unrelated formalisms, but different manifestations of the same underlying mechanism. In the multivariate setting, one studies tuples of polynomials in $\mathbb{R}[x_1,\dots,x_n]$ and the effect of iterated expansion in one or more directions. In the single-variable setting, the same ideas collapse to a simpler iteration theory in which expansion chains admit a rank, a degree, a limit, and a local number. The present paper develops a bridge between these two viewpoints by showing how specialization in the remaining variables can be used to compare the multivariate and one-dimensional theories. This bridge is particularly important for the later analytic part of the paper, where the notions of normalization, unionization, kernels, singular points, and analyticity are introduced in a way that parallels classical local and global behaviors in real algebraic geometry.\\

The paper is also designed to produce a structured combinatorial calculus for chains of sub-expansions. Once a sub-expansion relation is fixed, one can define an index measuring the number of steps required to pass from one expansion to another, and then study how these indices behave along chains. This leads to domination inequalities, additive relations for chains, and comparison principles between indices, totient, and local numbers. These results are not merely formal: they are intended to provide a way of organizing the complexity of an expansion and of comparing the behavior of different directions within the same tuple.\\

The relevance of this framework to the Pierce--Birkhoff conjecture is conceptual rather than direct. The conjecture itself concerns the representation of piecewise-polynomial functions, but such representation problems are often controlled by local and directional information. By constructing a theory in which expansions can be compared direction by direction, and by attaching to them discrete invariants that measure extinction, stabilization, and analytic behavior, the paper aims to supply a new algebraic language for studying piecewise-polynomial phenomena. In particular, the theory is meant to clarify how multivariate polynomial data can be decomposed into directional components and how those components can be organized into chains, kernels, singularities, and exact sequences of expansions.\\

\subsection{Organization of the paper:} The paper is organized as follows. Section~2 introduces multivariate expansions in mixed and specified directions and proves the first structural properties of the theory, including linearity, commutativity, and basic integral-type inequalities attached to the expansion formalism. Sections~3--11 develop the invariants associated with iterative behavior: the totient and residue, the Doppler effect, destabilization, diagonalization, hybrid expansions, exactness, and sub-expansion chains. Section~11 studies dominating expansions and the index calculus for chains of sub-expansions, and then passes to the analytic side by introducing kernels, singular points, normalization, unionization, and local numbers. Finally, the last part of the paper compares the multivariate and single-variable theories, explains the specialization procedure, and presents applications to additive number theory and to questions motivated by the Pierce--Birkhoff conjecture.

\subsection{Notations}
Throughout this paper, we maintain the usual standard notion $\mathcal{S}$ for all tuples whose entries belong to the ring $\mathbb{R}[x_1, x_2,\ldots, x_n]$. Occasionally, we might choose to index these tuples by $\mathcal{S}_j$ over the natural numbers $\mathbb{N}$ when there are two or more and we want to keep them distinct from each other. The tuples $\mathcal{S}_0=(0,0,\ldots,0)$ and $\mathcal{S}_e=(1,1,\ldots,1)$ are still reserved for the null and unit tuples, respectively. In addition to the above requirements, any polynomial tuple will be assumed to contain exactly $n\geq 2$ entries, and two tuples under the operation of addition or subtraction will be assumed to contain the same number of entries.
\bigskip

\section{Expansion in mixed and specified directions}

In this section, we introduce the notion of an expansion in a \emph{mixed} and \emph{specific} directions. We launch the following extension program:

\begin{definition}
Let $\mathcal{F}:=\{\mathcal{S}_i\}_{i=1}^{\infty}$ be a collection of tuples of polynomials $f_k\in\mathbb{R}[x_1,x_2,\ldots,x_n]$. By an expansion on $\mathcal{S}\in \mathcal{F}:=\{\mathcal{S}_i\}_{i=1}^{\infty}$ in the direction $x_i$ for $1\leq i\leq n$, we mean the composite map
\begin{align}
(\gamma^{-1}\circ\beta\circ\gamma\circ \nabla)_{[x_i]}:\mathcal{F}\longrightarrow \mathcal{F}\nonumber
\end{align}
where 
\begin{align}
\gamma(\mathcal{S})=\begin{pmatrix}f_1\\f_2\\ \vdots \\f_n
\end{pmatrix} \quad \mathrm{and} \quad \beta(\gamma(\mathcal{S}))=\begin{pmatrix}0 &  1 &\cdots 1\\1 & 0 & \cdots 1\\\vdots & \vdots & \cdots \vdots\\ 1 & 1 & \cdots 0\end{pmatrix}\begin{pmatrix}f_1\\f_2\\ \vdots \\f_n
\end{pmatrix}\nonumber 
\end{align}
with 
\begin{align}
\nabla_{[x_i]}(\mathcal{S})=\bigg(\frac{\partial{f_1}}{\partial{x_i}},\frac{\partial{f_2}}{\partial{x_i}},\ldots,\frac{\partial{f_n}}{\partial{x_i}}\bigg).\nonumber
\end{align}
The value of the $l^{th}$ expansion at $a$ of $x_i$ is \begin{align}
(\gamma^{-1}\circ \beta \circ \gamma \circ \nabla)^{l}_{[x_i](a)}(\mathcal{S})\nonumber
\end{align}
where $(\gamma^{-1}\circ\beta\circ\gamma\circ\nabla)^{l}_{[x_i](a)}(\mathcal{S})$ is a tuple of polynomials in $\mathbb{R}[x_1,\ldots,x_{i-1},x_{i+1},\ldots,x_n]$. Similarly, by an expansion in the mixed direction $\otimes_{i=1}^{l}[x_{\sigma(i)}]$, we mean 
\begin{align}
(\gamma^{-1}\circ\beta\circ\gamma\circ\nabla)_{{\otimes}_{i=1}^{l}[x_{\sigma(i)}]}(\mathcal{S})=(\gamma^{-1}\circ\beta\circ\gamma\circ\nabla)_{{\otimes}_{i=2}^{l}[x_{\sigma(i)}]}\circ(\gamma^{-1}\circ\beta\circ\gamma\circ\nabla)_{[x_{\sigma(1)}]}(\mathcal{S})\nonumber
\end{align}
for any permutation $\sigma:\{1,2,\ldots,l\}\longrightarrow \{1,2,\ldots,l\}$. The value of this expansion on a given value $a_i$ of $x_{\sigma(i)}$ for all $i\in [\sigma(1),\sigma(l)]$ \begin{align}
(\gamma^{-1}\circ \beta \circ \gamma \circ \nabla)_{{\otimes}_{i=1}^{l}[x_{\sigma(i)}](a_i)}(\mathcal{S})\nonumber
\end{align}
where $(\gamma^{-1}\circ\beta\circ\gamma\circ\nabla)_{{\otimes}_{i=1}^{l}[x_{\sigma(i)}](a_i)}(\mathcal{S})$ is a tuple of real numbers $\mathbb{R}$.
\end{definition}
\bigskip

\begin{proposition}\label{linearity}
A multivariate expansion is linear.
\end{proposition}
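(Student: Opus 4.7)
The plan is to unwind the definition of the expansion operator as a four-fold composition $E_{[x_i]} := \gamma^{-1}\circ \beta \circ \gamma \circ \nabla_{[x_i]}$ and verify linearity factor by factor, then reduce the mixed case to the single-direction case by induction on $l$.

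First I would fix two tuples $\mathcal{S},\mathcal{S}'\in\mathcal{F}$ and a scalar $c\in\mathbb{R}$, and check each constituent map separately. The operator $\nabla_{[x_i]}$ acts componentwise by the partial derivative $\partial/\partial x_i$, which is $\mathbb{R}$-linear on $\mathbb{R}[x_1,\ldots,x_n]$ by elementary calculus; hence $\nabla_{[x_i]}(\mathcal{S}+\mathcal{S}')=\nabla_{[x_i]}(\mathcal{S})+\nabla_{[x_i]}(\mathcal{S}')$ and $\nabla_{[x_i]}(c\mathcal{S})=c\,\nabla_{[x_i]}(\mathcal{S})$. The packing map $\gamma$ and its inverse $\gamma^{-1}$ are mere identifications between row tuples and column vectors of the same entries, so they are trivially linear. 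Finally, $\beta$ is left multiplication by the fixed matrix $J-I$ (all-ones minus identity), which is linear by distributivity of matrix multiplication over vector addition and scalar multiplication.

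Since the class of linear maps between $\mathbb{R}$-modules is closed under composition, the composite $E_{[x_i]}=\gamma^{-1}\circ\beta\circ\gamma\circ\nabla_{[x_i]}$ is linear. For the mixed expansion along $\otimes_{i=1}^{l}[x_{\sigma(i)}]$, I would proceed by induction on $l$. The base case $l=1$ is exactly what was just verified. For the inductive step, the recursive definition gives
\begin{equation*}
E_{\otimes_{i=1}^{l}[x_{\sigma(i)}]} \;=\; E_{\otimes_{i=2}^{l}[x_{\sigma(i)}]}\circ E_{[x_{\sigma(1)}]},
\end{equation*}
which is a composition of two linear maps (the inner one by the base case, the outer one by the inductive hypothesis), and is therefore linear.

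The argument involves no real obstacle; the only point where one must be careful is the bookkeeping, namely checking that the codomain of each stage genuinely sits inside the polynomial ring on which the next stage is defined (so that evaluation at $a_i$ is compatible with addition of tuples), and that scalar multiplication is interpreted entrywise in the tuple $\mathcal{S}$. Once this identification is made explicit, linearity follows directly and requires no further computation.
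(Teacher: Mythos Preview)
Your proposal is correct and follows essentially the same approach as the paper: both arguments verify linearity of the single-direction expansion by checking each constituent map ($\nabla_{[x_i]}$, $\gamma$, $\beta$, $\gamma^{-1}$) separately and then invoking closure of linear maps under composition. The only minor difference is that the paper writes out the computations for $\nabla_{[x_i]}$, $\gamma$, and $\beta\circ\gamma$ explicitly on a generic combination $\lambda\mathcal{S}_a+\mu\mathcal{S}_b$, whereas you appeal directly to the standard facts (linearity of $\partial/\partial x_i$, linearity of matrix multiplication by $J-I$); also, you make the inductive extension to the mixed-direction case explicit, while the paper leaves that step tacit.
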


\begin{proof}
It suffices to show that each of the operators $\nabla_{[x_i]}:\{\mathcal{S}_i\}_{i=1}^{\infty}\longrightarrow \{\mathcal{S}_i\}_{i=1}^{\infty}$ for a fixed direction $[x_i]$, $\gamma :\{\mathcal{S}_i\}_{i=1}^{\infty}\longrightarrow \{\mathcal{S}_i\}_{i=1}^{\infty}$ and $\beta \circ \gamma :\{\mathcal{S}_i\}_{i=1}^{\infty}\longrightarrow \{\mathcal{S}_i\}_{i=1}^{\infty}$ is linear, since the map $ \gamma:\{\mathcal{S}_i\}_{i=1}^{\infty}\longrightarrow \{\mathcal{S}_i\}_{i=1}^{\infty}$ is bijective. Let $\mathcal{S}_a=(f_1,f_2,\ldots, f_n), \mathcal{S}_b=(g_1,g_2,\ldots, g_n) \in \mathcal{F}=\{\mathcal{S}_i\}_{i=1}^{\infty}$ and $\lambda, \mu \in \mathbb{R}$. We get 
\begin{align}
\nabla_{[x_i]}(\lambda \mathcal{S}_a+\mu \mathcal{S}_b)&=\nabla(\lambda (f_1,f_2,\ldots,f_n)+\mu (g_1,g_2,\ldots, g_n))\nonumber \\&=\nabla_{[x_i]}((\lambda f_1, \lambda f_2,\ldots,\lambda f_n)+(\mu g_1,\mu g_2,\ldots,\mu g_n))\nonumber \\&=\nabla_{[x_i]}((\lambda f_1+\mu g_1,\lambda f_2+\mu g_2,\ldots \lambda f_n+\mu g_n))\nonumber \\&=(\frac{\partial{(\lambda f_1+\mu g_1)}}{\partial{x_i}}, \frac{\partial{(\lambda f_2+\mu g_2)}}{\partial{x_i}},\ldots, \frac{\partial{(\lambda f_n+\mu g_n)}}{\partial{x_i}})\nonumber \\&=(\lambda \frac{\partial{f_1}}{\partial{x_i}}+\mu \frac{\partial{g_1}}{\partial{x_i}},\lambda \frac{\partial{f_2}}{\partial{x_i}}+\mu \frac{\partial{g_2}}{\partial{x_i}},\ldots, \lambda \frac{\partial{f_n}}{\partial{x_i}}+\mu \frac{\partial{g_n}}{\partial{x_i}})\nonumber \\&=(\lambda \frac{\partial{f_1}}{\partial{x_i}},\lambda \frac{\partial{f_2}}{\partial{x_i}},\ldots,\lambda \frac{\partial{f_n}}{\partial{x_i}})+(\mu \frac{\partial{g_1}}{\partial{x_i}},\mu \frac{\partial{g_2}}{\partial{x_i}},\ldots,\mu \frac{\partial{g_n}}{\partial{x_i}})\nonumber \\&=\lambda (\frac{\partial{f_1}}{\partial{x_i}},\frac{\partial{f_2}}{\partial{x_i}},\ldots, \frac{\partial{f_n}}{\partial{x_i}})+\mu (\frac{\partial{g_1}}{\partial{x_i}},\frac{\partial{g_2}}{\partial{x_i}},\ldots, \frac{\partial{g_n}}{\partial{x_i}})\nonumber \\&=\lambda \nabla_{[x_i]}(\mathcal{S}_a)+\mu \nabla_{[x_i]}(\mathcal{S}_b).\nonumber
\end{align}
Similarly, we get 
\begin{align}
\gamma(\lambda \mathcal{S}_a+\mu \mathcal{S}_b)&=\begin{pmatrix}\lambda f_1+\mu g_1\\\lambda f_2+\mu g_2\\\vdots \\\lambda f_n+\mu g_n\end{pmatrix}\nonumber \\&=\begin{pmatrix}\lambda f_1\\\lambda f_2\\ \vdots \\\lambda f_n  \end{pmatrix}+\begin{pmatrix}\mu g_1 \\\mu g_2 \\ \vdots \\\mu g_n\end{pmatrix}\nonumber \\&=\lambda \gamma(\mathcal{S}_a)+\mu \gamma(\mathcal{S}_b).\nonumber
\end{align}
Similarly 
\begin{align}
\beta\circ\gamma(\lambda \mathcal{S}_a+\mu \mathcal{S}_b)&=\begin{pmatrix}0 & 1 &\cdots 1\\1 & 0 & 1 \cdots 1\\ \vdots & \vdots & \cdots \vdots\\1 & 1 & \cdots 0 \end{pmatrix}
\begin{pmatrix}\lambda f_1+\mu g_1\\\lambda f_2+\mu g_2\\ \vdots \\\lambda f_n+\mu g_n\end{pmatrix}\nonumber \\&=\begin{pmatrix}0 & 1 &\cdots 1\\1 & 0 & 1 \cdots 1\\ \vdots & \vdots & \cdots \vdots\\1 & 1 & \cdots 0 \end{pmatrix}\bigg\{ \begin{pmatrix}\lambda f_1\\\lambda f_2\\ \vdots \\\lambda f_n\end{pmatrix}+\begin{pmatrix}\mu g_1\\ \mu g_2\\ \vdots \\ \mu g_n  \end{pmatrix}\bigg\}\nonumber \\&=\lambda \begin{pmatrix}0 & 1 &\cdots 1\\1 & 0 & 1 \cdots 1\\ \vdots & \vdots & \cdots \vdots\\1 & 1 & \cdots 0 \end{pmatrix}\begin{pmatrix}f_1\\ f_2\\ \vdots \\f_n\end{pmatrix}+\mu \begin{pmatrix}0 & 1 &\cdots 1\\1 & 0 & 1 \cdots 1\\ \vdots & \vdots & \cdots \vdots\\1 & 1 & \cdots 0 \end{pmatrix}
\begin{pmatrix}g_1\\g_2\\ \vdots \\g_n  \end{pmatrix}\nonumber \\&=\lambda (\beta \circ \gamma)(\mathcal{S}_a)+\mu (\beta \circ \gamma)(\mathcal{S}_b).\nonumber
\end{align}
This proves the linearity of the expansion.
\end{proof}
\bigskip

Here, we verify the commutative property of an expansion. This reinforces the fact that there is no need to give precedence to the direction of an expansion. In essence, it gives some flexibility to the way and manner in which an expansion could be applied on tuples of polynomials.

\begin{proposition}
An expansion is commutative.
\end{proposition}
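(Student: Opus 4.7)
The plan is to reduce commutativity to two standard facts: the $\mathbb{R}$-linearity of partial differentiation on polynomials (essentially Proposition~\ref{linearity} applied component-wise), and Clairaut's theorem for polynomials. First I would isolate the ``algebraic part'' of an expansion by setting $T := \gamma^{-1}\circ\beta\circ\gamma$, a fixed $\mathbb{R}$-linear operator on tuples whose effect on $\mathcal{S}=(f_1,\ldots,f_n)$ is
\begin{align}
T(\mathcal{S}) = \bigg(\sum_{k\neq 1} f_k,\ \sum_{k\neq 2} f_k,\ \ldots,\ \sum_{k\neq n} f_k\bigg).\nonumber
\end{align}
In this notation the expansion in direction $[x_i]$ becomes simply $E_{[x_i]} = T\circ \nabla_{[x_i]}$, which strips the problem of most of its notational weight.

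Next I would verify the two key commutations. First, $T$ commutes with every $\nabla_{[x_j]}$: applying $\nabla_{[x_j]}$ component-wise to $T(\mathcal{S})$ yields a tuple whose $r$th entry is $\sum_{k\neq r}\partial f_k/\partial x_j$, which is exactly the $r$th entry of $T(\nabla_{[x_j]}(\mathcal{S}))$. This works because the matrix defining $\beta$ has real (hence constant) entries, so $\partial/\partial x_j$ distributes through the $\mathbb{R}$-linear combinations that $T$ performs. Second, $\nabla_{[x_i]}\circ \nabla_{[x_j]}=\nabla_{[x_j]}\circ \nabla_{[x_i]}$ holds entry-by-entry by Schwarz's theorem, which is completely immediate on polynomials.

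Combining the two identities, I would chain
\begin{align}
E_{[x_i]}\circ E_{[x_j]} = T\circ\nabla_{[x_i]}\circ T\circ \nabla_{[x_j]} = T^{2}\circ \nabla_{[x_i]}\circ \nabla_{[x_j]} = T^{2}\circ \nabla_{[x_j]}\circ \nabla_{[x_i]} = E_{[x_j]}\circ E_{[x_i]},\nonumber
\end{align}
which is the desired equality for any pair of coordinate directions. The extension to longer mixed expansions $\otimes_{i=1}^{l}[x_{\sigma(i)}]$ then follows from the fact that adjacent transpositions generate the full symmetric group, so pairwise commutativity of the $E_{[x_i]}$'s implies $\sigma$-invariance of the composite. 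I do not expect a genuine obstacle here: once $T$ is recognized as a constant-coefficient linear operator, the proposition is essentially a bookkeeping consequence of the polynomial Clairaut theorem, and the only place where care is needed is in confirming that the matrix $\beta$ has entries in $\mathbb{R}$ rather than in $\mathbb{R}[x_1,\ldots,x_n]$, which is clear from the definition.
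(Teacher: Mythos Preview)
Your proposal is correct and follows essentially the same approach as the paper: both arguments reduce the claim to the equality of mixed partials $\partial^2/\partial x_i\partial x_j=\partial^2/\partial x_j\partial x_i$ on polynomials together with the fact that the constant-coefficient operator $\gamma^{-1}\circ\beta\circ\gamma$ commutes with every $\nabla_{[x_j]}$. The paper carries this out by explicit entrywise summation, whereas your factorization $E_{[x_i]}=T\circ\nabla_{[x_i]}$ packages the same computation more cleanly as an operator identity.
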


\begin{proof}
Consider $\mathcal{F}:=\{\mathcal{S}_i\}_{i=1}^{\infty}$ the collection of tuples in the ring $\mathbb{R}[x_1,x_2,\ldots,x_n]$. It suffices to show that for any $\mathcal{S}\in \mathcal{F}$, we have
\begin{align}
(\gamma^{-1}\circ\beta\circ\gamma\circ\nabla)_{[x_i]\otimes [x_j]}(\mathcal{S})=(\gamma^{-1}\circ\beta\circ\gamma\circ\nabla)_{[x_j]\otimes [x_i]}(\mathcal{S}).\nonumber 
\end{align} 
We can write
\begin{align}
(\gamma^{-1}\circ\beta\circ\gamma\circ\nabla)_{[x_i]}(\mathcal{S})&=\bigg(\bigg(\sum \limits_{\substack{t\in [1,n]\\t\neq 1}}\sum \limits_{k=t}\frac{\partial{f_k}}{\partial{x_i}}\bigg),\ldots,\bigg(\sum \limits_{\substack{t\in [1,n]\\t\neq n}}\sum \limits_{k=t}\frac{\partial{f_k}}{\partial{x_i}}\bigg)\bigg)\nonumber 
\end{align}
and make the assignment 
\begin{align}
\mathcal{S}_{g_k}&=(g_{k1},g_{k2},\ldots,g_{kn})\nonumber \\&=\bigg(\bigg(\sum \limits_{\substack{t\in [1,n]\\t\neq 1}}\sum \limits_{k=t}\frac{\partial{f_k}}{\partial{x_i}}\bigg),\ldots,\bigg(\sum \limits_{\substack{t\in [1,n]\\t\neq n}}\sum \limits_{k=t}\frac{\partial{f_k}}{\partial{x_i}}\bigg)\bigg)\nonumber 
\end{align}
for $g_{ki}\in \mathbb{R}[x_1,x_2,\ldots,x_n]$. We apply the second expansion on $\mathcal{S}_{g_k}$ and get 
\begin{align}
(\gamma^{-1}\circ\beta\circ\gamma\circ\nabla)_{[x_j]}(\mathcal{S}_{g_k})&=\bigg(\sum \limits_{\substack{s\in [1,n]\\s\neq 1}}\sum \limits_{k=s}\frac{\partial{g_{ks}}}{\partial{x_j}},\ldots,\sum \limits_{\substack{s\in [1,n]\\s\neq n}}\sum \limits_{k=s}\frac{\partial{g_{ks}}}{\partial{x_j}}\bigg)\nonumber
\end{align} 
so that by combining the two expansions in both directions, we have 
\begin{align}
(\gamma^{-1}\circ\beta\circ\gamma\circ\nabla)_{[x_i]\otimes [x_j]}(\mathcal{S})&=(\gamma^{-1}\circ\beta\circ\gamma\circ \nabla)_{[x_j]}(\mathcal{S}_{g_k})\nonumber \\&=\bigg(\bigg(\sum \limits_{\substack{s\in [1,n]\\s\neq 1}}\sum \limits_{k=s}\frac{\partial{g_{ks}}}{\partial{x_j}}\bigg),\ldots,\bigg(\sum \limits_{\substack{s\in [1,n]\\s\neq n}}\sum \limits_{k=s}\frac{\partial{g_{ks}}}{\partial{x_j}}\bigg)\bigg)\nonumber \\&=\bigg(\bigg(\sum \limits_{\substack{s\in [1,n]\\s\neq 1}}\sum \limits_{k=s}\sum \limits_{\substack{t\in [1,n]\\t\neq 1}}\sum \limits_{k=t}\frac{\partial^2{f_{k}}}{\partial{x_j} \partial{x_i}}\bigg),\nonumber \\&\ldots,\bigg(\sum \limits_{\substack{s\in [1,n]\\s\neq n}}\sum \limits_{k=s}\sum \limits_{\substack{t\in [1,n]\\t\neq n}}\sum \limits_{k=t}\frac{\partial^2{f_{k}}}{\partial{x_j} \partial{x_i}}\bigg)\bigg)\nonumber
\end{align}
by using the linearity of the operator $\frac{\partial}{\partial{x_i}}$. Applying the expansion in the opposite direction and using the linearity of the operator 
\begin{align}
\frac{\partial}{\partial{x_i}}\nonumber
\end{align} 
we have 
\begin{align}
(\gamma^{-1}\circ\beta\circ\gamma\circ\nabla)_{[x_j]\otimes [x_i]}(\mathcal{S})&=\bigg(\bigg(\sum \limits_{\substack{s\in [1,n]\\s\neq 1}}\sum \limits_{k=s}\sum \limits_{\substack{t\in [1,n]\\t\neq 1}}\sum \limits_{k=t}\frac{\partial^2{f_{k}}}{\partial{x_j} \partial{x_i}}\bigg),\nonumber \\&\ldots,\bigg(\sum \limits_{\substack{s\in [1,n]\\s\neq n}}\sum \limits_{k=s}\sum \limits_{\substack{t\in [1,n]\\t\neq n}}\sum \limits_{k=t}\frac{\partial^2{f_{k}}}{\partial{x_j} \partial{x_i}}\bigg)\bigg)\nonumber
\end{align}
by exploiting the condition 
\begin{align}
\frac{\partial^2}{\partial{x_i} \partial{x_j}}=\frac{\partial^2}{\partial{x_j} \partial{x_i}}\nonumber
\end{align} 
for each polynomial $g_i,f_i \in \mathbb{R}[x_1,x_2,\ldots,x_n]$. Comparing the result of both expansions in reverse directions, the claim follows immediately.
\end{proof}

\subsection{The area of an expansion}

In this section, we introduce the notion of the \emph{area} of a multivariate expansion. This is an extension of the area of an expansion under the single variable theory.

\begin{definition}
Let $\mathcal{F}:=\{\mathcal{S}_i\}_{i=1}^{\infty}$ be a collection of tuples of the ring $\mathbb{C}[x_1,x_2,\ldots, x_l]$. By the \emph{area} induced by the expansion
\begin{align}
(\gamma^{-1}\circ\beta\circ\gamma\circ\nabla)_{{\otimes}_{i=1}^{l}[x_{\sigma(i)}]}(\mathcal{S}),\nonumber
\end{align}
denoted by $\mathcal{A}_{\vec{a},\vec{b}}(\mathcal{S})$, from the point $\vec{a}$ to the point $\vec{b}$ we mean
\begin{align}
\mathcal{A}_{\vec{a},\vec{b}}(\mathcal{S}):&=\overrightarrow{O\Delta_{\vec{a}}^{\vec{b}}\left [(\gamma^{-1}\circ \beta \circ \gamma \circ \nabla)_{{\otimes}_{i=1}^{l}[x_{\sigma(i)}]}(\mathcal{S})\right]}\cdot \overrightarrow{O\mathcal{S}_e}\nonumber \\&=\sum \limits_{j=1}^{n}\int \limits_{a_{\sigma(l)}}^{b_{\sigma(l)}}\int \limits_{a_{\sigma(l-1)}}^{b_{\sigma(l-1)}}\cdots \int \limits_{a_{\sigma(1)}}^{b_{\sigma(1)}}g_jdx_{\sigma(1)}dx_{\sigma(2)}\cdots dx_{\sigma(l)}\nonumber 
\end{align}
where 
\begin{align}
\Delta_{\vec{a}}^{\vec{b}}\left [(\gamma^{-1}\circ \beta \circ \gamma \circ \nabla)_{{\otimes}_{i=1}^{l}[x_{\sigma(i)}]}(\mathcal{S})\right]&=\bigg(\int \limits_{a_{\sigma(l)}}^{b_{\sigma(l)}}\int \limits_{a_{\sigma(l-1)}}^{b_{\sigma(l-1)}}\cdots \int \limits_{a_{\sigma(1)}}^{b_{\sigma(1)}}g_{1}dx_{\sigma(1)}dx_{\sigma(2)}\cdots dx_{\sigma(l)},\ldots,\nonumber \\&\int \limits_{a_{\sigma(l)}}^{b_{\sigma(l)}} \int \limits_{a_{\sigma(l-1)}}^{b_{\sigma(l-1)}}\cdots \int \limits_{a_{\sigma(1)}}^{b_{\sigma(1)}}g_ndx_{\sigma(1)}dx_{\sigma(2)}\ldots dx_{\sigma(l)}\bigg)\nonumber
\end{align}
and $\mathcal{S}_e=(1,1,\ldots,1)$ is the unit tuple and $g_i\in \mathbb{C}[x_1,x_2,\ldots,x_l]$ for $1\leq i\leq n$.
\end{definition}
\bigskip

\begin{proposition}\label{arealinearity}
The area of the expansion between points in space is linear.
\end{proposition}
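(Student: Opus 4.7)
The plan is to reduce the claim to two ingredients that are either already available or entirely standard: Proposition \ref{linearity}, which gives $\mathbb{R}$-linearity of the single-direction expansion operator $(\gamma^{-1}\circ\beta\circ\gamma\circ\nabla)_{[x_i]}$, together with the linearity of the iterated Riemann integral over the fixed rectangular box $\prod_{i=1}^{l}[a_{\sigma(i)},b_{\sigma(i)}]$.

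First I would record the preliminary observation that the mixed-direction expansion $(\gamma^{-1}\circ\beta\circ\gamma\circ\nabla)_{\otimes_{i=1}^{l}[x_{\sigma(i)}]}$ is itself linear, because by definition it is the $l$-fold composition of the single-direction expansions $(\gamma^{-1}\circ\beta\circ\gamma\circ\nabla)_{[x_{\sigma(i)}]}$, and a composition of $\mathbb{R}$-linear maps is $\mathbb{R}$-linear. Consequently, for $\mathcal{S}_a,\mathcal{S}_b\in\mathcal{F}$ and scalars $\lambda,\mu\in\mathbb{R}$, writing $(g_j^{(a)})_{j=1}^{n}$ and $(g_j^{(b)})_{j=1}^{n}$ for the component polynomials of the mixed expansions of $\mathcal{S}_a$ and $\mathcal{S}_b$ respectively, the $j$-th component of the mixed expansion of $\lambda\mathcal{S}_a+\mu\mathcal{S}_b$ equals $\lambda g_j^{(a)}+\mu g_j^{(b)}$.

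Next I would substitute this identity directly into the defining formula for $\mathcal{A}_{\vec{a},\vec{b}}$, apply the linearity of each iterated integral to pull the scalars $\lambda$ and $\mu$ outside, and sum over $j\in\{1,\ldots,n\}$. Regrouping then yields exactly $\lambda\mathcal{A}_{\vec{a},\vec{b}}(\mathcal{S}_a)+\mu\mathcal{A}_{\vec{a},\vec{b}}(\mathcal{S}_b)$, which is the desired linearity. Conceptually this is the slogan ``linear expansion followed by componentwise linear integration, then dot-producted with the fixed vector $\overrightarrow{O\mathcal{S}_e}$, is linear.''

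I do not anticipate a genuine obstacle here; the only point that deserves explicit mention is the propagation of linearity from the single-direction case to the mixed-direction case by composition, which is a one-line observation from Proposition \ref{linearity}. Everything else is routine bookkeeping around the definition of $\mathcal{A}_{\vec{a},\vec{b}}$, so the entire proof should fit in a few displayed lines.
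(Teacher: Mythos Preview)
Your proposal is correct and follows essentially the same route as the paper: unwind the definition of $\mathcal{A}_{\vec{a},\vec{b}}$, use linearity of the expansion operator together with linearity of the iterated integral, and regroup. If anything, your write-up is slightly more careful than the paper's, since you make explicit the intermediate step that the mixed-direction expansion is linear (via Proposition~\ref{linearity} and composition), whereas the paper's proof jumps directly to writing the integrand of $\mathcal{A}_{\vec{a},\vec{b}}(\omega\mathcal{S}_1+\mu\mathcal{S}_2)$ as $\omega f_j+\mu g_j$ and relies on that linearity implicitly.
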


\begin{proof}
Let $\omega,\mu \in \mathbb{R}$ and $\mathcal{S}_1,\mathcal{S}_2$ be a tuple of polynomials whose entry belongs to the ring $\mathbb{C}[x_1,x_2,\ldots,x_n]$ with $\mathcal{S}_1=(f_1,f_2,\ldots,f_n)$ and $\mathcal{S}_2=(g_1,g_2,\ldots,g_n)$. Let $\vec{a},\vec{b}\in \mathbb{R}^{l}$. We can write
\begin{align}
    \mathcal{A}_{\vec{a},\vec{b}}(\omega \mathcal{S}_1+\mu \mathcal{S}_2)&=\overrightarrow{O\Delta_{\vec{a}}^{\vec{b}}\left [(\gamma^{-1}\circ \beta \circ \gamma \circ \nabla)_{{\otimes}_{i=1}^{l}[x_{\sigma(i)}]}(\omega \mathcal{S}_1+\mu \mathcal{S}_2)\right]}\cdot \overrightarrow{O\mathcal{S}_e}\nonumber \\&=\sum \limits_{j=1}^{n}\int \limits_{a_{\sigma(l)}}^{b_{\sigma(l)}}\int \limits_{a_{\sigma(l-1)}}^{b_{\sigma(l-1)}}\cdots \int \limits_{a_{\sigma(1)}}^{b_{\sigma(1)}}(\omega f_j+\mu g_j)dx_{\sigma(1)}dx_{\sigma(2)}\cdots dx_{\sigma(l)}\nonumber \\&=\omega \sum \limits_{j=1}^{n}\int \limits_{a_{\sigma(l)}}^{b_{\sigma(l)}}\int \limits_{a_{\sigma(l-1)}}^{b_{\sigma(l-1)}}\cdots \int \limits_{a_{\sigma(1)}}^{b_{\sigma(1)}}f_jdx_{\sigma(1)}dx_{\sigma(2)}\cdots dx_{\sigma(l)}\nonumber \\&+\mu \sum \limits_{j=1}^{n}\int \limits_{a_{\sigma(l)}}^{b_{\sigma(l)}}\int \limits_{a_{\sigma(l-1)}}^{b_{\sigma(l-1)}}\cdots \int \limits_{a_{\sigma(1)}}^{b_{\sigma(1)}}g_jdx_{\sigma(1)}dx_{\sigma(2)}\cdots dx_{\sigma(l)}\nonumber \\&=\omega \mathcal{A}_{\vec{a},\vec{b}}(\mathcal{S}_1)+\mu \mathcal{A}_{\vec{a},\vec{b}}(\mathcal{S}_2).\nonumber
\end{align}
This proves that the area of an expansion between points in space is a linear map.
\end{proof}
\bigskip

The following is an application to an inequality controlling the 2-norm of any $l$-th fold integration by the $l$-th fold integration of the 2-norm of the corresponding integrand.

\begin{theorem}\label{general inequality}
Let $g_j\in \mathbb{R}[x_1,x_2,\ldots,x_l]$ for $1\leq j\leq n$. There exists some constant $C:=C(l)>0$ such that the inequality
\begin{align}
    \int \limits_{a_{\sigma(l)}}^{b_{\sigma(l)}}\int \limits_{a_{\sigma(l-1)}}^{b_{\sigma(l-1)}}\cdots \int \limits_{a_{\sigma(1)}}^{b_{\sigma(1)}}\sqrt{\bigg(\sum \limits_{j=1}^{n}g^2_j\bigg)}dx_{\sigma(1)}dx_{\sigma(2)}\cdots dx_{\sigma(l)}\nonumber \\ \geq C(l)\sqrt{\sum \limits_{j=1}^{n}\bigg(\int \limits_{a_{\sigma(l)}}^{b_{\sigma(l)}}\int \limits_{a_{\sigma(l-1)}}^{b_{\sigma(l-1)}}\cdots \int \limits_{a_{\sigma(1)}}^{b_{\sigma(1)}}g_jdx_{\sigma(1)}dx_{\sigma(2)}\cdots dx_{\sigma(l)}\nonumber\bigg)^2}.\nonumber
\end{align}
holds for $b_{i}>a_{i}$ for each $1\leq i\leq l$.
\end{theorem}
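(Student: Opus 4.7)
The plan is to recognize the statement as the standard ``norm of the integral is at most integral of the norm'' inequality, i.e.\ Minkowski's integral inequality for the Euclidean norm on a finite index set, and to prove it with the universal constant $C(l)=1$ via a short Cauchy--Schwarz duality argument that is insensitive to the number of variables $l$. Abbreviating the iterated integral over the box $R=[a_{\sigma(1)},b_{\sigma(1)}]\times\cdots\times[a_{\sigma(l)},b_{\sigma(l)}]$ simply by $\int_R(\cdot)\,d\mu$, set $c_j:=\int_R g_j\,d\mu$ for $1\le j\le n$, so that the right-hand side of the claimed inequality equals $C(l)\sqrt{\sum_j c_j^2}$ and the left-hand side equals $\int_R\sqrt{\sum_j g_j^2}\,d\mu$.

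First I would use linearity of the integral (which is what Proposition~\ref{arealinearity} reduces to coordinatewise) to write
\begin{align*}
\sum_{j=1}^n c_j^2 = \sum_{j=1}^n c_j\int_R g_j\,d\mu = \int_R \sum_{j=1}^n c_j\, g_j(x)\,d\mu.
\end{align*}
Next I would apply the pointwise Cauchy--Schwarz inequality to the numerical vectors $(c_1,\dots,c_n)$ and $(g_1(x),\dots,g_n(x))$, obtaining
\begin{align*}
\sum_{j=1}^n c_j\, g_j(x) \;\le\; \sqrt{\textstyle\sum_{j=1}^n c_j^2}\cdot\sqrt{\textstyle\sum_{j=1}^n g_j(x)^2}.
\end{align*}
Integrating over $R$ and pulling the constant $\sqrt{\sum_j c_j^2}$ outside the integral then gives
\begin{align*}
\sum_{j=1}^n c_j^2 \;\le\; \sqrt{\textstyle\sum_{j=1}^n c_j^2}\cdot\int_R\sqrt{\textstyle\sum_{j=1}^n g_j(x)^2}\,d\mu,
\end{align*}
and dividing both sides by $\sqrt{\sum_j c_j^2}$ (the degenerate case $\sum_j c_j^2=0$ being trivial since then the right-hand side of the theorem vanishes) yields the desired inequality with $C(l)=1$, a constant independent of both $l$ and $n$.

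There is essentially no obstacle: the argument is dimension-free. A minor point worth flagging is that the $c_j$ in the theorem statement are not under absolute values, which is harmless since $c_j^2=|c_j|^2$. For intuition, the same inequality can be recast via Jensen's inequality applied to the convex function $v\mapsto\|v\|_2$ on $\mathbb{R}^n$: after normalising $d\mu$ to a probability measure on $R$, the left-hand side is the average of $\|g(x)\|_2$ and the right-hand side is the norm of the average of $g(x)$, and convexity supplies the bound.
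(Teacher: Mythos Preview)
Your proof is correct and is the standard Minkowski-type duality argument; it yields the sharp constant $C(l)=1$. The paper's proof takes a genuinely different route: it computes the dot product of the vector $\big(\int_R g_1,\ldots,\int_R g_n\big)$ with the \emph{all-ones} vector $\mathcal{S}_e=(1,\ldots,1)$ in two ways---once via the cosine formula $\sqrt{n}\,|\cos\alpha|\,\sqrt{\sum_j(\int g_j)^2}$, once as $\big|\int_R\sum_j g_j\big|$---and then bounds the latter pointwise by Cauchy--Schwarz. This produces the inequality with the factor $|\cos\alpha|$ in place of your $1$, where $\alpha$ is the angle between $(\int g_1,\ldots,\int g_n)$ and $\mathcal{S}_e$.

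The key difference is the choice of test vector. You pair against the data-adapted direction $(c_1,\ldots,c_n)=(\int g_1,\ldots,\int g_n)$, which is exactly the maximiser in the dual characterisation $\|c\|_2=\sup_{\|u\|_2\le 1}\langle c,u\rangle$; this is why you obtain the clean, dimension-free constant $1$. The paper instead pairs against the fixed direction $\mathcal{S}_e$, and the resulting factor $|\cos\alpha|$ is not a constant depending only on $l$: it varies with the $g_j$ and can even vanish (take $n=2$, $g_1\equiv 1$, $g_2\equiv -1$). So your argument is both shorter and actually delivers what the theorem statement promises, whereas the paper's version, read literally, does not give a uniform positive $C(l)$. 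Your closing remark that the same bound follows from Jensen applied to $v\mapsto\|v\|_2$ is also a valid alternative and worth keeping.
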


\begin{proof}
Using the notion of the area of an expansion, we obtain an equivalent expression
\begin{align}
   \left |\mathcal{A}_{\vec{a},\vec{b}}(\mathcal{S})\right |:&=\left |\overrightarrow{O\Delta_{\vec{a}}^{\vec{b}}\left [(\gamma^{-1}\circ \beta \circ \gamma \circ \nabla)_{{\otimes}_{i=1}^{l}[x_{\sigma(i)}]}(\mathcal{S})\right]}\cdot \overrightarrow{O\mathcal{S}_e}\right |\nonumber \\&=\sqrt{n}|\cos (\alpha)|\sqrt{\sum \limits_{j=1}^{n}\bigg(\int \limits_{a_{\sigma(l)}}^{b_{\sigma(l)}}\int \limits_{a_{\sigma(l-1)}}^{b_{\sigma(l-1)}}\cdots \int \limits_{a_{\sigma(1)}}^{b_{\sigma(1)}}g_jdx_{\sigma(1)}dx_{\sigma(2)}\cdots dx_{\sigma(l)}\nonumber\bigg)^2}
\end{align}
On the other hand, we observe that by changing the order of summation, we can write using the notion of the area of an expansion the following 
\begin{align}
    \left |\mathcal{A}_{\vec{a},\vec{b}}(\mathcal{S})\right |:&=\left |\overrightarrow{O\Delta_{\vec{a}}^{\vec{b}}\left [(\gamma^{-1}\circ \beta \circ \gamma \circ \nabla)_{{\otimes}_{i=1}^{l}[x_{\sigma(i)}]}(\mathcal{S})\right]}\cdot \overrightarrow{O\mathcal{S}_e}\right |\nonumber \\&=\left |\sum \limits_{j=1}^{n}\int \limits_{a_{\sigma(l)}}^{b_{\sigma(l)}}\int \limits_{a_{\sigma(l-1)}}^{b_{\sigma(l-1)}}\cdots \int \limits_{a_{\sigma(1)}}^{b_{\sigma(1)}}g_jdx_{\sigma(1)}dx_{\sigma(2)}\cdots dx_{\sigma(l)}\right |\nonumber \\&=\left |\int \limits_{a_{\sigma(l)}}^{b_{\sigma(l)}}\int \limits_{a_{\sigma(l-1)}}^{b_{\sigma(l-1)}}\cdots \int \limits_{a_{\sigma(1)}}^{b_{\sigma(1)}}\sum \limits_{j=1}^{n}g_jdx_{\sigma(1)}dx_{\sigma(2)}\cdots dx_{\sigma(l)}\right |\nonumber \\&=\left |\int \limits_{a_{\sigma(l)}}^{b_{\sigma(l)}}\int \limits_{a_{\sigma(l-1)}}^{b_{\sigma(l-1)}}\cdots \int \limits_{a_{\sigma(1)}}^{b_{\sigma(1)}}\overrightarrow{O(\gamma^{-1}\circ \beta \circ \gamma \circ \nabla)_{{\otimes}_{i=1}^{l}[x_{\sigma(i)}]}(\mathcal{S})}\cdot \overrightarrow{O\mathcal{S}_e}dx_{\sigma(1)}dx_{\sigma(2)}\cdots dx_{\sigma(l)}\right |\nonumber \\& \leq  \int \limits_{a_{\sigma(l)}}^{b_{\sigma(l)}}\int \limits_{a_{\sigma(l-1)}}^{b_{\sigma(l-1)}}\cdots \int \limits_{a_{\sigma(1)}}^{b_{\sigma(1)}}||\overrightarrow{O(\gamma^{-1}\circ \beta \circ \gamma \circ \nabla)_{{\otimes}_{i=1}^{l}[x_{\sigma(i)}]}(\mathcal{S})}||||\overrightarrow{O\mathcal{S}_e}||dx_{\sigma(1)}dx_{\sigma(2)}\cdots dx_{\sigma(l)}\nonumber \\&=\sqrt{n}\int \limits_{a_{\sigma(l)}}^{b_{\sigma(l)}}\int \limits_{a_{\sigma(l-1)}}^{b_{\sigma(l-1)}}\cdots \int \limits_{a_{\sigma(1)}}^{b_{\sigma(1)}}\sqrt{\bigg(\sum \limits_{j=1}^{n}g^2_j\bigg)}dx_{\sigma(1)}dx_{\sigma(2)}\cdots dx_{\sigma(l)}.\nonumber
\end{align}
The inequality follows by combining this upper bound with the area of expansion.
\end{proof}
\bigskip

It is important to note that Theorem \ref{general inequality} is a generalization of the usual integral inequality for bounded functions $h$; in particular, polynomials. That is, if we take $\bigg(\sum \limits_{j=1}^{n}g^2_j\bigg)=h^2$, then 
\begin{align}
    \sqrt{\bigg(\sum \limits_{j=1}^{n}g^2_j\bigg)}=h\nonumber
\end{align}
and the result is the usual integral inequality with $h$ now on the finite supports $[a_1,b_1], [a_2,b_2],\ldots,[a_l,b_l]$. We deduce an inequality relating the minimum gap between the limits of integration to their corresponding  $l$-th fold integration of the function.

\begin{corollary}
Let $g_j\in \mathbb{R}[x_1,x_2,\ldots,x_l]$ for $1\leq j\leq n$ such that $g_j\neq 1$ with 
\begin{align}
    \sum \limits_{j=1}^{n}g^2_j\leq 1\nonumber
\end{align}
on $\cup_{i=1}^{l}[a_{i},b_i]$. There exist some constant $C:=C(l)>0$ such that the inequality
\begin{align}
    \frac{1}{C(l)}\prod \limits_{i=1}^{l}|b_{\sigma(i)
    }-a_{\sigma(i)}| \geq \sqrt{\sum \limits_{j=1}^{n}\bigg(\int \limits_{a_{\sigma(l)}}^{b_{\sigma(l)}}\int \limits_{a_{\sigma(l-1)}}^{b_{\sigma(l-1)}}\cdots \int \limits_{a_{\sigma(1)}}^{b_{\sigma(1)}}g_jdx_{\sigma(1)}dx_{\sigma(2)}\cdots dx_{\sigma(l)}\nonumber\bigg)^2}.\nonumber
\end{align}
holds.
\end{corollary}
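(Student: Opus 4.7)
The plan is to derive this as a direct corollary of Theorem \ref{general inequality}. First I would invoke that theorem in its given form, so that the quantity on the right-hand side of the desired inequality is already bounded below by $C(l)^{-1}$ times the $l$-fold integral of $\sqrt{\sum_{j=1}^n g_j^2}$ over the product region $\prod_{i=1}^l [a_{\sigma(i)}, b_{\sigma(i)}]$. Rearranging Theorem \ref{general inequality} gives
\begin{align}
\frac{1}{C(l)} \int \limits_{a_{\sigma(l)}}^{b_{\sigma(l)}} \cdots \int \limits_{a_{\sigma(1)}}^{b_{\sigma(1)}} \sqrt{\sum \limits_{j=1}^{n} g_j^2}\, dx_{\sigma(1)} \cdots dx_{\sigma(l)} \geq \sqrt{\sum \limits_{j=1}^{n}\bigg(\int \limits_{a_{\sigma(l)}}^{b_{\sigma(l)}} \cdots \int \limits_{a_{\sigma(1)}}^{b_{\sigma(1)}} g_j\, dx_{\sigma(1)} \cdots dx_{\sigma(l)}\bigg)^2}. \nonumber
\end{align}

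Next I would exploit the hypothesis $\sum_{j=1}^n g_j^2 \leq 1$ on $\cup_{i=1}^l [a_i, b_i]$, which, since the product region $\prod_{i=1}^l [a_{\sigma(i)}, b_{\sigma(i)}]$ involves coordinate values drawn from this set, yields $\sqrt{\sum_{j=1}^n g_j^2} \leq 1$ pointwise on the region of integration. Monotonicity of the integral then gives
\begin{align}
\int \limits_{a_{\sigma(l)}}^{b_{\sigma(l)}} \cdots \int \limits_{a_{\sigma(1)}}^{b_{\sigma(1)}} \sqrt{\sum \limits_{j=1}^{n} g_j^2}\, dx_{\sigma(1)} \cdots dx_{\sigma(l)} \leq \int \limits_{a_{\sigma(l)}}^{b_{\sigma(l)}} \cdots \int \limits_{a_{\sigma(1)}}^{b_{\sigma(1)}} 1 \, dx_{\sigma(1)} \cdots dx_{\sigma(l)} = \prod \limits_{i=1}^{l} |b_{\sigma(i)} - a_{\sigma(i)}|, \nonumber
\end{align}
since $b_i > a_i$ for each $i$. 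Chaining this upper bound with the rearranged form of Theorem \ref{general inequality} produces the claimed inequality.

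There is really no serious obstacle here; the proof is essentially a one-line substitution once the right theorem is on the table. The only delicate point worth flagging is interpreting the hypothesis $\sum_{j=1}^n g_j^2 \leq 1$ on the union $\cup_{i=1}^l [a_i, b_i]$ in a way that actually controls the integrand on the full product box $\prod_{i=1}^l [a_{\sigma(i)}, b_{\sigma(i)}]$; I would simply read the hypothesis as a pointwise bound holding throughout the relevant domain, so that the crude bound $\sqrt{\sum g_j^2} \leq 1$ transfers unchanged into the multi-dimensional integral.
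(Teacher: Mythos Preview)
Your proposal is correct and follows exactly the approach the paper intends: the corollary is stated immediately after Theorem~\ref{general inequality} without a separate proof, precisely because it is meant to be the one-line substitution you describe --- bound $\sqrt{\sum g_j^2}\le 1$ pointwise, integrate to get the volume of the box, and chain with the rearranged form of Theorem~\ref{general inequality}. Your remark on interpreting the hypothesis over the union $\cup_{i=1}^l[a_i,b_i]$ as a pointwise bound on the full product box is also well placed; the paper leaves this implicit.
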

\bigskip

There is no obvious reason why an analogous inequality should not hold for continuous functions on the intervals $\cup_{i=1}^{l}[a_i,b_i]$ by replacing the space of real multivariate polynomials $\mathbb{R}[x_1,x_2,\ldots,x_n]$ with the general space of real multivariate functions $\mathbb{F}[x_1,x_2,\ldots,x_l]$ continuous on the interval $\cup_{i=1}^{l}[a_i,b_i]$.

\subsection{The volume of an expansion}

In this section, we introduce the concept of the \emph{volume} induced by $n$ points in space.

\begin{definition}
Let $\mathcal{F}:=\{\mathcal{S}_i\}_{i=1}^{\infty}$ be a collection of tuples of polynomials with entries $f_k\in \mathbb{C}[x_1,x_2,\ldots, x_n]$. By the \emph{volume} induced by the expansion
\begin{align}
(\gamma^{-1}\circ\beta\circ\gamma\circ\nabla)_{{\otimes}_{i=1}^{l}[x_{\sigma(i)}]}(\mathcal{S}),\nonumber
\end{align}
denoted by $\mathcal{V}{\vec{a}_1,\vec{a}_2,\ldots,\vec{a}_n}(\mathcal{S})$, at the linearly independent \emph{spots} $\vec{a}_1,\vec{a}_2,\ldots,\vec{a}_n$ such that $\vec{a}_i\neq \vec{O}$ for $1\leq i\leq n$, we mean the sum
\begin{align}
\mathcal{V}_{\vec{a}_1,\vec{a}_2\ldots \vec{a}_n}(\mathcal{S}):&=\sum \limits_{\substack{s,t\in [1,n]\\s\leq t\\k\neq s,t\\k\in [1,n] }}\overrightarrow{O\Delta_{\vec{a}_s}^{\vec{a}_t}\left [(\gamma^{-1}\circ \beta \circ \gamma \circ \nabla)_{{\otimes}_{i=1}^{l}[x_{\sigma(i)}]}(\mathcal{S})\right]}\cdot \overrightarrow{O\mathcal{S}_e}\nonumber \\& \times \bigg | \bigg |\vec{a}_{s} \diamond \vec{a}_{t}\diamond \cdots \diamond \vec{a}_k \diamond \vec{a}_v\bigg |\bigg |\nonumber
\end{align}
where $\vec{a}_{s} \diamond \vec{a}_{t}\diamond \cdots \diamond \vec{a}_k \diamond \vec{a}_{v}$ is the cross product of any of $n-1$ spots including the spots $\vec{a}_s,\vec{a}_t$ and $\sigma:[1,l]\longrightarrow [1,l]$ is a permutation.
\end{definition}

\begin{proposition}
The volume of an expansion between spots is a linear operator.
\end{proposition}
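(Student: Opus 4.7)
The plan is to reduce the linearity of $\mathcal{V}$ to the linearity of the area map $\mathcal{A}$ already established in Proposition \ref{arealinearity}. First, I would fix the spots $\vec{a}_1,\vec{a}_2,\ldots,\vec{a}_n$ together with the permutation $\sigma$ and inspect a generic summand of $\mathcal{V}_{\vec{a}_1,\vec{a}_2,\ldots,\vec{a}_n}(\mathcal{S})$. Each such summand is a product of two factors: the dot product
\begin{align}
\overrightarrow{O\Delta_{\vec{a}_s}^{\vec{a}_t}\left[(\gamma^{-1}\circ \beta \circ \gamma \circ \nabla)_{{\otimes}_{i=1}^{l}[x_{\sigma(i)}]}(\mathcal{S})\right]}\cdot \overrightarrow{O\mathcal{S}_e}\nonumber
\end{align}
which is exactly the area $\mathcal{A}_{\vec{a}_s,\vec{a}_t}(\mathcal{S})$, and the cross-product norm $\bigl\|\vec{a}_s \diamond \vec{a}_t \diamond \cdots \diamond \vec{a}_k \diamond \vec{a}_v\bigr\|$. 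The first observation to record is that this norm depends only on the chosen spots and not on the tuple $\mathcal{S}$, so for the purposes of checking linearity in $\mathcal{S}$ it behaves as a scalar constant.

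Next, given $\omega,\mu\in\mathbb{R}$ and tuples $\mathcal{S}_1,\mathcal{S}_2\in\mathcal{F}$, I would apply Proposition \ref{arealinearity} to each summand to obtain
\begin{align}
\mathcal{A}_{\vec{a}_s,\vec{a}_t}(\omega \mathcal{S}_1+\mu \mathcal{S}_2)=\omega\,\mathcal{A}_{\vec{a}_s,\vec{a}_t}(\mathcal{S}_1)+\mu\,\mathcal{A}_{\vec{a}_s,\vec{a}_t}(\mathcal{S}_2).\nonumber
\end{align}
Multiplying both sides by the $\mathcal{S}$-independent factor $\bigl\|\vec{a}_s \diamond \vec{a}_t \diamond \cdots \diamond \vec{a}_v\bigr\|$ and summing over all admissible index sets $\{s,t,k,\ldots\}$ appearing in the definition of $\mathcal{V}$ then yields
\begin{align}
\mathcal{V}_{\vec{a}_1,\ldots,\vec{a}_n}(\omega \mathcal{S}_1+\mu \mathcal{S}_2)=\omega\,\mathcal{V}_{\vec{a}_1,\ldots,\vec{a}_n}(\mathcal{S}_1)+\mu\,\mathcal{V}_{\vec{a}_1,\ldots,\vec{a}_n}(\mathcal{S}_2),\nonumber
\end{align}
since a finite sum of linear maps, weighted by constant scalars, is linear. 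This completes the argument.

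There is essentially no analytic obstacle here; the content is already carried by Proposition \ref{arealinearity}. The only care required is notational bookkeeping: one must be explicit that in the definition of $\mathcal{V}$ the cross-product factor is purely a function of the geometry of the spots $\vec{a}_1,\ldots,\vec{a}_n$, so that it can be pulled out of the linear combination, and that the index set $\{(s,t,k,\ldots)\}$ over which we sum does not change when we replace $\mathcal{S}$ by $\omega \mathcal{S}_1+\mu \mathcal{S}_2$. Once these two points are flagged, the proof is a direct computation mirroring that of Proposition \ref{arealinearity}.
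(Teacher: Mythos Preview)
Your proposal is correct and takes essentially the same approach as the paper, which simply states that the result is an easy consequence of Proposition~\ref{arealinearity}. You have spelled out the details behind that one-line proof: the cross-product norm is independent of $\mathcal{S}$, so each summand is a scalar multiple of an area term, and linearity follows.
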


\begin{proof}
This is an easy consequence of Proposition \ref{arealinearity}.
\end{proof}

\begin{theorem}\label{averageinequality}
Let $\vec{a}_{1},\vec{a}_{2}\ldots,\vec{a_n}$ be any linearly independent vectors in the space $\mathbb{R}^{l}$ and $\sigma:\{1,2,\ldots,l\}\longrightarrow \{1,2,\ldots,l\}$ be any permutation. If for each $g_k\in \mathbb{R}[x_1,x_2,\ldots,x_l]$ with $1\leq k\leq n$ and 
\begin{align}
    \int \limits_{a_{i_{\sigma(l)}}}^{a_{j_{\sigma(l)}}}\int \limits_{a_{i_{\sigma(l-1)}}}^{a_{j_{\sigma(l-1)}}}\cdots \int \limits_{a_{i_{\sigma(1)}}}^{a_{j_{\sigma(1)}}}g_kdx_{\sigma(1)}dx_{\sigma(2)}\cdots dx_{\sigma(l)}>0\nonumber
\end{align}
then there exists a constant $C>0$ such that 
\begin{align}
    \sum \limits_{\substack{i,j\in [1,n]\\a_{i_{\sigma(s)}}<a_{j_{\sigma(s)}}\\s\in [1,l]\\v\neq i,j\\v\in [1,n] }}\bigg | \bigg |\vec{a}_{i} \diamond \vec{a}_{j}\diamond \cdots \diamond \vec{a}_v\bigg |\bigg |\sum \limits_{k=1}^{n}\int \limits_{a_{i_{\sigma(l)}}}^{a_{j_{\sigma(l)}}}\int \limits_{a_{i_{\sigma(l-1)}}}^{a_{j_{\sigma(l-1)}}}\cdots \int \limits_{a_{i_{\sigma(1)}}}^{a_{j_{\sigma(1)}}}g_kdx_{\sigma(1)}dx_{\sigma(2)}\cdots dx_{\sigma(l)}\nonumber\\ \leq 2C\times \binom{n}{2}\times \sqrt{n}\times \nonumber \\ \int \limits_{a_{i_{\sigma(l)}}}^{a_{j_{\sigma(l)}}}\int \limits_{a_{i_{\sigma(l-1)}}}^{a_{j_{\sigma(l-1)}}}\cdots \int \limits_{a_{i_{\sigma(1)}}}^{a_{j_{\sigma(1)}}}\sqrt{\bigg(\sum \limits_{k=1}^{n}(\mathrm{max}(g_k))^2\bigg)}dx_{\sigma(1)}dx_{\sigma(2)}\cdots dx_{\sigma(l)}.\nonumber
\end{align}
\end{theorem}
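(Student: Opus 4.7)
The plan is to read the left-hand side as a sum of area-of-expansion terms weighted by cross-product magnitudes, and then apply to each individual term the estimate already produced inside the proof of Theorem~\ref{general inequality}. Concretely, for any admissible pair of spots $(\vec{a}_i,\vec{a}_j)$, the inner quantity
\[
\sum_{k=1}^{n}\int_{a_{i_{\sigma(l)}}}^{a_{j_{\sigma(l)}}}\cdots\int_{a_{i_{\sigma(1)}}}^{a_{j_{\sigma(1)}}}g_k\,dx_{\sigma(1)}\cdots dx_{\sigma(l)}
\]
is by definition $\mathcal{A}_{\vec{a}_i,\vec{a}_j}(\mathcal{S})=\overrightarrow{O\Delta_{\vec{a}_i}^{\vec{a}_j}[(\gamma^{-1}\circ\beta\circ\gamma\circ\nabla)_{\otimes[x_{\sigma(i)}]}(\mathcal{S})]}\cdot\overrightarrow{O\mathcal{S}_e}$. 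Multiplying by the cross-product factor $\lVert\vec{a}_i\diamond\vec{a}_j\diamond\cdots\diamond\vec{a}_v\rVert$ and summing over the pairs $(i,j)$ subject to the stated ordering constraint gives exactly the relevant portion of the volume $\mathcal{V}_{\vec{a}_1,\ldots,\vec{a}_n}(\mathcal{S})$. The positivity hypothesis on the integrals lets us drop absolute values, so the LHS equals this volume verbatim.

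Next I would upper-bound each summand term by a term of the type appearing on the RHS. The key inequality is the Cauchy--Schwarz step used inside the proof of Theorem~\ref{general inequality}, namely
\[
\bigl|\mathcal{A}_{\vec{a}_i,\vec{a}_j}(\mathcal{S})\bigr|
\;\leq\;\sqrt{n}\int_{a_{i_{\sigma(l)}}}^{a_{j_{\sigma(l)}}}\cdots\int_{a_{i_{\sigma(1)}}}^{a_{j_{\sigma(1)}}}\sqrt{\sum_{k=1}^{n}g_k^{\,2}}\;dx_{\sigma(1)}\cdots dx_{\sigma(l)},
\]
and then replace each $g_k(\vec{x})$ pointwise by its maximum on the box $\prod_{s}[a_{i_{\sigma(s)}},a_{j_{\sigma(s)}}]$, which is legal since each $g_k$ is a polynomial on a compact set. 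This produces, for a single pair, a bound of shape $\sqrt{n}\int\sqrt{\sum_k(\max g_k)^2}$.

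Finally I would sum over admissible pairs. The indexing picks out one ordered pair per unordered pair of spots, so there are $\binom{n}{2}$ of them (the factor $2$ accounts for the fact that the defining sum in $\mathcal{V}$ ranges over $s\leq t$, so the relabelling to ordered pairs produces an extra factor of $2$ when compared with the unordered count used in the statement). The cross-product magnitudes $\lVert\vec{a}_i\diamond\cdots\diamond\vec{a}_v\rVert$ depend only on the fixed linearly independent vectors $\vec{a}_1,\ldots,\vec{a}_n$ and not on the polynomials $g_k$, and can therefore be absorbed, together with the $|\cos\alpha|^{-1}$-type factor from Theorem~\ref{general inequality}, into the single constant $C=C(l)$; this is the provenance of the $2C\binom{n}{2}\sqrt{n}$ prefactor on the right-hand side.

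The main obstacle I anticipate is the bookkeeping: matching the ordering condition ``$a_{i_{\sigma(s)}}<a_{j_{\sigma(s)}}$ for $s\in[1,l]$'' to a clean enumeration of $\binom{n}{2}$ pairs, and verifying that the cross-product magnitudes can be uniformly absorbed into $C$ rather than appearing explicitly on the right. Everything else, once the identification with the volume $\mathcal{V}$ is in place, reduces to the single-pair inequality that was already established in Theorem~\ref{general inequality}.
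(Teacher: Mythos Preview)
Your proposal is correct and follows essentially the same route as the paper: identify the left-hand side with the volume $\mathcal{V}_{\vec{a}_1,\ldots,\vec{a}_n}(\mathcal{S})$ via the definition, apply the Cauchy--Schwarz interpolation from the proof of Theorem~\ref{general inequality} term-by-term, pass to $\max(g_k)$ under the integral, and count the $2\binom{n}{2}$ ordered pairs of spots. The paper's proof is terser on exactly the two points you flag as obstacles---the pair enumeration and the absorption of the cross-product norms into $C$---so your write-up is, if anything, more explicit than the original on these matters.
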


\begin{proof}
We let 
\begin{align}
    \int \limits_{a_{i_{\sigma(l)}}}^{a_{j_{\sigma(l)}}}\int \limits_{a_{i_{\sigma(l-1)}}}^{a_{j_{\sigma(l-1)}}}\cdots \int \limits_{a_{i_{\sigma(1)}}}^{a_{j_{\sigma(1)}}}g_kdx_{\sigma(1)}dx_{\sigma(2)}\cdots dx_{\sigma(l)}\nonumber
\end{align}
be the $k^{th}$ entry of the vector $\overrightarrow{O\Delta_{\vec{a}_s}^{\vec{a}_t}\left [(\gamma^{-1}\circ \beta \circ \gamma \circ \nabla)_{{\otimes}_{i=1}^{l}[x_{\sigma(i)}]}(\mathcal{S})\right]}$ for $1\leq k \leq n$. Using the definition of the volume induced by an expansion at $n$ spots yields the expression at the left side. On the other hand, we observe that each outer sum is determined by their spots and in each of these, we maintain two distinct spots twice so that we get $2\times \binom{n}{2}$ for the number of such possible distinct sums. Under the main assumption that the maximum function taken is then absorbed by the $l$-fold integral, we deduce via the interpolation 
\begin{align}
    \left|\overrightarrow{O\Delta_{\vec{a}_i}^{\vec{a}_j}\left [(\gamma^{-1}\circ\beta\circ\gamma\circ \nabla)_{{\otimes}_{i=1}^{l}[x_{\sigma(i)}]}(\mathcal{S})\right]}\cdot \overrightarrow{O\mathcal{S}_e}\right|&=\left|\sum \limits_{k=1}^{n}\int \limits_{a_{i_{\sigma(l)}}}^{a_{j_{\sigma(l)}}}\cdots \int \limits_{a_{i_{\sigma(1)}}}^{a_{j_{\sigma(1)}}}g_kdx_{\sigma(1)}dx_{\sigma(2)}\cdots dx_{\sigma(l)}\right|\nonumber \\&=\left |\int \limits_{a_{i_{\sigma(l)}}}^{a_{j_{\sigma(l)}}}\cdots \int \limits_{a_{i_{\sigma(1)}}}^{a_{j_{\sigma(1)}}}\sum \limits_{k=1}^{n}g_k dx_{\sigma(1)}dx_{\sigma(2)}\cdots dx_{\sigma(l)}\right|\nonumber \\&=\bigg|\int \limits_{a_{i_{\sigma(l)}}}^{a_{j_{\sigma(l)}}}\cdots \int \limits_{a_{i_{\sigma(1)}}}^{a_{j_{\sigma(1)}}}\overrightarrow{O\left[(\gamma^{-1}\circ\beta\circ\gamma\circ \nabla)_{{\otimes}_{i=1}^{l}[x_{\sigma(i)}]}(\mathcal{S})\right]}\nonumber \\& \cdot \overrightarrow{O\mathcal{S}_e} dx_{\sigma(1)}dx_{\sigma(2)}\cdots dx_{\sigma(l)}\bigg |.\nonumber
\end{align}
\end{proof}
\bigskip

We deduce from this result another inequality that controls the average of any $l$-fold integral by the $l$-fold integral of the $2$-norm of the corresponding integrand.

\begin{corollary}
Let $\vec{a}_{1},\vec{a}_{2}\ldots,\vec{a_n}$ be linearly independent vectors in the space $\mathbb{R}^{l}$ and $\sigma:\{1,2,\ldots,l\}\longrightarrow \{1,2,\ldots,l\}$ be any permutation. Assume that for each $g_k\in \mathbb{R}[x_1,x_2,\ldots,x_l]$ such that $1\leq k\leq n$ with the cross products $\bigg|\bigg|\vec{a}_{i}\diamond \vec{a}_{j}\diamond \cdots \diamond \vec{a}_v\bigg|\bigg|\geq K$ for $K\in \mathbb{R}$ for all $1\leq i,j,\ldots,v\leq n$. There exist some constant $C>0$ such that if 
\begin{align}
    \int \limits_{a_{i_{\sigma(l)}}}^{a_{j_{\sigma(l)}}}\int \limits_{a_{i_{\sigma(l-1)}}}^{a_{j_{\sigma(l-1)}}}\cdots \int \limits_{a_{i_{\sigma(1)}}}^{a_{j_{\sigma(1)}}}g_kdx_{\sigma(1)}dx_{\sigma(2)}\cdots dx_{\sigma(l)}>0\nonumber
\end{align}
then 
\begin{align}
    \sum \limits_{k=1}^{n}\int \limits_{a_{i_{\sigma(l)}}}^{a_{j_{\sigma(l)}}}\int \limits_{a_{i_{\sigma(l-1)}}}^{a_{j_{\sigma(l-1)}}}\cdots \int \limits_{a_{i_{\sigma(1)}}}^{a_{j_{\sigma(1)}}}g_kdx_{\sigma(1)}dx_{\sigma(2)}\cdots dx_{\sigma(l)}&\leq C\times \frac{1}{K}\sqrt{n}\times \nonumber \\&\int \limits_{a_{i_{\sigma(l)}}}^{a_{j_{\sigma(l)}}}\cdots \int \limits_{a_{i_{\sigma(1)}}}^{a_{j_{\sigma(1)}}}\sqrt{\bigg(\sum \limits_{k=1}^{n}(\mathrm{max}(g_k))^2\bigg)}\nonumber \\&\times dx_{\sigma(1)}\cdots dx_{\sigma(l)}\nonumber
\end{align}
\end{corollary}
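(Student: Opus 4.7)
The plan is to read this off directly from Theorem \ref{averageinequality} by replacing the cross-product norms on the left-hand side by their common lower bound $K$, using positivity of the inner integrals to reduce the pair-sum to a single pair, and finally absorbing the combinatorial factor $2\binom{n}{2}$ into the constant. Concretely, the hypotheses of the corollary include the hypotheses of Theorem \ref{averageinequality}, so that result is available verbatim.

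First I would apply Theorem \ref{averageinequality} to the tuple $(g_1,\ldots,g_n)$ and the linearly independent spots $\vec{a}_1,\ldots,\vec{a}_n$ to obtain the master inequality
\begin{align}
\sum_{\substack{i,j\in [1,n]\\ a_{i_{\sigma(s)}}<a_{j_{\sigma(s)}}\\ v\neq i,j}} \bigl\|\vec{a}_i\diamond\vec{a}_j\diamond\cdots\diamond\vec{a}_v\bigr\|\sum_{k=1}^{n}\int_{a_{i_{\sigma(l)}}}^{a_{j_{\sigma(l)}}}\!\!\cdots\int_{a_{i_{\sigma(1)}}}^{a_{j_{\sigma(1)}}} g_k\,dx_{\sigma(1)}\cdots dx_{\sigma(l)} \nonumber \\
\leq 2C\binom{n}{2}\sqrt{n}\int_{a_{i_{\sigma(l)}}}^{a_{j_{\sigma(l)}}}\!\!\cdots\int_{a_{i_{\sigma(1)}}}^{a_{j_{\sigma(1)}}}\sqrt{\sum_{k=1}^{n}(\operatorname{max}(g_k))^{2}}\,dx_{\sigma(1)}\cdots dx_{\sigma(l)}.\nonumber
\end{align}
Using the uniform bound $\|\vec{a}_i\diamond\vec{a}_j\diamond\cdots\diamond\vec{a}_v\|\geq K$, each weight on the left can be replaced by $K$, yielding a lower bound of $K\sum_{\text{pairs}}\sum_k\int\cdots\int g_k$.

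Next I would invoke the positivity hypothesis: every inner integral $\int\cdots\int g_k\,dx_{\sigma(1)}\cdots dx_{\sigma(l)}$ is strictly positive, so every summand of the outer pair-sum is non-negative. Consequently the entire pair-sum dominates the contribution of any single admissible pair $(i,j)$. Fixing the pair $(i,j)$ appearing on the right-hand side and discarding all other pairs on the left gives
\begin{align}
K\sum_{k=1}^{n}\int_{a_{i_{\sigma(l)}}}^{a_{j_{\sigma(l)}}}\!\!\cdots\int_{a_{i_{\sigma(1)}}}^{a_{j_{\sigma(1)}}}g_k\,dx_{\sigma(1)}\cdots dx_{\sigma(l)} \leq 2C\binom{n}{2}\sqrt{n}\int_{a_{i_{\sigma(l)}}}^{a_{j_{\sigma(l)}}}\!\!\cdots\int_{a_{i_{\sigma(1)}}}^{a_{j_{\sigma(1)}}}\sqrt{\sum_{k=1}^{n}(\operatorname{max}(g_k))^{2}}\,dx_{\sigma(1)}\cdots dx_{\sigma(l)}.\nonumber
\end{align}
Dividing through by $K$ and re-labeling the constant $2C\binom{n}{2}$ as a new $C>0$ produces exactly the asserted inequality.

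There is no real obstacle here; the argument is purely mechanical once Theorem \ref{averageinequality} is in hand. The only point requiring care is that the dropping of all but one pair on the left is legitimate \emph{because} of the positivity hypothesis, which is precisely why that hypothesis is stated. The dimensional dependence is hidden in the absorbed factor $\binom{n}{2}$, which one could keep explicit if a sharper constant were desired.
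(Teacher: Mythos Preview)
Your proposal is correct and is precisely the intended derivation: the paper presents this corollary as an immediate deduction from Theorem \ref{averageinequality} without writing out a separate proof, and your three steps (replace each cross-product norm by the lower bound $K$, use positivity to retain a single pair from the outer sum, absorb $2\binom{n}{2}$ into the constant) are exactly how one reads it off.
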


\bigskip

\subsection{The totient and residue of an expansion}

In this section, we introduce the notion of the \emph{residue} and the \emph{totient} of an expansion. These two notions are analogous to the notion of the rank and the degree of an expansion under the single variable theory.

\begin{definition}\label{totient}
Let $\mathcal{F}=\{\mathcal{S}_i\}_{i=1}^{\infty}$ be a collection of tuples of polynomials in the ring $\mathbb{R}[x_1,x_2,\ldots,x_n]$. We say that the expansion $(\gamma^{-1}\circ\beta\circ\gamma\circ\nabla)_{[x_i]}(\mathcal{S})$ is \emph{free} with \emph{totient} $k$, denoted by $\Phi[(\gamma^{-1}\circ\beta\circ\gamma\circ\nabla)_{[x_i]}(\mathcal{S})]$, if 
\begin{align}
(\gamma^{-1}\circ\beta\circ\gamma\circ\nabla)^k_{[x_i]}(\mathcal{S})=\mathcal{S}_0\nonumber
\end{align}
where $k>0$ is the smallest such number. We call the expansion $(\gamma^{-1}\circ\beta\circ\gamma\circ\nabla)^{k-1}_{[x_i]}(\mathcal{S})$ the \emph{residue} of the expansion. We denote the residue by $\Theta[(\gamma^{-1}\circ\beta\circ\gamma\circ\nabla)^{k-1}_{[x_i]}(\mathcal{S})]$. Similarly, by the \emph{totient} of the mixed expansion 
$$
(\gamma^{-1}\circ\beta\circ\gamma\circ\nabla)_{{\otimes}_{i=1}^{l}[x_{\sigma(i)}]}(\mathcal{S})
$$ 
we mean the smallest value of $k$ such that 
\begin{align}
(\gamma^{-1}\circ\beta\circ\gamma\circ \nabla)^k_{{\otimes}_{i=1}^{l}[x_{\sigma(i)}]}(\mathcal{S})=\mathcal{S}_0.\nonumber
\end{align}
We denote the totient of the mixed expansion by
\begin{align}
\Phi[(\gamma^{-1}\circ\beta\circ\gamma\circ\nabla)_{{\otimes}_{i=1}^{l}[x_{\sigma(i)}]}(\mathcal{S})].\nonumber
\end{align}
\end{definition}
\bigskip

\begin{proposition}
Let $\mathcal{F}=\{\mathcal{S}_i\}_{i=1}^{\infty}$ be a collection of tuples of polynomials in the ring $\mathbb{R}[x_1,x_2,\ldots,x_n]$. If the expansions 
$$
(\gamma^{-1}\circ\beta\circ\gamma\circ\nabla)_{[x_i]}(\mathcal{S}_k)
$$ 
and 
$$
(\gamma^{-1}\circ\beta\circ\gamma\circ\nabla)_{[x_i]}(\mathcal{S}_l)
$$ 
are free with totient $s$ and $t$, respectively, and such that there exists no $\alpha \in \mathbb{R}$ such that $\mathcal{S}_k=\alpha \mathcal{S}_l$, then the expansion
\begin{align}
(\gamma^{-1}\circ\beta\circ\gamma\circ\nabla)_{[x_i]}(\mathcal{S}_k+\mathcal{S}_l)\nonumber
\end{align}
is also free with totient $\mathrm{max}\{s,t\}$.
\end{proposition}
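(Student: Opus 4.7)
The plan is to leverage the linearity of the expansion operator established in Proposition~\ref{linearity}. Write $T := (\gamma^{-1}\circ \beta \circ \gamma \circ \nabla)_{[x_i]}$ for brevity. The first observation is that since $T$ is linear, so is every iterate $T^m$ by an immediate induction, giving
\begin{align*}
T^m(\mathcal{S}_k + \mathcal{S}_l) = T^m(\mathcal{S}_k) + T^m(\mathcal{S}_l)
\end{align*}
for all $m \geq 0$. Taking $\lambda = \mu = 0$ in Proposition~\ref{linearity} shows $T(\mathcal{S}_0) = \mathcal{S}_0$, so once the orbit of a tuple hits the null tuple it remains there under further iteration.

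Set $M := \max\{s,t\}$ and assume without loss of generality that $s \geq t$, so $M = s$. Then $T^M(\mathcal{S}_k) = T^s(\mathcal{S}_k) = \mathcal{S}_0$ by definition of the totient $s$, and $T^M(\mathcal{S}_l) = T^{M-t}\bigl(T^t(\mathcal{S}_l)\bigr) = T^{M-t}(\mathcal{S}_0) = \mathcal{S}_0$ by the previous observation. Adding and invoking linearity yields $T^M(\mathcal{S}_k+\mathcal{S}_l) = \mathcal{S}_0$, so the expansion of $\mathcal{S}_k+\mathcal{S}_l$ is free with totient \emph{at most} $M$.

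The remaining and more delicate step is minimality: ruling out $T^m(\mathcal{S}_k + \mathcal{S}_l) = \mathcal{S}_0$ for every $0 < m < M$. When $s > t$, the range $t \leq m < s$ is immediate, since $T^m(\mathcal{S}_l) = \mathcal{S}_0$ but $T^m(\mathcal{S}_k) \neq \mathcal{S}_0$ by minimality of $s$, so their sum cannot vanish. For $m < t$ I would appeal to the fact that $\nabla_{[x_i]}$ strictly decreases the $x_i$-degree of each polynomial entry at every step, so the totient of a tuple is essentially one more than the maximal $x_i$-degree present after mixing by $\beta$; the strict gap $s > t$ then keeps $T^m(\mathcal{S}_k)$ at a higher $x_i$-degree than $T^m(\mathcal{S}_l)$, precluding cancellation. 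The main obstacle is the case $s = t$, where the two orbits terminate simultaneously and intermediate cancellation is \emph{a priori} possible (in the extreme example $\mathcal{S}_l = -\mathcal{S}_k$ every iterate cancels, forcing the sum's totient down to $1$). I would therefore either restrict the statement to the regime $s \neq t$ or add the natural genericity hypothesis that the leading $x_i$-homogeneous parts of $\mathcal{S}_k$ and $\mathcal{S}_l$ are not negatives of one another, under which the degree argument closes the minimality part cleanly.
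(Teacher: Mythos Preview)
Your approach is precisely the paper's: both arguments use the linearity of the expansion operator (Proposition~\ref{linearity}) to split $T^{M}(\mathcal{S}_k+\mathcal{S}_l)$ into $T^{M}(\mathcal{S}_k)+T^{M}(\mathcal{S}_l)$ and conclude that this vanishes at $M=\max\{s,t\}$. The paper's minimality step is the single line ``since at least one of the inequalities $\max\{s,t\}-r<s$ or $\max\{s,t\}-r<t$ must hold,'' from which it infers $T^{\max\{s,t\}-r}(\mathcal{S}_k+\mathcal{S}_l)\neq\mathcal{S}_0$.

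You are right to be uneasy about this step, and your analysis is in fact sharper than the paper's. Knowing that at least one summand $T^{m}(\mathcal{S}_k)$ or $T^{m}(\mathcal{S}_l)$ is nonzero does \emph{not} by itself prevent their sum from vanishing; your counterexample $\mathcal{S}_l=-\mathcal{S}_k$ (which forces $\mathcal{S}_k+\mathcal{S}_l=\mathcal{S}_0$ and hence totient $1$, not $s=t$) shows the stated proposition is literally false without a further non-cancellation hypothesis. So the gap you flag in the case $s=t$ is genuine and is present in the paper's own argument as well; your proposed remedies (restrict to $s\neq t$, or add a hypothesis on the leading $x_i$-homogeneous parts) are the natural fixes. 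For the sub-case $m<t$ with $s>t$, your degree argument via $\mathrm{Ind}_{f}(x_i)$ is exactly the mechanism behind Lemma~\ref{index-totient} and suffices.
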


\begin{proof}
Suppose that the expansions 
$$
(\gamma^{-1}\circ \beta \circ \gamma \circ \nabla)_{[x_i]}(\mathcal{S}_k)
$$ 
and 
$$
(\gamma^{-1}\circ \beta \circ \gamma \circ \nabla)_{[x_i]}(\mathcal{S}_l)
$$ 
are free with totient $s$ and $t$, respectively. It implies that 
\begin{align}
(\gamma^{-1}\circ\beta\circ\gamma\circ\nabla)^s_{[x_i]}(\mathcal{S}_k)=\mathcal{S}_0\nonumber
\end{align}
with  
$$
(\gamma^{-1}\circ\beta\circ\gamma\circ\nabla)^{s-m}_{[x_i]}(\mathcal{S}_k)\neq \mathcal{S}_0
$$ 
for all $m\leq s$ and 
\begin{align}
(\gamma^{-1}\circ\beta\circ\gamma\circ\nabla)^{t}_{[x_i]}(\mathcal{S}_l)=\mathcal{S}_0\nonumber
\end{align}
with 
$$
(\gamma^{-1}\circ\beta\circ\gamma\circ\nabla)^{t-m}_{[x_i]}(\mathcal{S}_l)\neq \mathcal{S}_0
$$ 
for all $m\leq t$. We apply $\mathrm{max}\{s,t\}$ copies of the expansion maps to the tuple $\mathcal{S}_k+\mathcal{S}_l$. By the linearity of an expansion, we get
\begin{align}
(\gamma^{-1}\circ \beta \circ \gamma \circ \nabla)^{\mathrm{max}\{s,t\}}_{[x_i]}(\mathcal{S}_k+\mathcal{S}_l)&=(\gamma^{-1}\circ \beta \circ \gamma \circ \nabla)^{\mathrm{max}\{s,t\}}_{[x_i]}(\mathcal{S}_k)\nonumber \\&+(\gamma^{-1}\circ \beta \circ \gamma \circ \nabla)^{\mathrm{max}\{s,t\}}_{[x_i]}(\mathcal{S}_l)\nonumber \\&=\mathcal{S}_0\nonumber
\end{align}
since $s,t\leq \mathrm{max}\{s,t\}$. Observe that for any positive integers $r$ with $1\leq r\leq \mathrm{max}\{s,t\}$ the linearity of the expansion map implies
\begin{align}
(\gamma^{-1}\circ \beta \circ \gamma \circ \nabla)^{\mathrm{max}\{s,t\}-r}_{[x_i]}(\mathcal{S}_k+\mathcal{S}_l)&=(\gamma^{-1}\circ \beta \circ \gamma \circ \nabla)^{\mathrm{max}\{s,t\}-r}_{[x_i]}(\mathcal{S}_k)\nonumber \\&+(\gamma^{-1}\circ \beta \circ \gamma \circ \nabla)^{\mathrm{max}\{s,t\}-r}_{[x_i]}(\mathcal{S}_l)\nonumber \\&\neq \mathcal{S}_0\nonumber
\end{align}
since at least one of the inequality $\mathrm{max}\{s,t\}-r<s$ or $\mathrm{max}\{s,t\}-r<t$ must hold. Thus, $\mathrm{max}\{s,t\}$ is the totient of the expansion $(\gamma^{-1}\circ \beta \circ \gamma \circ \nabla)_{[x_i]}(\mathcal{S}_k+\mathcal{S}_l)$. This completes the proof of the proposition.
\end{proof}
\bigskip

Here, we expose an important relationship between the totient of the mixed expansion and expansion in specific directions. One could view this result as a sub-additivity property of the totient of an expansion.

\begin{theorem}\label{subadditive totient}
Let $\mathcal{F}=\{\mathcal{S}_i\}_{i=1}^{\infty}$ be a collection of tuples of polynomials in the ring $\mathbb{R}[x_1,x_2,\ldots,x_n]$. We have
\begin{align}
\Phi[(\gamma^{-1}\circ \beta \circ \gamma \circ \nabla)_{\otimes_{i=1}^{l}[x_{\sigma(i)}]}(\mathcal{S})]\leq \frac{1}{l}\sum \limits_{i=1}^{l}\Phi[(\gamma^{-1}\circ \beta \circ \gamma \circ \nabla)_{[x_{\sigma(i)}]}(\mathcal{S})]+\mathcal{K}\nonumber
\end{align}
where $\mathcal{K}(l)=\mathcal{K}>0$.
\end{theorem}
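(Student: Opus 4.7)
The plan is to reduce the claim to an elementary comparison of polynomial degrees, by stripping off the purely algebraic part of the expansion map and exploiting the commutativity just established. Write $T_i := (\gamma^{-1}\circ\beta\circ\gamma\circ\nabla)_{[x_i]}$ and $M := \gamma^{-1}\circ\beta\circ\gamma$, so that $T_i = M\circ\nabla_{[x_i]}$, where $M$ is the $\mathbb{R}$-linear operator sending the tuple with entries $f_j$ to the tuple with entries $\sum_{k\neq j}f_k$. Since $M$ commutes with every partial derivative, and since the preceding proposition shows that the $T_i$'s mutually commute, one obtains, for each $k\geq 1$,
\begin{equation*}
\bigl(T_{\sigma(l)}\circ\cdots\circ T_{\sigma(1)}\bigr)^{k}
\;=\; T_{\sigma(l)}^{k}\circ\cdots\circ T_{\sigma(1)}^{k}
\;=\; M^{kl}\circ\prod_{i=1}^{l}\Bigl(\tfrac{\partial}{\partial x_{\sigma(i)}}\Bigr)^{k},
\end{equation*}
acting componentwise on the tuple $\mathcal{S}=(f_1,\ldots,f_n)$.

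Next I would observe that $\beta = J - I$, with $J$ the all-ones matrix, has determinant $(-1)^{n-1}(n-1)\neq 0$ for $n\geq 2$; consequently $M^{kl}$ is invertible on tuples of polynomials. Therefore the iterated mixed expansion vanishes at $\mathcal{S}$ if and only if $\prod_{i=1}^{l}(\partial/\partial x_{\sigma(i)})^{k}f_j=0$ for every $j$, while $T_{\sigma(i)}^{k}(\mathcal{S})=\mathcal{S}_{0}$ if and only if $(\partial/\partial x_{\sigma(i)})^{k}f_j=0$ for every $j$. In particular $\Phi[T_{\sigma(i)}(\mathcal{S})] = d_i+1$, where $d_i := \max_{j}\deg_{x_{\sigma(i)}}(f_j)$.

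With this translation in hand, the remainder is combinatorial. The mixed totient equals $D+1$, where $D := \max_{j}\max_{m\in f_j}\min_{1\leq i\leq l}e_i(m)$ and $e_i(m)$ denotes the exponent of $x_{\sigma(i)}$ in the monomial $m$, since $\prod_{i}(\partial/\partial x_{\sigma(i)})^{k}$ annihilates a monomial precisely when one of its exponents drops below $k$. For any fixed index $i_0$, $\min_{i}e_i(m)\leq e_{i_0}(m)\leq d_{i_0}$, which yields the standard max-min versus min-max bound $D\leq \min_{i}d_i$. Therefore
\begin{equation*}
\Phi\bigl[(\gamma^{-1}\circ\beta\circ\gamma\circ\nabla)_{\otimes_{i=1}^{l}[x_{\sigma(i)}]}(\mathcal{S})\bigr]
\;\leq\;\min_{1\leq i\leq l}(d_i+1)
\;\leq\;\frac{1}{l}\sum_{i=1}^{l}\Phi[T_{\sigma(i)}(\mathcal{S})],
\end{equation*}
so the inequality holds with any positive constant $\mathcal{K}(l)$. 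The main technical point is the factorisation above, which depends on the invertibility of $\beta$; the degenerate case $n=1$, where $\beta$ vanishes and every totient equals $1$, is handled trivially.
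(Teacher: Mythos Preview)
Your argument is correct and considerably more direct than the route the paper takes. You factor each expansion operator as $T_i=M\circ\nabla_{[x_i]}$ with $M=\gamma^{-1}\beta\gamma$ a fixed invertible linear map on tuples (the matrix $\beta=J-I$ has determinant $(-1)^{n-1}(n-1)\neq 0$ for $n\ge 2$), and since $M$ commutes with every $\nabla_{[x_i]}$ this immediately reduces both totient computations to pure degree counting; the chain $D+1\le\min_i(d_i+1)\le\tfrac{1}{l}\sum_i(d_i+1)$ then gives the inequality with no positive constant needed. The paper instead defers the proof, developing over several sections the machinery of ``dropler effects'', ``diagonalization'' and ``orders of diagonals'', and eventually proves the sharper Theorem~\ref{mixed specific inequality}, in which the additive term $\mathcal{K}$ is identified as the average order of the diagonals of the mixed expansion. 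What the paper's approach buys is a structural interpretation of the slack in terms of these newly introduced invariants; what your approach buys is a short, self-contained proof showing that the stated inequality is, once the invertibility of $\beta$ is noticed, an elementary fact about multivariate polynomial degrees. Your identity $\Phi[T_{\sigma(i)}(\mathcal{S})]=d_i+1$ is exactly what the paper later records as Lemma~\ref{index-totient}, so the two routes agree at that checkpoint but diverge completely in how they reach the mixed totient.
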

\bigskip

We remark that the inequality allows us to control the totient of a mixed expansion by the average of the totient of expansions in specific directions involved in the mixed expansion. We relegate the proof of this to subsequent sections, where we develop the relevant tools to establish this inequality. We will develop  stronger version of the inequality that contains enough information. 
\bigskip

\section{The Doppler effect induced by an expansion}

In this section, we introduce the notion of the \emph{Doppler} effect induced by an expansion. This phenomena is mostly induced by expansion on several other expansions in a specific direction.

\begin{definition}\label{dropler effect}
Let $\mathcal{F}=\{\mathcal{S}_i\}_{i=1}^{\infty}$ be a collection of tuples of polynomials in the ring $\mathbb{R}[x_1,x_2,\ldots,x_n]$. The expansion 
$$
(\gamma^{-1}\circ\beta\circ\gamma\circ\nabla)_{\otimes_{i=1}^{l}[x_{\sigma(i)}]}(\mathcal{S})
$$ 
is said to induce a \emph{Doppler} effect with \emph{intensity} $k$, denoted by 
$$
\mathcal{I}[(\gamma^{-1}\circ\beta\circ\gamma\circ\nabla)_{[x_j]}(\mathcal{S})]=k,
$$ 
on the expansion 
$$
(\gamma^{-1}\circ\beta\circ\gamma\circ\nabla)_{[x_j]}(\mathcal{S})
$$ 
if 
\begin{align}
(\gamma^{-1}\circ\beta\circ\gamma\circ\nabla)^{k}_{[x_j]}\circ (\gamma^{-1}\circ\beta\circ\gamma\circ\nabla)_{\otimes_{i=1}^{l}[x_{\sigma(i)}]}(\mathcal{S})\nonumber 
\end{align}
is \emph{free} with 
$$
k<\Phi[(\gamma^{-1}\circ\beta\circ\gamma\circ\nabla)_{[x_j]}(\mathcal{S})]
$$ 
and $k$ is the smallest such number. In other words, we say that the expansion admits a \emph{Doppler effect} from the \emph{source} 
$$
(\gamma^{-1}\circ\beta\circ\gamma\circ\nabla)_{\otimes_{i=1}^{l}[x_{\sigma(i)}]}(\mathcal{S})
$$ 
with intensity $k$. The energy
$$
\mathbb{E}[(\gamma^{-1}\circ\beta\circ\gamma\circ\nabla)_{[x_j]}](\mathcal{S})
$$ 
saved by the expansion under the Doppler effect is
\begin{align}
\mathbb{E}[(\gamma^{-1}\circ\beta\circ\gamma\circ\nabla)_{[x_j]}](\mathcal{S})=\Phi[(\gamma^{-1}\circ\beta\circ\gamma\circ \nabla)_{[x_j]}(\mathcal{S})]-\mathcal{I}[(\gamma^{-1}\circ \beta \circ \gamma \circ \nabla)_{[x_j]}(\mathcal{S})].\nonumber
\end{align}
We call this equation the energy-Doppler effect-intensity equation.
\end{definition}
\bigskip

\begin{proposition}\label{dropler effect additive property}
Let $\mathcal{F}=\{\mathcal{S}_i\}_{i=1}^{\infty}$ be a collection of tuples of polynomials in the ring $\mathbb{R}[x_1,x_2,\ldots,x_n]$. If the expansions $(\gamma^{-1}\circ\beta\circ\gamma\circ\nabla)_{[x_s]}(\mathcal{S})$ and $(\gamma^{-1}\circ\beta\circ\gamma\circ \nabla)_{[x_t]}(\mathcal{S})$ each admits a Doppler effect from the same source with intensities $k_1$ and $k_2$, respectively, then the expansion 
\begin{align}
\bigg [(\gamma^{-1}\circ\beta\circ\gamma\circ\nabla)_{[x_{s}]}+(\gamma^{-1}\circ\beta\circ\gamma\circ\nabla)_{[x_{t}]}\bigg ](\mathcal{S})\nonumber 
\end{align}
also admits a Doppler effect from the same source with intensity $\mathrm{max}\{k_1,k_2\}$.
\end{proposition}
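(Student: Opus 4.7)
Denote the common source tuple by
$T := (\gamma^{-1}\circ \beta \circ \gamma \circ \nabla)_{\otimes_{i=1}^{l}[x_{\sigma(i)}]}(\mathcal{S})$
and write $E_s, E_t$ for the operators
$(\gamma^{-1}\circ \beta \circ \gamma \circ \nabla)_{[x_s]}$ and $(\gamma^{-1}\circ \beta \circ \gamma \circ \nabla)_{[x_t]}$, respectively.
The two dropler hypotheses then translate into $E_s^{k_1}(T) = \mathcal{S}_0$ and $E_t^{k_2}(T) = \mathcal{S}_0$, with $k_1, k_2$ minimal and strictly bounded above by $\Phi[E_s(\mathcal{S})]$ and $\Phi[E_t(\mathcal{S})]$, respectively.

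The plan is to combine the commutativity of expansions in distinct directions ($E_s\circ E_t = E_t \circ E_s$) with the linearity of each expansion operator (Proposition \ref{linearity}) to apply the binomial identity
\[
(E_s + E_t)^{m}(T) \;=\; \sum_{i=0}^{m} \binom{m}{i}\, E_s^{i}\bigl(E_t^{m-i}(T)\bigr)
\]
for every $m \ge 0$. Setting $m = \max\{k_1, k_2\}$, I would argue that every summand vanishes: whenever $i \ge k_1$ one has $E_s^{i}(T) = E_s^{i-k_1}(\mathcal{S}_0) = \mathcal{S}_0$ (each expansion operator, being linear, sends $\mathcal{S}_0$ to $\mathcal{S}_0$), so after commuting $E_s^{i}$ past $E_t^{m-i}$ the whole summand collapses to $\mathcal{S}_0$; the symmetric analysis covers $m - i \ge k_2$. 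Minimality of $m$ would then follow from the minimalities of $k_1$ and $k_2$, since any smaller $m'$ leaves the pure iterate $E_s^{m'}(T)$ or $E_t^{m'}(T)$ nonzero among the binomial summands. The strict inequality $\max\{k_1,k_2\} < \Phi[(E_s + E_t)(\mathcal{S})]$ would be extracted by combining the individual strict bounds with the preceding additive-totient proposition, which identifies $\Phi[(E_s+E_t)(\mathcal{S})]$ with $\max\{\Phi[E_s(\mathcal{S})], \Phi[E_t(\mathcal{S})]\}$.

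The principal obstacle I foresee is the vanishing of the \emph{mixed} binomial terms $E_s^{i}E_t^{m-i}(T)$ in the regime where $i < k_1$ and $m - i < k_2$ hold simultaneously. Such indices genuinely occur when $m = \max\{k_1, k_2\}$ and $\min\{k_1, k_2\} \ge 2$; for instance with $k_1 = k_2 = 2$ and $i = 1$ the summand is $E_s E_t(T)$, and neither $E_s^2(T) = \mathcal{S}_0$ nor $E_t^2(T) = \mathcal{S}_0$ controls it on its own. Closing this gap seems to require either an additional structural bound on $T$ (for instance vanishing of the mixed partial $\partial_{x_s}^{i}\partial_{x_t}^{m-i}T$), or a reading of the dropler intensity of a summed expansion that tracks only the pure iterates $E_s^{m}(T)$ and $E_t^{m}(T)$, in the spirit of the preceding additive-totient proposition where linearity alone sufficed. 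Resolving this interpretive point is where I expect the real work to lie; once settled, the remaining verifications are mechanical.
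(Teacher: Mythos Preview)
Your overall strategy---expand $(E_s+E_t)^{m}(T)$ binomially via commutativity and linearity, then set $m=\max\{k_1,k_2\}$---is exactly the paper's. The paper simply compresses the computation to one sentence: it ``applies $\max\{k_1,k_2\}$ copies'' of $E_s+E_t$ to the source and asserts the result is $\mathcal{S}_0$ ``by the linearity of an expansion and further appealing to'' $E_s^{k_1}(T)=\mathcal{S}_0$, $E_t^{k_2}(T)=\mathcal{S}_0$. It never writes out the binomial sum and never isolates the mixed terms $E_s^{i}E_t^{m-i}(T)$; the minimality step is likewise a one-liner, invoking only that $\max\{k_1,k_2\}-r$ falls below at least one of $k_1,k_2$.

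So the obstacle you flag is real and is \emph{not} addressed in the paper's argument either. Your test case $k_1=k_2=2$ is already decisive: a source $T$ with a component proportional to $x_sx_t$ satisfies $E_s^{2}(T)=E_t^{2}(T)=\mathcal{S}_0$ (these force only degree $<2$ in each variable separately), yet the cross term $2E_sE_t(T)$ survives in $(E_s+E_t)^{2}(T)$. The paper's appeal to ``linearity'' gives no mechanism for killing it, and nothing earlier in the paper supplies one. The minimality argument has the same defect: the paper implicitly treats the binomial sum as though it were just $E_s^{m'}(T)+E_t^{m'}(T)$, ignoring both cross terms and possible cancellation. Finally, neither the paper nor your plan verifies the remaining dropler requirement $\max\{k_1,k_2\}<\Phi[(E_s+E_t)(\mathcal{S})]$; note that the additive-totient proposition you invoke treats a \emph{single} operator on a sum of tuples, not a sum of operators on one tuple, so it does not directly yield that bound.

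In short, you have reconstructed the paper's approach faithfully and, in doing so, located a genuine gap that the published proof leaves open as well; you are not missing a trick from the paper's toolbox.
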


\begin{proof}
Suppose that the expansions  
$$
(\gamma^{-1}\circ\beta\circ\gamma\circ\nabla)_{[x_s]}(\mathcal{S})
$$ 
and 
$$
(\gamma^{-1}\circ\beta\circ\gamma\circ \nabla)_{[x_t]}(\mathcal{S})
$$ 
each admits a Doppler effect from the same source with intensities $k_1$ and $k_2$, respectively. Let $(\gamma^{-1}\circ\beta\circ\gamma\circ\nabla)_{\otimes_{i=1}^{l}[x_{\sigma(i)}]}(\mathcal{S})$ be their source. By definition \ref{dropler effect}, we have
\begin{align}
(\gamma^{-1}\circ\beta\circ\gamma\circ\nabla)^{k_1}_{[x_s]}\circ (\gamma^{-1}\circ\beta\circ\gamma\circ\nabla)_{\otimes_{i=1}^{l}[x_{\sigma(i)}]}(\mathcal{S})=\mathcal{S}_0\label{intensity 1}
\end{align}
and 
\begin{align}
(\gamma^{-1}\circ\beta\circ\gamma\circ\nabla)^{k_2}_{[x_t]}\circ (\gamma^{-1}\circ\beta\circ\gamma\circ\nabla)_{\otimes_{i=1}^{l}[x_{\sigma(i)}]}(\mathcal{S})=\mathcal{S}_0.\label{intensity 2}
\end{align}
Consider the expansion map $\bigg [(\gamma^{-1}\circ\beta\circ\gamma\circ\nabla)_{[x_{s}]}+(\gamma^{-1}\circ\beta\circ\gamma\circ\nabla)_{[x_{t}]}\bigg ]$ and apply $\mathrm{max}\{k_1,k_2\}$ copies to the source $(\gamma^{-1}\circ\beta\circ\gamma\circ\nabla)_{\otimes_{i=1}^{l}[x_{\sigma(i)}]}(\mathcal{S})$. Using \eqref{intensity 1} and \eqref{intensity 2}, we deduce
\begin{align}
\bigg [(\gamma^{-1}\circ\beta\circ\gamma\circ\nabla)_{[x_{s}]}+(\gamma^{-1}\circ\beta\circ\gamma\circ\nabla)_{[x_{t}]}\bigg ]^{\mathrm{max}\{k_1,k_2\}}\circ (\gamma^{-1}\circ\beta\circ\gamma\circ\nabla)_{\otimes_{i=1}^{l}[x_{\sigma(i)}]}(\mathcal{S})=\mathcal{S}_0.\nonumber
\end{align}
We get
\begin{align}
\bigg [(\gamma^{-1}\circ \beta \circ \gamma \circ \nabla)_{[x_{s}]}+(\gamma^{-1}\circ \beta \circ \gamma \circ \nabla)_{[x_{t}]}\bigg ]^{\mathrm{max}\{k_1,k_2\}-r} \circ (\gamma^{-1} \circ \beta \circ \gamma \circ \nabla)_{\otimes_{i=1}^{l}[x_{\sigma(i)}]}(\mathcal{S})\neq \mathcal{S}_0
\end{align}
for any $r\geq 1$, using the linearity of a multivariate expansion and the fact that at least one of the inequality must hold
\begin{align}
\mathrm{max}\{k_1,k_2\}-r\leq k_1 \quad \mathrm{or} \quad \mathrm{max}\{k_1,k_2\}-r\leq k_2.\nonumber
\end{align}
Thus, $\mathrm{max}\{k_1,k_2\}$ is the intensity of the Doppler effect induced on the concatenations of the expansions under the same source.
\end{proof}
\bigskip

One could ask whether an analog of this result exists for expansions with concatenated directions. While a general answer to this question may seem very baffling, we can obtain a variant under additional requirements that ensure the expansion in one direction does not wear off and interfere with the direction of the other. We make this assertion precise in the following proposition.

\begin{proposition}\label{mixed expansion dropler effect property}
Let $\mathcal{F}=\{\mathcal{S}_i\}_{i=1}^{\infty}$ be a collection of tuples of polynomials in the ring $\mathbb{R}[x_1,x_2,\ldots,x_n]$. Let the expansions 
$$
(\gamma^{-1}\circ\beta\circ\gamma\circ\nabla)_{[x_s]}(\mathcal{S})
$$ 
and 
$$
(\gamma^{-1}\circ\beta\circ\gamma\circ \nabla)_{[x_t]}(\mathcal{S})
$$ 
each admits a Doppler effect with intensities $k_1$ and $k_2$, respectively, from the source 
$$
(\gamma^{-1}\circ\beta\circ\gamma\circ\nabla)_{\otimes_{i=1}^{l}[x_{\sigma(i)}]}(\mathcal{S}).
$$ 
If the expansions 
$$
(\gamma^{-1}\circ\beta\circ\gamma\circ\nabla)_{[x_s]}(\mathcal{S})
$$ 
and 
$$
(\gamma^{-1}\circ\beta\circ\gamma\circ\nabla)_{[x_t]}(\mathcal{S})
$$ 
admits no Doppler effect from the sources
\begin{align}
(\gamma^{-1}\circ\beta\circ\gamma\circ\nabla)_{[x_t]}\circ (\gamma^{-1}\circ\beta\circ\gamma\circ\nabla)_{\otimes_{i=1}^{l}[x_{\sigma(i)}]}(\mathcal{S})\nonumber 
\end{align}
and 
\begin{align}
(\gamma^{-1}\circ\beta\circ\gamma\circ\nabla)_{[x_s]}\circ (\gamma^{-1}\circ\beta\circ\gamma\circ\nabla)_{\otimes_{i=1}^{l}[x_{\sigma(i)}]}(\mathcal{S}),\nonumber 
\end{align} 
respectively, then the mixed expansion 
\begin{align}
(\gamma^{-1}\circ\beta\circ\gamma\circ\nabla)_{[x_s]\otimes [x_t]}(\mathcal{S})\nonumber
\end{align}
also admits a Doppler effect from the same source with intensity $\mathrm{min}\{k_1,k_2\}$.
\end{proposition}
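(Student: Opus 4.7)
The plan is to exploit the commutativity of multivariate expansions (the unlabeled proposition following Proposition \ref{linearity}) to decouple the iterated mixed operator. For brevity write $T := (\gamma^{-1}\circ \beta \circ \gamma \circ \nabla)_{\otimes_{i=1}^{l}[x_{\sigma(i)}]}$ and denote the single-direction expansions by $E_s := (\gamma^{-1}\circ \beta \circ \gamma \circ \nabla)_{[x_s]}$ and $E_t := (\gamma^{-1}\circ \beta \circ \gamma \circ \nabla)_{[x_t]}$. Since $E_s$ and $E_t$ commute, an easy induction on $k$ yields the decoupling identity
\[
\bigl((\gamma^{-1}\circ \beta \circ \gamma \circ \nabla)_{[x_s]\otimes[x_t]}\bigr)^{k} = (E_t\circ E_s)^{k} = E_s^{k}\circ E_t^{k} = E_t^{k}\circ E_s^{k},
\]
which is the sole structural fact I will need.

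The first step is to verify that $\min\{k_1,k_2\}$ copies of the mixed expansion annihilate $T(\mathcal{S})$. Without loss of generality assume $k_1\leq k_2$. Using the decoupling identity and the dropler hypothesis $E_s^{k_1}\circ T(\mathcal{S})=\mathcal{S}_0$, I would write
\[
\bigl((\gamma^{-1}\circ \beta \circ \gamma \circ \nabla)_{[x_s]\otimes[x_t]}\bigr)^{k_1}\circ T(\mathcal{S}) = E_t^{k_1}\bigl(E_s^{k_1}\circ T(\mathcal{S})\bigr) = E_t^{k_1}(\mathcal{S}_0) = \mathcal{S}_0,
\]
where the last equality is the linearity of $E_t$ established in Proposition \ref{linearity}. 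Together with the companion bound $\min\{k_1,k_2\}<\Phi[(\gamma^{-1}\circ \beta \circ \gamma \circ \nabla)_{[x_s]\otimes[x_t]}(\mathcal{S})]$, which follows from Theorem \ref{subadditive totient} (the mixed totient is bounded below by each individual totient, and each $k_i$ is strictly below the corresponding $\Phi[E_s],\Phi[E_t]$ by hypothesis), this verifies the freeness half of Definition \ref{dropler effect}.

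The second step is to rule out any smaller intensity. Suppose for contradiction that $\bigl((\gamma^{-1}\circ \beta \circ \gamma \circ \nabla)_{[x_s]\otimes[x_t]}\bigr)^{k_1-r}\circ T(\mathcal{S}) = \mathcal{S}_0$ for some $r\geq 1$. Decoupling again, this becomes
\[
E_s^{k_1-r}\circ \bigl(E_t^{k_1-r}\circ T(\mathcal{S})\bigr) = \mathcal{S}_0,
\]
so $E_s$ achieves freeness in $k_1-r<\Phi[E_s]$ steps against the source $E_t^{k_1-r}\circ T$. I would then invoke the no-dropler hypothesis on $E_s$ relative to $E_t\circ T$, propagated to higher powers $E_t^{m}\circ T$ by iterating the commutativity identity one $E_t$ at a time (each prepended $E_t$ leaves unchanged the minimal number of $E_s$-applications needed, since otherwise the intermediate source $E_t^{m-1}\circ T$ would itself exhibit an $E_s$-dropler, contradicting the hypothesis by induction on $m$). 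Minimality follows, and the symmetric argument via the other no-dropler hypothesis covers the case $k_2\leq k_1$.

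The main obstacle is precisely the induction in Step 2: the no-dropler hypothesis is written for the source $E_t\circ T$ with a single application of $E_t$, but the decoupling forces us to deal with $E_t^{k_1-r}\circ T$. Lifting the single-step statement to all powers is the delicate part, and it relies essentially on the commutativity of $E_s$ and $E_t$ together with the linearity of every expansion map. Once this iterative lift is secured, the proof is a straightforward application of the decoupling identity at both the freeness and the minimality level.
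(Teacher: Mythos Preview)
Your approach is essentially the same as the paper's: both arguments decouple $(E_{s,t})^{k}=E_s^{k}\circ E_t^{k}$ via commutativity, use one of the dropler hypotheses to get annihilation at $\min\{k_1,k_2\}$ steps, and then invoke the no-dropler hypotheses for minimality. The paper's proof is in fact terser than yours on the minimality step: it simply asserts that $(E_{s,t})^{\min\{k_1,k_2\}-r}\circ T(\mathcal{S})\neq\mathcal{S}_0$ because $\min\{k_1,k_2\}-r<k_1,k_2$ and the two no-dropler assumptions hold, without spelling out the inductive lift from the single-$E_t$ source to $E_t^{m}\circ T$ that you rightly flag as the delicate point.

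One correction: your appeal to Theorem \ref{subadditive totient} is misdirected. That theorem gives an \emph{upper} bound on the mixed totient, not the lower bound ``the mixed totient is bounded below by each individual totient'' that you claim. If you want the inequality $\min\{k_1,k_2\}<\Phi[E_{s,t}(\mathcal{S})]$, the relevant statement is Proposition \ref{dropler effect totient}, which (under no-dropler hypotheses of the same flavor as here) gives $\Phi[E_{s,t}(\mathcal{S})]=\min\{\Phi[E_s(\mathcal{S})],\Phi[E_t(\mathcal{S})]\}$, and then $\min\{k_1,k_2\}<\min\{\Phi[E_s],\Phi[E_t]\}$ follows from $k_1<\Phi[E_s]$ and $k_2<\Phi[E_t]$. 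Note, however, that the paper's own proof does not explicitly verify this totient inequality at all.
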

\bigskip

\begin{proof}
Suppose that the expansions 
$$
(\gamma^{-1}\circ\beta\circ\gamma\circ\nabla)_{[x_s]}(\mathcal{S})
$$ 
and 
$$
(\gamma^{-1}\circ\beta\circ\gamma\circ\nabla)_{[x_t]}(\mathcal{S})
$$ 
each admits a Doppler effect from the same source with intensities $k_1$ and $k_2$, respectively. Let 
$$
(\gamma^{-1}\circ\beta\circ\gamma\circ\nabla)_{\otimes_{i=1}^{l}[x_{\sigma(i)}]}(\mathcal{S})
$$ 
be their source. By definition \ref{dropler effect}, we have
\begin{align}
(\gamma^{-1}\circ\beta\circ\gamma\circ\nabla)^{k_1}_{[x_s]}\circ (\gamma^{-1}\circ\beta\circ\gamma\circ\nabla)_{\otimes_{i=1}^{l}[x_{\sigma(i)}]}(\mathcal{S})=\mathcal{S}_0\label{intensity mixed 1}
\end{align}
and 
\begin{align}
(\gamma^{-1}\circ\beta\circ\gamma\circ\nabla)^{k_2}_{[x_t]}\circ (\gamma^{-1}\circ\beta\circ\gamma\circ\nabla)_{\otimes_{i=1}^{l}[x_{\sigma(i)}]}(\mathcal{S})=\mathcal{S}_0.\label{intensity mixed 2}
\end{align}
We apply $\mathrm{min}\{k_1,k_2\}$ copies of the mixed expansion operator $(\gamma^{-1}\circ\beta\circ\gamma\circ \nabla)_{[x_s]\otimes [x_t]}$ to the source $(\gamma^{-1}\circ\beta\circ\gamma\circ\nabla)_{\otimes_{i=1}^{l}[x_{\sigma(i)}]}(\mathcal{S})$
and get 
\begin{align}
(\gamma^{-1}\circ\beta\circ\gamma\circ\nabla)^{\mathrm{min}\{k_1,k_2\}}_{[x_s]\otimes [x_t]}\circ(\gamma^{-1}\circ\beta\circ\gamma\circ\nabla)_{\otimes_{i=1}^{l}[x_{\sigma(i)}]}(\mathcal{S})&=\mathcal{S}_0\nonumber
\end{align}
using the commutative property of the expansion operator and \eqref{intensity mixed 1} and \eqref{intensity mixed 2}. Again by commutativity, we get 
\begin{align}
(\gamma^{-1}\circ\beta\circ\gamma\circ\nabla)^{\mathrm{min}\{k_1,k_2\}-r}_{[x_s]\otimes [x_t]}\circ(\gamma^{-1}\circ\beta\circ\gamma\circ\nabla)_{\otimes_{i=1}^{l}[x_{\sigma(i)}]}(\mathcal{S})&\neq \mathcal{S}_0\nonumber
\end{align}
for any $\mathrm{min}\{k_1,k_2\}\geq r\geq 1$, since $\mathrm{min}\{k_1,k_2\}-r<k_1$ with $\mathrm{min}\{k_1,k_2\}-r<k_2$ and $k_1,k_2$ are the intensities of the Doppler effects and the expansions 
$$
(\gamma^{-1}\circ\beta\circ\gamma\circ\nabla)_{[x_s]}(\mathcal{S})
$$ 
and 
$$
(\gamma^{-1}\circ\beta\circ\gamma\circ\nabla)_{[x_t]}(\mathcal{S})
$$ 
admit no Doppler effect from the sources
\begin{align}
(\gamma^{-1}\circ\beta\circ\gamma\circ\nabla)_{[x_t]}\circ (\gamma^{-1}\circ\beta\circ\gamma\circ\nabla)_{\otimes_{i=1}^{l}[x_{\sigma(i)}]}(\mathcal{S})\nonumber 
\end{align}
and 
\begin{align}
(\gamma^{-1}\circ\beta\circ\gamma\circ\nabla)_{[x_s]}\circ (\gamma^{-1}\circ\beta\circ\gamma\circ\nabla)_{\otimes_{i=1}^{l}[x_{\sigma(i)}]}(\mathcal{S})\nonumber 
\end{align} 
respectively. This proves that 
$$
\mathrm{min}\{k_1,k_2\}=\mathcal{I}[(\gamma^{-1}\circ \beta \circ \gamma \circ \nabla)_{[x_s]\otimes [x_t]}(\mathcal{S})]
$$
is the intensity of the Doppler effect induced on the mixed expansion.
\end{proof}
\bigskip

\begin{proposition}\label{dropler effect totient}
Let $\mathcal{F}=\{\mathcal{S}_i\}_{i=1}^{\infty}$ be a collection of tuples of polynomials in the ring $\mathbb{R}[x_1,x_2,\ldots,x_n]$. Let 
$$
(\gamma^{-1}\circ\beta\circ\gamma\circ\nabla)_{[x_s]}(\mathcal{S})
$$ 
and 
$$
(\gamma^{-1}\circ\beta\circ\gamma\circ\nabla)_{[x_t]}(\mathcal{S})
$$ 
be expansions with totient $k_1$ and $k_2$, respectively. If the expansions 
$$
(\gamma^{-1}\circ\beta\circ\gamma\circ \nabla)_{[x_s]}(\mathcal{S})
$$ 
and 
$$
(\gamma^{-1}\circ\beta\circ\gamma\circ\nabla)_{[x_t]}(\mathcal{S})
$$ 
admits no Doppler effect from the source
\begin{align}
(\gamma^{-1}\circ\beta\circ\gamma\circ\nabla)^{u}_{[x_{t}]}(\mathcal{S})\nonumber 
\end{align}
for $u<k_2$ and 
\begin{align}
(\gamma^{-1}\circ\beta\circ\gamma\circ\nabla)^{v}_{[x_{s}]}(\mathcal{S})\nonumber 
\end{align}
for $v<k_1$, respectively, then 
\begin{align}
\Phi[(\gamma^{-1}\circ\beta\circ\gamma\circ \nabla)_{[x_{t}]\otimes [x_{s}]}(\mathcal{S})]=\mathrm{min}\{k_1,k_2\}.\nonumber
\end{align}
\end{proposition}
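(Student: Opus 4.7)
The plan is to establish the equality $\Phi[(\gamma^{-1}\circ \beta \circ \gamma \circ \nabla)_{[x_{t}]\otimes [x_{s}]}(\mathcal{S})]=\mathrm{min}\{k_1,k_2\}$ by proving the upper and lower bounds separately. Throughout I will abbreviate $\mathcal{E}_{[x_j]}:=(\gamma^{-1}\circ \beta \circ \gamma \circ \nabla)_{[x_j]}$ and $\mathcal{E}_{[x_t]\otimes [x_s]}:=\mathcal{E}_{[x_t]}\circ \mathcal{E}_{[x_s]}$. The workhorses are the commutativity of expansions proved earlier in Section 2, which iterated yields
\begin{align}
\mathcal{E}_{[x_t]\otimes [x_s]}^{m}(\mathcal{S})=\mathcal{E}_{[x_t]}^{m}\circ \mathcal{E}_{[x_s]}^{m}(\mathcal{S})\nonumber
\end{align}
for every $m\geq 1$, together with the linearity of each $\mathcal{E}_{[x_j]}$ from Proposition \ref{linearity}, which in particular yields $\mathcal{E}_{[x_j]}(\mathcal{S}_0)=\mathcal{S}_0$.

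For the upper bound I would set $k=\mathrm{min}\{k_1,k_2\}$ and, by symmetry, assume $k=k_1$. The totient hypothesis then gives $\mathcal{E}_{[x_s]}^{k_1}(\mathcal{S})=\mathcal{S}_0$, and iterated linearity of $\mathcal{E}_{[x_t]}$ propagates the zero outward, so $\mathcal{E}_{[x_t]\otimes [x_s]}^{k_1}(\mathcal{S})=\mathcal{E}_{[x_t]}^{k_1}(\mathcal{S}_0)=\mathcal{S}_0$.

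For the lower bound I would take any $m$ with $1\leq m<k$ and show $\mathcal{E}_{[x_t]\otimes [x_s]}^{m}(\mathcal{S})\neq \mathcal{S}_0$. Writing $\mathcal{S}':=\mathcal{E}_{[x_s]}^{m}(\mathcal{S})$, commutativity gives $\mathcal{E}_{[x_t]\otimes [x_s]}^{m}(\mathcal{S})=\mathcal{E}_{[x_t]}^{m}(\mathcal{S}')$. Since $m<k_1$, the tuple $\mathcal{S}'$ is a source of exactly the form to which the no-dropler-effect hypothesis applies, namely $\mathcal{E}_{[x_s]}^{v}(\mathcal{S})$ with $v<k_1$. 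Unpacking Definition \ref{dropler effect}, the statement that $\mathcal{E}_{[x_t]}$ admits no dropler effect from this source forbids $\mathcal{E}_{[x_t]}^{j}(\mathcal{S}')=\mathcal{S}_0$ for every $j<k_2$; applying this at $j=m<k\leq k_2$ delivers the desired non-vanishing.

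The principal obstacle is reading the ``admits no dropler effect'' hypothesis in the correct direction. Definition \ref{dropler effect} phrases the \emph{existence} of a dropler effect as the existence of some intensity $k<\Phi$ collapsing the composition, so its negation is a universal statement across all sub-threshold intensities, and that universality is precisely what rules out premature collapse of $\mathcal{E}_{[x_t]}^{m}(\mathcal{S}')$ at the intermediate step. Once this is pinned down, commutativity and linearity do the rest. Only one of the two asymmetric hypotheses is invoked directly, but commutativity means one could equally well peel off the $[x_t]$-iterations first and invoke the other; this gives a useful sanity check on the argument.
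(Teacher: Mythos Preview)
Your argument is correct and follows essentially the same approach as the paper's own proof: both factor $\mathcal{E}_{[x_t]\otimes[x_s]}^{m}$ via commutativity as $\mathcal{E}_{[x_t]}^{m}\circ\mathcal{E}_{[x_s]}^{m}$, obtain the upper bound because one factor already annihilates at $\min\{k_1,k_2\}$, and obtain the lower bound by reading the no-dropler-effect hypothesis as forbidding $\mathcal{E}_{[x_t]}^{j}\bigl(\mathcal{E}_{[x_s]}^{v}(\mathcal{S})\bigr)=\mathcal{S}_0$ for $j<k_2$, $v<k_1$. Your unpacking of the negation of Definition~\ref{dropler effect} is in fact more explicit than the paper's, which simply asserts the corresponding minimality without spelling out the universal statement.
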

\bigskip

\begin{proof}
Suppose that 
$$
(\gamma^{-1}\circ\beta\circ\gamma\circ\nabla)_{[x_s]}(\mathcal{S})
$$ 
and 
$$
(\gamma^{-1}\circ\beta\circ\gamma\circ\nabla)_{[x_t]}(\mathcal{S})
$$ 
are expansions with totient $k_1$ and $k_2$, respectively. It implies that 
\begin{align}
(\gamma^{-1}\circ\beta\circ\gamma\circ\nabla)^{k_1}_{[x_s]}(\mathcal{S})=\mathcal{S}_0\label{mixed totient 1}
\end{align}
and
\begin{align}
(\gamma^{-1}\circ\beta\circ\gamma\circ\nabla)^{k_2}_{[x_t]}(\mathcal{S})=\mathcal{S}_0\label{mixed totient 2}
\end{align}
where $k_1,k_2$ are the smallest such number. Using commutativity of the expansion operator with \eqref{mixed totient 1} and \eqref{mixed totient 2}, we get
\begin{align}
(\gamma^{-1}\circ\beta\circ\gamma\circ\nabla)^{\mathrm{min}\{k_1,k_2\}}_{[x_{t}]\otimes [x_{s}]}(\mathcal{S})&=(\gamma^{-1}\circ\beta\circ\gamma\circ\nabla)^{\mathrm{min}\{k_1,k_2\}}_{[x_{s}]}\circ (\gamma^{-1}\circ\beta\circ\gamma\circ\nabla)^{\mathrm{min}\{k_1,k_2\}}_{[x_{t}]}(\mathcal{S})\nonumber \\&=\mathcal{S}_0.\nonumber
\end{align}
Under the assumption that the expansions $(\gamma^{-1}\circ\beta\circ\gamma\circ\nabla)_{[x_s]}(\mathcal{S})$ and $(\gamma^{-1}\circ\beta\circ\gamma\circ\nabla)_{[x_t]}(\mathcal{S})$ admits no Doppler effect from the source
\begin{align}
(\gamma^{-1}\circ\beta\circ\gamma\circ\nabla)^{v}_{[x_{t}]}(\mathcal{S})\nonumber 
\end{align}
for $v <k_2$ and 
\begin{align}
(\gamma^{-1}\circ\beta\circ\gamma\circ\nabla)^{u}_{[x_{s}]}(\mathcal{S})\nonumber 
\end{align}
for $u<k_1$, respectively, we find that 
$$
\Phi[(\gamma^{-1}\circ\beta\circ\gamma\circ\nabla)_{[x_s]}(\mathcal{S})]
$$ 
and 
$$
\Phi[(\gamma^{-1}\circ\beta\circ\gamma\circ\nabla)_{[x_t]}(\mathcal{S})]
$$ 
are the smallest numbers, respectively, such that 
\begin{align}
(\gamma^{-1}\circ\beta\circ\gamma\circ\nabla)^{\Phi[(\gamma^{-1}\circ\beta\circ\gamma\circ\nabla)_{[x_s]}(\mathcal{S})]}_{[x_s]}\circ (\gamma^{-1}\circ\beta\circ\gamma\circ\nabla)^{v}_{[x_{t}]}(\mathcal{S})=\mathcal{S}_0\nonumber
\end{align}
and
\begin{align}
(\gamma^{-1}\circ\beta\circ\gamma\circ\nabla)^{\Phi[(\gamma^{-1}\circ\beta\circ\gamma\circ\nabla)_{[x_t]}(\mathcal{S})]}_{[x_t]}\circ (\gamma^{-1}\circ \beta \circ \gamma \circ \nabla)^{u}_{[x_{s}]}(\mathcal{S})=\mathcal{S}_0\nonumber
\end{align}
so that for any $\mathrm{min}\{k_1,k_2\}\geq c\geq 1$ then 
\begin{align}
(\gamma^{-1}\circ\beta\circ\gamma\circ\nabla)^{\mathrm{min}\{k_1,k_2\}-c}_{[x_{t}]\otimes [x_{s}]}(\mathcal{S})&\neq \mathcal{S}_0\nonumber
\end{align}
using the linearity of an expansion operator. This proves that 
$$
\mathrm{min}\{k_1,k_2\}=\Phi[(\gamma^{-1}\circ\beta\circ\gamma\circ\nabla)_{[x_{t}]\otimes [x_{s}]}(\mathcal{S})]
$$ 
is the totient of the mixed expansion.
\end{proof}
\bigskip

\section{Destabilization of an expansion}

In this section, we introduce the notion of \emph{destabilization} induced by an expansion.

\begin{definition}
Let $\mathcal{F}=\{\mathcal{S}_i\}_{i=1}^{\infty}$ be a collection of tuples of polynomials in the ring $\mathbb{R}[x_1,x_2,\ldots,x_n]$. We say that the expansion $(\gamma^{-1}\circ\beta\circ\gamma\circ\nabla)_{[x_{i}]}(\mathcal{S})$ is said to undergo natural \emph{destabilization} if $(\gamma^{-1}\circ\beta\circ\gamma\circ\nabla)^{0}_{[x_{i}](0)}(\mathcal{S})\neq \mathcal{S}_0$. We say that it undergoes \emph{destabilization} at the stage $k\geq 1$ if 
$$
(\gamma^{-1}\circ\beta\circ\gamma\circ\nabla)^{j}_{[x_{i}](0)}(\mathcal{S})=\mathcal{S}_0
$$ 
for all $1\leq j\leq k-1$ and 
$$
(\gamma^{-1}\circ\beta\circ\gamma\circ\nabla)^{k}_{[x_{i}](0)}(\mathcal{S})\neq \mathcal{S}_0.
$$ 
In other words, we say that the expansion $(\gamma^{-1}\circ\beta\circ\gamma\circ\nabla)_{[x_{i}]}(\mathcal{S})$ admits a destabilization at the stage $k\geq 1$. We say that it is \emph{strongly} destabilized if the vector
\begin{align}
\overrightarrow{O(\gamma^{-1}\circ\beta\circ\gamma\circ\nabla)^{k}_{[x_{i}](0)}(\mathcal{S})}\nonumber
\end{align}
has no zero entry.
\end{definition}
\bigskip

Here, we show that destabilization stage must be reached in an expansion. The following result confines this stage to a certain range.

\begin{proposition}
Let $\mathcal{F}=\{\mathcal{S}_i\}_{i=1}^{\infty}$ be a collection of tuples of polynomials in the ring $\mathbb{R}[x_1,x_2,\ldots,x_n]$. The stage of destabilization $k$ of the expansion $(\gamma^{-1}\circ\beta\circ\gamma\circ \nabla)_{[x_{i}]}(\mathcal{S})$ satisfies the inequality
\begin{align}
0\leq k<\Phi[(\gamma^{-1}\circ\beta\circ\gamma\circ\nabla)_{[x_{i}]}(\mathcal{S})].\nonumber
\end{align}
\end{proposition}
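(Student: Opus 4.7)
The plan is to establish the two inequalities separately, with the lower bound $0 \leq k$ being immediate from the definition (the stage of destabilization is indexed starting at $0$ for natural destabilization and at some positive integer $k\geq 1$ otherwise), and the strict upper bound $k < \Phi[(\gamma^{-1}\circ \beta \circ \gamma \circ \nabla)_{[x_{i}]}(\mathcal{S})]$ handled by a short contradiction argument built on Definition \ref{totient}.

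Set $\Phi := \Phi[(\gamma^{-1}\circ \beta \circ \gamma \circ \nabla)_{[x_{i}]}(\mathcal{S})]$ for brevity. I would argue the upper bound by contradiction: suppose instead that $k \geq \Phi$. By Definition \ref{totient} of the totient,
\begin{align}
(\gamma^{-1}\circ \beta \circ \gamma \circ \nabla)^{\Phi}_{[x_{i}]}(\mathcal{S}) = \mathcal{S}_0\nonumber
\end{align}
as a tuple of polynomials in $\mathbb{R}[x_1,\ldots,x_{i-1},x_{i+1},\ldots,x_n]$. Applying the expansion operator any further number of times to the null tuple still returns $\mathcal{S}_0$, because by Proposition \ref{linearity} each of $\nabla_{[x_i]}$, $\gamma$, and $\beta$ is linear and hence annihilates $\mathcal{S}_0$. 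Therefore for every $j \geq \Phi$ we have $(\gamma^{-1}\circ \beta \circ \gamma \circ \nabla)^{j}_{[x_{i}]}(\mathcal{S}) = \mathcal{S}_0$ as polynomial tuples, and in particular evaluation at $x_i = 0$ yields $(\gamma^{-1}\circ \beta \circ \gamma \circ \nabla)^{j}_{[x_{i}](0)}(\mathcal{S}) = \mathcal{S}_0$.

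Now specialize $j = k \geq \Phi$. Then $(\gamma^{-1}\circ \beta \circ \gamma \circ \nabla)^{k}_{[x_{i}](0)}(\mathcal{S}) = \mathcal{S}_0$. This directly contradicts the defining property of the destabilization stage, which requires $(\gamma^{-1}\circ \beta \circ \gamma \circ \nabla)^{k}_{[x_{i}](0)}(\mathcal{S}) \neq \mathcal{S}_0$, regardless of whether $k=0$ (natural destabilization) or $k \geq 1$ (destabilization at stage $k$). Hence the assumption $k \geq \Phi$ is untenable, and we conclude $k < \Phi$.

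There is no real obstacle to this argument; the entire content is bookkeeping between "the polynomial is identically zero" (totient condition) and "the polynomial evaluates to zero at $x_i=0$" (destabilization condition), with the one-line observation that vanishing identically implies vanishing at a point, together with the linearity already proved in Proposition \ref{linearity} to ensure the iterates past $\Phi$ remain $\mathcal{S}_0$. The only subtle point worth flagging explicitly in the write-up is that evaluation at $x_i = 0$ commutes with the zero polynomial, so the chain of equalities goes through cleanly.
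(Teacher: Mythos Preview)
Your proof is correct and follows essentially the same contradiction strategy as the paper, but you locate the contradiction at a different stage. You argue that at stage $k\geq \Phi$ the iterated expansion is already the identically zero tuple, so its evaluation at $x_i=0$ is $\mathcal{S}_0$, contradicting the defining requirement $(\gamma^{-1}\circ\beta\circ\gamma\circ\nabla)^k_{[x_i](0)}(\mathcal{S})\neq\mathcal{S}_0$. The paper instead looks one step earlier, at stage $\Phi-1$: if $k\geq\Phi$ then by the definition of destabilization all stages $1\leq j\leq k-1$ must evaluate to $\mathcal{S}_0$ at $x_i=0$, in particular $j=\Phi-1$; but the residue $(\gamma^{-1}\circ\beta\circ\gamma\circ\nabla)^{\Phi-1}_{[x_i]}(\mathcal{S})$ is nonzero and, being annihilated by one further $\partial/\partial x_i$, is constant in $x_i$, so its value at $x_i=0$ cannot be $\mathcal{S}_0$. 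Your route is slightly more direct since it avoids needing the observation that the residue has no $x_i$-dependence; the paper's route has the mild advantage of exhibiting explicitly which nonzero object is being forced to vanish.
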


\begin{proof}
If the expansion $(\gamma^{-1}\circ\beta\circ\gamma\circ\nabla)_{[x_{i}]}(\mathcal{S})$ admits a natural destabilization, then the stage $k=0$. Thus, we may assume that the expansion $(\gamma^{-1}\circ\beta\circ\gamma\circ\nabla)_{[x_{i}]}(\mathcal{S})$ does not admit a natural destabilization. We suppose to the contrary that the stage of destabilization of some expansion $(\gamma^{-1}\circ\beta\circ\gamma\circ\nabla)_{[x_{i}]}(\mathcal{S}_m)$ satisfies 
$$
k\geq \Phi[(\gamma^{-1}\circ\beta\circ\gamma\circ \nabla)_{[x_{i}]}(\mathcal{S}_m)]
$$ 
so that 
\begin{align}
(\gamma^{-1}\circ\beta\circ\gamma\circ\nabla)^{\Phi[(\gamma^{-1}\circ\beta\circ\gamma\circ\nabla)_{[x_{i}]}(\mathcal{S}_m)]-1}_{[x_{i}](0)}(\mathcal{S}_m)=\mathcal{S}_0. \nonumber
\end{align}
This is a contradiction, since the expansion 
\begin{align}
(\gamma^{-1}\circ\beta\circ\gamma\circ\nabla)^{\Phi[(\gamma^{-1}\circ\beta\circ\gamma\circ\nabla)_{[x_{i}]}(\mathcal{S}_m)]-1}_{[x_{i}]}(\mathcal{S}_m)\nonumber
\end{align}
is the residue of the expansion $(\gamma^{-1}\circ\beta\circ\gamma\circ\nabla)_{[x_{i}]}(\mathcal{S}_m)$ and thus has no direction of form $[x_i]$. 
\end{proof}
\bigskip

\begin{theorem}\label{stage strong destabilize}
Let $\mathcal{F}=\{\mathcal{S}_i\}_{i=1}^{\infty}$ be a collection of tuples of polynomials in the ring $\mathbb{R}[x_1,x_2,\ldots,x_n]$. For all directions $[x_j]$ with $1\leq j\leq n$ every expansion $(\gamma^{-1}\circ\beta\circ\gamma\circ\nabla)_{[x_{j}]}(\mathcal{S})$ is strongly destabilized at the stage 
$$
\Phi[(\gamma^{-1}\circ\beta\circ\gamma\circ\nabla)_{[x_{j}]}(\mathcal{S})]-1.
$$
\end{theorem}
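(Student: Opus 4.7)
The plan is to exploit the defining property of the totient $\Phi := \Phi[(\gamma^{-1}\circ \beta \circ \gamma \circ \nabla)_{[x_{j}]}(\mathcal{S})]$, namely that $\Phi$ is the minimal number of applications of the operator $T_j := \gamma^{-1}\circ \beta \circ \gamma \circ \nabla_{[x_j]}$ that collapses $\mathcal{S}$ onto $\mathcal{S}_0$, in order to extract enough structural information about the residue $R := T_j^{\Phi-1}(\mathcal{S})$. I will first show that $R$ has no $x_j$-dependence, so that evaluation at $x_j=0$ leaves $R$ unchanged, and then argue that each coordinate of $R$ is a nonzero polynomial, which is strong destabilization at stage $\Phi-1$.

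For the first step, write $R=(h_1,\ldots,h_n)$. The identity $T_j(R)=\mathcal{S}_0$ unwinds to $\beta\bigl(\partial h_1/\partial x_j,\ldots,\partial h_n/\partial x_j\bigr)^{T}=0$. The matrix representing $\beta$ is $J_n-I_n$, whose spectrum consists of $n-1$ (simple) and $-1$ (with multiplicity $n-1$), so $\beta$ is invertible for every $n\geq 2$. Consequently $\partial h_i/\partial x_j\equiv 0$ for every $i$, and each $h_i$ lies in $\mathbb{R}[x_1,\ldots,x_{j-1},x_{j+1},\ldots,x_n]$. In particular $R|_{x_j=0}=R\neq\mathcal{S}_0$, which already witnesses destabilization at stage $\Phi-1$; the same invertibility trick, applied backward along the chain $T_j^{\Phi-2}(\mathcal{S}),\ldots,T_j(\mathcal{S})$, should force every intermediate iterate to vanish upon setting $x_j=0$, so that $\Phi-1$ is really the first stage at which destabilization occurs.

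For the strong version, I would proceed by contradiction. Suppose $h_k=0$ for some index $k$, set $P=(p_1,\ldots,p_n):=T_j^{\Phi-2}(\mathcal{S})$, $q_m:=\partial p_m/\partial x_j$, and $Q:=\sum_m q_m$. Unpacking one level gives $h_i=Q-q_i$, so $h_k=0$ forces $q_k=Q$, which together with $T_j(R)=\mathcal{S}_0$ yields a tightly constrained linear system on the $q_m$. Combining this with $T_j(P)=R$ and pushing the resulting identities backward to $T_j^{\Phi-3}(\mathcal{S}),T_j^{\Phi-4}(\mathcal{S}),\ldots$ via repeated invocations of the invertibility of $J_n-I_n$, I would aim to eventually force $R=\mathcal{S}_0$, contradicting the minimality of $\Phi$.

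The principal obstacle lies at precisely this last cascade: a single vanishing coordinate produces an under-determined relation among the $q_m$, so closing the backward induction will require careful bookkeeping on how $\beta$ couples the coordinates at each stage, very likely combined with an induction on $\Phi$. If the induction refuses to close in the degenerate case $n=2$ (where $\beta$ reduces to a coordinate swap and the relevant cancellations are very tight), a mild non-degeneracy hypothesis on $\mathcal{S}$ may need to be added; otherwise the full invertibility argument should deliver strong destabilization in the generality claimed.
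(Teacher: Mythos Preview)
Your first step is correct and is in fact cleaner than anything in the paper: inverting $\beta = J_n - I_n$ against $T_j(R)=\mathcal{S}_0$ to conclude that the residue $R = T_j^{\Phi-1}(\mathcal{S})$ is independent of $x_j$, hence $R|_{x_j=0} = R \neq \mathcal{S}_0$, is the right way to see ordinary destabilization at stage $\Phi-1$. Your aside about propagating the invertibility backward to force all earlier iterates to vanish at $x_j=0$ should be dropped: the paper is only claiming that the residue vector has no zero entry, not that $\Phi-1$ is the \emph{first} stage of destabilization, and in any case the backward claim is false in general.

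For the strong part the paper's argument is much shorter than yours and runs as follows: if $R|_{x_j=0}$ had a zero coordinate, then $R$ would ``contain the direction $[x_j]$'', whence $T_j^{\Phi}(\mathcal{S})\neq\mathcal{S}_0$, contradicting the definition of the totient. This skips precisely the gap you isolate: a coordinate $h_k$ of $R$ could be \emph{identically} zero rather than merely vanishing at $x_j=0$, and then no $x_j$-dependence is forced and the contradiction evaporates. So the paper's proof and your proposed cascade founder at the same place.

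Your instinct that a non-degeneracy hypothesis is needed is correct, and the obstruction is genuine rather than merely technical. For $n=2$ and $\mathcal{S}=(x_1+x_2,\,x_1^2)$ one computes $T_1(\mathcal{S})=(2x_1,1)$, $T_1^2(\mathcal{S})=(0,2)$, $T_1^3(\mathcal{S})=\mathcal{S}_0$, so $\Phi=3$ and the residue $(0,2)$ already has a zero entry; strong destabilization fails outright. Neither your backward induction nor the paper's one-line implication can close without an additional assumption on $\mathcal{S}$ that rules out such examples.
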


\begin{proof}
Let $(\gamma^{-1}\circ\beta\circ\gamma\circ\nabla)_{[x_{j}]}(\mathcal{S})$ be any expansion in an arbitrary direction $[x_j]$. By definition \ref{totient} the expansion 
\begin{align}
(\gamma^{-1}\circ\beta\circ\gamma\circ\nabla)^{\Phi[(\gamma^{-1}\circ\beta\circ\gamma\circ\nabla)_{[x_{j}]}(\mathcal{S})]-1}_{[x_{j}]}(\mathcal{S})\nonumber
\end{align}
is the residue of the expansion $(\gamma^{-1}\circ\beta\circ\gamma\circ\nabla)_{[x_{j}]}(\mathcal{S})$. We suppose to the contrary that the vector 
\begin{align}
\overrightarrow{O(\gamma^{-1}\circ\beta\circ\gamma\circ\nabla)^{\Phi[(\gamma^{-1}\circ\beta\circ\gamma\circ\nabla)_{[x_{j}]}(\mathcal{S})]-1}_{[x_{j}](0)}(\mathcal{S})}\nonumber
\end{align}
has at least a zero entry. It follows that the expansion $(\gamma^{-1}\circ\beta\circ\gamma\circ\nabla)^{\Phi[(\gamma^{-1}\circ\beta\circ\gamma\circ\nabla)_{[x_{j}]}(\mathcal{S})]-1}_{[x_{j}]}(\mathcal{S})$ contains the direction $[x_j]$ and hence
\begin{align}
(\gamma^{-1}\circ\beta\circ\gamma\circ\nabla)^{\Phi[(\gamma^{-1}\circ\beta\circ\gamma\circ\nabla)_{[x_{j}]}(\mathcal{S})]}_{[x_{j}]}(\mathcal{S})\neq \mathcal{S}_0\nonumber
\end{align}
which contradicts the fact that 
$$
\Phi[(\gamma^{-1}\circ\beta\circ\gamma\circ\nabla)_{[x_{j}]}(\mathcal{S})]
$$ 
is the totient of the expansion $(\gamma^{-1}\circ\beta\circ\gamma\circ\nabla)_{[x_{j}]}(\mathcal{S})$. This completes the proof.
\end{proof}
\bigskip

Here, we relate the notion of the Doppler effect induced by a mixed expansion on expansions in a specific direction to the notion of destabilization. We show that these two notions are somewhat related.
\bigskip

\section{Diagonalization and sub-expansion of an expansion}

In this section, we introduce the notion of \emph{diagonalization} of an expansion and sub-expansion of an expansion.

\begin{definition}\label{diagonalization}
Let $\mathcal{F}=\{\mathcal{S}_i\}_{i=1}^{\infty}$ be a collection of tuples of polynomials in the ring $\mathbb{R}[x_1,x_2,\ldots,x_n]$. We say that the mixed expansion $(\gamma^{-1}\circ\beta\circ\gamma\circ\nabla)_{\otimes_{i=1}^{l}[x_{\sigma(i)}]}(\mathcal{S})$ is \emph{diagonalizable} in the direction $[x_j]$ ($1\leq j\leq n$) at the spot $\mathcal{S}_r\in \mathcal{F}$ of order $k$ with $\mathcal{S}-\mathcal{S}_r$ not a tuple of $\mathbb{R}$ if 
\begin{align}
(\gamma^{-1}\circ\beta\circ\gamma\circ\nabla)_{\otimes_{i=1}^{l}[x_{\sigma(i)}]}(\mathcal{S})=(\gamma^{-1}\circ\beta\circ\gamma\circ \nabla)^{k}_{[x_{j}]}(\mathcal{S}_r).\nonumber
\end{align}
We call the expansion 
$$
(\gamma^{-1}\circ\beta\circ\gamma\circ\nabla)_{[x_{j}]}(\mathcal{S}_r)
$$ 
the \emph{diagonal} of the mixed expansion $(\gamma^{-1}\circ\beta\circ\gamma\circ\nabla)_{\otimes_{i=1}^{l}[x_{\sigma(i)}]}(\mathcal{S})$ of \emph{order} $k\geq 1$. We denote by $\mathcal{O}[(\gamma^{-1}\circ\beta\circ\gamma\circ\nabla)_{[x_{j}]}(\mathcal{S}_r)]$ the order of the diagonal.
\end{definition}
\bigskip

\begin{proposition}\label{diagonalsum}
Let $\mathcal{F}:=\{\mathcal{S}_i\}_{i=1}^{\infty}$ be a collection of tuples of polynomials in the ring $\mathbb{R}[x_1,x_2,\ldots,x_n]$. Assume that the expansions $(\gamma^{-1}\circ\beta\circ\gamma\circ\nabla)_{\otimes_{i=1}^{l}[x_{\sigma(i)}]}(\mathcal{S}_t)$ and $(\gamma^{-1}\circ\beta\circ\gamma\circ\nabla)_{\otimes_{i=1}^{l}[x_{\sigma(i)}]}(\mathcal{S}_r)$ are both diagonalizable in the fixed direction $[x_i]$ at the spot $\mathcal{S}_a$ with order $u$ and $\mathcal{S}_k$ with order $v$, respectively. If $u>v$ (resp. $v>u$) then
\begin{align}
(\gamma^{-1}\circ\beta\circ\gamma\circ\nabla)_{\otimes_{i=1}^{l}[x_{\sigma(i)}]}(\mathcal{S}_t+\mathcal{S}_r)\nonumber 
\end{align}
is also diagonalizable at the spot $(\gamma^{-1}\circ\beta\circ\gamma\circ\nabla)^{u-v}_{[x_i]}(\mathcal{S}_a)+\mathcal{S}_k$ with order $v$, respectively
\begin{align}
(\gamma^{-1}\circ\beta\circ\gamma\circ\nabla)^{v-u}_{[x_i]}(\mathcal{S}_k)+\mathcal{S}_a\nonumber
\end{align}
with order $u$.
\end{proposition}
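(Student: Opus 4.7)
The plan is to unpack both diagonalization hypotheses, combine them via the linearity of expansion (Proposition \ref{linearity}), and then refactor the common $v$ copies of the one-direction expansion operator to exhibit the sum as a single diagonal expansion.

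By Definition \ref{diagonalization}, the two hypotheses read
\begin{align*}
(\gamma^{-1} \circ \beta \circ \gamma \circ \nabla)_{\otimes_{i=1}^{l}[x_{\sigma(i)}]}(\mathcal{S}_t) &= (\gamma^{-1}\circ \beta \circ \gamma \circ \nabla)^{u}_{[x_i]}(\mathcal{S}_a),\\
(\gamma^{-1} \circ \beta \circ \gamma \circ \nabla)_{\otimes_{i=1}^{l}[x_{\sigma(i)}]}(\mathcal{S}_r) &= (\gamma^{-1}\circ \beta \circ \gamma \circ \nabla)^{v}_{[x_i]}(\mathcal{S}_k).
\end{align*}
First I would add these two identities and invoke the linearity of the mixed expansion on the left-hand side (which follows from Proposition \ref{linearity} since compositions of linear operators are linear) to obtain
\begin{align*}
(\gamma^{-1} \circ \beta \circ \gamma \circ \nabla)_{\otimes_{i=1}^{l}[x_{\sigma(i)}]}(\mathcal{S}_t+\mathcal{S}_r) = (\gamma^{-1}\circ \beta \circ \gamma \circ \nabla)^{u}_{[x_i]}(\mathcal{S}_a) + (\gamma^{-1}\circ \beta \circ \gamma \circ \nabla)^{v}_{[x_i]}(\mathcal{S}_k).
\end{align*}

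Next, assuming $u>v$, I would split the exponent on the first summand as $u = v + (u-v)$, using the semigroup property $(\gamma^{-1}\circ \beta \circ \gamma \circ \nabla)^{u}_{[x_i]} = (\gamma^{-1}\circ \beta \circ \gamma \circ \nabla)^{v}_{[x_i]}\circ (\gamma^{-1}\circ \beta \circ \gamma \circ \nabla)^{u-v}_{[x_i]}$ that follows from the very definition of iterated composition. Applying the linearity of $(\gamma^{-1}\circ \beta \circ \gamma \circ \nabla)^{v}_{[x_i]}$ once more in reverse then collapses the right-hand side to
\begin{align*}
(\gamma^{-1}\circ \beta \circ \gamma \circ \nabla)^{v}_{[x_i]}\Bigl[(\gamma^{-1}\circ \beta \circ \gamma \circ \nabla)^{u-v}_{[x_i]}(\mathcal{S}_a)+\mathcal{S}_k\Bigr],
\end{align*}
which is exactly the required diagonal presentation at the spot $(\gamma^{-1}\circ \beta \circ \gamma \circ \nabla)^{u-v}_{[x_i]}(\mathcal{S}_a)+\mathcal{S}_k$ with order $v$. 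The symmetric case $v>u$ is handled by the same manipulation with the roles of the two expansions interchanged.

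The routine parts are the two invocations of linearity and the exponent split; the real subtlety is the side condition in Definition \ref{diagonalization} that $(\mathcal{S}_t+\mathcal{S}_r) - \left[(\gamma^{-1}\circ \beta \circ \gamma \circ \nabla)^{u-v}_{[x_i]}(\mathcal{S}_a)+\mathcal{S}_k\right]$ must not be a tuple of real numbers. This does not follow formally from the hypotheses $\mathcal{S}_t-\mathcal{S}_a$ and $\mathcal{S}_r-\mathcal{S}_k$ being non-constant alone, so I expect this to be the main obstacle; the cleanest fix is to observe that any putative equality would force $\mathcal{S}_t - \mathcal{S}_a = (\gamma^{-1}\circ \beta \circ \gamma \circ \nabla)^{u-v}_{[x_i]}(\mathcal{S}_a) - \mathcal{S}_a + (\text{constant tuple})$, contradicting the fact that the right-hand side, once $u>v\geq 1$, has degree strictly less than $\mathcal{S}_a$ in $x_i$ while $\mathcal{S}_t-\mathcal{S}_a$ is assumed non-constant by hypothesis.
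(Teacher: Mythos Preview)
Your argument is correct and follows essentially the same route as the paper: unpack the two diagonalization identities, add them using linearity of the mixed expansion, then factor out $v$ copies of the single-direction operator to exhibit the new spot. In fact you go slightly beyond the paper by worrying about the ``not a tuple of $\mathbb{R}$'' side condition in Definition~\ref{diagonalization}; the paper's own proof simply ignores this point and stops once the spot $\mathcal{S}_f=(\gamma^{-1}\circ \beta \circ \gamma \circ \nabla)^{u-v}_{[x_i]}(\mathcal{S}_a)+\mathcal{S}_k$ has been identified.
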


\begin{proof}
Suppose that the expansions $(\gamma^{-1}\circ\beta\circ\gamma\circ\nabla)_{\otimes_{i=1}^{l}[x_{\sigma(i)}]}(\mathcal{S}_t)$ and $(\gamma^{-1}\circ\beta\circ\gamma\circ\nabla)_{\otimes_{i=1}^{l}[x_{\sigma(i)}]}(\mathcal{S}_r)$ are both diagonalizable in the fixed direction $[x_i]$ at the spots $\mathcal{S}_a$ of order $u$ and $\mathcal{S}_k$ of order $v$, respectively. By definition \ref{diagonalization}, we get 
\begin{align}
(\gamma^{-1}\circ\beta\circ\gamma\circ\nabla)_{\otimes_{i=1}^{l}[x_{\sigma(i)}]}(\mathcal{S}_t)=(\gamma^{-1}\circ\beta\circ\gamma\circ\nabla)^{u}_{[x_i]}(\mathcal{S}_a)\nonumber
\end{align}
and 
\begin{align}
(\gamma^{-1}\circ\beta\circ\gamma\circ\nabla)_{\otimes_{i=1}^{l}[x_{\sigma(i)}]}(\mathcal{S}_r)=(\gamma^{-1}\circ\beta\circ\gamma\circ\nabla)^{v}_{[x_i]}(\mathcal{S}_k).\nonumber
\end{align}
Concatenating the two mixed expansion and using the linearity of an expansion with $u>v$, we have
\begin{align}
(\gamma^{-1}\circ\beta\circ\gamma\circ\nabla)_{\otimes_{i=1}^{l}[x_{\sigma(i)}]}(\mathcal{S}_t+\mathcal{S}_r)&=(\gamma^{-1}\circ\beta\circ\gamma\circ\nabla)_{\otimes_{i=1}^{l}[x_{\sigma(i)}]}(\mathcal{S}_t)+(\gamma^{-1}\circ\beta\circ \nonumber \\& \gamma\circ\nabla)_{\otimes_{i=1}^{l}[x_{\sigma(i)}]}(\mathcal{S}_r)\nonumber \\&=(\gamma^{-1}\circ\beta\circ\gamma\circ\nabla)^{u}_{[x_i]}(\mathcal{S}_a)+(\gamma^{-1}\circ\beta\circ\gamma\circ\nabla)^{v}_{[x_i]}(\mathcal{S}_k).\nonumber
\end{align}
Under the assumption $u>v$, we deduce
\begin{align}
(\gamma^{-1}\circ\beta\circ\gamma\circ\nabla)_{\otimes_{i=1}^{l}[x_{\sigma(i)}]}(\mathcal{S}_t+\mathcal{S}_r)&=(\gamma^{-1}\circ\beta\circ\gamma\circ\nabla)^{v}_{[x_i]}\bigg((\gamma^{-1}\circ\beta\circ\gamma\circ\nabla)^{u-v}_{[x_i]}(\mathcal{S}_a)+\mathcal{S}_k\bigg).\nonumber
\end{align}
We establish the claim by choosing the spot
\begin{align}
\mathcal{S}_f=(\gamma^{-1}\circ\beta\circ\gamma\circ\nabla)^{u-v}_{[x_i]}(\mathcal{S}_a)+\mathcal{S}_k.\nonumber
\end{align}
\end{proof}
\bigskip

The notion of the sub-expansion of an expansion in the \emph{single variable theory} extends to this framework.

\begin{definition}\label{subexpansion}
Let $\mathcal{F}=\{\mathcal{S}_i\}_{i=1}^{\infty}$ be a collection of tuples of polynomials in the ring $\mathbb{R}[x_1,x_2,\ldots,x_n]$. We say that the expansion $(\gamma^{-1}\circ\beta\circ\gamma\circ\nabla)^k_{[x_{j}]}(\mathcal{S}_z)$ is a \emph{sub-expansion} of the expansion $(\gamma^{-1}\circ\beta\circ\gamma\circ\nabla)^l_{[x_{j}]}(\mathcal{S}_t)$, denoted $(\gamma^{-1}\circ\beta\circ\gamma\circ\nabla)^k_{[x_{j}]}(\mathcal{S}_z)\leq (\gamma^{-1}\circ\beta\circ\gamma\circ\nabla)^l_{[x_{j}]}(\mathcal{S}_t)$ if there exist some $0\leq m$ such that 
\begin{align}
(\gamma^{-1}\circ\beta\circ\gamma\circ\nabla)^k_{[x_{j}]}(\mathcal{S}_z)=(\gamma^{-1}\circ\beta\circ\gamma\circ\nabla)^{k+m}_{[x_{j}]}(\mathcal{S}_t).\nonumber
\end{align}
We say that the sub-expansion is proper if $m+k=l$. We denote this proper sub-expansion by 
$$
(\gamma^{-1}\circ\beta\circ\gamma\circ\nabla)^k_{[x_{j}]}(\mathcal{S}_z)<(\gamma^{-1}\circ\beta\circ\gamma\circ\nabla)^l_{[x_{j}]}(\mathcal{S}_t).
$$ 
On the other hand, we say that the sub-expansion is \emph{ancient} if $m+k>l$.
\end{definition}
\bigskip

Here, we relate the notion of the \emph{sub-expansion} of an expansion to the notion of \emph{diagonalization} of a mixed expansion.

\begin{proposition}\label{diagsubexpansion1}
Let $\mathcal{F}=\{\mathcal{S}_i\}_{i=1}^{\infty}$ be a collection of tuples of polynomials in the ring $\mathbb{R}[x_1,x_2,\ldots,x_n]$. If the mixed expansion $(\gamma^{-1}\circ\beta\circ\gamma\circ\nabla)_{\otimes_{i=1}^{l}[x_{\sigma(i)}]}(\mathcal{S})$ is \emph{diagonalizable} in the direction $[x_j]$ ($1\leq j\leq n$) at the \emph{spots} $\mathcal{S}_{t},\mathcal{S}_{r}\in \mathcal{F}$ such that $\mathcal{S}_t-\mathcal{S}_r$ is not a tuple of $\mathbb{R}$ of orders $k_t$ and $k_r$, respectively, and $k_r>k_t$ then
\begin{align}
(\gamma^{-1}\circ\beta\circ\gamma\circ\nabla)^{k_t}_{[x_{j}]}(\mathcal{S}_t)\leq (\gamma^{-1}\circ\beta\circ\gamma\circ\nabla)^{k_r}_{[x_{j}]}(\mathcal{S}_r).\nonumber
\end{align}
\end{proposition}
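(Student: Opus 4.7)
The plan is to unpack the two diagonalization hypotheses, equate them, and read off the required witness $m$ directly from Definition \ref{subexpansion}. Unlike most of the other propositions in this section, no induction on orders or linearity argument is needed — the statement turns out to be a tautological consequence of the definitions, once we observe that both diagonalizations refer to the same underlying mixed expansion.

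Concretely, first I would invoke Definition \ref{diagonalization} applied at the spot $\mathcal{S}_t$ with order $k_t$ to write
\begin{align}
(\gamma^{-1}\circ \beta \circ \gamma \circ \nabla)_{\otimes_{i=1}^{l}[x_{\sigma(i)}]}(\mathcal{S})=(\gamma^{-1}\circ \beta \circ \gamma \circ \nabla)^{k_t}_{[x_{j}]}(\mathcal{S}_t),\nonumber
\end{align}
and analogously at the spot $\mathcal{S}_r$ with order $k_r$ to write
\begin{align}
(\gamma^{-1}\circ \beta \circ \gamma \circ \nabla)_{\otimes_{i=1}^{l}[x_{\sigma(i)}]}(\mathcal{S})=(\gamma^{-1}\circ \beta \circ \gamma \circ \nabla)^{k_r}_{[x_{j}]}(\mathcal{S}_r).\nonumber
\end{align}
The left-hand sides coincide, so the right-hand sides must agree, yielding the key identity $(\gamma^{-1}\circ \beta \circ \gamma \circ \nabla)^{k_t}_{[x_{j}]}(\mathcal{S}_t)=(\gamma^{-1}\circ \beta \circ \gamma \circ \nabla)^{k_r}_{[x_{j}]}(\mathcal{S}_r)$.

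Next I would apply Definition \ref{subexpansion} with the roles $k=k_t$, $\mathcal{S}_z=\mathcal{S}_t$, $l=k_r$, and the base spot $\mathcal{S}_r$, by choosing the witness $m:=k_r-k_t$. Since we are given $k_r>k_t$, this $m$ is a nonnegative (in fact positive) integer and $k_t+m=k_r$, so the identity above is exactly the equation required in Definition \ref{subexpansion}. Hence $(\gamma^{-1}\circ \beta \circ \gamma \circ \nabla)^{k_t}_{[x_{j}]}(\mathcal{S}_t)\leq (\gamma^{-1}\circ \beta \circ \gamma \circ \nabla)^{k_r}_{[x_{j}]}(\mathcal{S}_r)$, as claimed.

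The only point that requires minor care is the nondegeneracy clause that $\mathcal{S}_t-\mathcal{S}_r$ is not a tuple over $\mathbb{R}$; this guarantees that the two diagonalizations really are at distinct spots in $\mathcal{F}$, so that the equality of iterates is nontrivial and the sub-expansion is genuine rather than vacuous. Once that consistency check is noted, no further obstacle arises; in fact the derivation also shows $k_t+m=k_r=l$, so the sub-expansion is automatically proper in the sense of Definition \ref{subexpansion}, although the proposition only records the weaker conclusion $\leq$.
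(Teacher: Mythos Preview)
Your proof is correct and follows essentially the same route as the paper: write out the two diagonalizations, equate them, and extract the witness $m=k_r-k_t$ for Definition \ref{subexpansion}. Your version is in fact slightly more explicit than the paper's, and your closing remark that the resulting sub-expansion is automatically proper is an accurate bonus observation.
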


\begin{proof}
Let $\mathcal{F}=\{\mathcal{S}_i\}_{i=1}^{\infty}$ be a collection of tuples of polynomials in the ring $\mathbb{R}[x_1,x_2,\ldots,x_n]$ and the mixed expansion $(\gamma^{-1}\circ\beta\circ\gamma\circ\nabla)_{\otimes_{i=1}^{l}[x_{\sigma(i)}]}(\mathcal{S})$ be diagonalizable in the direction $[x_j]$ ($1\leq j\leq n$) at the spots $\mathcal{S}_{t},\mathcal{S}_{r}\in \mathcal{F}$ such that $\mathcal{S}_t-\mathcal{S}_r$ is not a tuple of $\mathbb{R}$ with orders $k_t$ and $k_r$ so that 
\begin{align}
(\gamma^{-1}\circ\beta\circ\gamma\circ\nabla)_{\otimes_{i=1}^{l}[x_{\sigma(i)}]}(\mathcal{S})=(\gamma^{-1}\circ\beta\circ\gamma\circ\nabla)^{k_t}_{[x_{j}]}(\mathcal{S}_t)\label{disub1}
\end{align}
and 
\begin{align}
(\gamma^{-1}\circ\beta\circ\gamma\circ\nabla)_{\otimes_{i=1}^{l}[x_{\sigma(i)}]}(\mathcal{S})=(\gamma^{-1}\circ\beta\circ\gamma\circ\nabla)^{k_r}_{[x_{j}]}(\mathcal{S}_r).\label{disub2}
\end{align}
Combining \eqref{disub1} and \eqref{disub2}, we deduce
\begin{align}
(\gamma^{-1}\circ\beta\circ\gamma\circ\nabla)^{k_t}_{[x_{j}]}(\mathcal{S}_t)=(\gamma^{-1}\circ\beta\circ\gamma\circ\nabla)^{k_r}_{[x_{j}]}(\mathcal{S}_r)\nonumber
\end{align}
since $\mathcal{S}_t-\mathcal{S}_r$ is not a tuple of $\mathbb{R}$. Since $k_r>k_t$, it follows that there exist some $m\geq 1$ such that 
\begin{align}
(\gamma^{-1}\circ\beta\circ\gamma\circ\nabla)^{k_t+m}_{[x_{j}]}(\mathcal{S}_r)=(\gamma^{-1}\circ\beta\circ\gamma\circ\nabla)^{k_t}_{[x_{j}]}(\mathcal{S}_t)\nonumber
\end{align}
so that 
\begin{align}
(\gamma^{-1}\circ\beta\circ\gamma\circ\nabla)^{k_t}_{[x_{j}]}(\mathcal{S}_t)\leq (\gamma^{-1}\circ\beta\circ\gamma\circ\nabla)^{k_r}_{[x_{j}]}(\mathcal{S}_r).\nonumber
\end{align}
\end{proof}
\bigskip

The converse of Proposition \ref{diagsubexpansion1} may not necessarily hold because the sub-expansion may be ancient. We can establish the converse when we allow the sub-expansion to be a proper sub-expansion. 

\begin{proposition}\label{diagsubexpansion2}
Let $\mathcal{F}=\{\mathcal{S}_i\}_{i=1}^{\infty}$ be a collection of tuples of polynomials in the ring $\mathbb{R}[x_1,x_2,\ldots,x_n]$. If the expansion $(\gamma^{-1}\circ\beta\circ\gamma\circ\nabla)_{[x_{i}]}(\mathcal{S}_t)$ is a diagonal of order $k$ of the mixed expansion $(\gamma^{-1}\circ\beta\circ\gamma\circ\nabla)_{\otimes_{i=1}^{l}[x_{\sigma(i)}]}(\mathcal{S})$ and 
\begin{align}
(\gamma^{-1}\circ\beta\circ\gamma\circ\nabla)^l_{[x_{i}]}(\mathcal{S}_r)<(\gamma^{-1}\circ\beta\circ\gamma\circ\nabla)^k_{[x_{i}]}(\mathcal{S}_t) \nonumber
\end{align}
then the expansion $(\gamma^{-1}\circ\beta\circ\gamma\circ\nabla)_{[x_{i}]}(\mathcal{S}_r)$ is also a diagonal of order $l$ of the same mixed expansion.
\end{proposition}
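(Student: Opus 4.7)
The proof reduces essentially to unpacking the two definitions (Definition \ref{diagonalization} and Definition \ref{subexpansion}) and chaining one equality through the other. The plan is to start with the diagonalization hypothesis
\begin{align}
(\gamma^{-1} \circ \beta \circ \gamma \circ \nabla)_{\otimes_{i=1}^{l}[x_{\sigma(i)}]}(\mathcal{S}) = (\gamma^{-1}\circ \beta \circ \gamma \circ \nabla)^{k}_{[x_{i}]}(\mathcal{S}_t),\nonumber
\end{align}
then invoke the proper sub-expansion hypothesis
\begin{align}
(\gamma^{-1} \circ \beta \circ \gamma \circ \nabla)^l_{[x_{i}]}(\mathcal{S}_r) < (\gamma^{-1} \circ \beta \circ \gamma \circ \nabla)^k_{[x_{i}]}(\mathcal{S}_t),\nonumber
\end{align}
which by Definition \ref{subexpansion} furnishes an integer $m\geq 0$ satisfying $l+m = k$ together with the equality $(\gamma^{-1}\circ \beta \circ \gamma \circ \nabla)^{l}_{[x_{i}]}(\mathcal{S}_r)=(\gamma^{-1}\circ \beta \circ \gamma \circ \nabla)^{l+m}_{[x_{i}]}(\mathcal{S}_t)$.

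Substituting $l+m = k$ on the right-hand side and then feeding the diagonalization identity backwards yields the chain
\begin{align}
(\gamma^{-1}\circ \beta \circ \gamma \circ \nabla)^{l}_{[x_{i}]}(\mathcal{S}_r)=(\gamma^{-1}\circ \beta \circ \gamma \circ \nabla)^{k}_{[x_{i}]}(\mathcal{S}_t)=(\gamma^{-1} \circ \beta \circ \gamma \circ \nabla)_{\otimes_{i=1}^{l}[x_{\sigma(i)}]}(\mathcal{S}),\nonumber
\end{align}
which is precisely the defining identity needed to declare $(\gamma^{-1}\circ \beta \circ \gamma \circ \nabla)_{[x_i]}(\mathcal{S}_r)$ a diagonal of order $l$ for the same mixed expansion. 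Strictly speaking, Definition \ref{diagonalization} also carries the side condition that $\mathcal{S}-\mathcal{S}_r$ is not a tuple of $\mathbb{R}$; I would verify this by contradiction, noting that if $\mathcal{S}-\mathcal{S}_r$ were a tuple of real numbers, then the $l$-fold iterated expansion at $\mathcal{S}_r$ would differ from the one at $\mathcal{S}$ only by at most a constant translation (by the linearity established in Proposition \ref{linearity}), and this would collide with the diagonalizability of the mixed expansion at $\mathcal{S}_t$ via the sub-expansion identity, forcing $\mathcal{S}-\mathcal{S}_t$ to also be a tuple of $\mathbb{R}$ and contradicting the original diagonalization hypothesis.

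I expect the core equality to be essentially a one-line substitution once the two definitions are written down in parallel; the only mildly technical point—and the step that I consider the main obstacle—is the verification of the side constraint on $\mathcal{S}-\mathcal{S}_r$, because the author's proof of the companion Proposition \ref{diagsubexpansion1} treats this constraint as a standing assumption rather than something to be derived. If the author intends the constraint to be inherited implicitly from the proper (as opposed to ancient) sub-expansion relationship with $\mathcal{S}_t$, then the cleanest presentation would be to state this inheritance explicitly and then let the substitution above close the argument in a single line.
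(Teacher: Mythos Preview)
Your core argument is correct and matches the paper's proof essentially line for line: both unpack Definition~\ref{diagonalization} to get the identity for the mixed expansion, unpack Definition~\ref{subexpansion} (with the properness condition $l+m=k$) to get $(\gamma^{-1}\circ \beta \circ \gamma \circ \nabla)^{l}_{[x_{i}]}(\mathcal{S}_r)=(\gamma^{-1}\circ \beta \circ \gamma \circ \nabla)^{k}_{[x_{i}]}(\mathcal{S}_t)$, and chain the two. The paper's proof simply stops there and does not address the side constraint that $\mathcal{S}-\mathcal{S}_r$ not be a tuple of $\mathbb{R}$; your additional paragraph on that point goes beyond what the author verifies.
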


\begin{proof}
We suppose that the expansion $(\gamma^{-1}\circ\beta\circ\gamma\circ\nabla)_{[x_{i}]}(\mathcal{S}_t)$ is the diagonal of order $k$ of the mixed expansion $(\gamma^{-1}\circ\beta\circ\gamma\circ\nabla)_{\otimes_{i=1}^{l}[x_{\sigma(i)}]}(\mathcal{S})$. By definition \ref{diagonalization}, we get
\begin{align}
(\gamma^{-1}\circ\beta\circ\gamma\circ\nabla)_{\otimes_{i=1}^{l}[x_{\sigma(i)}]}(\mathcal{S})&=(\gamma^{-1}\circ\beta\circ\gamma\circ\nabla)^k_{[x_{i}]}(\mathcal{S}_t).\nonumber
\end{align}
Since
\begin{align}
(\gamma^{-1}\circ\beta\circ\gamma\circ\nabla)^l_{[x_{i}]}(\mathcal{S}_r)<(\gamma^{-1}\circ\beta\circ\gamma\circ\nabla)^k_{[x_{i}]}(\mathcal{S}_t) \nonumber
\end{align}
the definition \ref{subexpansion} implies
\begin{align}
(\gamma^{-1}\circ\beta\circ\gamma\circ\nabla)^l_{[x_{i}]}(\mathcal{S}_r)=(\gamma^{-1}\circ\beta\circ\gamma\circ\nabla)^{l+m}_{[x_{i}]}(\mathcal{S}_t).\nonumber
\end{align}
for some $0\leq m$ with $l+m=k$ so that 
\begin{align}
(\gamma^{-1}\circ\beta\circ\gamma\circ\nabla)^l_{[x_{i}]}(\mathcal{S}_r)=(\gamma^{-1}\circ\beta\circ\gamma\circ\nabla)^k_{[x_{i}]}(\mathcal{S}_t).\nonumber
\end{align}
We establish the claim since $(\gamma^{-1}\circ\beta\circ\gamma\circ \nabla)_{[x_{i}]}(\mathcal{S}_t)$ is a diagonal of order $k$ of the mixed expansion $(\gamma^{-1}\circ\beta\circ\gamma\circ\nabla)_{\otimes_{i=1}^{l}[x_{\sigma(i)}]}(\mathcal{S})$.
\end{proof}
\bigskip

The notion of the totient, the Doppler effect and the diagonalization of an expansion may seem to be quite separate disparate notions of the theory but the following Proposition indicates a subtle connection among these three.

\begin{proposition}\label{connection}
Let $\mathcal{F}=\{\mathcal{S}_i\}_{i=1}^{\infty}$ be a collection of tuples of polynomials in the ring $\mathbb{R}[x_1,x_2,\ldots,x_n]$. If the mixed expansion $(\gamma^{-1}\circ\beta\circ\gamma\circ\nabla)_{\otimes_{i=1}^{l}[x_{\sigma(i)}]}(\mathcal{S})$ induces a Doppler effect with intensity $k$ on the expansion $(\gamma^{-1}\circ\beta\circ\gamma\circ\nabla)_{[x_j]}(\mathcal{S})$ and is diagonalizable in the direction $[x_j]$ at the spot $\mathcal{S}_t$ of order $s$, then the expansion 
\begin{align}
(\gamma^{-1}\circ\beta\circ\gamma\circ\nabla)_{[x_j]}(\mathcal{S}_t)\nonumber
\end{align}
is free with totient 
\begin{align}
\Phi[(\gamma^{-1}\circ\beta\circ\gamma\circ\nabla)_{[x_j]}(\mathcal{S}_t)]=k+s.\nonumber
\end{align}
\end{proposition}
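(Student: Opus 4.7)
The plan is to unpack Definitions \ref{totient}, \ref{dropler effect}, and \ref{diagonalization} directly and chain the two hypotheses. Writing $M := (\gamma^{-1}\circ \beta \circ \gamma \circ \nabla)_{\otimes_{i=1}^{l}[x_{\sigma(i)}]}(\mathcal{S})$ for the source, the dropler hypothesis provides the equation $(\gamma^{-1}\circ \beta \circ \gamma \circ \nabla)^{k}_{[x_j]}(M)=\mathcal{S}_0$ with $k$ the smallest such non-negative integer, while the diagonalizability hypothesis provides $M=(\gamma^{-1}\circ \beta \circ \gamma \circ \nabla)^{s}_{[x_j]}(\mathcal{S}_t)$. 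Substituting the latter into the former yields
\begin{align*}
(\gamma^{-1}\circ \beta \circ \gamma \circ \nabla)^{k+s}_{[x_j]}(\mathcal{S}_t) = (\gamma^{-1}\circ \beta \circ \gamma \circ \nabla)^{k}_{[x_j]}(M) = \mathcal{S}_0,
\end{align*}
which already shows that the expansion at $\mathcal{S}_t$ is free and gives the upper bound $\Phi[(\gamma^{-1}\circ \beta \circ \gamma \circ \nabla)_{[x_j]}(\mathcal{S}_t)] \le k+s$.

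For the reverse (minimality) inequality, I would argue by contradiction. Suppose some $r$ with $0 \le r < k+s$ satisfies $(\gamma^{-1}\circ \beta \circ \gamma \circ \nabla)^{r}_{[x_j]}(\mathcal{S}_t)=\mathcal{S}_0$, and split into two subcases. If $r \ge s$, set $r' = r-s$, so $0 \le r' < k$, and reassociate the composition
\begin{align*}
(\gamma^{-1}\circ \beta \circ \gamma \circ \nabla)^{r'}_{[x_j]}(M) = (\gamma^{-1}\circ \beta \circ \gamma \circ \nabla)^{r'}_{[x_j]}\circ(\gamma^{-1}\circ \beta \circ \gamma \circ \nabla)^{s}_{[x_j]}(\mathcal{S}_t) = (\gamma^{-1}\circ \beta \circ \gamma \circ \nabla)^{r}_{[x_j]}(\mathcal{S}_t) = \mathcal{S}_0,
\end{align*}
contradicting the minimality of the dropler intensity $k$. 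If on the other hand $r < s$, then applying $(\gamma^{-1}\circ \beta \circ \gamma \circ \nabla)^{s-r}_{[x_j]}$ to both sides of $(\gamma^{-1}\circ \beta \circ \gamma \circ \nabla)^{r}_{[x_j]}(\mathcal{S}_t)=\mathcal{S}_0$ forces $M=\mathcal{S}_0$.

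The second subcase is where I expect the main difficulty to lie: a brute-force application of the definitions collapses $M$ to the null tuple, which makes the dropler intensity and the diagonal order interact degenerately. The way I would navigate this is to use the non-triviality that is implicit in the statement — namely that the dropler effect has a genuine positive intensity $k$, so that $M$ cannot be the null tuple (otherwise the intensity would be forced to $0$, contradicting the working assumption); equivalently, one invokes the condition $k < \Phi[(\gamma^{-1}\circ \beta \circ \gamma \circ \nabla)_{[x_j]}(\mathcal{S})]$ from Definition \ref{dropler effect} together with the fact that the diagonal order $s$ is at least $1$. With this non-degeneracy in hand, the case $r<s$ is excluded and both subcases yield a contradiction, so $k+s$ is the smallest integer for which the $(k+s)$-th expansion of $\mathcal{S}_t$ in direction $[x_j]$ vanishes. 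Combined with the first paragraph this gives $\Phi[(\gamma^{-1}\circ \beta \circ \gamma \circ \nabla)_{[x_j]}(\mathcal{S}_t)] = k+s$, as claimed.
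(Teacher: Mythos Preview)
Your proof is correct and follows essentially the same approach as the paper: substitute the diagonalization $M=(\gamma^{-1}\circ \beta \circ \gamma \circ \nabla)^{s}_{[x_j]}(\mathcal{S}_t)$ into the dropler relation to get vanishing at step $k+s$, then argue minimality by contradiction against the minimality of the intensity $k$. Your case split on $r\ge s$ versus $r<s$ is in fact slightly more careful than the paper's argument, which writes the exponent $k-r$ without explicitly checking non-negativity; your handling of the degenerate subcase via $M\neq\mathcal{S}_0$ is the natural way to close that gap.
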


\begin{proof}
We suppose that the mixed expansion $(\gamma^{-1}\circ\beta\circ\gamma\circ\nabla)_{\otimes_{i=1}^{l}[x_{\sigma(i)}]}(\mathcal{S})$ induces a Doppler effect with intensity $k$ on the expansion $(\gamma^{-1}\circ\beta\circ\gamma\circ\nabla)_{[x_j]}(\mathcal{S})$. By definition \ref{dropler effect}, we have
\begin{align}
(\gamma^{-1}\circ\beta\circ\gamma\circ\nabla)^{k}_{[x_j]}\circ (\gamma^{-1}\circ\beta\circ\gamma\circ\nabla)_{\otimes_{i=1}^{l}[x_{\sigma(i)}]}(\mathcal{S})=\mathcal{S}_0\nonumber
\end{align}
with 
$$
k<\Phi[(\gamma^{-1}\circ\beta\circ\gamma\circ\nabla)_{[x_j]}(\mathcal{S})]
$$ 
and $k$ is the smallest such number. Under the assumption the mixed expansion $(\gamma^{-1}\circ\beta\circ\gamma\circ\nabla)_{\otimes_{i=1}^{l}[x_{\sigma(i)}]}(\mathcal{S})$ is diagonalizable in the direction $[x_j]$ at the spot $\mathcal{S}_t$ with order $s$, we get 
\begin{align}
(\gamma^{-1}\circ\beta\circ\gamma\circ\nabla)_{\otimes_{i=1}^{l}[x_{\sigma(i)}]}(\mathcal{S})=(\gamma^{-1}\circ\beta\circ\gamma\circ\nabla)^s_{[x_j]}(\mathcal{S}_t)\nonumber
\end{align}
so that we have 
\begin{align}
(\gamma^{-1}\circ\beta\circ\gamma\circ\nabla)^{k+s}_{[x_j]}(\mathcal{S}_t)=\mathcal{S}_0.\nonumber
\end{align}
Now, we suppose that there exist some $r\leq k+s$ such that 
\begin{align}
(\gamma^{-1}\circ\beta\circ\gamma\circ\nabla)^{k+s-r}_{[x_j]}(\mathcal{S}_t)=\mathcal{S}_0.\nonumber
\end{align}
We have 
\begin{align}
(\gamma^{-1}\circ\beta\circ\gamma\circ\nabla)^{k-r}_{[x_j]}\circ (\gamma^{-1}\circ\beta\circ\gamma\circ\nabla)_{\otimes_{i=1}^{l}[x_{\sigma(i)}]}(\mathcal{S})=\mathcal{S}_0.\nonumber
\end{align}
This is a contradiction, since $k=\mathcal{I}[(\gamma^{-1}\circ\beta\circ\gamma\circ\nabla)_{[x_j]}(\mathcal{S})]$ is the intensity of the doppler effect and is the smallest such number. It follows that 
\begin{align}
\Phi[(\gamma^{-1}\circ\beta\circ\gamma\circ\nabla)_{[x_j]}(\mathcal{S}_t)]&=\mathcal{I}[(\gamma^{-1}\circ\beta\circ\gamma\circ\nabla)_{[x_j]}(\mathcal{S})]+\mathcal{O}[(\gamma^{-1}\circ\beta\circ\gamma\circ\nabla)_{[x_j]}(\mathcal{S}_t)]\nonumber \\&=k+s\nonumber
\end{align}
and the claim follows immediately.
\end{proof}
\bigskip

\begin{lemma}\label{admit dropler}
Let $\mathcal{F}=\{\mathcal{S}_i\}_{i=1}^{\infty}$ be a collection of tuples of polynomials in the ring $\mathbb{R}[x_1,x_2,\ldots,x_n]$. The expansion $(\gamma^{-1}\circ\beta\circ\gamma\circ\nabla)_{[x_{\sigma(i)}]}(\mathcal{S})$ for all $1\leq i\leq l$ admits the Doppler effect from the source
\begin{align}
(\gamma^{-1}\circ\beta\circ\gamma\circ\nabla)_{\otimes_{i=1}^{l}[x_{\sigma(l)}]}(\mathcal{S}).\nonumber
\end{align}
\end{lemma}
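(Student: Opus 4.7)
The plan is to reduce the statement to a finite-degree bookkeeping argument on each polynomial entry of $\mathcal{S}$, combined with the commutativity of multivariate expansion established earlier in the paper.

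First I would fix the direction $x_{\sigma(i)}$ and let $D$ denote the maximum degree in $x_{\sigma(i)}$ among the entries of $\mathcal{S}$. Since $\nabla_{[x_{\sigma(i)}]}$ strictly decreases this degree whenever it is positive, and the operators $\gamma$, $\beta$, $\gamma^{-1}$ are $\mathbb{R}$-linear rearrangements of the entries, so they do not raise the maximum degree in $x_{\sigma(i)}$, iterating $(\gamma^{-1}\circ\beta\circ\gamma\circ\nabla)_{[x_{\sigma(i)}]}$ at most $D+1$ times produces $\mathcal{S}_0$. This confirms that the totient $\Phi[(\gamma^{-1}\circ\beta\circ\gamma\circ\nabla)_{[x_{\sigma(i)}]}(\mathcal{S})]$ is finite and bounded above by $D+1$.

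Next, I would examine the source $(\gamma^{-1}\circ\beta\circ\gamma\circ\nabla)_{\otimes_{j=1}^{l}[x_{\sigma(j)}]}(\mathcal{S})$. Since $1\leq i\leq l$, the direction $[x_{\sigma(i)}]$ occurs in the mixed expansion, so by the commutativity of expansions already proved, the mixed expansion can be reordered so that the factor $(\gamma^{-1}\circ\beta\circ\gamma\circ\nabla)_{[x_{\sigma(i)}]}$ is performed at some stage of the composition. The effect is that the maximum degree in $x_{\sigma(i)}$ among the entries of the source is strictly smaller than $D$. Applying $(\gamma^{-1}\circ\beta\circ\gamma\circ\nabla)_{[x_{\sigma(i)}]}$ at most $D$ further times therefore annihilates the source entirely, so the smallest such number $k$ satisfies $k\leq D<D+1\leq \Phi[(\gamma^{-1}\circ\beta\circ\gamma\circ\nabla)_{[x_{\sigma(i)}]}(\mathcal{S})]$. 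This is precisely the definition of the mixed expansion inducing a dropler effect on $(\gamma^{-1}\circ\beta\circ\gamma\circ\nabla)_{[x_{\sigma(i)}]}(\mathcal{S})$.

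The main obstacle I anticipate is tracking the degree in $x_{\sigma(i)}$ through the linear operator $\beta$ precisely, since $\beta$ mixes the entries together and the quantity being preserved is the maximum degree across entries rather than the degree of each individual entry; one needs the observation that for any $\mathbb{R}$-linear combination of polynomials the degree in a fixed variable cannot exceed the maximum degree of the summands. A second delicate point is the degenerate case in which $\mathcal{S}$ is already constant in $x_{\sigma(i)}$, so that $\Phi=1$; here the mixed expansion already returns $\mathcal{S}_0$ and the dropler intensity is trivially zero under the natural convention. Once these bookkeeping issues are dispatched, the inequality $k<\Phi$ follows immediately from the degree-lowering argument outlined above.
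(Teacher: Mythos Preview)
Your overall strategy---use commutativity to isolate one copy of $(\gamma^{-1}\circ\beta\circ\gamma\circ\nabla)_{[x_{\sigma(i)}]}$ inside the mixed source, then argue that fewer than $\Phi$ further applications annihilate it---is exactly the idea in the paper. However, your execution contains a genuine gap in the final inequality chain. You first argue that $\Phi\leq D+1$ (an upper bound, from degree-lowering), and then write $k\leq D<D+1\leq\Phi$, which silently uses the \emph{opposite} inequality $\Phi\geq D+1$. You never justify this lower bound; it requires knowing that $D$ applications of the expansion do \emph{not} kill $\mathcal{S}$, i.e.\ that the linear mixing $\beta$ cannot cancel the top-degree-in-$x_{\sigma(i)}$ terms prematurely. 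This is true (and is the content of Lemma~\ref{index-totient} in the paper, which gives $\Phi=D+1$ exactly), but you have not argued it, and your own remark about $\beta$ mixing entries shows you are aware this step is delicate.

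The paper's proof sidesteps this entirely and is shorter: rather than tracking degrees, it simply applies $\Phi-1$ further copies of $(\gamma^{-1}\circ\beta\circ\gamma\circ\nabla)_{[x_{\sigma(i)}]}$ to the source. By commutativity this yields $(\gamma^{-1}\circ\beta\circ\gamma\circ\nabla)^{\Phi}_{[x_{\sigma(i)}]}$ composed with the remaining directional operators, and since $(\gamma^{-1}\circ\beta\circ\gamma\circ\nabla)^{\Phi}_{[x_{\sigma(i)}]}(\mathcal{S})=\mathcal{S}_0$ by definition of the totient, linearity forces the whole composite to vanish. Hence some minimal $k\leq\Phi-1<\Phi$ works, and the dropler effect follows directly. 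Your degree-bookkeeping route would also succeed once you either invoke Lemma~\ref{index-totient} for the equality $\Phi=D+1$, or replace the chain $k\leq D<D+1\leq\Phi$ with the direct $k\leq\Phi-1<\Phi$ as the paper does.
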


\begin{proof}
We consider an arbitrary expansion $(\gamma^{-1}\circ\beta\circ\gamma\circ\nabla)_{[x_{\sigma(j)}]}(\mathcal{S})$ for all $1\leq j\leq l$. Using the commutative property of an expansion, we can write 
\begin{align}
(\gamma^{-1}\circ\beta\circ\gamma\circ\nabla)_{\otimes_{i=1}^{l}[x_{\sigma(i)}]}(\mathcal{S})&=(\gamma^{-1}\circ\beta\circ\gamma\circ\nabla)_{[x_{\sigma(j)}]} \nonumber \\& \circ (\gamma^{-1}\circ\beta\circ\gamma\circ\nabla)_{\otimes_{\substack{i=1\\i\neq j}}^{l}[x_{\sigma(i)}]}(\mathcal{S}).\nonumber
\end{align}
We deduce
\begin{align}
(\gamma^{-1}\circ\beta\circ\gamma\circ\nabla)^{\Phi[(\gamma^{-1}\circ\beta\circ\gamma\circ\nabla)_{[x_{\sigma(j)}]}(\mathcal{S})]-1}_{[x_{\sigma(j)}]}\circ (\gamma^{-1}\circ\beta\circ\gamma\circ\nabla)_{\otimes_{i=1}^{l}[x_{\sigma(i)}]}(\mathcal{S})=\mathcal{S}_0.\nonumber
\end{align}
It follows that there exists some smallest number $k\leq \Phi[(\gamma^{-1}\circ\beta\circ\gamma\circ\nabla)_{[x_{\sigma(j)}]}(\mathcal{S})]-1<\Phi[(\gamma^{-1}\circ\beta\circ\gamma\circ\nabla)_{[x_{\sigma(j)}]}(\mathcal{S})]$ such that 
\begin{align}
(\gamma^{-1}\circ\beta\circ\gamma\circ\nabla)^{k}_{[x_{\sigma(j)}]}\circ (\gamma^{-1}\circ\beta\circ\gamma\circ\nabla)_{\otimes_{i=1}^{l}[x_{\sigma(i)}]}(\mathcal{S})=\mathcal{S}_0.\nonumber
\end{align}
This proves the claim that each expansion of the form $(\gamma^{-1}\circ\beta\circ\gamma\circ\nabla)_{[x_{\sigma(j)}]}(\mathcal{S})$ for all $1\leq j\leq l$ admits a Doppler effect from the source 
\begin{align}
(\gamma^{-1}\circ\beta\circ\gamma\circ\nabla)_{\otimes_{i=1}^{l}[x_{\sigma(i)}]}(\mathcal{S}).\nonumber
\end{align}
\end{proof}
\bigskip

Here, we show that the notion of diagonalization exists for mixed expansion in each direction involved in the mixed expansion. The proof is quite iterative in nature and will be used in the sequel.

\begin{proposition}\label{exist diagonalization}
Let $\mathcal{F}=\{\mathcal{S}_i\}_{i=1}^{\infty}$ be a collection of tuples of polynomials in the ring $\mathbb{R}[x_1,x_2,\ldots,x_n]$. The mixed expansion 
\begin{align}
(\gamma^{-1}\circ\beta\circ\gamma\circ\nabla)_{\otimes_{i=1}^{l}[x_{\sigma(i)}]}(\mathcal{S})\nonumber
\end{align}
is diagonalizable in each direction $[x_{\sigma(i)}]$ for $1\leq i\leq l$.
\end{proposition}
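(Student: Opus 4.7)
The plan is to exploit the commutativity of multivariate expansions (proved earlier in Section 2) to reduce the problem to a one-step diagonalization. Given any chosen direction $[x_{\sigma(j)}]$ appearing in the mixed expansion, iterative application of commutativity brings that direction to the outermost slot of the composition, and the remaining inner composition then supplies the required spot at order $k=1$.

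First I fix an index $j \in \{1, 2, \ldots, l\}$ and target the direction $[x_{\sigma(j)}]$. Using commutativity of expansion (each application transposes two adjacent directions in the composition), after at most $j-1$ transpositions I obtain
\[
(\gamma^{-1}\circ \beta \circ \gamma \circ \nabla)_{\otimes_{i=1}^{l}[x_{\sigma(i)}]}(\mathcal{S}) = (\gamma^{-1}\circ \beta \circ \gamma \circ \nabla)_{[x_{\sigma(j)}]} \circ (\gamma^{-1}\circ \beta \circ \gamma \circ \nabla)_{\otimes_{\substack{i=1 \\ i \neq j}}^{l}[x_{\sigma(i)}]}(\mathcal{S}).
\]
I then define the spot
\[
\mathcal{S}_r := (\gamma^{-1}\circ \beta \circ \gamma \circ \nabla)_{\otimes_{\substack{i=1 \\ i \neq j}}^{l}[x_{\sigma(i)}]}(\mathcal{S}).
\]
Since each of the operators $\nabla$, $\gamma$, $\beta$, and $\gamma^{-1}$ preserves the space of tuples of polynomials in $\mathbb{R}[x_1,x_2,\ldots,x_n]$, the tuple $\mathcal{S}_r$ belongs to the ambient collection, so we may take $\mathcal{S}_r\in\mathcal{F}$. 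Substituting gives
\[
(\gamma^{-1}\circ \beta \circ \gamma \circ \nabla)_{\otimes_{i=1}^{l}[x_{\sigma(i)}]}(\mathcal{S}) = (\gamma^{-1}\circ \beta \circ \gamma \circ \nabla)^{1}_{[x_{\sigma(j)}]}(\mathcal{S}_r),
\]
which is precisely a diagonalization in direction $[x_{\sigma(j)}]$ with order $k=1$ at the spot $\mathcal{S}_r$.

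The main obstacle I anticipate is the side hypothesis in Definition \ref{diagonalization} requiring $\mathcal{S}-\mathcal{S}_r$ not to be a tuple of $\mathbb{R}$. Because $\mathcal{S}_r$ is produced from $\mathcal{S}$ by at least one partial differentiation followed by the linear reshuffling $\gamma^{-1}\circ \beta \circ \gamma$, for a generic $\mathcal{S}$ the difference is a non-constant polynomial tuple. In the degenerate case where the two agree up to a constant tuple, one can fold an additional inner expansion back into the diagonal (raising its order to $k=2$), or perturb the spot by adding a non-constant element of $\mathcal{F}$ and invoke Proposition \ref{diagonalsum} to compensate. Since the choice of $j$ was arbitrary, this construction yields a diagonalization of the mixed expansion in each direction $[x_{\sigma(i)}]$ for $1\leq i\leq l$, completing the claim.
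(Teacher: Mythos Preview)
Your argument and the paper's share the same opening move: use commutativity to pull the chosen direction $[x_{\sigma(j)}]$ to the outside, obtaining
\[
(\gamma^{-1}\circ \beta \circ \gamma \circ \nabla)_{\otimes_{i=1}^{l}[x_{\sigma(i)}]}(\mathcal{S})=(\gamma^{-1}\circ \beta \circ \gamma \circ \nabla)_{[x_{\sigma(j)}]}\circ (\gamma^{-1}\circ \beta \circ \gamma \circ \nabla)_{\otimes_{i\neq j}[x_{\sigma(i)}]}(\mathcal{S}).
\]
From here the two proofs diverge. You simply declare the inner composite to be the spot $\mathcal{S}_r$ and stop at order $k=1$. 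The paper instead continues iteratively through the residual directions $[x_{\sigma(1)}],[x_{\sigma(2)}],\ldots$, checking at each stage whether the current partial result can itself be rewritten as an expansion in $[x_{\sigma(j)}]$; when it can, the order $k$ is incremented, otherwise the partial result is absorbed into the spot. The end product in the paper is a diagonalization of some order $k\geq 1$ at some $\mathcal{S}_t$.

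For the proposition as stated (mere existence of a diagonalization in each direction), your shortcut is entirely adequate, and indeed is what the paper's iteration collapses to whenever none of the intermediate rewrites succeed. The paper's longer construction buys a potentially larger order, which matters downstream in Theorem~\ref{mixed specific inequality} only insofar as one wants sharper control over $\mathcal{O}[\cdot]$; for the bare existence claim it is not needed. You are also more careful than the paper on one point: you flag the side condition in Definition~\ref{diagonalization} that $\mathcal{S}-\mathcal{S}_r$ not be a tuple of constants, and sketch how to handle the degenerate case. The paper's own proof does not address this hypothesis at all.
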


\begin{proof}
We consider the mixed expansion 
\begin{align}
(\gamma^{-1}\circ\beta\circ\gamma\circ\nabla)_{\otimes_{i=1}^{l}[x_{\sigma(i)}]}(\mathcal{S})\nonumber
\end{align}
and let $[x_{\sigma(j)}]$ for $1 \leq j\leq l$ be our targeted direction. Using the commutative property of an expansion, we have
\begin{align}
(\gamma^{-1}\circ\beta\circ\gamma\circ\nabla)_{\otimes_{i=1}^{l}[x_{\sigma(i)}]}(\mathcal{S})&=(\gamma^{-1}\circ\beta\circ\gamma\circ\nabla)_{[x_{\sigma(j)}]}\circ (\gamma^{-1}\circ\beta\circ\gamma\circ\nabla)_{\otimes_{\substack{i=1\\i\neq j}}^{l}[x_{\sigma(i)}]}(\mathcal{S}).\nonumber
\end{align}
We consider the residual mixed expansion
\begin{align}
(\gamma^{-1}\circ\beta\circ\gamma\circ\nabla)_{\otimes_{\substack{i=1\\i\neq j}}^{l}[x_{\sigma(i)}]}(\mathcal{S})&=(\gamma^{-1}\circ\beta\circ\gamma\circ\nabla)_{\otimes_{\substack{i=1\\i\neq j}}^{l}[x_{\sigma(i)}]}\nonumber \\& \circ (\gamma^{-1}\circ\beta\circ\gamma\circ\nabla)_{[x_{\sigma(1)}]}(\mathcal{S}).\nonumber
\end{align}
If there exists some tuple $\mathcal{S}_a\in \mathcal{F}$ such that 
\begin{align}
(\gamma^{-1}\circ\beta\circ\gamma\circ\nabla)_{[x_{\sigma(1)}]}(\mathcal{S})&=(\gamma^{-1}\circ\beta\circ\gamma\circ\nabla)_{[x_{\sigma(j)}]}(\mathcal{S}_a)\nonumber
\end{align}
then we make a substitution and obtain two copies of the expansion operator $(\gamma^{-1}\circ\beta\circ\gamma\circ\nabla)_{[x_{\sigma(j)}]}$ by using the commutative property of an expansion. Otherwise, we choose 
\begin{align}
\mathcal{S}_b=(\gamma^{-1}\circ\beta\circ\gamma\circ\nabla)_{[x_{\sigma(1)}]}(\mathcal{S})\nonumber
\end{align}
and apply the remaining operators on it. Iterating in this manner, we will obtain 
\begin{align}
(\gamma^{-1}\circ\beta\circ\gamma\circ\nabla)_{\otimes_{i=1}^{l}[x_{\sigma(i)}]}(\mathcal{S})&=(\gamma^{-1}\circ\beta\circ\gamma\circ\nabla)^k_{[x_{\sigma(j)}]}(\mathcal{S}_t)\nonumber
\end{align}
for $k\geq 1$ and for some $\mathcal{S}_t\in \mathcal{F}$. This completes the proof of the proposition.
\end{proof}
\bigskip

We are now ready to prove the inequality announced at the outset of the paper. We gather the tools developed in the previous section to obtain a stronger version of the inequality.

\begin{theorem}\label{mixed specific inequality}
Let $\mathcal{F}=\{\mathcal{S}_i\}_{i=1}^{\infty}$ be a collection of tuples of polynomials in the ring $\mathbb{R}[x_1,x_2,\ldots,x_n]$. We have the inequality
\begin{align}
\Phi[(\gamma^{-1}\circ\beta\circ\gamma\circ\nabla)_{\otimes_{i=1}^{l}[x_{\sigma(i)}]}(\mathcal{S})]&<\frac{1}{l}\sum \limits_{i=1}^{l}\Phi[(\gamma^{-1}\circ\beta\circ\gamma\circ\nabla)_{[x_{\sigma(i)}]}(\mathcal{S})]\nonumber \\&+\frac{1}{l}\sum \limits_{\substack{1\leq i\leq l\\\mathcal{S}_r\in \mathrm{Diag}[(\gamma^{-1}\circ\beta\circ\gamma\circ\nabla)_{\otimes_{i=1}^{l}[x_{\sigma(i)}]}(\mathcal{S})]\\\mathcal{S}_r=(\gamma^{-1}\circ\beta\circ\gamma\circ\nabla)_{[x_{\sigma(i)}]}(\mathcal{S}_t)}}\mathcal{O}[(\gamma^{-1}\circ\beta\circ\gamma\circ\nabla)_{[x_{\sigma(i)}]}(\mathcal{S}_t)]\nonumber
\end{align}
where 
$$
\mathrm{Diag}[(\gamma^{-1}\circ\beta\circ\gamma\circ\nabla)_{\otimes_{i=1}^{l}[x_{\sigma(i)}]}(\mathcal{S})]
$$ 
is the set of all diagonals of the expansion 
\begin{align}
(\gamma^{-1}\circ\beta\circ\gamma\circ\nabla)_{\otimes_{i=1}^{l}[x_{\sigma(i)}]}(\mathcal{S}).\nonumber
\end{align}
\end{theorem}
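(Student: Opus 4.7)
My plan is to combine Lemma~\ref{admit dropler}, the commutativity of expansions, and the linearity of each operator $(\gamma^{-1}\circ \beta \circ \gamma \circ \nabla)_{[x_{\sigma(i)}]}$ to obtain, for every direction index $j$, the single-direction bound $\Phi_{\mathrm{mix}} \le k_j+1$, where $\Phi_{\mathrm{mix}} := \Phi[(\gamma^{-1}\circ \beta \circ \gamma \circ \nabla)_{\otimes_{i=1}^{l}[x_{\sigma(i)}]}(\mathcal{S})]$ and $k_j := \mathcal{I}[(\gamma^{-1}\circ \beta \circ \gamma \circ \nabla)_{[x_{\sigma(j)}]}(\mathcal{S})]$ is the intensity of the dropler effect induced on the $j$-th directional expansion by the mixed source. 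This per-direction bound will be the analytic heart of the argument; the rest is averaging plus positivity of orders.

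To establish the bound, I would first invoke Lemma~\ref{admit dropler} to produce, for each $j \in \{1,\dots,l\}$, an intensity $k_j < \Phi[(\gamma^{-1}\circ \beta \circ \gamma \circ \nabla)_{[x_{\sigma(j)}]}(\mathcal{S})]$ with
\[
(\gamma^{-1}\circ \beta \circ \gamma \circ \nabla)^{k_j}_{[x_{\sigma(j)}]} \circ (\gamma^{-1}\circ \beta \circ \gamma \circ \nabla)_{\otimes_{i=1}^{l}[x_{\sigma(i)}]}(\mathcal{S}) = \mathcal{S}_0.
\]
Then I would write $(\gamma^{-1}\circ \beta \circ \gamma \circ \nabla)^{k_j+1}_{\otimes_{i=1}^{l}[x_{\sigma(i)}]}$ as $(\gamma^{-1}\circ \beta \circ \gamma \circ \nabla)^{k_j}_{\otimes_{i=1}^{l}[x_{\sigma(i)}]}$ composed with one more copy of the mixed operator, expand the outer $k_j$-th iterate as $\prod_{i=1}^{l}(\gamma^{-1}\circ \beta \circ \gamma \circ \nabla)^{k_j}_{[x_{\sigma(i)}]}$ via the commutativity proposition, and shuffle $(\gamma^{-1}\circ \beta \circ \gamma \circ \nabla)^{k_j}_{[x_{\sigma(j)}]}$ inward so it acts directly on the innermost mixed-expansion of $\mathcal{S}$. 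The dropler identity above kills that inner piece to $\mathcal{S}_0$, and the remaining $l-1$ directional operators preserve $\mathcal{S}_0$ by Proposition~\ref{linearity}. This yields $(\gamma^{-1}\circ \beta \circ \gamma \circ \nabla)^{k_j+1}_{\otimes_{i=1}^{l}[x_{\sigma(i)}]}(\mathcal{S})=\mathcal{S}_0$ and hence $\Phi_{\mathrm{mix}} \le k_j+1$.

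Averaging this over $j$ and using $k_j \le \Phi[(\gamma^{-1}\circ \beta \circ \gamma \circ \nabla)_{[x_{\sigma(j)}]}(\mathcal{S})]-1$, a direct consequence of the strict inequality in the definition of dropler intensity, I obtain
\[
\Phi_{\mathrm{mix}} \;\le\; \frac{1}{l}\sum_{j=1}^{l}(k_j+1) \;\le\; \frac{1}{l}\sum_{j=1}^{l}\Phi[(\gamma^{-1}\circ \beta \circ \gamma \circ \nabla)_{[x_{\sigma(j)}]}(\mathcal{S})].
\]
The strict inequality in the theorem then follows at once: by Proposition~\ref{exist diagonalization} each direction $[x_{\sigma(i)}]$ carries at least one diagonal of the mixed expansion, and every diagonal has order $\geq 1$ by Definition~\ref{diagonalization}; hence the second sum on the stated right-hand side is $\geq 1 > 0$, which upgrades the $\leq$ above to the $<$ of the theorem.

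The main obstacle, in my view, lies entirely in the commutation bookkeeping of the second paragraph. One has to verify carefully that the outer $k_j$-th iterate of the mixed operator really decomposes as a commuting product in which each direction appears exactly $k_j$ times, and that pushing $(\gamma^{-1}\circ \beta \circ \gamma \circ \nabla)^{k_j}_{[x_{\sigma(j)}]}$ across the remaining factors, and past the outer single copy of the mixed operator, yields precisely the composition appearing in the dropler identity, rather than introducing a spurious extra $[x_{\sigma(j)}]$ factor coming from that innermost extra mixed application. This is immediate from the commutativity proposition but should be spelled out in detail; the subsequent averaging and positivity steps are essentially arithmetic.
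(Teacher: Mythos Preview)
Your argument is correct but follows a different path from the paper. The paper proceeds through Proposition~\ref{connection}: for each direction $[x_{\sigma(i)}]$ it invokes Proposition~\ref{exist diagonalization} to write the mixed expansion as $(\gamma^{-1}\circ\beta\circ\gamma\circ\nabla)^{k}_{[x_{\sigma(i)}]}(\mathcal{S}_t)$, then uses Proposition~\ref{connection} to get $\Phi[(\gamma^{-1}\circ\beta\circ\gamma\circ\nabla)_{[x_{\sigma(i)}]}(\mathcal{S}_t)]=\mathcal{I}+\mathcal{O}$, bounds $\Phi_{\mathrm{mix}}\le\Phi[(\gamma^{-1}\circ\beta\circ\gamma\circ\nabla)_{[x_{\sigma(i)}]}(\mathcal{S}_t)]$, and finally applies $\mathcal{I}<\Phi[(\gamma^{-1}\circ\beta\circ\gamma\circ\nabla)_{[x_{\sigma(i)}]}(\mathcal{S})]$ to obtain the per-direction strict inequality $\Phi_{\mathrm{mix}}<\Phi[(\gamma^{-1}\circ\beta\circ\gamma\circ\nabla)_{[x_{\sigma(i)}]}(\mathcal{S})]+\mathcal{O}$ before averaging. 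You instead bypass Proposition~\ref{connection} and the diagonal-spot totients entirely: your commutation step yields the sharper intermediate bound $\Phi_{\mathrm{mix}}\le k_j+1\le\Phi[(\gamma^{-1}\circ\beta\circ\gamma\circ\nabla)_{[x_{\sigma(j)}]}(\mathcal{S})]$ directly, and the diagonal orders enter only as a positive padding term at the very end. Your route is more elementary and in fact proves the stronger statement $\Phi_{\mathrm{mix}}\le\frac{1}{l}\sum_j\Phi[(\gamma^{-1}\circ\beta\circ\gamma\circ\nabla)_{[x_{\sigma(j)}]}(\mathcal{S})]$ (indeed $\Phi_{\mathrm{mix}}\le\min_j\Phi[(\gamma^{-1}\circ\beta\circ\gamma\circ\nabla)_{[x_{\sigma(j)}]}(\mathcal{S})]$), from which the theorem follows trivially; the paper's route, while less sharp here, isolates the identity $\Phi=\mathcal{I}+\mathcal{O}$ that is reused elsewhere (e.g.\ in the totient--energy--intensity inequality following Corollary~\ref{actual inequality}).
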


\begin{proof}
We consider the mixed expansion 
\begin{align}
(\gamma^{-1}\circ\beta\circ\gamma\circ\nabla)_{\otimes_{i=1}^{l}[x_{\sigma(i)}]}(\mathcal{S}).\nonumber
\end{align}
Using Proposition \ref{exist diagonalization}, we find that for each direction $[x_{\sigma(i)}]$ for $1\leq i\leq l$ there exist some spot $\mathcal{S}_t$ and a number $k\geq 1$ such that we can write
\begin{align}
(\gamma^{-1}\circ\beta\circ\gamma\circ\nabla)_{\otimes_{i=1}^{l}[x_{\sigma(i)}]}(\mathcal{S})&=(\gamma^{-1}\circ\beta\circ\gamma\circ\nabla)^k_{[x_{\sigma(i)}]}(\mathcal{S}_t).\nonumber
\end{align}
Again, using the lemma \ref{admit dropler}, we find that each of the expansions $(\gamma^{-1}\circ\beta\circ\gamma\circ\nabla)^k_{[x_{\sigma(i)}]}(\mathcal{S})$ admits a Doppler effect from the source 
\begin{align}
(\gamma^{-1}\circ\beta\circ\gamma\circ\nabla)_{\otimes_{i=1}^{l}[x_{\sigma(i)}]}(\mathcal{S}).\nonumber
\end{align}
We deduce for each direction $[x_{\sigma(i)}]$ the relation
\begin{align}
\Phi[(\gamma^{-1}\circ\beta\circ\gamma\circ\nabla)_{[x_i]}(\mathcal{S}_t)]&=\mathcal{I}[(\gamma^{-1}\circ\beta\circ\gamma\circ\nabla)_{[x_i]}(\mathcal{S})]+\mathcal{O}[(\gamma^{-1}\circ\beta\circ\gamma\circ\nabla)_{[x_i]}(\mathcal{S}_t)].\nonumber
\end{align}
Using the definition \ref{dropler effect}, we obtain further the inequality
\begin{align}
\Phi[(\gamma^{-1}\circ\beta\circ\gamma\circ\nabla)_{[x_i]}(\mathcal{S}_t)]&<\Phi[(\gamma^{-1}\circ\beta\circ\gamma\circ\nabla)_{[x_i]}(\mathcal{S})]+\mathcal{O}[(\gamma^{-1}\circ\beta\circ\gamma\circ\nabla)_{[x_i]}(\mathcal{S}_t)].\nonumber
\end{align}
We deduce
\begin{align}
\Phi[(\gamma^{-1}\circ\beta\circ\gamma\circ\nabla)_{{\otimes}_{i=1}^{l}[x_{\sigma(i)}]}(\mathcal{S})]&=\Phi[(\gamma^{-1}\circ\beta\circ\gamma\circ\nabla)^k_{[x_i]}(\mathcal{S}_t)]\nonumber \\&\leq \Phi[(\gamma^{-1}\circ\beta\circ\gamma\circ\nabla)_{[x_i]}(\mathcal{S}_t)]\nonumber 
\end{align}
so that we have the refined inequality
\begin{align}
\Phi[(\gamma^{-1}\circ\beta\circ\gamma\circ\nabla)_{{\otimes}_{i=1}^{l}[x_{\sigma(i)}]}(\mathcal{S})]<\Phi[(\gamma^{-1}\circ\beta\circ\gamma\circ\nabla)_{[x_i]}(\mathcal{S})]+\mathcal{O}[(\gamma^{-1}\circ\beta\circ\gamma\circ\nabla)_{[x_i]}(\mathcal{S}_t)].\nonumber
\end{align}
Since there are $l$ directions under consideration, we add $l$ such chains of the inequality and obtain
\begin{align}
l\Phi[(\gamma^{-1}\circ\beta\circ\gamma\circ \nabla)_{\otimes_{i=1}^{l}[x_{\sigma(i)}]}(\mathcal{S})]&<\sum \limits_{i=1}^{l}\Phi[(\gamma^{-1}\circ\beta\circ\gamma\circ\nabla)_{[x_{\sigma(i)}]}(\mathcal{S})]\nonumber \\&+\sum \limits_{\substack{1\leq i\leq l\\\mathcal{S}_r\in \mathrm{Diag}[(\gamma^{-1}\circ\beta\circ\gamma\circ\nabla)_{\otimes_{i=1}^{l}[x_{\sigma(i)}]}(\mathcal{S})]\\\mathcal{S}_r=(\gamma^{-1}\circ\beta\circ\gamma\circ\nabla)_{[x_{\sigma(i)}]}(\mathcal{S}_t)}}\mathcal{O}[(\gamma^{-1}\circ\beta\circ\gamma\circ\nabla)_{[x_{\sigma(i)}]}(\mathcal{S}_t)].\nonumber
\end{align}
This completes the proof of the theorem.
\end{proof}
\bigskip

\begin{corollary}\label{actual inequality}
Let $\mathcal{F}:=\{\mathcal{S}_i\}_{i=1}^{\infty}$ be a collection of tuples of polynomials in the ring $\mathbb{R}[x_1,x_2,\ldots,x_n]$. If $\mathcal{O}[(\gamma^{-1}\circ\beta\circ\gamma\circ\nabla)_{[x_{\sigma(i)}]}(\mathcal{S}_t)]=1$ for each 
\begin{align}
(\gamma^{-1}\circ\beta\circ\gamma\circ\nabla)_{[x_{\sigma(i)}]}(\mathcal{S}_t) \in \mathrm{Diag}[(\gamma^{-1}\circ\beta\circ\gamma\circ\nabla)_{\otimes_{i=1}^{l}[x_{\sigma(i)}]}(\mathcal{S})]\nonumber
\end{align}
then 
\begin{align}
\Phi[(\gamma^{-1}\circ\beta\circ\gamma\circ\nabla)_{\otimes_{i=1}^{l}[x_{\sigma(i)}]}(\mathcal{S})]&<\frac{1}{l}\sum \limits_{i=1}^{l}\Phi[(\gamma^{-1}\circ\beta\circ\gamma\circ\nabla)_{[x_{\sigma(i)}]}(\mathcal{S})]+1.\nonumber
\end{align}
\end{corollary}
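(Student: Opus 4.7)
The plan is to derive Corollary \ref{actual inequality} as an immediate specialization of Theorem \ref{mixed specific inequality}, by substituting the hypothesis $\mathcal{O}=1$ for every diagonal appearing on the right-hand side. No new ideas are needed; the work is purely bookkeeping on the cardinality of the set of diagonals.

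First I would invoke Proposition \ref{exist diagonalization} to guarantee that for every one of the $l$ directions $[x_{\sigma(i)}]$ involved in the mixed expansion
\begin{align}
(\gamma^{-1}\circ \beta \circ \gamma \circ \nabla)_{\otimes_{i=1}^{l}[x_{\sigma(i)}]}(\mathcal{S}),\nonumber
\end{align}
there exists a spot $\mathcal{S}_t\in \mathcal{F}$ and a diagonal expansion $(\gamma^{-1}\circ \beta \circ \gamma \circ \nabla)_{[x_{\sigma(i)}]}(\mathcal{S}_t)$ in $\mathrm{Diag}[(\gamma^{-1}\circ \beta \circ \gamma \circ \nabla)_{\otimes_{i=1}^{l}[x_{\sigma(i)}]}(\mathcal{S})]$. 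This assures that the index set of the second sum in Theorem \ref{mixed specific inequality} is non-empty and contains exactly one contribution per direction $[x_{\sigma(i)}]$, giving $l$ terms in total.

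Second, I would apply Theorem \ref{mixed specific inequality} directly and insert the hypothesis $\mathcal{O}[(\gamma^{-1}\circ \beta \circ \gamma \circ \nabla)_{[x_{\sigma(i)}]}(\mathcal{S}_t)]=1$ for each such diagonal. Then the inner sum reduces to
\begin{align}
\frac{1}{l}\sum \limits_{\substack{1\leq i\leq l\\ \mathcal{S}_r\in \mathrm{Diag}[(\gamma^{-1}\circ \beta \circ \gamma \circ \nabla)_{\otimes_{i=1}^{l}[x_{\sigma(i)}]}(\mathcal{S})]\\ \mathcal{S}_r=(\gamma^{-1}\circ \beta \circ \gamma \circ \nabla)_{[x_{\sigma(i)}]}(\mathcal{S}_t)}} 1 = \frac{1}{l}\cdot l = 1,\nonumber
\end{align}
and the claimed inequality follows immediately.

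The only mild obstacle is ensuring that the counting of diagonals is consistent with the indexing used in Theorem \ref{mixed specific inequality}: that is, each direction $[x_{\sigma(i)}]$ contributes exactly one diagonal spot to the sum, rather than several. Proposition \ref{exist diagonalization} furnishes existence, and the summation convention in the theorem statement pairs each index $i$ with the corresponding diagonal spot $\mathcal{S}_t$, so there is nothing further to verify. The result is thus essentially a one-line consequence once the diagonal count is made explicit.
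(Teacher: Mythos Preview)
Your proposal is correct and follows essentially the same approach as the paper's own proof: both derive the corollary by substituting the hypothesis $\mathcal{O}[(\gamma^{-1}\circ \beta \circ \gamma \circ \nabla)_{[x_{\sigma(i)}]}(\mathcal{S}_t)]=1$ directly into Theorem \ref{mixed specific inequality}. Your version is slightly more explicit in justifying the diagonal count via Proposition \ref{exist diagonalization} and carrying out the arithmetic $\frac{1}{l}\cdot l=1$, but this is exactly what the paper's one-line proof leaves implicit.
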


\begin{proof}
This is a consequence of the inequality in Theorem \ref{mixed specific inequality} by taking
\begin{align}
\mathcal{O}[(\gamma^{-1}\circ\beta\circ\gamma\circ\nabla)_{[x_{\sigma(i)}]}(\mathcal{S}_t)]=1\nonumber
\end{align}
for each 
\begin{align}
(\gamma^{-1}\circ\beta\circ\gamma\circ\nabla)_{[x_{\sigma(i)}]}(\mathcal{S}_t) \in \mathrm{Diag}[(\gamma^{-1}\circ\beta\circ\gamma\circ\nabla)_{\otimes_{i=1}^{l}[x_{\sigma(i)}]}(\mathcal{S})].\nonumber
\end{align}
\end{proof}
\bigskip

Using the energy-Doppler effect-intensity equation in definition \ref{dropler effect}
\begin{align}
\mathbb{E}[(\gamma^{-1}\circ\beta\circ\gamma\circ\nabla)_{[x_j]}](\mathcal{S})=\Phi[(\gamma^{-1}\circ\beta\circ\gamma\circ\nabla)_{[x_j]}(\mathcal{S})]-\mathcal{I}[(\gamma^{-1}\circ\beta\circ\gamma\circ\nabla)_{[x_j]}(\mathcal{S})]\nonumber
\end{align}
and Theorem \ref{mixed specific inequality}, we obtain a refined inequality 
\begin{align}
\Phi[(\gamma^{-1}\circ\beta\circ\gamma\circ\nabla)_{\otimes_{i=1}^{l}[x_{\sigma(i)}]}(\mathcal{S})]&<\frac{1}{l}\sum \limits_{i=1}^{l}\mathbb{E}[(\gamma^{-1}\circ\beta\circ\gamma\circ\nabla)_{[x_{\sigma(i)}]}(\mathcal{S})]\nonumber \\&+\frac{1}{l}\sum \limits_{i=1}^{l}\mathcal{I}[(\gamma^{-1}\circ\beta\circ\gamma\circ\nabla)_{[x_{\sigma(i)}]}(\mathcal{S})]\nonumber \\&+\frac{1}{l}\sum \limits_{\substack{1\leq i\leq l\\\mathcal{S}_r\in \mathrm{Diag}[(\gamma^{-1}\circ\beta\circ\gamma\circ\nabla)_{\otimes_{i=1}^{l}[x_{\sigma(i)}]}(\mathcal{S})]\\\mathcal{S}_r=(\gamma^{-1}\circ\beta\circ\gamma\circ\nabla)_{[x_{\sigma(i)}]}(\mathcal{S}_t)}}\mathcal{O}[(\gamma^{-1}\circ\beta\circ\gamma\circ\nabla)_{[x_{\sigma(i)}]}(\mathcal{S}_t)].\nonumber
\end{align}
We call this inequality the totient-energy-Doppler effect intensity inequality.
\bigskip

\section{Hybrid expansions}

In this section, we introduce and study the notion of \emph{hybrid} expansions.

\begin{definition}\label{hybrid}
Let $\mathcal{F}=\{\mathcal{S}_i\}_{i=1}^{\infty}$ be a collection of tuples of polynomials in the ring $\mathbb{R}[x_1,x_2,\ldots,x_n]$. We say that the expansions $(\gamma^{-1}\circ\beta\circ\gamma\circ\nabla)^k_{[x_{i}]}(\mathcal{S}_a)$ and $(\gamma^{-1}\circ\beta\circ\gamma\circ\nabla)^t_{[x_{j}]}(\mathcal{S}_b)$ with $i\neq j$ are \emph{hybrid} if 
\begin{align}
(\gamma^{-1}\circ\beta\circ\gamma\circ\nabla)^k_{[x_{i}]}(\mathcal{S}_a)=(\gamma^{-1}\circ\beta\circ\gamma\circ\nabla)^t_{[x_{j}]}(\mathcal{S}_b). \nonumber
\end{align}
We denote this relationship by
\begin{align}
(\gamma^{-1}\circ\beta\circ\gamma\circ\nabla)^k_{[x_{i}]}(\mathcal{S}_a)\Join (\gamma^{-1}\circ\beta\circ\gamma\circ\nabla)^t_{[x_{j}]}(\mathcal{S}_b). \nonumber
\end{align}
\end{definition}
\bigskip

\begin{proposition}\label{diaghybrid}
Let $\mathcal{F}=\{\mathcal{S}_i\}_{i=1}^{\infty}$ be a collection of tuples of polynomials in the ring $\mathbb{R}[x_1,x_2,\ldots,x_n]$. If the mixed expansion $(\gamma^{-1}\circ\beta \circ\gamma\circ\nabla)_{\otimes_{i=1}^{l}[x_{\sigma(i)}]}(\mathcal{S})$ is diagonalizable at the spot $\mathcal{S}_a$ of order $k$ in the direction $[x_i]$ and 
\begin{align}
(\gamma^{-1}\circ\beta\circ\gamma\circ\nabla)^k_{[x_{i}]}(\mathcal{S}_a)\Join (\gamma^{-1}\circ\beta\circ\gamma\circ\nabla)^t_{[x_{j}]}(\mathcal{S}_b) \nonumber
\end{align}
then the mixed expansion is also diagonalizable at the spot $\mathcal{S}_b$ of order $t$ in the direction $[x_j]$.
\end{proposition}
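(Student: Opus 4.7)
The plan is very direct: chain together the two given equalities to conclude that the mixed expansion equals $(\gamma^{-1} \circ \beta \circ \gamma \circ \nabla)^t_{[x_j]}(\mathcal{S}_b)$, and then invoke Definition \ref{diagonalization} to read off the claimed diagonalizability at the new spot.

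First I would unpack the diagonalizability hypothesis at $\mathcal{S}_a$ in direction $[x_i]$ with order $k$, which by Definition \ref{diagonalization} is the equality
\begin{align}
(\gamma^{-1}\circ \beta \circ \gamma \circ \nabla)_{\otimes_{i=1}^{l}[x_{\sigma(i)}]}(\mathcal{S}) = (\gamma^{-1} \circ \beta \circ \gamma \circ \nabla)^k_{[x_{i}]}(\mathcal{S}_a).\nonumber
\end{align}
Next I would invoke the hybrid relationship from Definition \ref{hybrid}, which yields
\begin{align}
(\gamma^{-1} \circ \beta \circ \gamma \circ \nabla)^k_{[x_{i}]}(\mathcal{S}_a) = (\gamma^{-1} \circ \beta \circ \gamma \circ \nabla)^t_{[x_{j}]}(\mathcal{S}_b).\nonumber
\end{align}
Transitivity of equality then delivers the key identification
\begin{align}
(\gamma^{-1}\circ \beta \circ \gamma \circ \nabla)_{\otimes_{i=1}^{l}[x_{\sigma(i)}]}(\mathcal{S}) = (\gamma^{-1} \circ \beta \circ \gamma \circ \nabla)^t_{[x_{j}]}(\mathcal{S}_b),\nonumber
\end{align}
which is precisely the defining equation for diagonalizability of the mixed expansion at the spot $\mathcal{S}_b$ with order $t$ in direction $[x_j]$.

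The only substantive technicality is to verify the side condition in Definition \ref{diagonalization} that $\mathcal{S} - \mathcal{S}_b$ is not a tuple of $\mathbb{R}$. This is where I expect the main (and essentially only) obstacle to lie. The strategy would be to argue by contradiction: if $\mathcal{S} - \mathcal{S}_b$ were a constant tuple in $\mathbb{R}^n$, then because $\nabla_{[x_j]}$ annihilates constants and the expansion operator is linear by Proposition \ref{linearity}, any positive power of the expansion in direction $[x_j]$ applied to such a tuple would force $(\gamma^{-1} \circ \beta \circ \gamma \circ \nabla)^t_{[x_j]}(\mathcal{S}_b) = (\gamma^{-1} \circ \beta \circ \gamma \circ \nabla)^t_{[x_j]}(\mathcal{S})$, and propagating this through the hybrid equality would in turn compromise the hypothesis that $\mathcal{S} - \mathcal{S}_a$ is not a constant tuple built into the original diagonalizability. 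Once that side condition is confirmed, the proposition follows immediately from the three displayed equalities above.
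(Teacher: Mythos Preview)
Your core argument is correct and is essentially identical to the paper's own proof: unpack Definition~\ref{diagonalization} to get the first equality, unpack Definition~\ref{hybrid} to get the second, and chain them by transitivity. The paper's proof stops there and declares the claim immediate; your additional paragraph attempting to verify the side condition that $\mathcal{S}-\mathcal{S}_b$ is not a tuple of $\mathbb{R}$ goes beyond what the paper actually checks (the paper simply ignores that side condition), though your sketch of the contradiction there is not fully fleshed out.
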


\begin{proof}
Suppose that the mixed expansion $(\gamma^{-1}\circ\beta\circ\gamma\circ\nabla)_{\otimes_{i=1}^{l}[x_{\sigma(i)}]}(\mathcal{S})$ is diagonalizable at the spot $\mathcal{S}_a$ of order $k$ in the direction $[x_i]$. By the definition \ref{diagonalization}, we have
\begin{align}
(\gamma^{-1}\circ\beta\circ\gamma\circ\nabla)_{\otimes_{i=1}^{l}[x_{\sigma(i)}]}(\mathcal{S})=(\gamma^{-1}\circ\beta\circ\gamma\circ\nabla)^k_{[x_{i}]}(\mathcal{S}_a).\nonumber
\end{align}
Under the assumption that the expansions are hybrid, we get
\begin{align}
(\gamma^{-1}\circ\beta\circ\gamma\circ\nabla)_{\otimes_{i=1}^{l}[x_{\sigma(i)}]}(\mathcal{S})=(\gamma^{-1}\circ\beta\circ\gamma\circ\nabla)^t_{[x_{j}]}(\mathcal{S}_b)\nonumber
\end{align}
and the claim follows immediately.
\end{proof}
\bigskip

\begin{proposition}\label{hybrid totient control}
Let $\mathcal{F}=\{\mathcal{S}_i\}_{i=1}^{\infty}$ be a collection of tuples of polynomial in the ring $\mathbb{R}[x_1,x_2,\ldots,x_n]$. Let $(\gamma^{-1}\circ\beta\circ\gamma\circ\nabla)_{[x_{i}]}(\mathcal{S}_a)$ be a diagonal of the mixed expansion 
\begin{align}
(\gamma^{-1}\circ\beta\circ\gamma\circ\nabla)_{\otimes_{i=1}^{l}[x_{\sigma(i)}]}(\mathcal{S})\nonumber
\end{align}
of order $k\geq 1$. If 
\begin{align}
(\gamma^{-1}\circ\beta\circ\gamma\circ\nabla)^k_{[x_{i}]}(\mathcal{S}_a)\Join (\gamma^{-1}\circ\beta\circ\gamma\circ\nabla)^t_{[x_{j}]}(\mathcal{S}_b) \nonumber
\end{align}
then 
\begin{align}
\Phi[(\gamma^{-1}\circ\beta\circ\gamma\circ\nabla)^t_{[x_{j}]}(\mathcal{S}_b)]&<\mathrm{max}\{\Phi[(\gamma^{-1}\circ\beta\circ\gamma\circ\nabla)_{[x_{\sigma(i)}]}(\mathcal{S})]\}_{i=1}^{l}\nonumber \\&+\mathrm{max}\{\mathcal{O}[(\gamma^{-1}\circ\beta\circ\gamma\circ\nabla)_{[x_{\sigma(i)}]}(\mathcal{S}_t)]\}_{\substack{i=1\\\mathcal{S}_t\in \mathrm{Diag}[(\gamma^{-1}\circ\beta\circ\gamma\circ\nabla)_{\otimes_{i=1}^{l}[x_{\sigma(i)}]}(\mathcal{S})]}}^{l}.\nonumber
\end{align}
\end{proposition}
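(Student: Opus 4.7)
The plan is to transport the diagonalization hypothesis from the $[x_i]$-direction to the $[x_j]$-direction via the hybrid equality, and then combine Proposition \ref{connection} with a simple monotonicity step on iterated expansions. First, I would invoke Proposition \ref{diaghybrid}: since $(\gamma^{-1} \circ \beta \circ \gamma \circ \nabla)_{[x_{i}]}(\mathcal{S}_a)$ is a diagonal of the mixed expansion with order $k$ and is hybrid to $(\gamma^{-1} \circ \beta \circ \gamma \circ \nabla)^t_{[x_{j}]}(\mathcal{S}_b)$, the mixed expansion is also diagonalizable in direction $[x_j]$ at the spot $\mathcal{S}_b$ with order $t$. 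In particular $(\gamma^{-1}\circ \beta \circ \gamma \circ \nabla)_{[x_j]}(\mathcal{S}_b)$ belongs to $\mathrm{Diag}[(\gamma^{-1}\circ \beta \circ \gamma \circ \nabla)_{\otimes_{i=1}^{l}[x_{\sigma(i)}]}(\mathcal{S})]$, so the order $t$ is bounded above by $\max\{\mathcal{O}[(\gamma^{-1}\circ \beta \circ \gamma \circ \nabla)_{[x_{\sigma(i)}]}(\mathcal{S}_s)]\}$, the second maximum on the right-hand side of the claim.

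Next, I would use Lemma \ref{admit dropler} to guarantee that the mixed expansion induces a dropler effect on $(\gamma^{-1}\circ \beta \circ \gamma \circ \nabla)_{[x_j]}(\mathcal{S})$ with some intensity $k'_j$. Proposition \ref{connection}, applied to the diagonalization just obtained, then yields the identity $\Phi[(\gamma^{-1}\circ \beta \circ \gamma \circ \nabla)_{[x_j]}(\mathcal{S}_b)] = k'_j + t$; moreover, Definition \ref{dropler effect} forces $k'_j < \Phi[(\gamma^{-1}\circ \beta \circ \gamma \circ \nabla)_{[x_j]}(\mathcal{S})]$, which is in turn at most the first maximum on the right-hand side. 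The closing step is a monotonicity remark: letting $T = \Phi[(\gamma^{-1}\circ \beta \circ \gamma \circ \nabla)_{[x_j]}(\mathcal{S}_b)]$, applying $T - t$ further copies of $(\gamma^{-1}\circ \beta \circ \gamma \circ \nabla)_{[x_j]}$ to the iterate $(\gamma^{-1}\circ \beta \circ \gamma \circ \nabla)^t_{[x_j]}(\mathcal{S}_b)$ reproduces $(\gamma^{-1}\circ \beta \circ \gamma \circ \nabla)^T_{[x_j]}(\mathcal{S}_b) = \mathcal{S}_0$, so $\Phi[(\gamma^{-1}\circ \beta \circ \gamma \circ \nabla)^t_{[x_j]}(\mathcal{S}_b)] \leq T = k'_j + t$. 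Concatenating the three bounds $k'_j < \max\Phi$ and $t \leq \max\mathcal{O}$ yields the desired strict inequality.

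The main obstacle I anticipate is the implicit assumption that $[x_j]$ appears among $\{[x_{\sigma(i)}]\}_{i=1}^{l}$, which is needed both for Lemma \ref{admit dropler} to produce a bona fide dropler effect in direction $[x_j]$ and for $\Phi[(\gamma^{-1}\circ \beta \circ \gamma \circ \nabla)_{[x_j]}(\mathcal{S})]$ to be dominated by the first maximum on the right-hand side. This has to be either read into the hypothesis or deduced from the support of the underlying mixed expansion; once granted, the three steps above are routine given the tools developed in the preceding sections.
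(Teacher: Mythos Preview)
Your approach has a real gap that you yourself flag but do not resolve: the entire argument hinges on $[x_j]$ lying among $\{[x_{\sigma(i)}]\}_{i=1}^{l}$. Lemma \ref{admit dropler} only produces a dropler effect for directions appearing in the mixed expansion, and both maxima on the right-hand side are taken over those same $l$ directions. But the hybrid relation in Definition \ref{hybrid} explicitly requires $i\neq j$, and nothing in the hypotheses forces $[x_j]$ to coincide with any $[x_{\sigma(m)}]$. So the step ``$k'_j<\Phi[(\gamma^{-1}\circ\beta\circ\gamma\circ\nabla)_{[x_j]}(\mathcal{S})]\leq \max_i\Phi[(\gamma^{-1}\circ\beta\circ\gamma\circ\nabla)_{[x_{\sigma(i)}]}(\mathcal{S})]$'' and the companion bound $t\leq\max\mathcal{O}$ are both unjustified in general; you cannot simply ``read this into the hypothesis'' without changing the statement.

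The paper's argument avoids this entirely and is much shorter. From the diagonal hypothesis and the hybrid equality one gets directly
\[
(\gamma^{-1}\circ\beta\circ\gamma\circ\nabla)_{\otimes_{i=1}^{l}[x_{\sigma(i)}]}(\mathcal{S})
=(\gamma^{-1}\circ\beta\circ\gamma\circ\nabla)^{t}_{[x_j]}(\mathcal{S}_b),
\]
hence the two sides have the same totient. One then invokes Theorem \ref{mixed specific inequality} for the mixed expansion, which bounds that common totient by the \emph{averages} $\tfrac{1}{l}\sum_i\Phi[\cdot]$ and $\tfrac{1}{l}\sum\mathcal{O}[\cdot]$ taken over the directions $[x_{\sigma(i)}]$, and finally replaces each average by the corresponding maximum. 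The point is that Theorem \ref{mixed specific inequality} already packages the dropler-effect/diagonalization bookkeeping for \emph{all} the $\sigma$-directions at once, so no hypothesis on $[x_j]$ is needed. Your route essentially tries to rerun that theorem for the single direction $[x_j]$, which is exactly where the missing assumption bites.
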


\begin{proof}
Suppose that the expansion $(\gamma^{-1}\circ\beta\circ\gamma\circ\nabla)_{[x_{i}]}(\mathcal{S}_a)$ is a diagonal of the mixed expansion 
\begin{align}
(\gamma^{-1}\circ\beta\circ\gamma\circ\nabla)_{\otimes_{i=1}^{l}[x_{\sigma(i)}]}(\mathcal{S})\nonumber
\end{align}
of order $k\geq 1$. We deduce
\begin{align}
(\gamma^{-1}\circ\beta\circ\gamma\circ\nabla)_{\otimes_{i=1}^{l}[x_{\sigma(i)}]}(\mathcal{S})&=(\gamma^{-1}\circ\beta\circ\gamma\circ\nabla)^k_{[x_{i}]}(\mathcal{S}_a).\nonumber
\end{align}
Since 
\begin{align}
(\gamma^{-1}\circ\beta\circ\gamma\circ\nabla)^k_{[x_{i}]}(\mathcal{S}_a)\Join (\gamma^{-1}\circ\beta\circ\gamma\circ\nabla)^t_{[x_{j}]}(\mathcal{S}_b) \nonumber
\end{align}
we can write
\begin{align}
(\gamma^{-1}\circ\beta\circ\gamma\circ\nabla)_{\otimes_{i=1}^{l}[x_{\sigma(i)}]}(\mathcal{S})&=(\gamma^{-1}\circ\beta\circ\gamma\circ\nabla)^t_{[x_{j}]}(\mathcal{S}_b) \nonumber
\end{align}
so that by using Theorem \ref{mixed specific inequality}, we obtain 
\begin{align}
\Phi[(\gamma^{-1}\circ\beta\circ\gamma\circ\nabla)^t_{[x_{j}]}(\mathcal{S}_b)]&=\Phi[(\gamma^{-1}\circ\beta\circ\gamma\circ\nabla)_{\otimes_{i=1}^{l}[x_{\sigma(i)}]}(\mathcal{S})]\nonumber \\&<\frac{1}{l}\sum \limits_{i=1}^{l}\Phi[(\gamma^{-1}\circ\beta\circ\gamma\circ\nabla)_{[x_{\sigma(i)}]}(\mathcal{S})]\nonumber \\&+\frac{1}{l}\sum \limits_{\substack{1\leq i\leq l\\\mathcal{S}_t\in \mathrm{Diag}[(\gamma^{-1}\circ\beta\circ\gamma\circ\nabla)_{\otimes_{i=1}^{l}[x_{\sigma(i)}]}(\mathcal{S})]}}\mathcal{O}[(\gamma^{-1}\circ\beta\circ\gamma\circ\nabla)_{[x_{\sigma(i)}]}(\mathcal{S}_t)]\nonumber
\end{align}
and the claim follows by further controlling the two sums on the right hand-side of the inequality.
\end{proof}
\bigskip

Here, we deduce the relationship between the notion hybridization and the notion of diagonalization of a mixed expansion.

\begin{proposition}\label{hybrid diagonalization connection}
Let $\mathcal{F}=\{\mathcal{S}_i\}_{i=1}^{\infty}$ be a collection of tuples of polynomials in the ring $\mathbb{R}[x_1,x_2,\ldots,x_n]$. If
\begin{align}
(\gamma^{-1}\circ\beta\circ\gamma\circ\nabla)^k_{[x_{i}]}(\mathcal{S}_a)\Join (\gamma^{-1}\circ\beta\circ\gamma\circ\nabla)^t_{[x_{j}]}(\mathcal{S}_b)\nonumber
\end{align}
then the mixed expansion $(\gamma^{-1}\circ\beta\circ\gamma\circ\nabla)_{[x_{i}]}\circ (\gamma^{-1}\circ\beta\circ\gamma\circ\nabla)^t_{[x_{j}]}(\mathcal{S}_b)$ respectively $(\gamma^{-1}\circ\beta\circ\gamma\circ\nabla)_{[x_{j}]}\circ (\gamma^{-1}\circ\beta\circ\gamma\circ\nabla)^k_{[x_{i}]}(\mathcal{S}_a)$ is diagonalizable at the spots $\mathcal{S}_a$ of order $k+1$ respectively $\mathcal{S}_b$ with order $t+1$.
\end{proposition}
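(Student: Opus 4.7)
The plan is to exploit the hybrid relation directly: by Definition \ref{hybrid}, the assumption $(\gamma^{-1} \circ \beta \circ \gamma \circ \nabla)^k_{[x_{i}]}(\mathcal{S}_a) \Join (\gamma^{-1} \circ \beta \circ \gamma \circ \nabla)^t_{[x_{j}]}(\mathcal{S}_b)$ is an equality of tuples of polynomials, not merely an abstract relation. The entire proof will consist of applying a single expansion operator to both sides of this equality and reading off the resulting identity against Definition \ref{diagonalization}.

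First I would apply the expansion operator $(\gamma^{-1} \circ \beta \circ \gamma \circ \nabla)_{[x_{i}]}$ to both sides of the hybrid equality. The left-hand side collapses to $(\gamma^{-1} \circ \beta \circ \gamma \circ \nabla)^{k+1}_{[x_{i}]}(\mathcal{S}_a)$ by composition of powers of the same operator, while the right-hand side becomes $(\gamma^{-1} \circ \beta \circ \gamma \circ \nabla)_{[x_{i}]} \circ (\gamma^{-1} \circ \beta \circ \gamma \circ \nabla)^t_{[x_{j}]}(\mathcal{S}_b)$. Reading this identity against Definition \ref{diagonalization}, the mixed expansion on the right is therefore diagonalizable in the direction $[x_i]$ at the spot $\mathcal{S}_a$ with order $k+1$, which is the first assertion.

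The second assertion is obtained symmetrically. Applying $(\gamma^{-1} \circ \beta \circ \gamma \circ \nabla)_{[x_{j}]}$ to both sides of the hybrid equality yields
\begin{align}
(\gamma^{-1} \circ \beta \circ \gamma \circ \nabla)_{[x_{j}]} \circ (\gamma^{-1} \circ \beta \circ \gamma \circ \nabla)^k_{[x_{i}]}(\mathcal{S}_a) = (\gamma^{-1} \circ \beta \circ \gamma \circ \nabla)^{t+1}_{[x_{j}]}(\mathcal{S}_b),\nonumber
\end{align}
which, again by Definition \ref{diagonalization}, exhibits the mixed expansion on the left as diagonalizable in the direction $[x_j]$ at the spot $\mathcal{S}_b$ with order $t+1$.

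There is essentially no obstacle here; the argument is a one-line manipulation in each case. The only points that require mild care are making sure that the composition with the extra factor genuinely produces a well-formed mixed expansion in the sense of Definition \ref{diagonalization} (so that both $k+1\geq 1$ and $t+1\geq 1$, which are automatic from $k,t\geq 1$), and, implicitly, that the spots $\mathcal{S}_a$ and $\mathcal{S}_b$ continue to meet the non-constant tuple difference condition required by that definition, which is inherited from the hypothesis that the original diagonalization is valid.
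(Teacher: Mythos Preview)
Your proposal is correct and follows essentially the same approach as the paper: unpack the hybrid relation as an equality, apply one copy of the expansion operator in the appropriate direction to both sides, and read off the diagonalization from Definition~\ref{diagonalization}. The paper's proof is identical in substance, without your additional remarks about the order bounds and the non-constant difference condition.
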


\begin{proof}
Suppose that
\begin{align}
(\gamma^{-1}\circ\beta\circ\gamma\circ\nabla)^k_{[x_{i}]}(\mathcal{S}_a)\Join (\gamma^{-1}\circ\beta\circ\gamma\circ\nabla)^t_{[x_{j}]}(\mathcal{S}_b).\nonumber
\end{align}
We deduce
\begin{align}
(\gamma^{-1}\circ\beta\circ\gamma\circ\nabla)^k_{[x_{i}]}(\mathcal{S}_a)=(\gamma^{-1}\circ\beta\circ\gamma\circ\nabla)^t_{[x_{j}]}(\mathcal{S}_b)\nonumber
\end{align}
so that by applying a copy of the expansion $(\gamma^{-1}\circ\beta\circ\gamma\circ\nabla)_{[x_{i}]}$ respectively $(\gamma^{-1}\circ\beta\circ\gamma\circ \nabla)_{[x_{j}]}$ on both sides, we have the following relations 
\begin{align}
(\gamma^{-1}\circ\beta\circ\gamma\circ\nabla)_{[x_{i}]}\circ (\gamma^{-1}\circ\beta\circ\gamma\circ\nabla)^t_{[x_{j}]}(\mathcal{S}_b)&=(\gamma^{-1}\circ\beta\circ\gamma\circ\nabla)^{k+1}_{[x_{i}]}(\mathcal{S}_a)\nonumber 
\end{align}
and 
\begin{align}
(\gamma^{-1}\circ\beta\circ\gamma\circ\nabla)_{[x_{j}]}\circ (\gamma^{-1}\circ\beta\circ\gamma\circ\nabla)^k_{[x_{i}]}(\mathcal{S}_a)&=(\gamma^{-1}\circ\beta\circ\gamma\circ\nabla)^{t+1}_{[x_{j}]}(\mathcal{S}_b)\nonumber
\end{align}
and the claim follows immediately from these two relations.
\end{proof}
\bigskip

\section{Applications of the totient inequality}

In this section, we explore some applications of the theory. We obtain an inequality that will be useful for the study of the Pierce-Birkhoff conjecture. We first make the following precise.

\begin{definition}\label{index}
Let $f_k\in \mathbb{R}[x_1,x_2,\ldots,x_n]$ be a polynomial. By the index of $x_i$ for $1\leq i\leq n$ relative to $f_k$, denoted by $\mathrm{Ind}_{f_k}(x_i)$, we mean the largest power of $x_i$ in the polynomial $f_k$.
\end{definition}
\bigskip

\begin{lemma}\label{index-totient}
Let $\mathcal{S}=(f_1,f_2,\ldots,f_s)$ be a tuple of polynomials such that $f_i\in \mathbb{R}[x_1,x_2,\ldots,x_n]$ for $1\leq i \leq s$. For any $1\leq j\leq n$, we have
\begin{align}
\Phi[(\gamma^{-1}\circ\beta\circ\gamma\circ\nabla)_{[x_{j}]}(\mathcal{S})]=\mathrm{max}\{\mathrm{Ind}_{f_i}(x_j)\}_{i=1}^{s}+1.\nonumber
\end{align}
\end{lemma}
\bigskip

\begin{proposition}
Let $f_1,f_2,\ldots,f_s\in \mathbb{R}[x_1,x_2,\ldots.x_n]$ be polynomials. There exist some $\mathcal{J}:=\mathcal{J}(l)\geq 0$ such that 
\begin{align}
\mathrm{min}\{\mathrm{max}\{\mathrm{Ind}_{f_k}(x_{\sigma(i)})\}_{k=1}^{s}+1\}_{i=1}^{l}&<\frac{1}{l}\sum \limits_{i=1}^{l}\mathrm{max}\{\mathrm{Ind}_{f_k}(x_{\sigma(i)})\}_{k=1}^{s}+2+\mathcal{J}.\nonumber
\end{align}
\end{proposition}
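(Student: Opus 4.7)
The plan is a two-step reduction: first, restate both sides in the language of totients via Lemma~\ref{index-totient}; second, invoke the elementary min-versus-average inequality.

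Form the tuple $\mathcal{S}=(f_1,f_2,\ldots,f_s)\in\mathcal{F}$ and, for each $1\leq i\leq l$, set
\begin{align*}
T_i := \max_{1\leq k\leq s}\mathrm{Ind}_{f_k}(x_{\sigma(i)}) + 1.
\end{align*}
By Lemma~\ref{index-totient}, $T_i = \Phi[(\gamma^{-1}\circ\beta\circ\gamma\circ\nabla)_{[x_{\sigma(i)}]}(\mathcal{S})]$. Substituting $\max_k\mathrm{Ind}_{f_k}(x_{\sigma(i)}) = T_i - 1$ on the right-hand side of the claim, the target inequality reads
\begin{align*}
\min_{1\leq i\leq l} T_i < \frac{1}{l}\sum_{i=1}^{l}T_i + 1 + \mathcal{J}(l),
\end{align*}
which merely asserts that the minimum of a finite sequence lies strictly below a fixed positive perturbation of its arithmetic mean.

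Since $\min_{i} T_i \leq \tfrac{1}{l}\sum_{i=1}^{l} T_i$ for any finite real sequence, adding $1 + \mathcal{J}(l)$ (with $\mathcal{J}(l)\geq 0$) to the right-hand side produces the desired strict inequality; in particular $\mathcal{J}(l)=0$ already suffices. No substantive obstacle arises here: the genuine content of the proposition is the totient-index identification furnished by Lemma~\ref{index-totient}, and once that translation is in hand the rest is elementary.

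Alternatively, to thread the argument through the deeper machinery of the paper, one can apply Theorem~\ref{mixed specific inequality} to get
\begin{align*}
\Phi[(\gamma^{-1}\circ\beta\circ\gamma\circ\nabla)_{\otimes_{i=1}^{l}[x_{\sigma(i)}]}(\mathcal{S})] < \frac{1}{l}\sum_{i=1}^{l} T_i + \frac{1}{l}\sum_{\text{diag}}\mathcal{O},
\end{align*}
and absorb the diagonal-order sum into $\mathcal{J}(l)$. The challenge in that route would be to lower-bound $\min_i T_i$ by the mixed totient, which in general fails (the mixed totient can be strictly smaller, as the commutation of $\beta$ with $\nabla$ can induce premature cancellation), so a uniform comparison would require an additional assumption on $\mathcal{S}$. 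The straightforward min-versus-average proof above bypasses this difficulty entirely.
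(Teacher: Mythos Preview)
Your elementary argument is correct: after the substitution $T_i=\max_k\mathrm{Ind}_{f_k}(x_{\sigma(i)})+1$, the claim reduces to $\min_i T_i<\tfrac{1}{l}\sum_i T_i+1+\mathcal{J}$, which follows from $\min\leq$ average with $\mathcal{J}=0$. In fact even the appeal to Lemma~\ref{index-totient} is unnecessary, since your $T_i$ are defined directly from the indices and the totient identification plays no role in the min-versus-average step.

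The paper takes a genuinely different route. It identifies the left-hand side with the totient of the \emph{mixed} expansion $\Phi[(\gamma^{-1}\circ\beta\circ\gamma\circ\nabla)_{\otimes_{i=1}^{l}[x_{\sigma(i)}]}(\mathcal{S})]$ via a case split on whether the component expansions admit dropler effects: when none do, Proposition~\ref{dropler effect totient} gives $\Phi[\text{mixed}]=\min_i T_i$; when at least one does, the mixed totient drops by some $\mathcal{J}>0$, whence $\Phi[\text{mixed}]=\min_i T_i-\mathcal{J}$. In either case Corollary~\ref{actual inequality} then supplies the upper bound $\tfrac{1}{l}\sum_i T_i+1$. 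So the paper resolves precisely the lower-bound difficulty you flagged in your alternative paragraph, by means of the dropler-effect case analysis. What your approach buys is brevity and independence from the expansivity machinery; what the paper's approach buys is an illustration of how the mixed-totient inequality (Theorem~\ref{mixed specific inequality}) interacts with the dropler-effect structure, which is presumably the point of the section even though the target inequality itself is, as you observed, essentially trivial.
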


\begin{proof}
Consider the tuple $\mathcal{S}=(f_1,f_2,\ldots,f_s)$. We break the proof into two special cases: The case were each of the expansions $(\gamma^{-1}\circ\beta\circ\gamma\circ\nabla)_{[x_{\sigma(i)}]}(\mathcal{S})$ for $1\leq i \leq l$ does not admit and the case at least one admits a Doppler effect from the source 
\begin{align}
(\gamma^{-1}\circ\beta\circ\gamma\circ\nabla)_{\otimes_{i=1}^{l}[x_{\sigma(i)}]}(\mathcal{S}).\nonumber
\end{align}
In the case each of the expansions admit no Doppler effect from the underlying source, we get by using Proposition \ref{mixed expansion dropler effect property} and Lemma \ref{index-totient}
\begin{align}
\Phi[(\gamma^{-1}\circ\beta\circ\gamma\circ\nabla)_{\otimes_{i=1}^{l}[x_{\sigma(i)}]}(\mathcal{S})]&=\mathrm{min}\{\Phi[(\gamma^{-1}\circ\beta\circ\gamma\circ\nabla)_{[x_{\sigma(i)}]}(\mathcal{S})]\}_{i=1}^{l}\nonumber \\&=\mathrm{min}\{\mathrm{max}\{\mathrm{Ind}_{f_k}(x_{\sigma(i)})\}_{k=1}^{s}+1\}_{i=1}^{l}.\nonumber 
\end{align}
By the lemma \ref{index-totient}, we can write 
\begin{align}
\Phi[(\gamma^{-1}\circ\beta\circ\gamma\circ \nabla)_{[x_{\sigma(i)}]}(\mathcal{S})]&=\mathrm{max}\{\mathrm{Ind}_{f_i}(x_j)\}_{i=1}^{s}+1.\nonumber
\end{align}
By Corollary \ref{actual inequality}, we get
\begin{align}
\mathrm{min}\{\mathrm{max}\{\mathrm{Ind}_{f_k}(x_{\sigma(i)})\}_{k=1}^{s}+1\}_{i=1}^{l}&<\frac{1}{l}\sum \limits_{i=1}^{l}\mathrm{max}\{\mathrm{Ind}_{f_k}(x_{\sigma(i)})\}_{k=1}^{s}+2.\nonumber
\end{align}
We now turn to the case where at least one of the expansions $(\gamma^{-1}\circ\beta\circ\gamma\circ\nabla)_{[x_{\sigma(i)}]}(\mathcal{S})$ for $1\leq i\leq l$ admits a Doppler effect. In this case, Proposition \ref{mixed expansion dropler effect property} gives
\begin{align}
\Phi[(\gamma^{-1}\circ\beta\circ\gamma\circ\nabla)_{\otimes_{i=1}^{l}[x_{\sigma(i)}]}(\mathcal{S})]&=\mathrm{min}\{(\gamma^{-1}\circ\beta\circ\gamma\circ\nabla)_{[x_{\sigma(i)}]}(\mathcal{S})\}_{i=1}^{l}-\mathcal{J}\nonumber \\&=\mathrm{min}\{\mathrm{max}\{\mathrm{Ind}_{f_k}(x_{\sigma(i)})\}_{k=1}^{s}+1\}_{i=1}^{l}-\mathcal{J}\nonumber 
\end{align}
for some $\mathcal{J}:=\mathcal{J}(l)>0$. The right hand side expression is not impacted in this case. Combining both cases, the claim inequality follows as a consequence.  
\end{proof}
\bigskip

\section{Exact expansion}

In this section, we introduce the notion of an \emph{exact} expansion.

\begin{definition}\label{exact}
Let $\mathcal{F}=\{\mathcal{S}_i\}_{i=1}^{\infty}$ be a collection of tuples of polynomials in the ring $\mathbb{R}[x_1,x_2,\ldots,x_n]$. We say that expansion $(\gamma^{-1}\circ\beta\circ\gamma\circ\nabla)_{[x_k]}(\mathcal{S})$ is \emph{exact} in directions $[x_{\sigma(1)}],\ldots,[x_{\sigma(l)}]$ each with multiplicity $1$ for $1\leq l\leq n$ and $\sigma:\{1,2,\ldots, n\}\longrightarrow \{1,2,\ldots,n\}$ at the spot $\mathcal{S}_1$ if there exists a number $s\in \mathbb{N}$, called the \emph{degree} of exactness, such that 
\begin{align}
  (\gamma^{-1}\circ\beta\circ\gamma\circ\nabla)^s_{[x_k]}(\mathcal{S})=(\gamma^{-1}\circ\beta\circ\gamma\circ\nabla)_{\otimes_{i=1}^{l}[x_{\sigma(i)}]}(\mathcal{S}_1).\nonumber  
\end{align}
In general, we say that the expansion $(\gamma^{-1}\circ\beta\circ\gamma\circ\nabla)_{[x_k]}(\mathcal{S})$ is \emph{exact} in the directions $[x_{\sigma(1)}],\ldots,[x_{\sigma(l)}]$ each with multiplicity $k_1,\ldots,k_l\in \mathbb{N}$ for $1\leq l\leq n$ with degree $s$ of exactness if 
\begin{align}
   (\gamma^{-1}\circ\beta\circ\gamma\circ\nabla)^s_{[x_k]}(\mathcal{S})=(\gamma^{-1}\circ\beta\circ\gamma\circ \nabla)_{\otimes_{i=1}^{l}[x_{\sigma(i)}]^{k_i}}(\mathcal{S}_1)\nonumber  
\end{align}
where $[x_{\sigma(i)}]^{k_i}=[x_{\sigma(i)}] \otimes [x_{\sigma(i)}]\cdots \otimes [x_{\sigma(i)}]~(k_i~times)$.
\end{definition}
\bigskip

The following web shows the commutative diagram of a typical exact expansion
\begin{tikzcd}
(\gamma^{-1}\circ\beta\circ\gamma\circ\nabla)_{[x_{\sigma(1)}]}(\mathcal{S}_1) \arrow{r}{\phi_2}
& (\gamma^{-1}\circ\beta\circ\gamma\circ\nabla)_{ \otimes_{i=1}^{2}[x_{\sigma(i)}]} (\mathcal{S}_1)\arrow{d}{\phi_3} \\
(\gamma^{-1}\circ\beta\circ\gamma\circ\nabla)_{[x_k]}(\mathcal{S}) \arrow{r}{\eta^2_k}
& (\gamma^{-1}\circ\beta\circ\gamma\circ\nabla)_{\otimes_{i=1}^{3}[x_{\sigma(i)}]}(\mathcal{S}_1)
\end{tikzcd}
 \\with degree $3$ of exactness, where
 $$
 \phi_l=(\gamma^{-1}\circ\beta\circ\gamma\circ\nabla)_{[x_{\sigma(l)}]}
 $$ 
 and 
 $$
 \phi_l\circ (\gamma^{-1}\circ\beta\circ\gamma\circ\nabla)_{[x_k]}(\mathcal{S})=(\gamma^{-1}\circ\beta\circ\gamma\circ\nabla)_{[x_k]\otimes [x_{\sigma(l)}]}(\mathcal{S})
 $$ 
 and 
 $$
 \eta_k^l=(\gamma^{-1}\circ\beta\circ\gamma\circ\nabla)^l_{[x_k]}
 $$ 
 for $1\leq l\leq n$. One can also construct more expanded commutative diagrams for exact expansion with arbitrarily large degrees. The notion of an exact expansion provides alternative paths for modeling an expansion in a specific direction. This type of expansion could conceivably be difficult and often delicate, so that a little distortion in the choice of directions may not lead to the targeted expansion.
 
 \begin{proposition}\label{diagonalization vs exactness}
 The expansion $(\gamma^{-1}\circ\beta\circ\gamma\circ\nabla)_{[x_k]}(\mathcal{S})$ is exact in the directions $[x_{\sigma(1)}],\ldots,[x_{\sigma(l)}]$ for $1\leq l\leq n$ and $\sigma:\{1,2,\ldots, n\}\longrightarrow \{1,2,\ldots,n\}$ at the spot $\mathcal{S}_1$ with degree $s\in \mathbb{N}$ if and only if the expansion $(\gamma^{-1}\circ\beta\circ\gamma\circ\nabla)_{\otimes_{i=1}^{l}[x_{\sigma(i)}]}(\mathcal{S}_1)$ is diagonalizable in the direction $[x_k]$ at the spot $\mathcal{S}$ of order $s$.
 \end{proposition}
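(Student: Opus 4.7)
The plan is to show this is essentially a tautology extracted by unpacking the two definitions involved and observing that they reduce to exactly the same identity.

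First I would invoke Definition \ref{exact} to rewrite the exactness hypothesis: by definition, the expansion $(\gamma^{-1}\circ \beta \circ \gamma \circ \nabla)_{[x_k]}(\mathcal{S})$ is exact in the directions $[x_{\sigma(1)}],\ldots,[x_{\sigma(l)}]$ (each with multiplicity one) at the spot $\mathcal{S}_1$ with degree $s$ if and only if
\begin{align}
(\gamma^{-1}\circ \beta \circ \gamma \circ \nabla)^s_{[x_k]}(\mathcal{S})=(\gamma^{-1}\circ \beta \circ \gamma \circ \nabla)_{\otimes_{i=1}^{l}[x_{\sigma(i)}]}(\mathcal{S}_1).\nonumber
\end{align}

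Next I would invoke Definition \ref{diagonalization} applied with the mixed expansion $(\gamma^{-1} \circ \beta \circ \gamma \circ \nabla)_{\otimes_{i=1}^{l}[x_{\sigma(i)}]}(\mathcal{S}_1)$ in the role of the object being diagonalized, the direction $[x_k]$ in the role of the distinguished direction, the spot $\mathcal{S}$ in the role of the diagonalization spot, and the number $s$ in the role of the order. Doing so, diagonalizability with order $s$ amounts to
\begin{align}
(\gamma^{-1} \circ \beta \circ \gamma \circ \nabla)_{\otimes_{i=1}^{l}[x_{\sigma(i)}]}(\mathcal{S}_1)=(\gamma^{-1}\circ \beta \circ \gamma \circ \nabla)^{s}_{[x_{k}]}(\mathcal{S}),\nonumber
\end{align}
which is literally the same equation as the one produced by the exactness definition, merely read from right to left. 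Hence the forward implication is obtained by rewriting the exactness identity in reverse, and the converse by rewriting the diagonalization identity in reverse. Neither direction requires any additional structure.

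The only mild subtlety worth spelling out is that both definitions carry side conditions that must be simultaneously satisfied, and one should check there is no clash. The diagonalization definition requires $\mathcal{S}-\mathcal{S}_1$ not to be a tuple of constants, and the exactness definition requires a natural number $s$ as degree; both conditions are symmetric in what the two statements postulate, so matching them is immediate. I expect no real obstacle: the proposition is a translation between two pieces of terminology that were set up to describe the same equation from two different vantage points (the "source" side vs.\ the "target" side of the identity), so the proof is two short paragraphs of cross-referencing Definitions \ref{exact} and \ref{diagonalization}.
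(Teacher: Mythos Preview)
Your proposal is correct and matches the paper's treatment: the paper states this proposition without proof, treating it as an immediate translation between Definitions~\ref{exact} and~\ref{diagonalization}, exactly as you describe. Your observation that both definitions unwind to the single identity $(\gamma^{-1}\circ \beta \circ \gamma \circ \nabla)^s_{[x_k]}(\mathcal{S})=(\gamma^{-1}\circ \beta \circ \gamma \circ \nabla)_{\otimes_{i=1}^{l}[x_{\sigma(i)}]}(\mathcal{S}_1)$, together with the matching side condition on $\mathcal{S}_1-\mathcal{S}$, is precisely the content, and nothing further is needed.
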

 \bigskip
 
 The proposition \ref{diagonalization vs exactness} expresses the relationship between the notion of an exactness of an expansion and the diagonalization of an expansion. These two notions are similar except that the notion of exactness is applied to expansions in a specific direction, while the notion of diagonalization is appropriate for expansions in mixed directions. Whichever way one perceives these notions as different, they can both be considered as notions orthogonal to each other. We will show that the notion of exactness in directions can be extended to other directions.
 
 \begin{proposition}
 If the expansion $(\gamma^{-1}\circ\beta\circ\gamma\circ\nabla)_{[x_k]}(\mathcal{S})$ is exact in the directions $[x_{\sigma(1)}],\ldots,[x_{\sigma(l)}]$ for $1\leq l\leq n$ and $\sigma:\{1,2,\ldots, n\}\longrightarrow \{1,2,\ldots,n\}$ at the spot $\mathcal{S}_1$ of degree $s\in \mathbb{N}$, then it is also exact in the directions  $[x_{\sigma(1)}],\ldots,[x_{\sigma(l)}],[x_k]$ at the spot $\mathcal{S}_1$ of degree $s+1$. 
 \end{proposition}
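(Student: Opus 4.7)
The plan is to unfold the definition of exactness, apply one additional copy of the expansion operator $(\gamma^{-1}\circ \beta \circ \gamma \circ \nabla)_{[x_k]}$ to both sides, and then invoke the commutativity of expansion to recognize the resulting right-hand side as a mixed expansion in the augmented list of directions.

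First, I would begin by writing down the hypothesis using Definition \ref{exact}, namely that there exists $s\in\mathbb{N}$ with
\begin{align}
(\gamma^{-1}\circ \beta \circ \gamma \circ \nabla)^s_{[x_k]}(\mathcal{S})=(\gamma^{-1}\circ \beta \circ \gamma \circ \nabla)_{\otimes_{i=1}^{l}[x_{\sigma(i)}]}(\mathcal{S}_1).\nonumber
\end{align}
Then I would apply the operator $(\gamma^{-1}\circ \beta \circ \gamma \circ \nabla)_{[x_k]}$ to both sides. On the left-hand side this simply increments the exponent, producing $(\gamma^{-1}\circ \beta \circ \gamma \circ \nabla)^{s+1}_{[x_k]}(\mathcal{S})$. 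On the right-hand side it appends $[x_k]$ to the front of the mixed direction string, giving $(\gamma^{-1}\circ \beta \circ \gamma \circ \nabla)_{[x_k]\otimes\bigotimes_{i=1}^{l}[x_{\sigma(i)}]}(\mathcal{S}_1)$.

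Next, I would invoke the commutativity of expansion established earlier in the paper to reorder the directions so that $[x_k]$ appears at the end of the concatenation, obtaining
\begin{align}
(\gamma^{-1}\circ \beta \circ \gamma \circ \nabla)^{s+1}_{[x_k]}(\mathcal{S})=(\gamma^{-1}\circ \beta \circ \gamma \circ \nabla)_{\otimes_{i=1}^{l}[x_{\sigma(i)}]\otimes [x_k]}(\mathcal{S}_1).\nonumber
\end{align}
By Definition \ref{exact} this is precisely the assertion that $(\gamma^{-1}\circ \beta \circ \gamma \circ \nabla)_{[x_k]}(\mathcal{S})$ is exact in the directions $[x_{\sigma(1)}],\ldots,[x_{\sigma(l)}],[x_k]$ at the spot $\mathcal{S}_1$ with degree $s+1$, completing the proof.

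There is essentially no obstacle here; the argument is a one-line manipulation once one has the commutativity of expansion in hand. The only mild subtlety is bookkeeping about where $[x_k]$ sits in the tensor string, which is dissolved by commutativity. One could alternatively route the proof through Proposition \ref{diagonalization vs exactness}: exactness of $(\gamma^{-1}\circ \beta \circ \gamma \circ \nabla)_{[x_k]}(\mathcal{S})$ with degree $s$ corresponds to a diagonalization of the mixed expansion on the right with order $s$ at the spot $\mathcal{S}$, and applying one more $[x_k]$-expansion on both sides yields the diagonalization of the extended mixed expansion with order $s+1$; translating back via Proposition \ref{diagonalization vs exactness} gives the claim.
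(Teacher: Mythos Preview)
Your proof is correct and follows essentially the same approach as the paper: write out the exactness equation from Definition \ref{exact} and apply one more copy of $(\gamma^{-1}\circ \beta \circ \gamma \circ \nabla)_{[x_k]}$ to both sides. Your version is slightly more explicit in invoking commutativity to reposition $[x_k]$ in the tensor string, which the paper leaves implicit.
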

 
 \begin{proof}
 By the definition \ref{exact}, we can write 
\begin{align}
  (\gamma^{-1}\circ\beta\circ\gamma\circ\nabla)^s_{[x_k]}(\mathcal{S})=(\gamma^{-1}\circ\beta\circ\gamma\circ\nabla)_{\otimes_{i=1}^{l}[x_{\sigma(i)}]}(\mathcal{S}_1).\nonumber  
\end{align}
The claim follows by applying an extra copy of the expansion operator $(\gamma^{-1}\circ\beta\circ\gamma\circ\nabla)_{[x_k]}$ on both sides of the equation.
 \end{proof}
\bigskip

 Here, we show that the notion of exactness can be extended to proper sub-expansions of an expansion.
 
 \begin{proposition}\label{exactness vs sub-expansion}
 Let $(\gamma^{-1}\circ\beta\circ\gamma\circ\nabla)_{[x_k]}(\mathcal{S})<(\gamma^{-1}\circ\beta\circ\gamma\circ\nabla)_{[x_k]}(\mathcal{S}_a)$ be a sub-expansion of the expansion. If $(\gamma^{-1}\circ\beta\circ\gamma\circ\nabla)_{[x_k]}(\mathcal{S})$ is exact in the directions $[x_{\sigma(1)}],\ldots,[x_{\sigma(l)}]$ for $1\leq l\leq n$ and $\sigma:\{1,2,\ldots, n\}\longrightarrow \{1,2,\ldots,n\}$ at the spot $\mathcal{S}_1$ of degree $s\in \mathbb{N}$, then there exists some positive integer $m\in \mathbb{N}$ such that the expansion $(\gamma^{-1}\circ\beta\circ\gamma\circ\nabla)_{[x_k]}(\mathcal{S}_a)$ is exact of degree $s+m-1$ in the directions $[x_{\sigma(1)}],\ldots,[x_{\sigma(l)}]$ for $1\leq l\leq n$ and $\sigma:\{1,2,\ldots, n\}\longrightarrow \{1,2,\ldots,n\}$ at the spot $\mathcal{S}_1$.
 \end{proposition}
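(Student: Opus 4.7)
The plan is to unpack each of the two hypotheses via the definitions they invoke, and then to combine the resulting equalities by an associativity-of-iteration argument. Concretely, from the proper sub-expansion hypothesis
$$(\gamma^{-1}\circ \beta \circ \gamma \circ \nabla)_{[x_k]}(\mathcal{S}) < (\gamma^{-1}\circ \beta \circ \gamma \circ \nabla)_{[x_k]}(\mathcal{S}_a),$$
Definition \ref{subexpansion} supplies a non-negative integer $m$ (with the proper-ness condition fixing its value) such that $(\gamma^{-1}\circ \beta \circ \gamma \circ \nabla)_{[x_k]}(\mathcal{S}) = (\gamma^{-1}\circ \beta \circ \gamma \circ \nabla)^{m}_{[x_k]}(\mathcal{S}_a)$; this $m$ is the parameter that will appear in the conclusion.

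Next, Definition \ref{exact} applied to the exactness hypothesis for $(\gamma^{-1}\circ \beta \circ \gamma \circ \nabla)_{[x_k]}(\mathcal{S})$ at the spot $\mathcal{S}_1$ with degree $s$ produces the identity
$$(\gamma^{-1}\circ \beta \circ \gamma \circ \nabla)^s_{[x_k]}(\mathcal{S}) = (\gamma^{-1}\circ \beta \circ \gamma \circ \nabla)_{\otimes_{i=1}^{l}[x_{\sigma(i)}]}(\mathcal{S}_1).$$
The key step is then to rewrite the left-hand side by peeling off one copy of $(\gamma^{-1}\circ \beta \circ \gamma \circ \nabla)_{[x_k]}$ and substituting the sub-expansion identity:
$$(\gamma^{-1}\circ \beta \circ \gamma \circ \nabla)^s_{[x_k]}(\mathcal{S}) = (\gamma^{-1}\circ \beta \circ \gamma \circ \nabla)^{s-1}_{[x_k]}\bigl((\gamma^{-1}\circ \beta \circ \gamma \circ \nabla)_{[x_k]}(\mathcal{S})\bigr) = (\gamma^{-1}\circ \beta \circ \gamma \circ \nabla)^{s+m-1}_{[x_k]}(\mathcal{S}_a).$$
Combining this with the exactness identity yields
$$(\gamma^{-1}\circ \beta \circ \gamma \circ \nabla)^{s+m-1}_{[x_k]}(\mathcal{S}_a) = (\gamma^{-1}\circ \beta \circ \gamma \circ \nabla)_{\otimes_{i=1}^{l}[x_{\sigma(i)}]}(\mathcal{S}_1),$$
which is precisely the statement that $(\gamma^{-1}\circ \beta \circ \gamma \circ \nabla)_{[x_k]}(\mathcal{S}_a)$ is exact at the spot $\mathcal{S}_1$ in the directions $[x_{\sigma(1)}],\ldots,[x_{\sigma(l)}]$ with degree $s+m-1$, as demanded by Definition \ref{exact}.

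The heart of the argument is the bookkeeping of iteration counts; there are no deep ingredients beyond the definitions already in place, and the only possible obstacle is ensuring that the exponent $m$ extracted from the proper sub-expansion relation is compatible with the degree shift $s \mapsto s+m-1$ in Definition \ref{exact}. That compatibility is essentially automatic from the composition law $(\gamma^{-1}\circ \beta \circ \gamma \circ \nabla)^{a}_{[x_k]}\circ(\gamma^{-1}\circ \beta \circ \gamma \circ \nabla)^{b}_{[x_k]} = (\gamma^{-1}\circ \beta \circ \gamma \circ \nabla)^{a+b}_{[x_k]}$ for the iterated single-direction operator, so the proof reduces to assembling the three displayed equations above.
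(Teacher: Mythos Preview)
Your proof is correct and follows essentially the same route as the paper: extract $m$ from the proper sub-expansion relation to obtain $(\gamma^{-1}\circ \beta \circ \gamma \circ \nabla)_{[x_k]}(\mathcal{S}) = (\gamma^{-1}\circ \beta \circ \gamma \circ \nabla)^{m}_{[x_k]}(\mathcal{S}_a)$, then apply $s-1$ further copies of the single-direction operator and invoke the exactness identity at $\mathcal{S}_1$. The paper phrases the middle step as ``applying $(s-1)$ copies of the expansion operator on both sides'' whereas you phrase it as peeling off one copy and substituting, but the computation and the resulting exponent $s+m-1$ are identical.
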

 
 \begin{proof}
 Under condition $(\gamma^{-1}\circ\beta\circ\gamma\circ\nabla)_{[x_k]}(\mathcal{S})<(\gamma^{-1}\circ\beta\circ\gamma\circ\nabla)_{[x_k]}(\mathcal{S}_a)$, there exists some fixed $m\in \mathbb{N}$ such that 
 \begin{align}
     (\gamma^{-1}\circ\beta\circ\gamma\circ\nabla)_{[x_k]}(\mathcal{S})=(\gamma^{-1}\circ\beta\circ\gamma\circ\nabla)^m_{[x_k]}(\mathcal{S}_a)\nonumber 
 \end{align}
 so that by applying $(s-1)$ copies of the expansion operator $(\gamma^{-1}\circ\beta\circ\gamma\circ\nabla)_{[x_k]}$ on both sides of the equation, we have 
 \begin{align}
     (\gamma^{-1}\circ\beta\circ\gamma\circ\nabla)^s_{[x_k]}(\mathcal{S})=(\gamma^{-1}\circ\beta\circ\gamma\circ\nabla)^{s+m-1}_{[x_k]}(\mathcal{S}_a).\nonumber
 \end{align}
 The claim follows because the expansion $(\gamma^{-1}\circ\beta\circ\gamma\circ\nabla)_{[x_k]}(\mathcal{S})$ is exact of degree $s$ in the directions $[x_{\sigma(1)}],\ldots,[x_{\sigma(l)}]$ for $1\leq l\leq n$ at the spot $\mathcal{S}_1$.
 \end{proof}
\bigskip

Although it is fairly easy to pass the notion of exactness of a sub-expansion to an expansion, the converse is actually difficult. However, we can establish this converse under certain conditions on an expansion and their sub-expansion. The follow-up result underscores this discussion.

\begin{proposition}
 Let $(\gamma^{-1}\circ\beta\circ\gamma\circ\nabla)_{[x_k]}(\mathcal{S})<(\gamma^{-1}\circ\beta\circ\gamma\circ\nabla)_{[x_k]}(\mathcal{S}_a)$ be a proper sub-expansion of the expansion. If $(\gamma^{-1}\circ\beta\circ\gamma\circ\nabla)_{[x_k]}(\mathcal{S}_a)$ is exact in the directions $[x_{\sigma(1)}],\ldots,[x_{\sigma(l)}]$ for $1\leq l\leq n$ and $\sigma:\{1,2,\ldots, n\}\longrightarrow \{1,2,\ldots,n\}$ at the spot $\mathcal{S}_1$ of degree $s\in \mathbb{N}$ and $(\gamma^{-1}\circ\beta\circ\gamma\circ\nabla)^s_{[x_k]}(\mathcal{S}_a)<(\gamma^{-1}\circ\beta\circ\gamma\circ\nabla)_{[x_k]}(\mathcal{S})$, then there exists some $j \in \mathbb{N}$ such that the proper sub-expansion $(\gamma^{-1}\circ\beta\circ\gamma\circ\nabla)_{[x_k]}(\mathcal{S})$ is exact of degree $j<s$ in the directions $[x_{\sigma(1)}],\ldots,[x_{\sigma(l)}]$ for $1\leq l\leq n$ and $\sigma:\{1,2,\ldots, n\}\longrightarrow \{1,2,\ldots,n\}$ at the spot $\mathcal{S}_1$.
\end{proposition}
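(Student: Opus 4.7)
The plan is to run the argument of Proposition \ref{exactness vs sub-expansion} in reverse: instead of building a larger exactness degree by post-composing extra copies of $(\gamma^{-1}\circ \beta \circ \gamma \circ \nabla)_{[x_k]}$, I want to \emph{factor out} those extra copies to recover a strictly smaller exactness degree for $(\gamma^{-1}\circ \beta \circ \gamma \circ \nabla)_{[x_k]}(\mathcal{S})$. Concretely, I first unpack the proper sub-expansion hypothesis $(\gamma^{-1}\circ \beta \circ \gamma \circ \nabla)_{[x_k]}(\mathcal{S})<(\gamma^{-1}\circ \beta \circ \gamma \circ \nabla)_{[x_k]}(\mathcal{S}_a)$ exactly as in Proposition \ref{exactness vs sub-expansion}, invoking Definition \ref{subexpansion} to produce a fixed $m\in\mathbb{N}$ with $m\geq 2$ such that
\begin{align*}
(\gamma^{-1}\circ \beta \circ \gamma \circ \nabla)_{[x_k]}(\mathcal{S})=(\gamma^{-1}\circ \beta \circ \gamma \circ \nabla)^m_{[x_k]}(\mathcal{S}_a).
\end{align*}
In parallel, Definition \ref{exact} applied to the exactness hypothesis gives
\begin{align*}
(\gamma^{-1}\circ \beta \circ \gamma \circ \nabla)^s_{[x_k]}(\mathcal{S}_a)=(\gamma^{-1}\circ \beta \circ \gamma \circ \nabla)_{\otimes_{i=1}^{l}[x_{\sigma(i)}]}(\mathcal{S}_1).
\end{align*}

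Next I would leverage the second hypothesis $(\gamma^{-1}\circ \beta \circ \gamma \circ \nabla)^s_{[x_k]}(\mathcal{S}_a)<(\gamma^{-1}\circ \beta \circ \gamma \circ \nabla)_{[x_k]}(\mathcal{S})$, read through Definition \ref{subexpansion}, to extract the crucial integer inequality $s>m$; this is the step that allows the exactness iterate on $\mathcal{S}_a$ to be split into two pieces, one of which coincides with the first identity above. Once this is in hand, the computation is a single cancellation,
\begin{align*}
(\gamma^{-1}\circ \beta \circ \gamma \circ \nabla)^s_{[x_k]}(\mathcal{S}_a)&=(\gamma^{-1}\circ \beta \circ \gamma \circ \nabla)^{s-m}_{[x_k]}\circ (\gamma^{-1}\circ \beta \circ \gamma \circ \nabla)^m_{[x_k]}(\mathcal{S}_a)\\
&=(\gamma^{-1}\circ \beta \circ \gamma \circ \nabla)^{s-m+1}_{[x_k]}(\mathcal{S}),
\end{align*}
after which setting $j:=s-m+1$ gives $1\leq j<s$ (strict because $m\geq 2$) together with
\begin{align*}
(\gamma^{-1}\circ \beta \circ \gamma \circ \nabla)^j_{[x_k]}(\mathcal{S})=(\gamma^{-1}\circ \beta \circ \gamma \circ \nabla)_{\otimes_{i=1}^{l}[x_{\sigma(i)}]}(\mathcal{S}_1),
\end{align*}
which by Definition \ref{exact} is the required exactness of $(\gamma^{-1}\circ \beta \circ \gamma \circ \nabla)_{[x_k]}(\mathcal{S})$ in the directions $[x_{\sigma(1)}],\ldots,[x_{\sigma(l)}]$ at $\mathcal{S}_1$ with degree $j<s$.

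The main obstacle is not the algebra but the careful bookkeeping of the two sub-expansion inequalities. The first one quantifies how many extra expansions in direction $[x_k]$ separate $\mathcal{S}$ from $\mathcal{S}_a$, and the second one is essential for guaranteeing that the exactness index $s$ overshoots that separation; without the hypothesis $(\gamma^{-1}\circ \beta \circ \gamma \circ \nabla)^s_{[x_k]}(\mathcal{S}_a)<(\gamma^{-1}\circ \beta \circ \gamma \circ \nabla)_{[x_k]}(\mathcal{S})$ the exponent $s-m+1$ need not even be a positive integer, let alone strictly smaller than $s$. Once both inequalities are turned into concrete comparisons of iteration counts via Definition \ref{subexpansion}, the factorization above does the rest.
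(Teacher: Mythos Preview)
Your proposal is correct and follows essentially the same route as the paper's proof: both arguments extract the integer $m$ from the first proper sub-expansion hypothesis, combine it with the exactness relation at degree $s$, and arrive at the same value $j=s-m+1<s$. The only cosmetic difference is that the paper first names the integer $j$ coming from the second sub-expansion hypothesis and then verifies $s=j-1+m$, whereas you compute $j$ directly by the factorization $(\gamma^{-1}\circ \beta \circ \gamma \circ \nabla)^s_{[x_k]}(\mathcal{S}_a)=(\gamma^{-1}\circ \beta \circ \gamma \circ \nabla)^{s-m+1}_{[x_k]}(\mathcal{S})$.
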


\begin{proof}
Suppose that $(\gamma^{-1}\circ\beta\circ\gamma\circ\nabla)_{[x_k]}(\mathcal{S}_a)$ is exact in the directions $[x_{\sigma(1)}],\ldots,[x_{\sigma(l)}]$ for $1\leq l\leq n$ and $\sigma:\{1,2,\ldots, n\}\longrightarrow \{1,2,\ldots,n\}$ at the spot $\mathcal{S}_1$ of degree $s\in \mathbb{N}$. We deduce
\begin{align}
   (\gamma^{-1}\circ\beta\circ\gamma\circ\nabla)^s_{[x_k]}(\mathcal{S}_a)=(\gamma^{-1}\circ\beta\circ\gamma\circ\nabla)_{\otimes_{i=1}^{l}[x_{\sigma(i)}]}(\mathcal{S}_1)\nonumber  
\end{align}
so that under the requirement $(\gamma^{-1}\circ\beta\circ\gamma\circ\nabla)^s_{[x_k]}(\mathcal{S}_a)<(\gamma^{-1}\circ\beta\circ\gamma\circ\nabla)_{[x_k]}(\mathcal{S})$ there exists some $j\in \mathbb{N}$ such that 
\begin{align}
   (\gamma^{-1}\circ\beta\circ\gamma\circ\nabla)^s_{[x_k]}(\mathcal{S}_a)= (\gamma^{-1}\circ\beta\circ\gamma\circ\nabla)^j_{[x_k]}(\mathcal{S}).\nonumber
\end{align}
Since $(\gamma^{-1}\circ\beta\circ\gamma\circ\nabla)_{[x_k]}(\mathcal{S})<(\gamma^{-1}\circ\beta\circ\gamma\circ \nabla)_{[x_k]}(\mathcal{S}_a)$ is a proper sub-expansion of the expansion, there exists some $m\in \mathbb{N}$ such that
\begin{align}
   (\gamma^{-1}\circ\beta\circ\gamma\circ\nabla)_{[x_k]}(\mathcal{S})=(\gamma^{-1}\circ\beta\circ\gamma\circ\nabla)^m_{[x_k]}(\mathcal{S}_a).\nonumber
\end{align}
Combining both equations, we deduce
\begin{align}
    (\gamma^{-1}\circ\beta\circ\gamma\circ\nabla)^s_{[x_k]}(\mathcal{S}_a)=(\gamma^{-1}\circ\beta\circ\gamma\circ\nabla)^{j-1+m}_{[x_k]}(\mathcal{S}_a)\nonumber
\end{align}
so that $j<s$ and the claim follows from this assertion.
\end{proof}
\bigskip

\section{Sequences of an exact expansion}

In this section, we examine the structure and the commutative diagram of an exact expansion.

\begin{definition}
Let $(\gamma^{-1}\circ\beta\circ\gamma\circ\nabla)_{[x_k]}(\mathcal{S})$ be an exact expansion in the directions $[x_{\sigma(1)}],\ldots,[x_{\sigma(l)}]$ for $1\leq l\leq n$ and $\sigma:\{1,2,\ldots, n\}\longrightarrow \{1,2,\ldots,n\}$ of degree $s$. We call the chain
$$\begin{tikzcd}
(\gamma^{-1}\circ\beta\circ\gamma\circ\nabla)_{[x_{\sigma(1)}]}(\mathcal{S}_1) \arrow[r]
& (\gamma^{-1}\circ\beta\circ\gamma\circ\nabla)_{[x_{\sigma(1)}]\otimes [[x_{\sigma(2)}]}(\mathcal{S}_1) \arrow[r]
\arrow[d, phantom, ""{coordinate, name=Z}]
& \cdots \arrow[dll,
rounded corners,
to path={ -- ([xshift=2ex]\tikztostart.east)
|- (Z) [near end]\tikztonodes
-| ([xshift=-2ex]\tikztotarget.west)
-- (\tikztotarget)}] \\
(\gamma^{-1}\circ\beta\circ\gamma\circ\nabla)_{{ \otimes_{i=1}^{l-1}[x_{\sigma(i)}]}}(\mathcal{S}_1) \arrow[r]
& (\gamma^{-1}\circ\beta\circ\gamma\circ\nabla)_{{ \otimes_{i=1}^{l}[x_{\sigma(i)}]}}(\mathcal{S}_1)
\end{tikzcd}
$$
an exact sequence of the exact expansion $(\gamma^{-1}\circ\beta\circ\gamma\circ\nabla)_{[x_k]}(\mathcal{S})$--respectively, $(\gamma^{-1}\circ\beta\circ\gamma\circ\nabla)_{[x_k]}(\mathcal{S})$ is an exact expansion of the exact sequence--where $$
\phi_l=(\gamma^{-1}\circ\beta\circ\gamma\circ\nabla)_{[x_{\sigma(l)}]}
$$ 
and 
$$
\phi_l\circ (\gamma^{-1}\circ\beta\circ\gamma\circ\nabla)_{[x_k]}(\mathcal{S})=(\gamma^{-1}\circ\beta\circ\gamma\circ\nabla)_{[x_k]\otimes [x_{\sigma(l)}]}(\mathcal{S})
$$ 
for $1\leq l\leq n$.
\end{definition}
\bigskip

\begin{definition}
We say that the exact sequence 
$$
\begin{tikzcd}
(\gamma^{-1}\circ\beta\circ\gamma\circ\nabla)_{[x_{\sigma(1)}]}(\mathcal{S}_1) \arrow[r]
& (\gamma^{-1}\circ\beta\circ\gamma\circ\nabla)_{[x_{\sigma(1)}]\otimes [[x_{\sigma(2)}]}(\mathcal{S}_1) \arrow[r]
\arrow[d, phantom, ""{coordinate, name=Z}]
& \cdots \arrow[dll,
rounded corners,
to path={ -- ([xshift=2ex]\tikztostart.east)
|- (Z) [near end]\tikztonodes
-| ([xshift=-2ex]\tikztotarget.west)
-- (\tikztotarget)}] \\
(\gamma^{-1}\circ\beta\circ\gamma\circ\nabla)_{{ \otimes_{i=1}^{l-1}[x_{\sigma(i)}]}}(\mathcal{S}_1) \arrow[r]
& (\gamma^{-1}\circ\beta\circ\gamma\circ\nabla)_{{ \otimes_{i=1}^{l}[x_{\sigma(i)}]}}(\mathcal{S}_1)
\end{tikzcd}
$$ 
is a sub-sequence of the exact sequence 
$$\begin{tikzcd}
(\gamma^{-1}\circ\beta\circ\gamma\circ\nabla)_{[x_{\lambda(1)}]}(\mathcal{S}_2) \arrow[r]
& (\gamma^{-1}\circ\beta\circ\gamma\circ\nabla)_{[x_{\lambda(1)}]\otimes [[x_{\lambda(2)}]}(\mathcal{S}_2) \arrow[r]
\arrow[d, phantom, ""{coordinate, name=Z}]
& \cdots \arrow[dll,
rounded corners,
to path={ -- ([xshift=2ex]\tikztostart.east)
|- (Z) [near end]\tikztonodes
-| ([xshift=-2ex]\tikztotarget.west)
-- (\tikztotarget)}] \\
(\gamma^{-1}\circ\beta\circ\gamma\circ\nabla)_{{ \otimes_{i=1}^{r-1}[x_{\lambda(i)}]}}(\mathcal{S}_2) \arrow[r]
& (\gamma^{-1}\circ\beta\circ\gamma\circ\nabla)_{{ \otimes_{i=1}^{r}[x_{\lambda(i)}]}}(\mathcal{S}_2)
\end{tikzcd}
$$ 
if the first chain is contained in the second chain.
\end{definition}

\begin{definition}
We say that the expansion $(\gamma^{-1}\circ\beta\circ\gamma\circ\nabla)_{[x_k]}(\mathcal{S}_a)$ is a sub-expansion of the expansion $(\gamma^{-1}\circ\beta\circ\gamma\circ\nabla)_{[x_j]}(\mathcal{S}_b)$ along the directions $[x_{\sigma(1)}],\ldots,[x_{\sigma(l)}]$ each with multiplicity $k_i$ for $1\leq i \leq l\leq n$, where $\sigma:\{1,2,\ldots,n\}\longrightarrow \{1,2,\ldots,n\}$ if and only if
\begin{align}
    (\gamma^{-1}\circ\beta\circ\gamma\circ\nabla)_{[x_k]}(\mathcal{S}_a)=(\gamma^{-1}\circ\beta\circ\gamma\circ\nabla)_{{ \otimes_{i=1}^{r}[x_{\sigma(i)}]^{k_i}}}\circ (\gamma^{-1}\circ\beta\circ\gamma\circ\nabla)_{[x_j]}(\mathcal{S}_b).\nonumber
\end{align}
We denote this sub-expansion by 
\begin{align}
    (\gamma^{-1}\circ\beta\circ\gamma\circ\nabla)_{[x_k]}(\mathcal{S}_a)\leq_{[x_\sigma(1)],\ldots,[x_\sigma(l)]}~ (\gamma^{-1}\circ\beta\circ\gamma\circ\nabla)_{[x_j]}(\mathcal{S}_b).\nonumber
\end{align}
\end{definition}
\bigskip

We may view this as an extended notion of sub-expansions of an expansion. Indeed, the intuition remains that a sub-expansion of an expansion is an outcome of several expansions on the mother expansion. This also provides some flexibility to the manner in which sub-expansions can be obtained from their mother expansion in the framework of an expansion in mixed directions.

\begin{theorem}
If the exact sequence 
$$
\begin{tikzcd}
(\gamma^{-1}\circ\beta\circ\gamma\circ\nabla)_{[x_{\sigma(1)}]}(\mathcal{S}_1) \arrow[r]
& (\gamma^{-1}\circ\beta\circ\gamma\circ\nabla)_{[x_{\sigma(1)}]\otimes [[x_{\sigma(2)}]}(\mathcal{S}_1) \arrow[r]
\arrow[d, phantom, ""{coordinate, name=Z}]
& \cdots \arrow[dll,
rounded corners,
to path={ -- ([xshift=2ex]\tikztostart.east)
|- (Z) [near end]\tikztonodes
-| ([xshift=-2ex]\tikztotarget.west)
-- (\tikztotarget)}] \\
(\gamma^{-1}\circ\beta\circ\gamma\circ\nabla)_{{ \otimes_{i=1}^{l-1}[x_{\sigma(i)}]}}(\mathcal{S}_1) \arrow[r]
& (\gamma^{-1}\circ\beta\circ\gamma\circ\nabla)_{{ \otimes_{i=1}^{l}[x_{\sigma(i)}]}}(\mathcal{S}_1)
\end{tikzcd}
$$ 
of the exact expansion $(\gamma^{-1}\circ\beta\circ\gamma\circ\nabla)_{[x_k]}(\mathcal{S}_a)$ with degree $u$ of exactness is a sub-sequence of the exact sequence 
$$
\begin{tikzcd}
(\gamma^{-1}\circ\beta\circ\gamma\circ\nabla)_{[x_{\sigma(1)}]}(\mathcal{S}_2) \arrow[r]
& (\gamma^{-1}\circ\beta\circ\gamma\circ\nabla)_{[x_{\sigma(1)}]\otimes [[x_{\sigma(2)}]}(\mathcal{S}_2) \arrow[r]
\arrow[d, phantom, ""{coordinate, name=Z}]
& \cdots \arrow[dll,
rounded corners,
to path={ -- ([xshift=2ex]\tikztostart.east)
|- (Z) [near end]\tikztonodes
-| ([xshift=-2ex]\tikztotarget.west)
-- (\tikztotarget)}] \\
(\gamma^{-1}\circ\beta\circ\gamma\circ\nabla)_{{ \otimes_{i=1}^{r-1}[x_{\sigma(i)}]}}(\mathcal{S}_2) \arrow[r]
& (\gamma^{-1}\circ\beta\circ\gamma\circ\nabla)_{{ \otimes_{i=1}^{r}[x_{\sigma(i)}]}}(\mathcal{S}_2)
\end{tikzcd}
$$ 
of the exact expansion  $(\gamma^{-1}\circ\beta\circ\gamma\circ\nabla)_{[x_j]}(\mathcal{S}_b)$ with degree $v$ of exactness then
\begin{align}
    (\gamma^{-1}\circ\beta\circ\gamma circ\nabla)^u_{[x_k]}(\mathcal{S}_a)\leq_{[x_{\sigma(l+1)}],[x_{\sigma(l+2)}],\ldots,[x_{\sigma(r)}],\ldots, [x_{\lambda(s)}]} ~ (\gamma^{-1}\circ\beta\circ\gamma\circ\nabla)^v_{[x_j]}(\mathcal{S}_b).\nonumber
\end{align}
for some $s\in \mathbb{N}$ with $s\leq n$ and where $\lambda:\{1,2,\ldots,n\}\longrightarrow \{1,2,\ldots, n\}$.
\end{theorem}

\begin{proof}
Under the main assumption, we can embed the chain 
$$
\begin{tikzcd}
(\gamma^{-1}\circ\beta\circ\gamma\circ\nabla)_{[x_{\sigma(1)}]}(\mathcal{S}_1) \arrow[r]
& (\gamma^{-1}\circ\beta\circ\gamma\circ\nabla)_{[x_{\sigma(1)}]\otimes [[x_{\sigma(2)}]}(\mathcal{S}_1) \arrow[r]
\arrow[d, phantom, ""{coordinate, name=Z}]
& \cdots \arrow[dll,
rounded corners,
to path={ -- ([xshift=2ex]\tikztostart.east)
|- (Z) [near end]\tikztonodes
-| ([xshift=-2ex]\tikztotarget.west)
-- (\tikztotarget)}] \\
(\gamma^{-1}\circ\beta\circ\gamma\circ\nabla)_{{ \otimes_{i=1}^{l-1}[x_{\sigma(i)}]}}(\mathcal{S}_1) \arrow[r]
& (\gamma^{-1}\circ\beta\circ\gamma\circ\nabla)_{{ \otimes_{i=1}^{l}[x_{\sigma(i)}]}}(\mathcal{S}_1)
\end{tikzcd}
$$ 
in the chain 
$$
\begin{tikzcd}
(\gamma^{-1}\circ\beta\circ\gamma\circ\nabla)_{[x_{\sigma(1)}]}(\mathcal{S}_2) \arrow[r]
& (\gamma^{-1}\circ\beta\circ\gamma\circ\nabla)_{[x_{\sigma(1)}]\otimes [[x_{\sigma(2)}]}(\mathcal{S}_2) \arrow[r]
\arrow[d, phantom, ""{coordinate, name=Z}]
& \cdots \arrow[dll,
rounded corners,
to path={ -- ([xshift=2ex]\tikztostart.east)
|- (Z) [near end]\tikztonodes
-| ([xshift=-2ex]\tikztotarget.west)
-- (\tikztotarget)}] \\
(\gamma^{-1}\circ\beta\circ\gamma\circ\nabla)_{{ \otimes_{i=1}^{r-1}[x_{\sigma(i)}]}}(\mathcal{S}_2) \arrow[r]
& (\gamma^{-1}\circ\beta\circ\gamma\circ\nabla)_{{ \otimes_{i=1}^{r}[x_{\sigma(i)}]}}(\mathcal{S}_2)
\end{tikzcd}
$$ 
so that by the commutative property of an expansion, we can write 
\begin{align}
    (\gamma^{-1}\circ\beta\circ\gamma\circ\nabla)_{{ \otimes_{i=1}^{r}[x_{\sigma(i)}]}}(\mathcal{S}_2)&=(\gamma^{-1}\circ\beta\circ\gamma\circ\nabla)_{{ \otimes_{j=1}^{s}[x_{\lambda(j)}]}} \circ (\gamma^{-1}\circ\beta\circ\gamma\circ\nabla)_{{ \otimes_{i=1}^{l}[x_{\sigma(i)}]}}\nonumber \\& \circ (\gamma^{-1}\circ\beta\circ\gamma\circ\nabla)_{{ \otimes_{i=l+1}^{r}[x_{\sigma(i)}]}}(\mathcal{S}_1)\nonumber \\&=(\gamma^{-1}\circ\beta\circ\gamma\circ\nabla)_{{ \otimes_{i=l+1}^{r}[x_{\sigma(i)}]}}\circ (\gamma^{-1}\circ\beta\circ\gamma\circ\nabla)_{{ \otimes_{j=1}^{s}[x_{\lambda(j)}]}}\nonumber \\& \circ (\gamma^{-1}\circ\beta\circ\gamma\circ\nabla)_{{ \otimes_{i=1}^{l}[x_{\sigma(i)}]}}(\mathcal{S}_1)\nonumber \\&=(\gamma^{-1}\circ\beta\circ\gamma\circ\nabla)_{{ \otimes_{i=l+1}^{r}[x_{\sigma(i)}]}}\circ (\gamma^{-1}\circ\beta\circ\gamma\circ\nabla)_{{ \otimes_{j=1}^{s}[x_{\lambda(j)}]}}\nonumber \\& \circ  (\gamma^{-1}\circ\beta\circ\gamma\circ\nabla)^u_{[x_k]}(\mathcal{S}_a)\nonumber
\end{align}
since $(\gamma^{-1}\circ\beta\circ\gamma\circ \nabla)_{{ \otimes_{i=1}^{l}[x_{\sigma(i)}]}}(\mathcal{S}_1)= (\gamma^{-1}\circ\beta\circ\gamma\circ\nabla)^u_{[x_k]}(\mathcal{S}_a)$. Under the exactness condition 
$$
(\gamma^{-1}\circ \beta \circ \gamma \circ \nabla)^v_{[x_j]}(\mathcal{S}_b)=(\gamma^{-1}\circ \beta \circ \gamma \circ\nabla)_{{ \otimes_{i=1}^{r}[x_{\sigma(i)}]}}(\mathcal{S}_2)
$$
we obtain 
\begin{align}
    (\gamma^{-1}\circ\beta\circ\gamma\circ\nabla)^v_{[x_j]}(\mathcal{S}_b)&=(\gamma^{-1}\circ\beta\circ\gamma\circ\nabla)_{{ \otimes_{i=l+1}^{r}[x_{\sigma(i)}]}}\circ (\gamma^{-1}\circ\beta\circ\gamma\circ\nabla)_{{ \otimes_{j=1}^{s}[x_{\lambda(j)}]}}\nonumber \\& \circ  (\gamma^{-1}\circ\beta\circ\gamma\circ\nabla)^u_{[x_k]}(\mathcal{S}_a).\nonumber
\end{align}
We deduce 
\begin{align}
    (\gamma^{-1}\circ\beta\circ\gamma\circ\nabla)^u_{[x_k]}(\mathcal{S}_a)\leq_{[x_{\sigma(l+1)}],[x_{\sigma(l+2)}],\ldots,[x_{\sigma(r)}],\ldots, [x_{\lambda(s)}]} ~ (\gamma^{-1}\circ\beta\circ\gamma\circ\nabla)^v_{[x_j]}(\mathcal{S}_b).\nonumber
\end{align}
\end{proof}
\bigskip

\section{Sub-expansion and indices of expansion}
In this section, we introduce the notion of the \emph{index} of sub-expansions of an expansion.

\begin{definition}
Let $(\gamma^{-1}\circ\beta\circ\gamma\circ\nabla)_{[x_{j}]}(\mathcal{S}_z)$ and $(\gamma^{-1}\circ\beta\circ\gamma\circ\nabla)_{[x_{j}]}(\mathcal{S}_t)$ be expansions. By the \emph{index} of expansion $(\gamma^{-1}\circ\beta\circ\gamma\circ\nabla)_{[x_{j}]}(\mathcal{S}_z)$ in the expansion $(\gamma^{-1}\circ\beta\circ\gamma\circ\nabla)_{[x_{j}]}(\mathcal{S}_t)$, denoted by 
$$
\bigg[(\gamma^{-1}\circ\beta\circ\gamma\circ\nabla)_{[x_{j}]}(\mathcal{S}_t):(\gamma^{-1}\circ\beta\circ\gamma\circ\nabla)_{[x_{j}]}(\mathcal{S}_z)\bigg]
$$ 
we mean the value of $r\in \mathbb{N}$ such that 
\begin{align}
    (\gamma^{-1}\circ\beta\circ\gamma\circ\nabla)_{[x_{j}]}(\mathcal{S}_z)=(\gamma^{-1}\circ\beta\circ\gamma\circ\nabla)^r_{[x_{j}]}(\mathcal{S}_t).\nonumber
\end{align}
We write
\begin{align}
    \bigg[(\gamma^{-1}\circ\beta\circ\gamma\circ\nabla)_{[x_{j}]}(\mathcal{S}_t):(\gamma^{-1}\circ\beta\circ\gamma\circ\nabla)_{[x_{j}]}(\mathcal{S}_z)\bigg]=r.\nonumber
\end{align}
to denote the index. We say that the index is finite if and only if it exists and we write 
\begin{align}
    \bigg[(\gamma^{-1}\circ\beta\circ\gamma\circ\nabla)_{[x_{j}]}(\mathcal{S}_t):(\gamma^{-1}\circ\beta\circ\gamma\circ\nabla)_{[x_{j}]}(\mathcal{S}_z)\bigg]<\infty.\nonumber
\end{align}
On the other hand, if no such value exists, then we say that the index is infinite and we write 
\begin{align}
    \bigg[(\gamma^{-1}\circ\beta\circ\gamma\circ\nabla)_{[x_{j}]}(\mathcal{S}_t):(\gamma^{-1}\circ\beta\circ\gamma\circ\nabla)_{[x_{j}]}(\mathcal{S}_z)\bigg]=\infty.\nonumber
\end{align}
\end{definition}
\bigskip

\begin{proposition}\label{finite index-sub-expansion}
Let $(\gamma^{-1}\circ\beta\circ\gamma\circ\nabla)_{[x_{j}]}(\mathcal{S}_z)$ and $(\gamma^{-1}\circ\beta\circ\gamma\circ\nabla)_{[x_{j}]}(\mathcal{S}_t)$ be expansions. 
We have
\begin{align}
     \bigg[(\gamma^{-1}\circ\beta\circ\gamma\circ \nabla)_{[x_{j}]}(\mathcal{S}_t):(\gamma^{-1}\circ\beta\circ\gamma\circ\nabla)_{[x_{j}]}(\mathcal{S}_z)\bigg]<\infty\nonumber
\end{align}
if and only if $(\gamma^{-1}\circ\beta\circ\gamma\circ\nabla)_{[x_{j}]}(\mathcal{S}_z) \leq (\gamma^{-1}\circ\beta\circ\gamma\circ\nabla)_{[x_{j}]}(\mathcal{S}_t)$.
\end{proposition}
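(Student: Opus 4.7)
The plan is to prove both directions by direct unpacking of the relevant definitions, since the statement is essentially a translation between two formulations of the same containment relation.

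First, for the forward direction, I would assume the index is finite, so by the definition of finite index there exists some $r \in \mathbb{N}$ with
\begin{align}
(\gamma^{-1}\circ \beta \circ \gamma \circ \nabla)_{[x_{j}]}(\mathcal{S}_z) = (\gamma^{-1}\circ \beta \circ \gamma \circ \nabla)^{r}_{[x_{j}]}(\mathcal{S}_t).\nonumber
\end{align}
Setting $m := r - 1 \geq 0$, this becomes $(\gamma^{-1}\circ \beta \circ \gamma \circ \nabla)^{1}_{[x_{j}]}(\mathcal{S}_z) = (\gamma^{-1}\circ \beta \circ \gamma \circ \nabla)^{1+m}_{[x_{j}]}(\mathcal{S}_t)$, which is exactly the definition of a sub-expansion in the sense of Definition \ref{subexpansion} with $k=1$.

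For the reverse direction, I would assume $(\gamma^{-1}\circ \beta \circ \gamma \circ \nabla)_{[x_{j}]}(\mathcal{S}_z) \leq (\gamma^{-1}\circ \beta \circ \gamma \circ \nabla)_{[x_{j}]}(\mathcal{S}_t)$. By Definition \ref{subexpansion}, there exists $m \geq 0$ with
\begin{align}
(\gamma^{-1}\circ \beta \circ \gamma \circ \nabla)_{[x_{j}]}(\mathcal{S}_z) = (\gamma^{-1}\circ \beta \circ \gamma \circ \nabla)^{1+m}_{[x_{j}]}(\mathcal{S}_t).\nonumber
\end{align}
Taking $r := 1 + m \in \mathbb{N}$, this witnesses the finiteness of the index, so that
\begin{align}
\bigg[(\gamma^{-1}\circ \beta \circ \gamma \circ \nabla)_{[x_{j}]}(\mathcal{S}_t):(\gamma^{-1}\circ \beta \circ \gamma \circ \nabla)_{[x_{j}]}(\mathcal{S}_z)\bigg] = r < \infty.\nonumber
\end{align}

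There is no genuine obstacle to this argument beyond a minor bookkeeping point concerning the convention on $\mathbb{N}$: the definition of finite index requires $r \in \mathbb{N}$, while the sub-expansion definition requires $m \geq 0$. I would briefly note the correspondence $r = m+1$ to reconcile these and make clear that the two conditions are logically equivalent. This is the only subtle spot, since the rest is a direct translation of syntactic conditions.
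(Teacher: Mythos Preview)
Your proof is correct and follows exactly the approach the paper takes: the paper's own proof simply states that this is ``a simple consequence of the notion sub-expansions of an expansion and the index of an expansion,'' and your argument is precisely the direct unpacking of those two definitions via the correspondence $r = m+1$.
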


\begin{proof}
This is a simple consequence of the notion sub-expansions of an expansion and the index of an expansion. 
\end{proof}
\bigskip

\begin{proposition}\label{index transitivity}
Let $(\gamma^{-1}\circ\beta\circ\gamma\circ\nabla)_{[x_{j}]}(\mathcal{S}_1)$, $(\gamma^{-1}\circ\beta\circ\gamma\circ\nabla)_{[x_{j}]}(\mathcal{S}_2)$ and $(\gamma^{-1}\circ\beta\circ\gamma\circ\nabla)_{[x_{j}]}(\mathcal{S}_3)$ be expansions. If $\bigg[(\gamma^{-1}\circ\beta\circ\gamma\circ\nabla)_{[x_{j}]}(\mathcal{S}_3):(\gamma^{-1}\circ\beta\circ\gamma\circ\nabla)_{[x_{j}]}(\mathcal{S}_2)\bigg]<\infty$ and $\bigg[(\gamma^{-1}\circ\beta\circ\gamma\circ\nabla)_{[x_{j}]}(\mathcal{S}_2):(\gamma^{-1}\circ\beta\circ\gamma\circ\nabla)_{[x_{j}]}(\mathcal{S}_1)\bigg]<\infty$ then 
\begin{align}
    \bigg[(\gamma^{-1}\circ\beta\circ\gamma\circ\nabla)_{[x_{j}]}(\mathcal{S}_3):(\gamma^{-1}\circ\beta\circ\gamma\circ\nabla)_{[x_{j}]}(\mathcal{S}_1)\bigg]<\infty.\nonumber
\end{align}
\end{proposition}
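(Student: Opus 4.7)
The plan is to unpack the definition of finite index for each of the two hypotheses, stitch the resulting equations by applying extra copies of the expansion operator, and recognize the combined exponent as a witness to finiteness of the composite index.

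First, I would invoke the hypothesis that both indices are finite to extract positive integers $r_1, r_2 \in \mathbb{N}$ with
\begin{align}
(\gamma^{-1}\circ \beta \circ \gamma \circ \nabla)_{[x_{j}]}(\mathcal{S}_2) &= (\gamma^{-1}\circ \beta \circ \gamma \circ \nabla)^{r_1}_{[x_{j}]}(\mathcal{S}_3), \nonumber \\
(\gamma^{-1}\circ \beta \circ \gamma \circ \nabla)_{[x_{j}]}(\mathcal{S}_1) &= (\gamma^{-1}\circ \beta \circ \gamma \circ \nabla)^{r_2}_{[x_{j}]}(\mathcal{S}_2). \nonumber
\end{align}
Then I would apply $r_2 - 1$ additional copies of the operator $(\gamma^{-1}\circ \beta \circ \gamma \circ \nabla)_{[x_{j}]}$ to both sides of the first equation, which yields
\begin{align}
(\gamma^{-1}\circ \beta \circ \gamma \circ \nabla)^{r_2}_{[x_{j}]}(\mathcal{S}_2) = (\gamma^{-1}\circ \beta \circ \gamma \circ \nabla)^{r_1 + r_2 - 1}_{[x_{j}]}(\mathcal{S}_3). \nonumber
\end{align}
Substituting the second hypothesis into the left-hand side gives
\begin{align}
(\gamma^{-1}\circ \beta \circ \gamma \circ \nabla)_{[x_{j}]}(\mathcal{S}_1) = (\gamma^{-1}\circ \beta \circ \gamma \circ \nabla)^{r_1 + r_2 - 1}_{[x_{j}]}(\mathcal{S}_3), \nonumber
\end{align}
so that, by definition of the index of an expansion, $\bigl[(\gamma^{-1}\circ \beta \circ \gamma \circ \nabla)_{[x_{j}]}(\mathcal{S}_3):(\gamma^{-1}\circ \beta \circ \gamma \circ \nabla)_{[x_{j}]}(\mathcal{S}_1)\bigr] = r_1 + r_2 - 1 < \infty$.

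An alternative packaging would route everything through Proposition \ref{finite index-sub-expansion}: the finite-index hypotheses are translated into the sub-expansion relations $(\gamma^{-1}\circ \beta \circ \gamma \circ \nabla)_{[x_{j}]}(\mathcal{S}_1) \leq (\gamma^{-1}\circ \beta \circ \gamma \circ \nabla)_{[x_{j}]}(\mathcal{S}_2)$ and $(\gamma^{-1}\circ \beta \circ \gamma \circ \nabla)_{[x_{j}]}(\mathcal{S}_2) \leq (\gamma^{-1}\circ \beta \circ \gamma \circ \nabla)_{[x_{j}]}(\mathcal{S}_3)$, one chases the two witnesses $m_1, m_2 \geq 0$ from Definition \ref{subexpansion} to produce a single witness $m = m_1 + m_2$ for the composite sub-expansion relation, and then translates back via Proposition \ref{finite index-sub-expansion}.

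There is no real obstacle here: the argument is purely a matter of composing two ``$(\gamma^{-1}\circ \beta \circ \gamma \circ \nabla)_{[x_{j}]}$-reachability'' witnesses. The only point requiring care is an off-by-one check in the exponent, because the definition of index $r$ packages ``one application on the source side is equal to $r$ applications on the target side'', so the correct composite exponent is $r_1 + r_2 - 1$ rather than $r_1 + r_2$; keeping this bookkeeping straight is essentially the whole content of the proof.
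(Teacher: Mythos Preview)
Your proof is correct and follows essentially the same approach as the paper: extract the two index witnesses $r_1,r_2$, compose them by iterating the operator, and observe that the resulting exponent $r_1+r_2-1$ witnesses finiteness of the composite index. The paper's argument is identical up to naming the witnesses $r$ and $s$ instead of $r_1$ and $r_2$.
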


\begin{proof}
Suppose that $\bigg[(\gamma^{-1}\circ\beta\circ\gamma\circ\nabla)_{[x_{j}]}(\mathcal{S}_3):(\gamma^{-1}\circ\beta\circ\gamma\circ\nabla)_{[x_{j}]}(\mathcal{S}_2)\bigg]<\infty$ and $\bigg[(\gamma^{-1}\circ\beta\circ\gamma\circ\nabla)_{[x_{j}]}(\mathcal{S}_2):(\gamma^{-1}\circ\beta\circ\gamma\circ\nabla)_{[x_{j}]}(\mathcal{S}_1)\bigg]<\infty$. There exist some $r,s\in \mathbb{N}$ such that
\begin{align}
   (\gamma^{-1}\circ\beta\circ\gamma\circ\nabla)_{[x_{j}]}(\mathcal{S}_2)=(\gamma^{-1}\circ\beta\circ\gamma\circ\nabla)^r_{[x_{j}]}(\mathcal{S}_3)\nonumber
\end{align}
and 
\begin{align}
    (\gamma^{-1}\circ\beta\circ\gamma\circ\nabla)_{[x_{j}]}(\mathcal{S}_1)=(\gamma^{-1}\circ\beta\circ\gamma\circ\nabla)^s_{[x_{j}]}(\mathcal{S}_2).\nonumber
\end{align}
It follows that
\begin{align}
    (\gamma^{-1}\circ\beta\circ\gamma\circ\nabla)_{[x_{j}]}(\mathcal{S}_1)&=(\gamma^{-1}\circ\beta\circ\gamma\circ\nabla)^s_{[x_{j}]}(\mathcal{S}_2\nonumber \\&=(\gamma^{-1}\circ\beta\circ\gamma\circ\nabla)^{r+s-1}_{[x_{j}]}(\mathcal{S}_3)\nonumber
\end{align}
so that $\bigg[(\gamma^{-1}\circ\beta\circ\gamma\circ \nabla)_{[x_{j}]}(\mathcal{S}_3):(\gamma^{-1}\circ\beta\circ\gamma\circ\nabla)_{[x_{j}]}(\mathcal{S}_1)\bigg]<\infty.$
\end{proof}
\bigskip

Here, we show that the index of a sub-expansion in an expansion decreases with further expansions.

\begin{proposition}
Let $(\gamma^{-1}\circ\beta\circ\gamma\circ\nabla)_{[x_{j}]}(\mathcal{S}_1)\leq  (\gamma^{-1}\circ\beta\circ\gamma\circ\nabla)_{[x_{j}]}(\mathcal{S}_2)$. If there exists some $l\in \mathbb{N}$ such that $(\gamma^{-1}\circ\beta\circ\gamma\circ\nabla)_{[x_{j}]}(\mathcal{S}_1)\leq  (\gamma^{-1}\circ\beta\circ\gamma\circ\nabla)^l_{[x_{j}]}(\mathcal{S}_2)$, then 
\begin{align}
    \bigg[(\gamma^{-1}\circ\beta\circ\gamma\circ\nabla)^l_{[x_{j}]}(\mathcal{S}_2):(\gamma^{-1}\circ\beta\circ\gamma\circ\nabla)_{[x_{j}]}(\mathcal{S}_1)\bigg]&<\bigg[(\gamma^{-1}\circ\beta\circ\gamma\circ\nabla)_{[x_{j}]}(\mathcal{S}_2)\nonumber \\&:(\gamma^{-1}\circ\beta\circ\gamma\circ\nabla)_{[x_{j}]}(\mathcal{S}_1)\bigg].\nonumber
\end{align}
\end{proposition}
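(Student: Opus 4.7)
The plan is to convert both sub-expansion hypotheses into explicit finite index values via Proposition~\ref{finite index-sub-expansion}, rewrite each index as an identity expressing $(\gamma^{-1}\circ \beta \circ \gamma \circ \nabla)_{[x_{j}]}(\mathcal{S}_1)$ as an iterated expansion of the base tuple $\mathcal{S}_2$, and then compare the resulting total exponents.

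Concretely, I would first set
\[
r:=\bigg[(\gamma^{-1}\circ \beta \circ \gamma \circ \nabla)_{[x_{j}]}(\mathcal{S}_2):(\gamma^{-1}\circ \beta \circ \gamma \circ \nabla)_{[x_{j}]}(\mathcal{S}_1)\bigg].
\]
By the first hypothesis and Proposition~\ref{finite index-sub-expansion}, this index is finite, so the index definition yields
\[
(\gamma^{-1}\circ \beta \circ \gamma \circ \nabla)_{[x_{j}]}(\mathcal{S}_1)=(\gamma^{-1}\circ \beta \circ \gamma \circ \nabla)^{r}_{[x_{j}]}(\mathcal{S}_2).
\]
Next, I would let
\[
s:=\bigg[(\gamma^{-1}\circ \beta \circ \gamma \circ \nabla)^l_{[x_{j}]}(\mathcal{S}_2):(\gamma^{-1}\circ \beta \circ \gamma \circ \nabla)_{[x_{j}]}(\mathcal{S}_1)\bigg],
\]
extending the index definition to permit the mother to be $(\gamma^{-1}\circ \beta \circ \gamma \circ \nabla)_{[x_{j}]}$ applied to the tuple $(\gamma^{-1}\circ \beta \circ \gamma \circ \nabla)^{l-1}_{[x_{j}]}(\mathcal{S}_2)$. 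The second hypothesis combined with the corresponding analogue of Proposition~\ref{finite index-sub-expansion} keeps $s$ finite, and unpacking yields
\[
(\gamma^{-1}\circ \beta \circ \gamma \circ \nabla)_{[x_{j}]}(\mathcal{S}_1)=(\gamma^{-1}\circ \beta \circ \gamma \circ \nabla)^{s}_{[x_{j}]}\bigl((\gamma^{-1}\circ \beta \circ \gamma \circ \nabla)^{l-1}_{[x_{j}]}(\mathcal{S}_2)\bigr)=(\gamma^{-1}\circ \beta \circ \gamma \circ \nabla)^{s+l-1}_{[x_{j}]}(\mathcal{S}_2).
\]

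Equating the two descriptions as iterated expansions of the common base $\mathcal{S}_2$ forces $r=s+l-1$, hence $s=r-(l-1)$. Since the nontrivial content of the existence clause requires $l\geq 2$ (the case $l=1$ merely repeats the first hypothesis and renders the strict inequality vacuous), we conclude $s<r$, which is precisely the claimed inequality
\[
\bigg[(\gamma^{-1}\circ \beta \circ \gamma \circ \nabla)^l_{[x_{j}]}(\mathcal{S}_2):(\gamma^{-1}\circ \beta \circ \gamma \circ \nabla)_{[x_{j}]}(\mathcal{S}_1)\bigg]<\bigg[(\gamma^{-1}\circ \beta \circ \gamma \circ \nabla)_{[x_{j}]}(\mathcal{S}_2):(\gamma^{-1}\circ \beta \circ \gamma \circ \nabla)_{[x_{j}]}(\mathcal{S}_1)\bigg].
\]

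The main obstacle I anticipate is the careful bookkeeping required to justify extending both the index definition and Proposition~\ref{finite index-sub-expansion} to the situation where the mother expansion carries the exponent $l>1$; this essentially mimics the concatenation manipulation used in the proof of Proposition~\ref{index transitivity}, where compositions of iterated expansion operators are collapsed to a single total exponent on $\mathcal{S}_2$. I would appeal to that same computational template to legitimize the passage from the two-step composition on the left-hand side of the $s$-identity to the single-exponent form $(\gamma^{-1}\circ \beta \circ \gamma \circ \nabla)^{s+l-1}_{[x_{j}]}(\mathcal{S}_2)$, after which the comparison of exponents is immediate.
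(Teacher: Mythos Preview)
Your proposal is correct and follows essentially the same approach as the paper: both arguments unpack the two sub-expansion hypotheses into identities of the form $(\gamma^{-1}\circ \beta \circ \gamma \circ \nabla)_{[x_{j}]}(\mathcal{S}_1)=(\gamma^{-1}\circ \beta \circ \gamma \circ \nabla)^{N}_{[x_{j}]}(\mathcal{S}_2)$, equate the resulting total exponents on $\mathcal{S}_2$, and read off the strict inequality. The only discrepancy is a harmless off-by-one in your extension of the index to an $l$-th-power mother (the paper writes $l+u$ where you write $s+l-1$, so the paper's $u$ is your $s-1$), which is why the paper does not need to single out the $l=1$ case.
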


\begin{proof}
Suppose that $(\gamma^{-1}\circ\beta\circ\gamma\circ\nabla)_{[x_{j}]}(\mathcal{S}_1)\leq  (\gamma^{-1}\circ\beta\circ\gamma\circ\nabla)_{[x_{j}]}(\mathcal{S}_2)$. There exists some $s\in \mathbb{N}$ such that 
\begin{align}
    (\gamma^{-1}\circ\beta\circ\gamma\circ\nabla)_{[x_{j}]}(\mathcal{S}_1)=(\gamma^{-1}\circ\beta\circ\gamma\circ\nabla)^s_{[x_{j}]}(\mathcal{S}_2).\nonumber
\end{align}
Under the regularity condition $(\gamma^{-1}\circ\beta\circ\gamma\circ\nabla)_{[x_{j}]}(\mathcal{S}_1)\leq  (\gamma^{-1}\circ\beta\circ\gamma\circ\nabla)^l_{[x_{j}]}(\mathcal{S}_2)$ there exists some $u\in \mathbb{N}$ such that 
\begin{align}
   (\gamma^{-1}\circ\beta\circ\gamma\circ\nabla)_{[x_{j}]}(\mathcal{S}_1)=(\gamma^{-1}\circ\beta\circ\gamma\circ\nabla)^{l+u}_{[x_{j}]}(\mathcal{S}_2)\nonumber 
\end{align}
so that 
\begin{align}
   (\gamma^{-1}\circ\beta\circ\gamma\circ\nabla)^s_{[x_{j}]}(\mathcal{S}_2)=(\gamma^{-1}\circ\beta\circ\gamma\circ\nabla)^{l+u}_{[x_{j}]}(\mathcal{S}_2)\nonumber 
\end{align}
and $u<u+l=s$. We deduce the claimed inequality by making the substitutions $\bigg[(\gamma^{-1}\circ\beta\circ\gamma\circ\nabla)^l_{[x_{j}]}(\mathcal{S}_2):(\gamma^{-1}\circ\beta\circ\gamma\circ\nabla)_{[x_{j}]}(\mathcal{S}_1)\bigg]=u$ and $\bigg[(\gamma^{-1}\circ\beta\circ\gamma\circ\nabla)_{[x_{j}]}(\mathcal{S}_2):(\gamma^{-1}\circ\beta\circ\gamma\circ\nabla)_{[x_{j}]}(\mathcal{S}_1)\bigg]=s$.
\end{proof}
\bigskip

Here, we relate the index of the smallest sub-expansion in a collection of chains of sub-expansion in their mother expansion to the index of other sub-expansions in other sub-expansion.

\begin{theorem}\label{identity}
Let $(\gamma^{-1}\circ\beta\circ\gamma\circ\nabla)_{[x_{j}]}(\mathcal{S}_1)\leq  (\gamma^{-1}\circ\beta\circ\gamma\circ\nabla)_{[x_{j}]}(\mathcal{S}_2)\leq \cdots \leq  (\gamma^{-1}\circ\beta\circ\gamma\circ\nabla)_{[x_{j}]}(\mathcal{S}_n)$ be a chain of sub-expansions of the expansion $(\gamma^{-1}\circ\beta\circ\gamma\circ \nabla)_{[x_{j}]}(\mathcal{S}_n)$. We have
\begin{align}
   \bigg[(\gamma^{-1}\circ\beta\circ\gamma\circ\nabla)_{[x_{j}]}(\mathcal{S}_n):(\gamma^{-1}\circ\beta\circ\gamma\circ\nabla)_{[x_{j}]}(\mathcal{S}_1)\bigg]&=\sum \limits_{i=1}^{n-1}\bigg[(\gamma^{-1}\circ\beta\circ\gamma\circ\nabla)_{[x_{j}]}(\mathcal{S}_{i+1})\nonumber \\&:(\gamma^{-1}\circ\beta\circ\gamma\circ\nabla)_{[x_{j}]}(\mathcal{S}_i)\bigg]-(n-2).\nonumber 
\end{align}
\end{theorem}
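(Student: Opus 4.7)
The plan is to proceed by induction on $n \geq 2$, using a sharpened form of Proposition \ref{index transitivity} that records not only finiteness of the composed index but its exact value. For brevity let me write $E := (\gamma^{-1}\circ \beta \circ \gamma \circ \nabla)_{[x_{j}]}$ throughout the argument. The base case $n = 2$ is tautological, since the right-hand side is $[E(\mathcal{S}_2):E(\mathcal{S}_1)] - (2-2) = [E(\mathcal{S}_2):E(\mathcal{S}_1)]$, with the empty correction term $(n-2) = 0$.

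For the key auxiliary step, I would revisit the proof of Proposition \ref{index transitivity}: if $E(\mathcal{S}_2) = E^r(\mathcal{S}_3)$ and $E(\mathcal{S}_1) = E^s(\mathcal{S}_2)$, then rewriting
\begin{align}
E(\mathcal{S}_1) = E^{s-1}\circ E(\mathcal{S}_2) = E^{s-1}\circ E^r(\mathcal{S}_3) = E^{s+r-1}(\mathcal{S}_3)\nonumber
\end{align}
upgrades the transitivity result to the exact three-term identity
\begin{align}
\bigl[E(\mathcal{S}_3):E(\mathcal{S}_1)\bigr] = \bigl[E(\mathcal{S}_3):E(\mathcal{S}_2)\bigr] + \bigl[E(\mathcal{S}_2):E(\mathcal{S}_1)\bigr] - 1.\nonumber
\end{align}

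The inductive step from $n$ to $n+1$ then follows by applying this three-term identity to the triple $\mathcal{S}_1, \mathcal{S}_n, \mathcal{S}_{n+1}$ (the required finiteness of both summand indices being supplied by Proposition \ref{finite index-sub-expansion} applied to the chain hypothesis) and substituting the inductive formula
\begin{align}
\bigl[E(\mathcal{S}_n):E(\mathcal{S}_1)\bigr] = \sum_{i=1}^{n-1}\bigl[E(\mathcal{S}_{i+1}):E(\mathcal{S}_i)\bigr] - (n-2).\nonumber
\end{align}
Adding the new pairwise index $[E(\mathcal{S}_{n+1}):E(\mathcal{S}_n)]$ extends the sum to $i=1,\ldots,n$, and the extra $-1$ from the three-term identity updates the correction $-(n-2)$ to $-(n-1) = -((n+1)-2)$, completing the induction.

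The only subtlety, rather than a genuine obstacle, is careful bookkeeping of the $-1$ correction: each concatenation of a new link to the chain loses exactly one unit of index because the intermediate expansion is counted once in each of the two adjacent pairwise indices, and the telescoping of this overlap over $n-1$ links produces precisely the compensating term $-(n-2)$. I would also briefly verify that the sharpened three-term identity does not require a separate hypothesis of properness beyond the sub-expansion relations guaranteed by the chain, since the computation $E^s(E^r(\mathcal{S}_3)) = E^{s+r-1}(\mathcal{S}_3)$ is purely formal in $\mathbb{N}$.
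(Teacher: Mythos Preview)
Your proof is correct and is essentially the same argument as the paper's, just organized as a formal induction rather than an explicit iteration. The paper proves the result by repeatedly applying the identity $E(\mathcal{S}_{k}) = E^{r_{n-k}}(\mathcal{S}_{k+1})$ down the chain, accumulating $E(\mathcal{S}_1) = E^{r_1+\cdots+r_{n-1}-(n-2)}(\mathcal{S}_n)$ step by step; your three-term identity $[E(\mathcal{S}_3):E(\mathcal{S}_1)] = [E(\mathcal{S}_3):E(\mathcal{S}_2)] + [E(\mathcal{S}_2):E(\mathcal{S}_1)] - 1$ is exactly the single-step computation from the proof of Proposition~\ref{index transitivity} that the paper unrolls, and your induction is the natural repackaging of that unrolling.
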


\begin{proof}
By the proposition \ref{finite index-sub-expansion}, we write $\bigg[(\gamma^{-1}\circ\beta\circ\gamma\circ\nabla)_{[x_{j}]}(\mathcal{S}_{i+1}):(\gamma^{-1}\circ\beta\circ\gamma\circ\nabla)_{[x_{j}]}(\mathcal{S}_i)\bigg]<\infty$ for all $1\leq i\leq n-1$.  There exist some $r_1\in \mathbb{N}$ such that
\begin{align}
   (\gamma^{-1}\circ\beta\circ\gamma\circ\nabla)_{[x_{j}]}(\mathcal{S}_{n-1})=(\gamma^{-1}\circ\beta\circ\gamma\circ\nabla)^{r_1}_{[x_{j}]}(\mathcal{S}_n).\nonumber 
\end{align}
Again there exists some $r_2\in \mathbb{N}$ such that 
\begin{align}
    (\gamma^{-1}\circ\beta\circ\gamma\circ\nabla)_{[x_{j}]}(\mathcal{S}_{n-2})=(\gamma^{-1}\circ\beta\circ\gamma\circ\nabla)^{r_2}_{[x_{j}]}(\mathcal{S}_{n-1})\nonumber
\end{align}
so that 
\begin{align}
   (\gamma^{-1}\circ\beta\circ\gamma\circ\nabla)_{[x_{j}]}(\mathcal{S}_{n-2})&= (\gamma^{-1}\circ\beta\circ\gamma\circ\nabla)^{r_2}_{[x_{j}]}(\mathcal{S}_{n-1})\nonumber \\&=(\gamma^{-1}\circ\beta\circ\gamma\circ\nabla)^{r_1+r_2-1}_{[x_{j}]}(\mathcal{S}_n).\nonumber 
\end{align}
Similarly, there exists some $r_3\in \mathbb{N}$ such that 
\begin{align}
   (\gamma^{-1}\circ\beta\circ\gamma\circ\nabla)_{[x_{j}]}(\mathcal{S}_{n-3})=(\gamma^{-1}\circ\beta\circ\gamma\circ\nabla)^{r_3}_{[x_{j}]}(\mathcal{S}_{n-2})\nonumber 
\end{align}
so that 
\begin{align}
    (\gamma^{-1}\circ\beta\circ\gamma\circ\nabla)_{[x_{j}]}(\mathcal{S}_{n-3})&=(\gamma^{-1}\circ\beta\circ\gamma\circ\nabla)^{r_3}_{[x_{j}]}(\mathcal{S}_{n-2})\nonumber \\&=(\gamma^{-1}\circ\beta\circ\gamma\circ\nabla)^{r_1+r_2+r_3-2}_{[x_{j}]}(\mathcal{S}_n).\nonumber  
\end{align}
Repeating this argument and using 
$$
\bigg[(\gamma^{-1}\circ\beta\circ\gamma\circ\nabla)_{[x_{j}]}(\mathcal{S}_{i+1}):(\gamma^{-1}\circ\beta\circ\gamma\circ\nabla)_{[x_{j}]}(\mathcal{S}_i)\bigg]<\infty
$$ 
for all $1\leq i\leq n-1$, we obtain 
\begin{align}
   (\gamma^{-1}\circ\beta\circ\gamma\circ\nabla)_{[x_{j}]}(\mathcal{S}_{1})&=(\gamma^{-1}\circ\beta\circ\gamma\circ\nabla)^{r_1+r_2+r_3+\cdots+r_{n-1}-(n-2)}_{[x_{j}]}(\mathcal{S}_n).\nonumber  
\end{align}
We deduce 
\begin{align}
    \bigg[(\gamma^{-1}\circ\beta\circ\gamma\circ\nabla)_{[x_{j}]}(\mathcal{S}_n):(\gamma^{-1}\circ\beta\circ\gamma\circ\nabla)_{[x_{j}]}(\mathcal{S}_1)\bigg]&=\sum \limits_{i=1}^{n-1}r_{n-i}-(n-2).\nonumber 
\end{align}
The claimed assertion follows because 
$$
\bigg[(\gamma^{-1}\circ\beta\circ\gamma\circ\nabla)_{[x_{j}]}(\mathcal{S}_{i+1}):(\gamma^{-1}\circ\beta\circ\gamma\circ\nabla)_{[x_{j}]}(\mathcal{S}_i)\bigg]=r_{n-i}
$$ 
for $1\leq i\leq n-1$.
\end{proof}
\bigskip

We now obtain an important inequality as a consequence of Theorem \ref{identity} that relates the index of the smallest sub-expansion in their mother expansion to local indices in each sub-expansion of the sub-expansions in the chain. 

\begin{corollary}[The index inequality]
Let $(\gamma^{-1}\circ\beta\circ\gamma\circ\nabla)_{[x_{j}]}(\mathcal{S}_1)\leq  (\gamma^{-1}\circ\beta\circ\gamma\circ\nabla)_{[x_{j}]}(\mathcal{S}_2)\leq \cdots \leq  (\gamma^{-1}\circ\beta\circ\gamma\circ\nabla)_{[x_{j}]}(\mathcal{S}_n)$--a chain of sub-expansions of the expansion $(\gamma^{-1}\circ\beta\circ\gamma\circ\nabla)_{[x_{j}]}(\mathcal{S}_n)$. We have
\begin{align}
   \bigg[(\gamma^{-1}\circ\beta\circ\gamma\circ\nabla)_{[x_{j}]}(\mathcal{S}_n):(\gamma^{-1}\circ\beta\circ\gamma\circ\nabla)_{[x_{j}]}(\mathcal{S}_1)\bigg]&<\sum \limits_{i=1}^{n-1}\bigg[(\gamma^{-1}\circ\beta\circ\gamma\circ\nabla)_{[x_{j}]}(\mathcal{S}_{i+1})\nonumber \\&:(\gamma^{-1}\circ\beta\circ\gamma\circ\nabla)_{[x_{j}]}(\mathcal{S}_i)\bigg].\nonumber 
\end{align}
\end{corollary}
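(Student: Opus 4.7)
The plan is to derive this inequality as an immediate consequence of Theorem \ref{identity}. That theorem already gives us the sharp identity
\begin{align}
\bigg[(\gamma^{-1}\circ \beta \circ \gamma \circ \nabla)_{[x_{j}]}(\mathcal{S}_n):(\gamma^{-1}\circ \beta \circ \gamma \circ \nabla)_{[x_{j}]}(\mathcal{S}_1)\bigg] = \sum \limits_{i=1}^{n-1}\bigg[(\gamma^{-1}\circ \beta \circ \gamma \circ \nabla)_{[x_{j}]}(\mathcal{S}_{i+1}):(\gamma^{-1}\circ \beta \circ \gamma \circ \nabla)_{[x_{j}]}(\mathcal{S}_i)\bigg]-(n-2),\nonumber
\end{align}
so the only task remaining is to verify that the correction term $-(n-2)$ is strictly negative, whereupon dropping it yields the claimed strict inequality.

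Concretely, I would first invoke Proposition \ref{finite index-sub-expansion} on each link of the chain to confirm each local index
\begin{align}
\bigg[(\gamma^{-1}\circ \beta \circ \gamma \circ \nabla)_{[x_{j}]}(\mathcal{S}_{i+1}):(\gamma^{-1}\circ \beta \circ \gamma \circ \nabla)_{[x_{j}]}(\mathcal{S}_i)\bigg]\nonumber
\end{align}
is finite; by transitivity (Proposition \ref{index transitivity}) this ensures the global index on the left of the identity is finite as well, so the identity of Theorem \ref{identity} can be legitimately applied. Then, under the natural assumption $n\geq 3$ (the chain is genuinely non-trivial), we have $n-2 \geq 1 > 0$, so
\begin{align}
\sum \limits_{i=1}^{n-1}\bigg[(\gamma^{-1}\circ \beta \circ \gamma \circ \nabla)_{[x_{j}]}(\mathcal{S}_{i+1}):(\gamma^{-1}\circ \beta \circ \gamma \circ \nabla)_{[x_{j}]}(\mathcal{S}_i)\bigg]-(n-2) < \sum \limits_{i=1}^{n-1}\bigg[(\gamma^{-1}\circ \beta \circ \gamma \circ \nabla)_{[x_{j}]}(\mathcal{S}_{i+1}):(\gamma^{-1}\circ \beta \circ \gamma \circ \nabla)_{[x_{j}]}(\mathcal{S}_i)\bigg],\nonumber
\end{align}
and substituting the left-hand side via the identity of Theorem \ref{identity} yields exactly the announced inequality.

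There is essentially no serious obstacle here: the corollary is a direct relaxation of an already-proved equality. The only mild subtlety is the edge case $n=2$, in which $-(n-2)=0$ and the identity collapses to equality, not strict inequality; this case should either be excluded from the hypothesis or the statement should be read as ``$\leq$ in general, with strict inequality for $n\geq 3$''. Apart from this boundary bookkeeping, the entire proof reduces to the telescoping observation carried out in Theorem \ref{identity}, which is where the genuine content resides.
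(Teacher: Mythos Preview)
Your proposal is correct and matches the paper's approach exactly: the corollary is stated immediately after Theorem \ref{identity} as a direct consequence of that identity, obtained by dropping the $-(n-2)$ term. Your observation about the $n=2$ edge case is a valid caveat that the paper does not address.
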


\begin{theorem}
Let $(\gamma^{-1}\circ\beta\circ\gamma\circ\nabla)_{[x_{j}]}(\mathcal{S}_1)\leq  (\gamma^{-1}\circ\beta\circ\gamma\circ\nabla)_{[x_{j}]}(\mathcal{S}_2)$--a sub-expansion of the expansion.  If there exists some $s\in \mathbb{N}$ such that $ (\gamma^{-1}\circ\beta\circ\gamma\circ\nabla)^s_{[x_{j}]}(\mathcal{S}_2)\leq (\gamma^{-1}\circ\beta\circ\gamma\circ\nabla)_{[x_{j}]}(\mathcal{S}_1)$, then
\begin{align}
s+1&=\bigg[(\gamma^{-1}\circ\beta\circ\gamma\circ\nabla)_{[x_{j}]}(\mathcal{S}_{2}):(\gamma^{-1}\circ\beta\circ\gamma\circ\nabla)_{[x_{j}]}(\mathcal{S}_1)\bigg]\nonumber \\&+\bigg[(\gamma^{-1}\circ\beta\circ\gamma\circ\nabla)_{[x_{j}]}(\mathcal{S}_{1}):(\gamma^{-1}\circ\beta\circ\gamma\circ\nabla)^s_{[x_{j}]}(\mathcal{S}_2)\bigg].\nonumber
\end{align}
\end{theorem}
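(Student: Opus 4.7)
The plan is to unpack both sub-expansion hypotheses into explicit iterate relations via the index definition, chain them into a single equation on the orbit of $\mathcal{S}_2$, and then extract the arithmetic identity $s+1 = a+b$ by cancelling common iterates. Throughout I would abbreviate the expansion operator as $T := (\gamma^{-1}\circ \beta \circ \gamma \circ \nabla)_{[x_{j}]}$.

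First I would invoke Proposition~\ref{finite index-sub-expansion} to convert each sub-expansion statement into a finite index together with an explicit iterate identity. The first hypothesis $T(\mathcal{S}_1) \leq T(\mathcal{S}_2)$ delivers a finite value $a := [T(\mathcal{S}_2) : T(\mathcal{S}_1)]$ satisfying $T(\mathcal{S}_1) = T^{a}(\mathcal{S}_2)$. The second hypothesis $T^{s}(\mathcal{S}_2) \leq T(\mathcal{S}_1)$ delivers a finite value $b := [T(\mathcal{S}_1) : T^{s}(\mathcal{S}_2)]$ satisfying $T^{s}(\mathcal{S}_2) = T^{b}(\mathcal{S}_1)$.

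Next I would substitute the first relation into the second using the composition law for iterated operators, writing $T^{b}(\mathcal{S}_1) = T^{b-1}\bigl(T(\mathcal{S}_1)\bigr) = T^{b-1}\bigl(T^{a}(\mathcal{S}_2)\bigr) = T^{a+b-1}(\mathcal{S}_2)$. Combined with $T^{s}(\mathcal{S}_2) = T^{b}(\mathcal{S}_1)$, this yields the equation $T^{s}(\mathcal{S}_2) = T^{a+b-1}(\mathcal{S}_2)$ on the orbit of the single seed $\mathcal{S}_2$. Cancelling the common iterates then leaves $s = a + b - 1$, i.e.\ $s + 1 = a + b$, which is exactly the claimed identity after re-substituting the bracket notations for $a$ and $b$.

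The main obstacle is the final cancellation: why does $T^{s}(\mathcal{S}_2) = T^{a+b-1}(\mathcal{S}_2)$ force $s = a + b - 1$? The operator $T$ is not literally injective on all of $\mathcal{F}$, but on the orbit of a single seed the iterates are genuinely distinct as long as they are non-null, because each application of $T$ strictly drops the $x_{j}$-index of at least one component by one. This is precisely the content of Lemma~\ref{index-totient}, which identifies the totient with one more than the maximum $x_{j}$-index of the components of the seed. Provided one remains below this totient, the iterate sequence on $\mathcal{S}_2$ is a strictly decreasing chain (in $x_{j}$-index), and the uniqueness of the value $r$ built into Definition~\ref{subexpansion} then supplies exactly the cancellation law required. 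With that in hand the arithmetic identity is immediate.
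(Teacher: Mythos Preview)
Your proof is correct and follows essentially the same route as the paper: both unpack the two sub-expansion hypotheses into iterate identities $T(\mathcal{S}_1)=T^{a}(\mathcal{S}_2)$ and $T^{s}(\mathcal{S}_2)=T^{b}(\mathcal{S}_1)$, chain them to get $T^{s}(\mathcal{S}_2)=T^{a+b-1}(\mathcal{S}_2)$, and read off $s+1=a+b$. If anything you are more careful than the paper, which simply asserts the cancellation step without the orbit-injectivity discussion you supply.
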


\begin{proof}
Under the condition $(\gamma^{-1}\circ\beta\circ\gamma\circ\nabla)_{[x_{j}]}(\mathcal{S}_1)\leq  (\gamma^{-1}\circ\beta\circ\gamma\circ\nabla)_{[x_{j}]}(\mathcal{S}_2)$, there exists some $l\in \mathbb{N}$ such that 
\begin{align}
   (\gamma^{-1}\circ\beta\circ\gamma\circ\nabla)_{[x_{j}]}(\mathcal{S}_1)=(\gamma^{-1}\circ\beta\circ\gamma\circ\nabla)^l_{[x_{j}]}(\mathcal{S}_2) \nonumber
\end{align}
so that $\bigg[(\gamma^{-1}\circ\beta\circ\gamma\circ\nabla)_{[x_{j}]}(\mathcal{S}_{2}):(\gamma^{-1}\circ\beta\circ\gamma\circ\nabla)_{[x_{j}]}(\mathcal{S}_1)\bigg]=l$. 
Again 
$$
(\gamma^{-1}\circ\beta\circ\gamma\circ\nabla)^s_{[x_{j}]}(\mathcal{S}_2)\leq (\gamma^{-1}\circ\beta\circ\gamma\circ\nabla)_{[x_{j}]}(\mathcal{S}_1)
$$ 
for some $s\in \mathbb{N}$ implies that there exist some $r\in \mathbb{N}$ such that
\begin{align}
   (\gamma^{-1}\circ\beta\circ\gamma\circ\nabla)^s_{[x_{j}]}(\mathcal{S}_2)=(\gamma^{-1}\circ\beta\circ\gamma\circ\nabla)^r_{[x_{j}]}(\mathcal{S}_1)\nonumber 
\end{align}
so that $\bigg[(\gamma^{-1}\circ\beta\circ\gamma\circ\nabla)_{[x_{j}]}(\mathcal{S}_{1}):(\gamma^{-1}\circ\beta\circ\gamma\circ\nabla)^s_{[x_{j}]}(\mathcal{S}_2)\bigg]=r$. We deduce 
\begin{align}
   (\gamma^{-1}\circ\beta\circ\gamma\circ\nabla)^s_{[x_{j}]}(\mathcal{S}_2)&=(\gamma^{-1}\circ\beta\circ\gamma\circ\nabla)^r_{[x_{j}]}(\mathcal{S}_1)\nonumber \\&=(\gamma^{-1}\circ\beta\circ\gamma\circ\nabla)^{r+l-1}_{[x_{j}]}(\mathcal{S}_2)\nonumber 
\end{align}
and write $s+1=r+l$. The claimed assertion is deduced by making the substitutions 
$$
\bigg[(\gamma^{-1}\circ\beta\circ\gamma\circ \nabla)_{[x_{j}]}(\mathcal{S}_{2}):(\gamma^{-1}\circ\beta\circ\gamma\circ\nabla)_{[x_{j}]}(\mathcal{S}_1)\bigg]=l
$$ 
and 
$$
\bigg[(\gamma^{-1}\circ\beta\circ\gamma\circ\nabla)_{[x_{j}]}(\mathcal{S}_{1}):(\gamma^{-1}\circ\beta\circ\gamma\circ\nabla)^s_{[x_{j}]}(\mathcal{S}_2)\bigg]=r.
$$
\end{proof}

\subsection{An applications to additive number theory}

We apply the framework to the theory of partitions in additive number theory.

\begin{corollary}\label{application}
Let  $(\gamma^{-1}\circ\beta\circ\gamma\circ\nabla)_{[x_{j}]}(\mathcal{S}_1)\leq  (\gamma^{-1}\circ\beta\circ\gamma\circ\nabla)_{[x_{j}]}(\mathcal{S}_2)$ be such that $(\gamma^{-1}\circ\beta\circ\gamma\circ\nabla)^s_{[x_{j}]}(\mathcal{S}_2)\leq (\gamma^{-1}\circ\beta\circ\gamma\circ\nabla)_{[x_{j}]}(\mathcal{S}_1)$. If $\bigg[(\gamma^{-1}\circ\beta\circ\gamma\circ\nabla)_{[x_{j}]}(\mathcal{S}_{2}):(\gamma^{-1}\circ\beta\circ\gamma\circ\nabla)_{[x_{j}]}(\mathcal{S}_1)\bigg]$ and $\bigg[(\gamma^{-1}\circ\beta\circ\gamma\circ\nabla)_{[x_{j}]}(\mathcal{S}_{1}):(\gamma^{-1}\circ\beta\circ\gamma\circ\nabla)^s_{[x_{j}]}(\mathcal{S}_2)\bigg]$ are both prime numbers, then $s+1$ can be written as sum of two prime numbers.
\end{corollary}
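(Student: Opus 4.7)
The plan is to deduce this corollary directly from the preceding theorem, which under the same hypotheses already produces the exact identity
\begin{align}
    s+1&=\bigg[(\gamma^{-1}\circ \beta \circ \gamma \circ \nabla)_{[x_{j}]}(\mathcal{S}_{2}):(\gamma^{-1}\circ \beta \circ \gamma \circ \nabla)_{[x_{j}]}(\mathcal{S}_1)\bigg]\nonumber \\&+\bigg[(\gamma^{-1}\circ \beta \circ \gamma \circ \nabla)_{[x_{j}]}(\mathcal{S}_{1}):(\gamma^{-1}\circ \beta \circ \gamma \circ \nabla)^s_{[x_{j}]}(\mathcal{S}_2)\bigg].\nonumber
\end{align}
The corollary is essentially a translation of this identity into the language of additive number theory, so no further structural work is needed.

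First I would observe that the hypotheses of the preceding theorem are satisfied verbatim: we are assuming $(\gamma^{-1}\circ \beta \circ \gamma \circ \nabla)_{[x_{j}]}(\mathcal{S}_1)\leq  (\gamma^{-1}\circ \beta \circ \gamma \circ \nabla)_{[x_{j}]}(\mathcal{S}_2)$ together with the existence of $s\in \mathbb{N}$ with $(\gamma^{-1}\circ \beta \circ \gamma \circ \nabla)^s_{[x_{j}]}(\mathcal{S}_2)\leq (\gamma^{-1}\circ \beta \circ \gamma \circ \nabla)_{[x_{j}]}(\mathcal{S}_1)$. Invoking the theorem then gives the displayed identity expressing $s+1$ as the sum of the two indices.

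Next I would substitute the hypothesis that both indices are prime numbers, say $p_1$ and $p_2$. The identity collapses to $s+1=p_1+p_2$, which exhibits $s+1$ explicitly as a sum of two primes. At this point the conclusion of the corollary is achieved, and no further appeal to Proposition \ref{finite index-sub-expansion}, Proposition \ref{index transitivity}, or any earlier machinery is required.

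The only conceivable obstacle is the well-definedness of the two indices appearing in the identity, since the index of one expansion in another is only guaranteed to be a natural number when the corresponding sub-expansion relation holds; but the hypotheses ensure precisely this finiteness via Proposition \ref{finite index-sub-expansion}, so both indices are genuine natural numbers and the primality assumption is meaningful. Hence the proof is essentially a one-line deduction from the preceding theorem.
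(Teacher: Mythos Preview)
Your proposal is correct and matches the paper's intended approach: the corollary is stated immediately after the theorem giving $s+1$ equal to the sum of the two indices, and the paper offers no separate proof, treating it as an obvious substitution of the primality hypothesis into that identity. Your observation about Proposition \ref{finite index-sub-expansion} guaranteeing finiteness of the indices is a reasonable extra care that the paper leaves implicit.
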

\bigskip

\section{Dominating expansions}

In this section, we introduce the notion of \emph{dominating} expansions and their corresponding numbers.

\begin{definition}
Let $(\gamma^{-1}\circ\beta\circ\gamma\circ\nabla)_{[x_{j}]}(\mathcal{S}_z)\leq (\gamma^{-1}\circ\beta\circ\gamma\circ\nabla)_{[x_{j}]}(\mathcal{S}_t)$. We call the smallest number $s\in \mathbb{N}$ such that $(\gamma^{-1}\circ\beta\circ\gamma\circ\nabla)^s_{[x_{j}]}(\mathcal{S}_t)\leq (\gamma^{-1}\circ\beta\circ\gamma\circ\nabla)_{[x_{j}]}(\mathcal{S}_z)$ the \emph{dominating number} of the expansion $(\gamma^{-1}\circ\beta\circ\gamma\circ\nabla)_{[x_{j}]}(\mathcal{S}_z)$ relative to the expansion $(\gamma^{-1}\circ\beta\circ\gamma\circ\nabla)_{[x_{j}]}(\mathcal{S}_t)$. We say that the expansion $(\gamma^{-1}\circ\beta\circ\gamma\circ\nabla)_{[x_{j}]}(\mathcal{S}_z)$ dominates expansion $(\gamma^{-1}\circ\beta\circ\gamma\circ\nabla)_{[x_{j}]}(\mathcal{S}_t)$ with \emph{dominating number} $s$. We denote the dominating number by
\begin{align}
    \mathbb{D}[(\gamma^{-1}\circ\beta\circ\gamma\circ\nabla)_{[x_{j}]}(\mathcal{S}_z)~|~(\gamma^{-1}\circ\beta\circ\gamma\circ\nabla)_{[x_{j}]}(\mathcal{S}_t)]=s.\nonumber
\end{align}
\end{definition}
\bigskip

Here, we relate the notion of the dominating expansion to the notion of sub-expansion of an expansion.

\begin{proposition}
Let $(\gamma^{-1}\circ\beta\circ\gamma\circ\nabla)_{[x_{j}]}(\mathcal{S}_1)\leq (\gamma^{-1}\circ\beta\circ\gamma\circ\nabla)_{[x_{j}]}(\mathcal{S}_n)$ and $(\gamma^{-1}\circ\beta\circ\gamma\circ\nabla)_{[x_{j}]}(\mathcal{S}_2)\leq (\gamma^{-1}\circ\beta\circ\gamma\circ\nabla)_{[x_{j}]}(\mathcal{S}_n)$. We have
\begin{align}
 \mathbb{D}[(\gamma^{-1}\circ\beta\circ\gamma\circ\nabla)_{[x_{j}]}(\mathcal{S}_1)~|~(\gamma^{-1}\circ\beta\circ\gamma\circ\nabla)_{[x_{j}]}(\mathcal{S}_n)]&<\mathbb{D}[(\gamma^{-1}\circ\beta\circ\gamma\circ\nabla)_{[x_{j}]}(\mathcal{S}_2)~|\nonumber \\&~(\gamma^{-1}\circ\beta\circ\gamma\circ\nabla)_{[x_{j}]}(\mathcal{S}_n)]\nonumber   
\end{align}
if and only if $(\gamma^{-1}\circ\beta\circ\gamma\circ\nabla)_{[x_{j}]}(\mathcal{S}_2)\leq (\gamma^{-1}\circ\beta\circ\gamma\circ\nabla)_{[x_{j}]}(\mathcal{S}_1)$.
\end{proposition}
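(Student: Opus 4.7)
Let us abbreviate $E_i := (\gamma^{-1}\circ\beta\circ\gamma\circ\nabla)_{[x_j]}(\mathcal{S}_i)$ for $i \in \{1,2,n\}$, and set $s_i := \mathbb{D}[E_i \mid E_n]$. The plan is to prove both directions by translating the sub-expansion hypothesis (respectively the strict inequality $s_1 < s_2$) into explicit power-shift identities via Definition~\ref{subexpansion}, and then concluding via the transitivity of the sub-expansion relation, which follows by iterating the defining identity twice, in the spirit of Proposition~\ref{index transitivity}.

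For the direction $(\Leftarrow)$, assume $E_2 \leq E_1$. Definition~\ref{subexpansion} supplies an integer $r \geq 0$ with $(\gamma^{-1}\circ\beta\circ\gamma\circ\nabla)\mathcal{S}_2 = (\gamma^{-1}\circ\beta\circ\gamma\circ\nabla)^{1+r}\mathcal{S}_1$, and under the implicit assumption that the two sub-expansions of $E_n$ are genuinely distinct we have $r \geq 1$. By the definition of the dominating number there is a non-negative integer $b$ with $(\gamma^{-1}\circ\beta\circ\gamma\circ\nabla)^{s_2}\mathcal{S}_n = (\gamma^{-1}\circ\beta\circ\gamma\circ\nabla)^{s_2+b}\mathcal{S}_2$; substituting the identity for $(\gamma^{-1}\circ\beta\circ\gamma\circ\nabla)\mathcal{S}_2$ on the right-hand side yields $(\gamma^{-1}\circ\beta\circ\gamma\circ\nabla)^{s_2}\mathcal{S}_n = (\gamma^{-1}\circ\beta\circ\gamma\circ\nabla)^{s_2+b+r}\mathcal{S}_1$, which certifies $(\gamma^{-1}\circ\beta\circ\gamma\circ\nabla)^{s_2}\mathcal{S}_n \leq E_1$, so $s_1 \leq s_2$. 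Repeating the substitution, I expect to obtain $(\gamma^{-1}\circ\beta\circ\gamma\circ\nabla)^{s_2-r}\mathcal{S}_n \leq E_1$ by the analogous identity at level $s_2-r$, and the minimality of $s_1$ then forces $s_1 \leq s_2 - r \leq s_2 - 1$, giving the strict inequality.

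For the direction $(\Rightarrow)$, assume $s_1 < s_2$. The defining equations $(\gamma^{-1}\circ\beta\circ\gamma\circ\nabla)^{s_1}\mathcal{S}_n = (\gamma^{-1}\circ\beta\circ\gamma\circ\nabla)^{s_1+a}\mathcal{S}_1$ and $(\gamma^{-1}\circ\beta\circ\gamma\circ\nabla)^{s_2}\mathcal{S}_n = (\gamma^{-1}\circ\beta\circ\gamma\circ\nabla)^{s_2+b}\mathcal{S}_2$ (for some $a, b \geq 0$) combine, after applying $(\gamma^{-1}\circ\beta\circ\gamma\circ\nabla)^{s_2 - s_1}$ to the first, into $(\gamma^{-1}\circ\beta\circ\gamma\circ\nabla)^{s_2+a}\mathcal{S}_1 = (\gamma^{-1}\circ\beta\circ\gamma\circ\nabla)^{s_2+b}\mathcal{S}_2$; peeling off one copy of $\nabla$ on each side produces the target identity $(\gamma^{-1}\circ\beta\circ\gamma\circ\nabla)\mathcal{S}_2 = (\gamma^{-1}\circ\beta\circ\gamma\circ\nabla)^{1+(a-b)}\mathcal{S}_1$, which is precisely the sub-expansion $E_2 \leq E_1$ from Definition~\ref{subexpansion}, provided $a \geq b$. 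The strict inequality $s_1 < s_2$ is exactly what should force $a \geq b$: otherwise one could trim the witness for $E_2$ back to a level strictly smaller than $s_2$, contradicting the minimality in the definition of $s_2$.

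The main obstacle lies in converting the formal power-shift identities back into proper sub-expansion relations near the regime where the operator stabilises at $\mathcal{S}_0$ (i.e., once the totient is reached), where the sequence $\{(\gamma^{-1}\circ\beta\circ\gamma\circ\nabla)^{k}\mathcal{S}_n\}_{k}$ ceases to be injective. Handling this cleanly will require either excluding the degenerate regime in which $(\gamma^{-1}\circ\beta\circ\gamma\circ\nabla)^{s}\mathcal{S}_n = \mathcal{S}_0$ already holds for $s \leq \min(s_1,s_2)$, or supplementing the argument with a separate case in which the non-injectivity of the iterates forces the inequality $a \geq b$ to be extracted from the minimality conditions on $s_1, s_2$ rather than from the uniqueness of the defining identities.
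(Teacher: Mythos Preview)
Your approach and the paper's diverge at the essential step, and the gap you flag at the end is exactly where your argument breaks while the paper's does not need it.

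In the forward direction the paper never tries to ``peel off'' iterates of the operator. Writing $u=s_1$ and $v=s_2$, the paper observes that since $v$ is the \emph{smallest} integer with $(\gamma^{-1}\circ\beta\circ\gamma\circ\nabla)^{v}_{[x_j]}(\mathcal{S}_n)\leq E_2$ and $u<v$, the iterate at level $u$ is \emph{not} a sub-expansion of $E_2$. Because $E_2\leq E_n$ forces $E_2$ to sit inside the chain $\{(\gamma^{-1}\circ\beta\circ\gamma\circ\nabla)^{k}_{[x_j]}(\mathcal{S}_n)\}_k$, that chain is totally ordered by $\leq$, so failure of one comparison immediately yields the other: $E_2\leq(\gamma^{-1}\circ\beta\circ\gamma\circ\nabla)^{u}_{[x_j]}(\mathcal{S}_n)$. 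Combined with $(\gamma^{-1}\circ\beta\circ\gamma\circ\nabla)^{u}_{[x_j]}(\mathcal{S}_n)\leq E_1$ this gives $E_2\leq E_1$ by transitivity. No cancellation of operators, hence no injectivity hypothesis, is needed.

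Your $(\Rightarrow)$ argument instead produces an identity $(\gamma^{-1}\circ\beta\circ\gamma\circ\nabla)^{s_2+a}_{[x_j]}(\mathcal{S}_1)=(\gamma^{-1}\circ\beta\circ\gamma\circ\nabla)^{s_2+b}_{[x_j]}(\mathcal{S}_2)$ and then attempts to strip copies of the operator from both sides. That move is illegitimate in general (the operator is not injective once the totient is approached), and your proposed fix---deducing $a\geq b$ from minimality of $s_2$---would itself require comparing iterates at levels below $s_2$, which is precisely the comparability statement the paper gets for free from the chain structure. Similarly, in your $(\Leftarrow)$ direction the line ``I expect to obtain $(\gamma^{-1}\circ\beta\circ\gamma\circ\nabla)^{s_2-r}_{[x_j]}(\mathcal{S}_n)\leq E_1$'' again amounts to undoing $r$ applications of the operator without justification. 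The fix is to abandon the identity-manipulation route and argue, as the paper does, via the total order on the iterates of $\mathcal{S}_n$ together with the minimality clauses in the definition of $\mathbb{D}$.
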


\begin{proof}
Suppose that 
\begin{align}
  \mathbb{D}[(\gamma^{-1}\circ\beta\circ\gamma\circ\nabla)_{[x_{j}]}(\mathcal{S}_1)~|~(\gamma^{-1}\circ\beta\circ\gamma\circ\nabla)_{[x_{j}]}(\mathcal{S}_n)]&<\mathbb{D}[(\gamma^{-1}\circ\beta\circ\gamma\circ\nabla)_{[x_{j}]}(\mathcal{S}_2)~|\nonumber \\&~(\gamma^{-1}\circ\beta\circ\gamma\circ\nabla)_{[x_{j}]}(\mathcal{S}_n)]\nonumber   
\end{align}
and let $\mathbb{D}[(\gamma^{-1}\circ\beta\circ\gamma\circ\nabla)_{[x_{j}]}(\mathcal{S}_1)~|~(\gamma^{-1}\circ\beta\circ\gamma\circ\nabla)_{[x_{j}]}(\mathcal{S}_n)]=u$ and $\mathbb{D}[(\gamma^{-1}\circ\beta\circ\gamma\circ\nabla)_{[x_{j}]}(\mathcal{S}_2)~|~(\gamma^{-1}\circ\beta\circ\gamma\circ\nabla)_{[x_{j}]}(\mathcal{S}_n)]=v$. We have $(\gamma^{-1}\circ\beta\circ\gamma\circ\nabla)^v_{[x_{j}]}(\mathcal{S}_n)\leq (\gamma^{-1}\circ\beta\circ\gamma\circ\nabla)_{[x_{j}]}(\mathcal{S}_2)$ and $(\gamma^{-1}\circ\beta\circ\gamma\circ\nabla)^u_{[x_{j}]}(\mathcal{S}_n)\leq (\gamma^{-1}\circ\beta\circ\gamma\circ\nabla)_{[x_{j}]}(\mathcal{S}_1)$. Since $v$ and $u$ are the smallest such numbers and $v>u$, we obtain the chain of sub-expansions
\begin{align}
  (\gamma^{-1}\circ\beta\circ\gamma\circ\nabla)^v_{[x_{j}]}(\mathcal{S}_n)&\leq (\gamma^{-1}\circ\beta\circ\gamma\circ\nabla)_{[x_{j}]}(\mathcal{S}_2)\leq (\gamma^{-1}\circ\beta\circ\gamma\circ\nabla)^u_{[x_{j}]}(\mathcal{S}_n)\nonumber \\& \leq (\gamma^{-1}\circ\beta\circ\gamma\circ\nabla)_{[x_{j}]}(\mathcal{S}_1)\nonumber  
\end{align}
so that 
$$
(\gamma^{-1}\circ\beta\circ\gamma\circ\nabla)_{[x_{j}]}(\mathcal{S}_2)\leq (\gamma^{-1}\circ\beta\circ\gamma\circ\nabla)_{[x_{j}]}(\mathcal{S}_1).
$$ 
Conversely, suppose that $(\gamma^{-1}\circ\beta\circ\gamma\circ\nabla)_{[x_{j}]}(\mathcal{S}_2)\leq (\gamma^{-1}\circ\beta\circ\gamma\circ\nabla)_{[x_{j}]}(\mathcal{S}_1)$. We deduce
\begin{align}
   \mathbb{D}[(\gamma^{-1}\circ\beta\circ\gamma\circ\nabla)_{[x_{j}]}(\mathcal{S}_1)~|~(\gamma^{-1}\circ\beta\circ\gamma\circ\nabla)_{[x_{j}]}(\mathcal{S}_n)]&<\mathbb{D}[(\gamma^{-1}\circ\beta\circ\gamma\circ\nabla)_{[x_{j}]}(\mathcal{S}_2)~|\nonumber \\&~(\gamma^{-1}\circ\beta\circ\gamma\circ\nabla)_{[x_{j}]}(\mathcal{S}_n)]\nonumber   
\end{align}
since 
$$
\mathbb{D}[(\gamma^{-1}\circ\beta\circ\gamma\circ\nabla)_{[x_{j}]}(\mathcal{S}_1)~|~(\gamma^{-1}\circ\beta\circ\gamma\circ\nabla)_{[x_{j}]}(\mathcal{S}_n)]
$$ 
and 
$$
\mathbb{D}[(\gamma^{-1}\circ\beta\circ\gamma\circ\nabla)_{[x_{j}]}(\mathcal{S}_2)~|~(\gamma^{-1}\circ\beta\circ\gamma\circ\nabla)_{[x_{j}]}(\mathcal{S}_n)]
$$ 
are the smallest such numbers.
\end{proof}
\bigskip

The notion of the totient introduced in previous sections provides a specific time range for an expansion to run into extinction. In other words, the totient is the time taken for an expansion to come to a complete halt. The next result controls the total dominating number of any chain of sub-expansion of an expansion by an expression involving the totient.

\begin{proposition}
Let $(\gamma^{-1}\circ\beta\circ\gamma\circ\nabla)_{[x_{j}]}(\mathcal{S}_1)\leq (\gamma^{-1}\circ\beta\circ\gamma\circ\nabla)_{[x_{j}]}(\mathcal{S}_2)\leq \cdots \leq (\gamma^{-1}\circ\beta\circ\gamma\circ\nabla)_{[x_{j}]}(\mathcal{S}_n)$ be a chain of sub-expansions of the expansion $(\gamma^{-1}\circ\beta\circ\gamma\circ\nabla)_{[x_{j}]}(\mathcal{S}_n)$. We have
\begin{align}
    &\sum \limits_{i=1}^{n}\mathbb{D}[(\gamma^{-1}\circ\beta\circ\gamma\circ\nabla)_{[x_{j}]}(\mathcal{S}_i)~|~(\gamma^{-1}\circ\beta\circ\gamma\circ\nabla)_{[x_{j}]}(\mathcal{S}_{n})]\nonumber \\& \leq \frac{\Phi[(\gamma^{-1}\circ\beta\circ\gamma\circ\nabla)_{[x_{j}]}(\mathcal{S}_n)]-1}{2}\nonumber \times (\Phi[(\gamma^{-1}\circ\beta\circ\gamma\circ\nabla)_{[x_{j}]}(\mathcal{S}_n)]-2).\nonumber
\end{align}
\end{proposition}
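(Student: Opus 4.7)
The plan is to translate the chain of sub-expansions into a strictly decreasing sequence of non-negative integers, show that these integers coincide (under the paper's convention) with the dominating numbers, and then bound their total as a triangular number capped by the totient.

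First I would unpack the chain. For every $i<n$, the relation $(\gamma^{-1}\circ \beta \circ \gamma \circ \nabla)_{[x_{j}]}(\mathcal{S}_{i}) \leq (\gamma^{-1}\circ \beta \circ \gamma \circ \nabla)_{[x_{j}]}(\mathcal{S}_{i+1})$ supplies, via Definition~\ref{subexpansion}, some $m_i \geq 0$ with
\[
(\gamma^{-1}\circ \beta \circ \gamma \circ \nabla)_{[x_{j}]}(\mathcal{S}_{i}) \;=\; (\gamma^{-1}\circ \beta \circ \gamma \circ \nabla)^{1+m_i}_{[x_{j}]}(\mathcal{S}_{i+1}).
\]
Telescoping down to $\mathcal{S}_n$ and setting $a_i := m_i+m_{i+1}+\cdots+m_{n-1}$ for $i<n$ and $a_n:=0$ produces the uniform presentation
\[
(\gamma^{-1}\circ \beta \circ \gamma \circ \nabla)_{[x_{j}]}(\mathcal{S}_{i}) \;=\; (\gamma^{-1}\circ \beta \circ \gamma \circ \nabla)^{1+a_i}_{[x_{j}]}(\mathcal{S}_{n}).
\]
Because the chain consists of genuinely distinct sub-expansions, $m_i \geq 1$ for every $i<n$, so the sequence $a_1>a_2>\cdots>a_n=0$ is strictly decreasing; in particular $\{a_1,\dots,a_n\}$ is a set of $n$ distinct non-negative integers.

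Second I would match each dominating number to its excess index. By the definition of $\mathbb{D}[\cdot\,|\,\cdot]$ one seeks the least $s$ with $(\gamma^{-1}\circ \beta \circ \gamma \circ \nabla)^s_{[x_{j}]}(\mathcal{S}_n) \leq (\gamma^{-1}\circ \beta \circ \gamma \circ \nabla)^{1+a_i}_{[x_{j}]}(\mathcal{S}_n)$, and interpreting the sub-expansion relation as ``at least as many iterations applied to the common base $\mathcal{S}_n$'' isolates the minimum at $s=a_i$ after normalising away the trivial shift. This yields $\mathbb{D}\bigl[(\gamma^{-1}\circ \beta \circ \gamma \circ \nabla)_{[x_{j}]}(\mathcal{S}_i)\,\big|\,(\gamma^{-1}\circ \beta \circ \gamma \circ \nabla)_{[x_{j}]}(\mathcal{S}_n)\bigr]=a_i$.

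Third I would bring in the totient bound. Setting $N:=\Phi[(\gamma^{-1}\circ \beta \circ \gamma \circ \nabla)_{[x_{j}]}(\mathcal{S}_n)]$, the non-triviality of $(\gamma^{-1}\circ \beta \circ \gamma \circ \nabla)^{1+a_i}_{[x_{j}]}(\mathcal{S}_n)=(\gamma^{-1}\circ \beta \circ \gamma \circ \nabla)_{[x_{j}]}(\mathcal{S}_i)\neq \mathcal{S}_0$ forces $1+a_i<N$, i.e.\ $a_i\leq N-2$. Hence the $n$ distinct values $a_i$ all lie in $\{0,1,\dots,N-2\}$, so $n\leq N-1$ and
\[
\sum_{i=1}^{n}\mathbb{D}\bigl[(\gamma^{-1}\circ \beta \circ \gamma \circ \nabla)_{[x_{j}]}(\mathcal{S}_i)\,\big|\,(\gamma^{-1}\circ \beta \circ \gamma \circ \nabla)_{[x_{j}]}(\mathcal{S}_n)\bigr]=\sum_{i=1}^{n}a_i\;\leq\;\sum_{k=0}^{N-2}k\;=\;\frac{(N-1)(N-2)}{2}.
\]
The main obstacle will be the second step — cleanly extracting the equality $\mathbb{D}_i=a_i$ — because Definition~\ref{subexpansion} of the sub-expansion relation $\leq$ is sensitive to the chosen representations of both sides and requires careful bookkeeping with base exponents; once that off-by-one normalisation is settled, the rest is the elementary fact that $n$ distinct elements of $\{0,1,\dots,N-2\}$ sum to at most $\binom{N-1}{2}$.
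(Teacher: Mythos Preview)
Your approach is essentially the same as the paper's: embed the given chain into the ``full'' chain of sub-expansions of $(\gamma^{-1}\circ \beta \circ \gamma \circ \nabla)_{[x_{j}]}(\mathcal{S}_n)$, observe that distinct members sit at distinct depths bounded by the totient, and then bound the sum of dominating numbers by the triangular number $\sum_{i=2}^{\Phi-1}(\Phi-i)=\tfrac{(\Phi-1)(\Phi-2)}{2}$. The paper states this in two sentences without spelling out the telescoping or the identification of $\mathbb{D}_i$ with a depth parameter; your write-up makes those steps explicit and even correctly flags the delicate point (the off-by-one normalisation in step two), so you are carrying out the same argument with more care rather than by a different route.
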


\begin{proof}
We insert the chain $(\gamma^{-1}\circ\beta\circ\gamma\circ\nabla)_{[x_{j}]}(\mathcal{S}_1)\leq (\gamma^{-1}\circ\beta\circ\gamma\circ\nabla)_{[x_{j}]}(\mathcal{S}_2)\leq \cdots \leq (\gamma^{-1}\circ\beta\circ\gamma\circ\nabla)_{[x_{j}]}(\mathcal{S}_n)$ of sub-expansions of the expansion $(\gamma^{-1}\circ\beta\circ\gamma\circ\nabla)_{[x_{j}]}(\mathcal{S}_n)$ into the full chain of sub-expansion of the expansion $(\gamma^{-1}\circ\beta\circ\gamma\circ\nabla)_{[x_{j}]}(\mathcal{S}_n)$. The sum on the left side is bounded by 
\begin{align}
    \sum \limits_{i=2}^{\Phi[(\gamma^{-1}\circ\beta\circ\gamma\circ\nabla)_{[x_{j}]}(\mathcal{S}_n)]-1}\bigg(\Phi[(\gamma^{-1}\circ\beta\circ\gamma\circ\nabla)_{[x_{j}]}(\mathcal{S}_n)]-i\bigg).\nonumber
\end{align}
\end{proof}
\bigskip

Here, we compare the dominating number to the index of an expansion.

\begin{proposition}\label{dominating-index}
If $(\gamma^{-1}\circ\beta\circ\gamma\circ\nabla)_{[x_{j}]}(\mathcal{S}_1)\leq (\gamma^{-1}\circ\beta\circ\gamma\circ\nabla)_{[x_{j}]}(\mathcal{S}_2)$, then 
\begin{align}
   \bigg[(\gamma^{-1}\circ\beta\circ\gamma\circ\nabla)_{[x_{j}]}(\mathcal{S}_2):(\gamma^{-1}\circ\beta\circ\gamma\circ\nabla)_{[x_{j}]}(\mathcal{S}_1)\bigg]&\leq \mathbb{D}[(\gamma^{-1}\circ\beta\circ\gamma\circ\nabla)_{[x_{j}]}(\mathcal{S}_1)~|\nonumber \\&~(\gamma^{-1}\circ\beta\circ\gamma\circ\nabla)_{[x_{j}]}(\mathcal{S}_2)].\nonumber 
\end{align}
\end{proposition}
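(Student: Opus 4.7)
The plan is to unpack the two defining relations and match them against each other. Write $X_i := (\gamma^{-1}\circ \beta \circ \gamma \circ \nabla)_{[x_j]}(\mathcal{S}_i)$ for brevity. First, the hypothesis $X_1 \leq X_2$ together with Proposition \ref{finite index-sub-expansion} guarantees that the index $r := \bigl[X_2 : X_1\bigr]$ is finite, and by definition
\begin{align*}
X_1 = (\gamma^{-1}\circ \beta \circ \gamma \circ \nabla)^{r}_{[x_j]}(\mathcal{S}_2).
\end{align*}
Iterating the $[x_j]$-expansion on both sides produces the bridging identity $(\gamma^{-1}\circ \beta \circ \gamma \circ \nabla)^{k}_{[x_j]}(\mathcal{S}_1) = (\gamma^{-1}\circ \beta \circ \gamma \circ \nabla)^{k+r-1}_{[x_j]}(\mathcal{S}_2)$ for every $k \geq 1$, which is the key link between the two chains.

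Next, I would turn to the dominating number $s := \mathbb{D}\bigl[X_1 \mid X_2\bigr]$, the smallest positive integer for which $(\gamma^{-1}\circ \beta \circ \gamma \circ \nabla)^{s}_{[x_j]}(\mathcal{S}_2) \leq X_1$ holds as a sub-expansion. Unpacking Definition \ref{subexpansion} then yields some $m' \geq 0$ satisfying
\begin{align*}
(\gamma^{-1}\circ \beta \circ \gamma \circ \nabla)^{s}_{[x_j]}(\mathcal{S}_2) = (\gamma^{-1}\circ \beta \circ \gamma \circ \nabla)^{s+m'}_{[x_j]}(\mathcal{S}_1),
\end{align*}
and feeding the bridging identity into the right-hand side collapses this to the single relation
\begin{align*}
(\gamma^{-1}\circ \beta \circ \gamma \circ \nabla)^{s}_{[x_j]}(\mathcal{S}_2) = (\gamma^{-1}\circ \beta \circ \gamma \circ \nabla)^{s + m' + r - 1}_{[x_j]}(\mathcal{S}_2).
\end{align*}

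Finally, I would prove $r \leq s$ by contradiction. Assume $s < r$. Then the two iterates of $\mathcal{S}_2$ at positions $s$ and $s + m' + r - 1$ must agree, yet the second exponent strictly exceeds the first by $m' + r - 1 \geq r - 1 \geq 1$, which forces a collapse inside the $\mathcal{S}_2$-chain at a position strictly before $r$. This, however, contradicts the definition of $r$ as the exact number of iterations at which $\mathcal{S}_2$ first realises the non-trivial iterate $X_1$; no genuine coincidence of iterates can appear earlier. Hence $s \geq r$, which is the asserted inequality.

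The main obstacle I anticipate is precisely this last step: turning the minimality inherent in the definition of the index into a concrete statement that no coincidence of iterates $(\gamma^{-1}\circ \beta \circ \gamma \circ \nabla)^{p}_{[x_j]}(\mathcal{S}_2) = (\gamma^{-1}\circ \beta \circ \gamma \circ \nabla)^{q}_{[x_j]}(\mathcal{S}_2)$ with $p < q$ can occur for $p < r$. All other steps are purely formal manipulations — unpacking the definitions of \emph{index} and \emph{sub-expansion} and applying the bridging identity — so once this no-premature-collapse argument is formalised, the inequality $\bigl[X_2 : X_1\bigr] \leq \mathbb{D}\bigl[X_1 \mid X_2\bigr]$ follows immediately.
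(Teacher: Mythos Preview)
Your route detours unnecessarily through the $\mathcal{S}_1$-chain. The paper's argument is a one-liner once the two definitions are unpacked: writing $v$ for the index and $u$ for the dominating number, one has $X_1=(\gamma^{-1}\circ\beta\circ\gamma\circ\nabla)^{v}_{[x_j]}(\mathcal{S}_2)$ and $(\gamma^{-1}\circ\beta\circ\gamma\circ\nabla)^{u}_{[x_j]}(\mathcal{S}_2)\leq X_1$, so the sub-expansion relation is read \emph{directly at the $\mathcal{S}_2$ level}: there exists $t\geq 0$ with $(\gamma^{-1}\circ\beta\circ\gamma\circ\nabla)^{u}_{[x_j]}(\mathcal{S}_2)=(\gamma^{-1}\circ\beta\circ\gamma\circ\nabla)^{v+t}_{[x_j]}(\mathcal{S}_2)$, whence $u=v+t\geq v$. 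No bridging identity, no contradiction step.

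Your final contradiction is where the detour bites you. From your displayed relation $(\gamma^{-1}\circ\beta\circ\gamma\circ\nabla)^{s}_{[x_j]}(\mathcal{S}_2)=(\gamma^{-1}\circ\beta\circ\gamma\circ\nabla)^{s+m'+r-1}_{[x_j]}(\mathcal{S}_2)$, the exponent-matching principle you are tacitly invoking (and which you correctly identify as the crux) forces $m'+r-1=0$, i.e.\ $r=1$, \emph{independently of} $s$; it does not yield $r\leq s$. The trouble is your application of Definition~\ref{subexpansion}: you add $m'$ to the sub-expansion's exponent $s$ rather than to the super-expansion's exponent $1$, so the bridging identity then shifts everything by the wrong amount. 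If you instead write $(\gamma^{-1}\circ\beta\circ\gamma\circ\nabla)^{s}_{[x_j]}(\mathcal{S}_2)=(\gamma^{-1}\circ\beta\circ\gamma\circ\nabla)^{1+m'}_{[x_j]}(\mathcal{S}_1)$ and then bridge, you obtain $s=m'+r\geq r$ directly --- which is exactly the paper's computation, just with an extra (and avoidable) pass through $\mathcal{S}_1$. The ``no-premature-collapse'' obstacle you flag is nothing more than the exponent-matching the paper uses in one stroke; once you grant it, there is no need for contradiction at all.
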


\begin{proof}
Let 
$$
v=\bigg[(\gamma^{-1}\circ\beta\circ\gamma\circ\nabla)_{[x_{j}]}(\mathcal{S}_2):(\gamma^{-1}\circ\beta\circ\gamma\circ\nabla)_{[x_{j}]}(\mathcal{S}_1)\bigg]
$$ 
and 
$$u=\mathbb{D}[(\gamma^{-1}\circ\beta\circ\gamma\circ\nabla)_{[x_{j}]}(\mathcal{S}_1)~|~(\gamma^{-1}\circ\beta\circ\gamma\circ\nabla)_{[x_{j}]}(\mathcal{S}_2)].
$$
We can write 
$$
(\gamma^{-1}\circ\beta\circ\gamma\circ\nabla)_{[x_{j}]}(\mathcal{S}_1)=(\gamma^{-1}\circ\beta\circ\gamma\circ\nabla)^v_{[x_{j}]}(\mathcal{S}_2)
$$ 
and 
$$
(\gamma^{-1}\circ\beta\circ\gamma\circ\nabla)^u_{[x_{j}]}(\mathcal{S}_2)\leq (\gamma^{-1}\circ\beta\circ\gamma\circ\nabla)_{[x_{j}]}(\mathcal{S}_1)=(\gamma^{-1}\circ\beta\circ\gamma\circ\nabla)^v_{[x_{j}]}(\mathcal{S}_2).
$$
It implies that $(\gamma^{-1}\circ\beta\circ\gamma\circ\nabla)^u_{[x_{j}]}(\mathcal{S}_2)$ is a sub-expansion of the expansion $(\gamma^{-1}\circ\beta\circ\gamma\circ\nabla)^v_{[x_{j}]}(\mathcal{S}_2)$ so that there exist some $t\geq 0$ such that 
\begin{align}
    (\gamma^{-1}\circ\beta\circ\gamma\circ\nabla)^u_{[x_{j}]}(\mathcal{S}_2)=(\gamma^{-1}\circ\beta\circ\gamma\circ\nabla)^{v+t}_{[x_{j}]}(\mathcal{S}_2).\nonumber
\end{align}
It follows that $\mathbb{D}[(\gamma^{-1}\circ\beta\circ\gamma\circ\nabla)_{[x_{j}]}(\mathcal{S}_1)~|~(\gamma^{-1}\circ\beta\circ\gamma\circ\nabla)_{[x_{j}]}(\mathcal{S}_2)]=u=v+t\geq v=\bigg[(\gamma^{-1}\circ\beta\circ\gamma\circ\nabla)_{[x_{j}]}(\mathcal{S}_2):(\gamma^{-1}\circ\beta\circ\gamma\circ\nabla)_{[x_{j}]}(\mathcal{S}_1)\bigg]$.
\end{proof}
\bigskip

Here, we generalize the inequality in proposition \ref{dominating-index} to arbitrary sub-expansions in a chain.

\begin{theorem}\label{main}
Let $(\gamma^{-1}\circ\beta\circ\gamma\circ\nabla)_{[x_{j}]}(\mathcal{S}_1)\leq (\gamma^{-1}\circ\beta\circ\gamma\circ\nabla)_{[x_{j}]}(\mathcal{S}_2)\leq \cdots \leq (\gamma^{-1}\circ\beta\circ\gamma\circ\nabla)_{[x_{j}]}(\mathcal{S}_n)$ be a chain of sub-expansions of the expansion $(\gamma^{-1}\circ\beta\circ\gamma\circ\nabla)_{[x_{j}]}(\mathcal{S}_n)$. We have
\begin{align}
   &\sum \limits_{i=k+1}^{n}\bigg[(\gamma^{-1}\circ\beta\circ\gamma\circ\nabla)_{[x_{j}]}(\mathcal{S}_i):(\gamma^{-1}\circ\beta\circ\gamma\circ\nabla)_{[x_{j}]}(\mathcal{S}_{i-1})\bigg]\nonumber \\&<\mathbb{D}[(\gamma^{-1}\circ\beta\circ\gamma\circ\nabla)_{[x_{j}]}(\mathcal{S}_k)~|~(\gamma^{-1}\circ\beta\circ\gamma\circ\nabla)_{[x_{j}]}(\mathcal{S}_n)]+(n-k).\nonumber
\end{align}
\end{theorem}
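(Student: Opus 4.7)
The plan is to combine the telescoping identity of Theorem~\ref{identity} with the comparison between index and dominating number furnished by Proposition~\ref{dominating-index}. First I would apply Theorem~\ref{identity} not to the full chain but to its tail
\begin{align}
(\gamma^{-1}\circ \beta \circ \gamma \circ \nabla)_{[x_{j}]}(\mathcal{S}_k)\leq (\gamma^{-1}\circ \beta \circ \gamma \circ \nabla)_{[x_{j}]}(\mathcal{S}_{k+1})\leq \cdots \leq (\gamma^{-1}\circ \beta \circ \gamma \circ \nabla)_{[x_{j}]}(\mathcal{S}_n),\nonumber
\end{align}
which is itself a chain of sub-expansions of $(\gamma^{-1}\circ \beta \circ \gamma \circ \nabla)_{[x_{j}]}(\mathcal{S}_n)$ of length $n-k+1$. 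Applied to this tail, Theorem~\ref{identity} delivers the identity
\begin{align}
\bigg[(\gamma^{-1}\circ \beta \circ \gamma \circ \nabla)_{[x_{j}]}(\mathcal{S}_n):(\gamma^{-1}\circ \beta \circ \gamma \circ \nabla)_{[x_{j}]}(\mathcal{S}_k)\bigg]=\sum \limits_{i=k+1}^{n}\bigg[(\gamma^{-1}\circ \beta \circ \gamma \circ \nabla)_{[x_{j}]}(\mathcal{S}_i):(\gamma^{-1}\circ \beta \circ \gamma \circ \nabla)_{[x_{j}]}(\mathcal{S}_{i-1})\bigg]-(n-k-1),\nonumber
\end{align}
after the routine index shift $j=i+1$ inside the summation inherited from the statement of Theorem~\ref{identity}.

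Next, I would solve this for the sum that appears on the left-hand side of the theorem, obtaining
\begin{align}
\sum \limits_{i=k+1}^{n}\bigg[(\gamma^{-1}\circ \beta \circ \gamma \circ \nabla)_{[x_{j}]}(\mathcal{S}_i):(\gamma^{-1}\circ \beta \circ \gamma \circ \nabla)_{[x_{j}]}(\mathcal{S}_{i-1})\bigg]=\bigg[(\gamma^{-1}\circ \beta \circ \gamma \circ \nabla)_{[x_{j}]}(\mathcal{S}_n):(\gamma^{-1}\circ \beta \circ \gamma \circ \nabla)_{[x_{j}]}(\mathcal{S}_k)\bigg]+(n-k-1).\nonumber
\end{align}
Since $(\gamma^{-1}\circ \beta \circ \gamma \circ \nabla)_{[x_{j}]}(\mathcal{S}_k)\leq (\gamma^{-1}\circ \beta \circ \gamma \circ \nabla)_{[x_{j}]}(\mathcal{S}_n)$, Proposition~\ref{dominating-index} bounds the index term by the dominating number, giving
\begin{align}
\bigg[(\gamma^{-1}\circ \beta \circ \gamma \circ \nabla)_{[x_{j}]}(\mathcal{S}_n):(\gamma^{-1}\circ \beta \circ \gamma \circ \nabla)_{[x_{j}]}(\mathcal{S}_k)\bigg]\leq \mathbb{D}[(\gamma^{-1}\circ \beta \circ \gamma \circ \nabla)_{[x_{j}]}(\mathcal{S}_k)~|~(\gamma^{-1}\circ \beta \circ \gamma \circ \nabla)_{[x_{j}]}(\mathcal{S}_n)].\nonumber
\end{align}
Combining these two relations yields the sum is at most $\mathbb{D}[\cdots]+(n-k-1)$, and since $n-k-1<n-k$ the desired strict inequality follows.

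There is essentially no obstacle in principle: the theorem is a clean corollary of the two results already in hand, and the only delicate point is bookkeeping the telescoping constant obtained from Theorem~\ref{identity}. The strictness is automatic because the telescoping identity gives the additive defect $n-k-1$, which sits one unit below the $n-k$ appearing in the statement; this single unit of slack is what upgrades the weak bound coming from Proposition~\ref{dominating-index} into the strict bound asserted in the theorem.
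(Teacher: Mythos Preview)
Your proof is correct. It differs from the paper's argument in organization rather than in underlying content, but the difference is worth noting. The paper does not invoke Theorem~\ref{identity} or Proposition~\ref{dominating-index} at all; instead it rebuilds the telescoping identity from scratch, starting from the dominating number $u=\mathbb{D}[\cdots]$, using the sub-expansion $(\gamma^{-1}\circ \beta \circ \gamma \circ \nabla)^{u}_{[x_j]}(\mathcal{S}_n)\leq (\gamma^{-1}\circ \beta \circ \gamma \circ \nabla)_{[x_j]}(\mathcal{S}_k)$ to produce an extra index $r_k\in\mathbb{N}$, and then iterating up the chain to obtain $u=r_k+\sum_{i=k+1}^{n}r_i-(n-k)$. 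Dropping the positive $r_k$ yields the strict inequality.

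Your route is more modular: you apply Theorem~\ref{identity} to the tail chain as a black box, then bound the resulting global index $[\,(\gamma^{-1}\circ \beta \circ \gamma \circ \nabla)_{[x_j]}(\mathcal{S}_n):(\gamma^{-1}\circ \beta \circ \gamma \circ \nabla)_{[x_j]}(\mathcal{S}_k)\,]$ via Proposition~\ref{dominating-index}. The unit of slack you identify, $n-k-1$ versus $n-k$, plays the same role as the paper's discarded $r_k\geq 1$. Your argument has the advantage of making clear that the theorem is a direct corollary of the two earlier results; the paper's argument is self-contained but redundant with the proofs already given.
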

\bigskip

\begin{proof}
Let 
$$
\mathbb{D}[(\gamma^{-1}\circ\beta\circ\gamma\circ\nabla)_{[x_{j}]}(\mathcal{S}_k)~|~(\gamma^{-1}\circ\beta\circ\gamma\circ\nabla)_{[x_{j}]}(\mathcal{S}_n)]=u.
$$
We obtain
\begin{align}
   (\gamma^{-1}\circ\beta\circ\gamma\circ\nabla)^u_{[x_{j}]}(\mathcal{S}_n)\leq (\gamma^{-1}\circ\beta\circ\gamma\circ\nabla)_{[x_{j}]}(\mathcal{S}_k)\nonumber 
\end{align} 
so that there exists some $r_k\in \mathbb{N}$ such that 
\begin{align}
   (\gamma^{-1}\circ\beta\circ\gamma\circ\nabla)^u_{[x_{j}]}(\mathcal{S}_n)=(\gamma^{-1}\circ\beta\circ\gamma\circ\nabla)^{r_k}_{[x_{j}]}(\mathcal{S}_k).\nonumber  
\end{align}
Using the chain of sub-expansion 
$$
(\gamma^{-1}\circ\beta\circ\gamma\circ\nabla)_{[x_{j}]}(\mathcal{S}_k)\leq (\gamma^{-1}\circ\beta\circ\gamma\circ\nabla)_{[x_{j}]}(\mathcal{S}_{k+1})
$$ 
we find that there exists some $r_{k+1}\in \mathbb{N}$ such that 
\begin{align}
   (\gamma^{-1}\circ\beta\circ\gamma\circ\nabla)_{[x_{j}]}(\mathcal{S}_k)=(\gamma^{-1}\circ\beta\circ\gamma\circ\nabla)^{r_{k+1}}_{[x_{j}]}(\mathcal{S}_{k+1})\nonumber
\end{align}
so that 
\begin{align}
    \gamma^{-1}\circ\beta\circ\gamma\circ\nabla)^u_{[x_{j}]}(\mathcal{S}_n)&=(\gamma^{-1}\circ\beta\circ\gamma\circ\nabla)^{r_k}_{[x_{j}]}(\mathcal{S}_k)\nonumber \\&=(\gamma^{-1}\circ\beta\circ\gamma\circ\nabla)^{r_k+r_{k+1}-1}_{[x_{j}]}(\mathcal{S}_{k+1}).\nonumber
\end{align}
 Further of the chain $(\gamma^{-1}\circ\beta\circ\gamma\circ\nabla)_{[x_{j}]}(\mathcal{S}_{k+1})\leq (\gamma^{-1}\circ\beta\circ\gamma\circ\nabla)_{[x_{j}]}(\mathcal{S}_{k+2})$ implies that there exists some $r_{k+2}\in \mathbb{N}$ such that 
\begin{align}
   (\gamma^{-1}\circ\beta\circ\gamma\circ\nabla)_{[x_{j}]}(\mathcal{S}_{k+1})=(\gamma^{-1}\circ\beta\circ\gamma\circ\nabla)^{r_{k+2}}_{[x_{j}]}(\mathcal{S}_{k+2})\nonumber  
\end{align}
so that
\begin{align}
   (\gamma^{-1}\circ\beta\circ\gamma\circ\nabla)^u_{[x_{j}]}(\mathcal{S}_n)&=(\gamma^{-1}\circ\beta\circ\gamma\circ\nabla)^{r_k}_{[x_{j}]}(\mathcal{S}_k)\nonumber \\&=(\gamma^{-1}\circ\beta\circ\gamma\circ\nabla)^{r_k+r_{k+1}-1}_{[x_{j}]}(\mathcal{S}_{k+1})\nonumber \\&=(\gamma^{-1}\circ\beta\circ\gamma\circ\nabla)^{r_k+r_{k+1}+r_{k+2}-2}_{[x_{j}]}(\mathcal{S}_{k+2}).\nonumber
\end{align}
Continuing the argument, we obtain 
\begin{align}
   (\gamma^{-1}\circ\beta\circ\gamma\circ\nabla)^u_{[x_{j}]}(\mathcal{S}_n)&= (\gamma^{-1}\circ\beta\circ\gamma\circ\nabla)^{r_k+r_{k+1}+r_{k+2}+\cdots+r_n-(n-k)}_{[x_{j}]}(\mathcal{S}_{n}).\nonumber
\end{align}
We deduce
\begin{align}
   u&=r_k+\sum \limits_{i=k+1}^{n}r_i-(n-k)\nonumber \\& \geq \sum \limits_{i=k+1}^{n}r_i-(n-k)\nonumber
\end{align}
with 
$$
r_i=\bigg[(\gamma^{-1}\circ\beta\circ\gamma\circ\nabla)_{[x_{j}]}(\mathcal{S}_i):(\gamma^{-1}\circ\beta\circ\gamma\circ\nabla)_{[x_{j}]}(\mathcal{S}_{i-1})\bigg]
$$ 
and 
$$\mathbb{D}[(\gamma^{-1}\circ\beta\circ\gamma\circ\nabla)_{[x_{j}]}(\mathcal{S}_k)~|~(\gamma^{-1}\circ\beta\circ\gamma\circ\nabla)_{[x_{j}]}(\mathcal{S}_n)]=u
$$ 
establishing the asserted inequality.
\end{proof}

\begin{corollary}
Let $(\gamma^{-1}\circ\beta\circ\gamma\circ\nabla)_{[x_{j}]}(\mathcal{S}_1)\leq (\gamma^{-1}\circ\beta\circ\gamma\circ\nabla)_{[x_{j}]}(\mathcal{S}_2)\leq \cdots \leq (\gamma^{-1}\circ\beta\circ\gamma\circ\nabla)_{[x_{j}]}(\mathcal{S}_n)$ be a chain of sub-expansions of the expansion $(\gamma^{-1}\circ\beta\circ\gamma\circ\nabla)_{[x_{j}]}(\mathcal{S}_n)$. We have
\begin{align}
   \sum \limits_{i=k+1}^{n}\bigg[(\gamma^{-1}\circ\beta\circ\gamma\circ\nabla)_{[x_{j}]}(\mathcal{S}_i):(\gamma^{-1}\circ\beta\circ\gamma\circ\nabla)_{[x_{j}]}(\mathcal{S}_{i-1})\bigg]>n-k.\nonumber 
\end{align}
\end{corollary}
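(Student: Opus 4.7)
The plan is to deduce this corollary directly as a companion inequality to Theorem \ref{main}. First I would unpack the identity hidden in the proof of Theorem \ref{main}: the telescoping argument there produces the relation
\begin{align}
\mathbb{D}\bigl[(\gamma^{-1}\circ \beta \circ \gamma \circ \nabla)_{[x_{j}]}(\mathcal{S}_k) \mid (\gamma^{-1}\circ \beta \circ \gamma \circ \nabla)_{[x_{j}]}(\mathcal{S}_n)\bigr] = r_k + \sum_{i=k+1}^n r_i - (n-k),\nonumber
\end{align}
where $r_i = \bigl[(\gamma^{-1}\circ \beta \circ \gamma \circ \nabla)_{[x_{j}]}(\mathcal{S}_i) : (\gamma^{-1}\circ \beta \circ \gamma \circ \nabla)_{[x_{j}]}(\mathcal{S}_{i-1})\bigr]$ are the local indices of consecutive sub-expansions and $r_k$ is the auxiliary natural number produced in that proof from the dominating-number step.

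Next I would rearrange to obtain $\sum_{i=k+1}^n r_i = \mathbb{D} + (n-k) - r_k$. Since each index is a natural number, so $r_k \geq 1$, and since the dominating number $\mathbb{D}$ is by definition the \emph{smallest} exponent satisfying its defining sub-expansion relation, in a nontrivial chain of distinct sub-expansions $\mathbb{D}$ must strictly exceed $r_k$. This forces $\mathbb{D} - r_k \geq 1$, and hence
\begin{align}
\sum_{i=k+1}^n r_i \geq (n-k) + 1 > n-k,\nonumber
\end{align}
which is exactly the claimed strict inequality.

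The main obstacle is rigorously justifying the strict gap $\mathbb{D} > r_k$ in every configuration: this rests on a careful reading of the minimality clause in the definition of the dominating number, together with the assumption that the sub-expansions $(\gamma^{-1}\circ \beta \circ \gamma \circ \nabla)_{[x_{j}]}(\mathcal{S}_k) \leq \cdots \leq (\gamma^{-1}\circ \beta \circ \gamma \circ \nabla)_{[x_{j}]}(\mathcal{S}_n)$ are genuinely distinct expansion entities rather than collapsing into one another. Once this step is in hand, the corollary follows from the rearranged identity in one line, so essentially all the work is concentrated in verifying this minimality-driven strict inequality.
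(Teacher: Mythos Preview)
Your rearranged identity $\sum_{i=k+1}^n r_i = \mathbb{D} + (n-k) - r_k$ is correct, but the step you flag as the crux is in fact circular: by that very identity, the inequality $\mathbb{D} > r_k$ is \emph{equivalent} to the desired conclusion $\sum_{i=k+1}^n r_i > n-k$, so invoking it proves nothing unless you can establish $\mathbb{D} > r_k$ by some independent route. Your appeal to the minimality clause in the definition of the dominating number does not do this: minimality of $\mathbb{D}$ constrains which exponents $s$ satisfy the defining sub-expansion relation, and says nothing directly about how $\mathbb{D}$ compares to the auxiliary index $r_k$ produced in the proof of Theorem~\ref{main}.

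The paper's intended argument is more elementary and does not route through the dominating number at all. Each consecutive index $r_i$ lies in $\mathbb{N}$ by definition, hence $r_i \geq 1$; summing the $n-k$ terms gives $\sum_{i=k+1}^n r_i \geq n-k$ immediately. The strict inequality is then precisely the non-degeneracy condition you yourself isolate at the end---that at least one link in the chain is a genuine (non-identity) sub-expansion, forcing some $r_i \geq 2$. So your instinct about where the content lies is correct, but the dominating-number machinery is a detour that lands you back at the same unresolved point; the corollary is really just the trivial lower bound together with an implicit non-degeneracy assumption on the chain.
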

\bigskip

\section{Analytic expansions and application to function theory}

In this section, we introduce the notion of \emph{singularity}, the \emph{kernel} and a\emph{analytic expansions}. We provide an application to the existence of singularities of solutions to certain polynomial equations of several variables.

\subsection{Some notions from single variable expansivity theory}

In this section, we review some notions in single variable expansivity theory developed. 

\begin{definition}\label{rank}
Let $\mathcal{F}=\{\mathcal{S}_m\}^{\infty}_{m=1}$ be a family of tuples of polynomials in the ring $\mathbb{R}[x]$ each having at least two entries with distinct degrees. The value of $n$ such that the expansion $(\gamma^{-1}\circ\beta\circ\gamma\circ\nabla)^{n}(\mathcal{S})\neq \mathcal{S}_{0}$ and $(\gamma^{-1}\circ\beta\circ\gamma\circ\nabla)^{n+1}(\mathcal{S})=\mathcal{S}_{0}$ where $\mathcal{S}_{0}=(0,0,\ldots,0)$ is called the \emph{degree} of expansion. We call $(\gamma^{-1}\circ\beta\circ\gamma\circ\nabla)^{n}(\mathcal{S})$ is the \emph{rank} of expansion, denoted by $\mathcal{R}(\mathcal{S})$. 
\end{definition}

\begin{theorem}\label{rank2}
Let $\mathcal{S}_{1}$ and $\mathcal{S}_{2}$ be tuples of polynomials in the ring $\mathbb{R}[x]$ with $\mathrm{deg}(\mathcal{S}_1)>\mathrm{deg}(\mathcal{S}_{2})$ and satisfying certain initial conditions at each phase of expansion. If $\mathcal{R}(\mathcal{S}_{1})=\mathcal{R}(\mathcal{S}_{2})$, then there exist some $j$ satisfying $1\leq j <\mathrm{deg}(\mathcal{S}_{1})$ such that $\mathcal{S}_{1}^{j}=\mathcal{S}_{2}$.
\end{theorem}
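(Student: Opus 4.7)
The plan is to exhibit the explicit value $j = \deg(\mathcal{S}_1) - \deg(\mathcal{S}_2)$. Writing $n_1 = \deg(\mathcal{S}_1)$, $n_2 = \deg(\mathcal{S}_2)$, and $\Lambda = \gamma^{-1}\circ \beta \circ \gamma \circ \nabla$ for short, the non-degeneracy hypothesis in Definition \ref{rank} (two entries of distinct degree) forces $n_2 \ge 1$, so $1 \le j < n_1$, which is the claimed range. By Definition \ref{rank} the rank equality $\mathcal{R}(\mathcal{S}_1) = \mathcal{R}(\mathcal{S}_2)$ means
\begin{align}
\Lambda^{n_1}(\mathcal{S}_1) = \Lambda^{n_2}(\mathcal{S}_2), \nonumber
\end{align}
which, grouping the first $j$ iterates on the left into $\mathcal{S}_1^{j} := \Lambda^{j}(\mathcal{S}_1)$, rewrites as $\Lambda^{n_2}(\mathcal{S}_1^{j}) = \Lambda^{n_2}(\mathcal{S}_2)$. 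Thus it only remains to cancel the $n_2$ outer copies of $\Lambda$.

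I would cancel them by descending induction on $k$ from $n_2$ down to $0$, carrying the identity $\Lambda^{k}(\mathcal{S}_1^{j}) = \Lambda^{k}(\mathcal{S}_2)$. The linear components $\gamma$, $\gamma^{-1}$ and $\beta$ of $\Lambda$ are all bijective --- $\gamma$ tautologically so, and $\beta$ because its underlying matrix $J - I$ has eigenvalues $n-1$ and $-1$ and is hence invertible for $n \ge 2$. The only genuine obstruction is the non-injectivity of $\nabla$, which kills additive constants. This is precisely the role of the phrase ``certain initial conditions at each phase of expansion'' in the statement: the prescribed initial datum at phase $k$ pins down the constant of integration, so that together with $\Lambda^{k}(\mathcal{S}_1^{j}) = \Lambda^{k}(\mathcal{S}_2)$ it forces $\Lambda^{k-1}(\mathcal{S}_1^{j}) = \Lambda^{k-1}(\mathcal{S}_2)$. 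Iterating down to $k=0$ yields $\mathcal{S}_1^{j} = \mathcal{S}_2$, which is the desired identity.

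The hard part will be making the cancellation step rigorous --- that is, formalizing what is meant by ``initial conditions at each phase'' so that the induction actually closes. The most natural reading is that, for every $0 \le k \le n_2$, the value of the tuples $\Lambda^{k}(\mathcal{S}_1^{j})$ and $\Lambda^{k}(\mathcal{S}_2)$ at a common evaluation point (typically $x = 0$) is prescribed and common to both. Under that reading each inductive step recovers a unique antiderivative and the argument carries through. A subtle point worth flagging is that the uniformity of the initial data across all $n_2$ phases is indispensable: a purely terminal condition would only determine the expansion up to an additive constant at each step, and those constants would compound into a polynomial discrepancy across the full descent, defeating the conclusion.
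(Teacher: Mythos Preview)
Your proposal is correct and follows essentially the same route as the paper: both set $j=\deg(\mathcal{S}_1)-\deg(\mathcal{S}_2)$, use the rank equality $\Lambda^{n_1}(\mathcal{S}_1)=\Lambda^{n_2}(\mathcal{S}_2)$, and then cancel the outer $n_2$ copies of $\Lambda$. The paper simply asserts the biconditional $\Lambda^{k_2}(\mathcal{S}_2)=\Lambda^{k_1}(\mathcal{S}_1)\iff\Lambda^{k_1-k_2}(\mathcal{S}_1)=\mathcal{S}_2$ without further comment, whereas you actually justify the cancellation by isolating the invertible factors $\gamma,\gamma^{-1},\beta$ and invoking the phase-by-phase initial conditions to handle the non-injectivity of $\nabla$; in that sense your write-up is more explicit than the paper's own proof.
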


\begin{proof}
Suppose that $\mathcal{S}_{1}$ and $\mathcal{S}_{2}$ are tuples of polynomials in the ring $\mathbb{R}[x]$. Assume that $\mathrm{deg}(\mathcal{S}_{1})=k_1$ and $\mathrm{deg}(\mathcal{S}_{2})=k_2$. By definition \ref{rank}, we can write $\mathcal{R}(\mathcal{S}_{1})=(\gamma^{-1}\circ\beta\circ\gamma\circ\nabla)^{k_1}(\mathcal{S}_1)$ and $\mathcal{R}(\mathcal{S}_{2})=(\gamma^{-1}\circ\beta\circ\gamma\circ\nabla)^{k_2}(\mathcal{S}_2)$. Under the assumption $\mathcal{R}(\mathcal{S}_{1})=\mathcal{R}(\mathcal{S}_{2})$, we get $(\gamma^{-1}\circ\beta\circ\gamma\circ\nabla)^{k_2}(\mathcal{S}_2)=(\gamma^{-1}\circ\beta\circ\gamma\circ\nabla)^{k_1}(\mathcal{S}_1)$ if and only if $(\gamma^{-1}\circ\beta\circ\gamma\circ\nabla)^{k_1-k_2}(\mathcal{S}_1)=\mathcal{S}_{2}$. Since $1\leq k_1-k_2<k_1=\mathrm{deg}(\mathcal{S}_{1})$, we deduce the asserted claim.
\end{proof}

\begin{definition}\label{limit}
Let $\{\mathcal{S}^{m}\}^{\infty}_{m=1}$ be a family of expanded tuples of $\mathcal{S}$ having at least two entries of distinct degrees. The limit of expansion of $\mathcal{S}$ is the first expanded tuple $\mathcal{S}^{j}=(g_1,g_2,\ldots,g_n)$ such that $\mathrm{deg}(g_1)=\cdots=\mathrm{deg}(g_n)$ for $n\geq 3$ and $1\leq j\leq m$. We denote the limit by 
\begin{align}
\lim (\mathcal{S}^{n})=\mathcal{S}^{j}.\nonumber
\end{align}
\end{definition} 

\begin{theorem}\label{exist}
Let $\{\mathcal{S}^{m}\}^{\infty}_{m=1}$ be a family of expansions of the tuple $\mathcal{S}$ of polynomials in the ring $\mathbb{R}[x]$ such that at least two entries are of distinct degree. The limit of expansions $\lim (\mathcal{S}^{n})$ of $\mathcal{S}$ exists.
\end{theorem}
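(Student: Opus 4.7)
The plan is to track the multiset of degrees of $\mathcal{S}^m=(f_1^{(m)},\ldots,f_n^{(m)})$ under a single expansion and argue that it equalises in a bounded number of steps. First I would unpack the operator: in the single-variable setting, the $i$th entry of $\mathcal{S}^{m+1}=(\gamma^{-1}\circ\beta\circ\gamma\circ\nabla)(\mathcal{S}^m)$ is $\sum_{k\neq i}(f_k^{(m)})'$, so generically $\deg f_i^{(m+1)}=\max_{k\neq i}\deg f_k^{(m)}-1$. Setting $D_m:=\max_k\deg f_k^{(m)}$ and $S_m:=\{k:\deg f_k^{(m)}=D_m\}$, the evolution of the degree profile is controlled entirely by the cardinality $|S_m|$.

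The main step is a dichotomy on $|S_m|$. If $|S_m|\geq 2$, then for every $i$ there is some $k\neq i$ with $\deg f_k^{(m)}=D_m$, so (generically) every new entry has degree $D_m-1$; hence $\mathcal{S}^{m+1}$ is the desired equal-degree tuple. If $S_m=\{i_0\}$ is a singleton, then every $i\neq i_0$ receives new degree $D_m-1$ while $i_0$ receives the second-largest old degree minus $1$, strictly smaller than $D_m-1$; consequently $|S_{m+1}|=n-1\geq 2$, and the first case applied at stage $m+1$ delivers equal degrees at stage $m+2$. Applied to $m=0$, together with the hypothesis that $\mathcal{S}$ has at least two entries of distinct degrees, this already gives $\lim\mathcal{S}^n=\mathcal{S}^j$ with $j\leq 2$ under genericity.

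The lone obstacle is possible cancellation of leading coefficients in the sums $\sum_{k\neq i}(f_k^{(m)})'$ when several of the $f_k^{(m)}$ share the maximum degree $D_m$. The indices $i\in S_m$ are immune, because their own derivative is absent from the sum, so they continue to realise $\deg f_i^{(m+1)}=D_m-1$; however, the indices $i\notin S_m$ may drop below this value. I would absorb the difficulty by iterating the dichotomy: the property $|S_m|\geq 2$ is preserved (indeed, $S_{m+1}\supseteq S_m$ once $|S_m|\geq 2$), the maximum degree $D_m$ is a nonnegative integer that strictly decreases at every expansion step, and hence the process must terminate. At the latest it reaches a tuple of constants with $D_m=0$, whose next expansion is the zero tuple $\mathcal{S}_0$, which trivially has all entries of equal degree. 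This exhibits the required $j$ and establishes the existence of $\lim(\mathcal{S}^n)$.
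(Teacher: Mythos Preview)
Your proof is correct and ultimately rests on the same idea as the paper's: the maximum degree $D_m$ strictly decreases under each expansion, so the process terminates (at worst at the zero tuple), and at termination all entries have equal degree. The paper phrases this as a one-line contradiction argument citing an external finiteness proposition, whereas you make the degree descent explicit and self-contained; your additional $|S_m|$ dichotomy is a sharper observation (and in fact anticipates the content of the subsequent local-number lemma), but it is not needed for the bare existence claim and your own fallback to the termination argument is what actually closes the proof.
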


\begin{proof}
Let $\{\mathcal{S}^{m}\}^{\infty}_{m=1}$ be a family of expansions of the tuple $\mathcal{S}$ of polynomials in the ring $\mathbb{R}[x]$ having at least two entries of distinct degree. Suppose that the limit of expansion does not exist, and let $\mathcal{S}^{1}=(f_1,f_2,\ldots,f_n)$ be the first phase expansion of $\mathcal{S}$. It implies $\mathrm{deg}(f_i)\neq \mathrm{deg}(f_j)$ for some $1\leq i,j\leq n$ with $i\neq j$. It follows in particular that $\mathcal{S}^{1}\neq \mathcal{R}(\mathcal{S})$ and $\mathcal{S}^{1}\neq \mathcal{S}_{0}$. Thus, the second phase expansion exists. Let $\mathcal{S}^{2}=(g_1,g_2,\ldots,g_n)$ be the second phase expanded tuple. By hypothesis, we have $\mathrm{deg}(g_i)\neq \mathrm{deg}(g_{j})$ for some $1\leq i,j\leq n$ with $i\neq j$. It implies $\mathcal{S}^{2}\neq \mathcal{R}(\mathcal{S})$ and $\mathcal{S}^{2}\neq \mathcal{S}_{0}$. Thus, the third phase expansion exist. By induction, we conclude that the tuple $\mathcal{S}$ of $\mathbb{R}[x]$ admits infinite number of expansions. This violates the finite degrees of the entries.
\end{proof}

\begin{theorem}\label{lemma1}
Let $\{\mathcal{S}^{n}\}_{n=1}^{\infty}$ be a family of expanded tuples of the tuple $\mathcal{S}$ of polynomials in the ring $\mathbb{R}[x]$ such that at least two entries are of distinct degrees and satisfies certain initial conditions at each phase of expansion. There exist some number $k$ called the \emph{dimension} of expansion ($\mathrm{dim}(\mathcal{S})$) such that 
$$
\lim (\mathcal{S}^{n})=(\Delta\circ\gamma^{-1}\circ\beta^{-1}\circ\gamma)^{k}(\mathcal{R}(\mathcal{S}))
$$ 
for some $k<\mathrm{deg}(\mathcal{S})$. 
\end{theorem}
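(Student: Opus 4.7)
The plan is to obtain the required exponent $k$ as the number of expansion phases that sit strictly between the limit of expansion and the rank, and then to realize backward motion in this phase sequence by the operator $\Delta \circ \gamma^{-1}\circ \beta^{-1} \circ \gamma$, which I will argue is a left inverse of the expansion operator $\gamma^{-1}\circ \beta \circ \gamma \circ \nabla$. First I would invoke Theorem \ref{exist} to secure the existence of $\lim(\mathcal{S}^n)$; call this phase $\mathcal{S}^j$. By Definition \ref{rank}, the rank equals $\mathcal{R}(\mathcal{S})=(\gamma^{-1}\circ \beta \circ \gamma \circ \nabla)^{\deg(\mathcal{S})}(\mathcal{S})=\mathcal{S}^{\deg(\mathcal{S})}$. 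Since by Definition \ref{limit} the limit is the \emph{first} phase at which all entries have common degree, while the rank is one phase before the null tuple (so all entries must at that point be constants of a common degree), we have $j \leq \deg(\mathcal{S})$, and in fact $j < \deg(\mathcal{S})$ because $j \geq 1$ and at least one further expansion step must still occur between the limit and the null tuple.

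The core step is to check the single-phase identity
\begin{align*}
(\Delta \circ \gamma^{-1}\circ \beta^{-1} \circ \gamma)(\mathcal{S}^{m}) = \mathcal{S}^{m-1} \qquad (m \geq 1).
\end{align*}
Here $\gamma$ is the bijective column-stacking map (used in Proposition \ref{linearity}), and $\beta$ is the fixed invertible matrix with zeros on the diagonal and ones elsewhere; thus $\gamma^{-1}\circ \beta^{-1}\circ \gamma$ is a genuine left inverse of $\gamma^{-1}\circ \beta \circ \gamma$ on tuples. The remaining ingredient is that $\Delta$, an antiderivative operator, serves as a left inverse of $\nabla$ on each entry; since $\nabla$ kills constants, $\Delta$ is only well-defined modulo a constant of integration. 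This is exactly what the hypothesis ``satisfying certain initial conditions at each phase of expansion'' is there to supply: it pins the constant uniquely at every phase, making $\Delta$ unambiguous and the inversion identity above valid stage-by-stage.

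With that identity in hand, I would iterate it $k:=\deg(\mathcal{S})-j$ times, starting from $\mathcal{R}(\mathcal{S}) = \mathcal{S}^{\deg(\mathcal{S})}$ and descending through $\mathcal{S}^{\deg(\mathcal{S})-1},\ldots,\mathcal{S}^{j}$, yielding
\begin{align*}
(\Delta \circ \gamma^{-1}\circ \beta^{-1} \circ \gamma)^{k}(\mathcal{R}(\mathcal{S})) = \mathcal{S}^{j} = \lim(\mathcal{S}^n).
\end{align*}
The inequality $k < \deg(\mathcal{S})$ follows from $j\geq 1$, and this $k$ is the quantity the statement baptises $\dim(\mathcal{S})$.

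The main obstacle I expect is the rigorous justification of the one-step inversion formula, precisely because of the constants-of-integration issue: one has to check that the prescribed initial conditions at every intermediate phase really do single out the antiderivative that matches the previous phase, rather than some other polynomial differing from it by a constant tuple. Once that is settled (and it is a bookkeeping argument on each coordinate, using the linearity established in Proposition \ref{linearity} to propagate the match across the full tuple), the rest of the argument is the straightforward telescoping iteration described above.
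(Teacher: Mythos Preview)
Your argument is essentially the paper's own approach: invoke Theorem \ref{exist} for the limit, use that the recovery operator $\Delta \circ \gamma^{-1}\circ \beta^{-1}\circ \gamma$ inverts the expansion operator phase by phase (under the initial-condition hypothesis that fixes constants of integration), and count steps between the limit and the rank. The only cosmetic difference is that you construct $k=\deg(\mathcal{S})-j$ explicitly and read off $k<\deg(\mathcal{S})$ from $j\geq 1$, whereas the paper asserts existence of $k$ from finiteness and then bounds $k$ by contradiction (if $k\geq \deg(\mathcal{S})$ then applying $k$ expansions to the limit yields $\mathcal{S}_0$, forcing $\mathcal{R}(\mathcal{S})=\mathcal{S}_0$). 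One small point: your side claim that $j<\deg(\mathcal{S})$ strictly is not needed and not obviously true in general (the limit could coincide with the rank, since the rank is a tuple of constants and hence has all entries of equal degree); but your bound on $k$ already follows from $j\geq 1$ alone, so the proof stands.
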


\begin{proof}
Let $\mathcal{S}$ be any tuple of polynomials in the ring $\mathbb{R}[x]$ that can be expanded with at least two entries of distinct degree. The limit exists by Theorem \ref{exist}. Since an expansion can only be finitely applied and the map $\Delta\circ\gamma^{-1}\circ\beta^{-1}\circ\gamma$ is a recovery map which exists, there exist such number $k$ so that  $\lim (\mathcal{S}^{n})=(\Delta\circ\gamma^{-1}\circ\beta^{-1}\circ\gamma)^{k}(\mathcal{R}(\mathcal{S}))$. We only need to show that $k$ lies in the stated range. Suppose that $\lim (\mathcal{S}^{n})=(\Delta\circ\gamma^{-1}\circ\beta^{-1}\circ\gamma)^{k}(\mathcal{R}(\mathcal{S}))$ for any $k\geq \mathrm{deg}(\mathcal{S})$. Since  the map is a bijection, it follows that $(\gamma^{-1}\circ\beta\circ\gamma\circ\nabla)^{k}(\lim (\mathcal{S}^{n}))=\mathcal{R}(\mathcal{S})$. We deduce 
$$
(\gamma^{-1}\circ\beta\circ\gamma\circ\nabla)^{k}(\lim (\mathcal{S}^{n})=\mathcal{S}_{0}
$$ 
which implies $\mathcal{R}(\mathcal{S})=\mathcal{S}_{0}$ and so the rank of an expansion is null. This violate the definition \ref{rank}.
\end{proof}

\begin{definition}
Let $\mathcal{S}$ be a tuple of polynomial in the ring $\mathbb{R}[x]$ and $\{\mathcal{S}^{m}\}_{m=1}^{\infty}$ be the family of expanded tuple of $\mathcal{S}$. By the \emph{local number} of expansion, denoted by $\mathcal{L}(\mathcal{S})$, we mean the value of $n$ such that 
$$
(\gamma^{-1}\circ\beta\circ\gamma\circ\nabla)^{n}(\mathcal{S})=\lim (\mathcal{S}^{m}).
$$
\end{definition}
\bigskip

Using Theorem \ref{lemma1} and the assumption that any tuple of polynomial in the ring $\mathbb{R}[x]$ satisfying certain initial conditions at each phase of expansion, we get
\begin{align}
(\gamma^{-1}\circ\beta\circ\gamma\circ\nabla)^{n}(\mathcal{S})=(\Delta\circ\gamma^{-1}\circ\beta^{-1}\circ\gamma)^{k}(\mathcal{R}(\mathcal{S}))\nonumber
\end{align}
if and only if 
\begin{align}
(\gamma^{-1}\circ\beta\circ\gamma\circ\nabla)^{n+k}(\mathcal{S})=\mathcal{R}(\mathcal{S}).\nonumber
\end{align}
By the definition of the rank, we get
\begin{align}
n+k=\mathrm{deg}(\mathcal{S})\nonumber
\end{align}
which we call the principal equation and where $\mathcal{L}(\mathcal{S})=n$, $\mathrm{dim}(\mathcal{S})=k$ and $\mathrm{deg}(\mathcal{S})$ are the local number, the dimension and the degree of expansion, respectively, of an expansion on $\mathcal{S}$. It is interesting to recognize that the value of the local number $\mathcal{L}(\mathcal{S})$ is bounded. 

\begin{lemma}\label{local number}
Let $\mathcal{S}$ be a tuple of polynomials in the ring $\mathbb{R}[x]$, satisfying certain initial conditions at each phase with $deg(\mathcal{S})\geq 4$.  If $dim(\mathcal{S})>2$, then the local number $\mathcal{L}(\mathcal{S})$ must satisfy the inequality 
\begin{align}
0 \leq \mathcal{L}(\mathcal{S})\leq 2. \nonumber
\end{align}
\end{lemma}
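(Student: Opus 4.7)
The starting point is the principal equation $\mathcal{L}(\mathcal{S})+\dim(\mathcal{S})=\deg(\mathcal{S})$ derived in the paragraph following Theorem \ref{lemma1}. The lower bound $0\leq\mathcal{L}(\mathcal{S})$ is immediate because the local number is, by definition, a nonnegative count of expansions required to reach $\lim(\mathcal{S}^m)$. The substance of the proof therefore lies in establishing the upper bound $\mathcal{L}(\mathcal{S})\leq 2$.

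My plan is to analyse directly how the polynomial degrees of the entries of $\mathcal{S}$ evolve under two consecutive applications of the expansion operator $\gamma^{-1}\circ\beta\circ\gamma\circ\nabla$, and to show that after at most two such applications every entry shares a common degree. By Definition \ref{limit} this is exactly the condition for the limit to have been reached, so $\mathcal{L}(\mathcal{S})\leq 2$ will follow. Concretely, write $\mathcal{S}=(f_1,\ldots,f_m)$ with $m\geq 3$ and set $D=\max_{i}\deg(f_i)$. The $i$-th entry of $\mathcal{S}^1$ equals $\sum_{j\neq i}f_j'$, and the ``certain initial conditions at each phase of expansion'' appearing in the lemma's hypothesis are precisely the non-cancellation assumption for the leading coefficients of this sum; they imply that the degree of the $i$-th entry of $\mathcal{S}^1$ equals $\max_{j\neq i}\deg(f_j)-1$. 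Consequently every position $i$ that is not the unique top-degree index produces an entry of degree $D-1$, so at least $m-1\geq 2$ entries of $\mathcal{S}^1$ carry this common maximum.

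Applying the same analysis once more: since at least two entries of $\mathcal{S}^1$ attain the maximum $D-1$, removing any single index $i$ from the summation still leaves a representative of degree $D-1$ inside $\sum_{j\neq i}(\mathcal{S}^1_j)'$, so (again by non-cancellation) every entry of $\mathcal{S}^2$ has degree exactly $D-2$. By Definition \ref{limit}, $\mathcal{S}^2$ is then a limit tuple (or the limit was in fact reached at step $0$ or $1$, which only strengthens the conclusion), giving $\mathcal{L}(\mathcal{S})\leq 2$. The hypotheses $\deg(\mathcal{S})\geq 4$ and $\dim(\mathcal{S})>2$ do not contribute directly to this bound; rather, via the principal equation, they guarantee that $\dim(\mathcal{S})\geq \deg(\mathcal{S})-2$ is a nondegenerate constraint, so that the resulting statement $\mathcal{L}(\mathcal{S})\in\{0,1,2\}$ is substantive.

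The most delicate step, and the one I expect to be the main obstacle, is making rigorous the non-cancellation degree formula at \emph{both} phases. I would formalise the ``initial conditions'' clause as the property that, at each phase, the sum of the leading coefficients of the maximum-degree summands in each coordinate of $\beta\circ\gamma\circ\nabla$ does not vanish; the technical point is then to verify that this property is inherited from $\mathcal{S}$ to $\mathcal{S}^1$, so that the formula $\deg\bigl(\sum_{j\neq i}(\mathcal{S}^1_j)'\bigr)=\max_{j\neq i}\deg(\mathcal{S}^1_j)-1$ remains valid at the second step. Once this auxiliary non-cancellation lemma is isolated and proved, the remainder of the argument is just the clean degree count outlined above.
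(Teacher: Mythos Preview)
Your argument is correct and takes a genuinely different route from the paper. The paper argues by contradiction: assuming $\mathcal{L}(\mathcal{S})>2$, it uses the principal equation to locate the expanded tuple $\mathcal{S}_1=(\gamma^{-1}\circ\beta\circ\gamma\circ\nabla)^{\dim(\mathcal{S})+2}(\mathcal{S})$, invokes Theorem~\ref{rank2} to compare $\mathcal{R}(\mathcal{S})$ with $\mathcal{R}(\mathcal{S}_1)$, and derives the numerical contradiction $\deg(\mathcal{S}_1)\leq 0$; the hypotheses $\deg(\mathcal{S})\geq 4$ and $\dim(\mathcal{S})>2$ enter precisely to close that chain of inequalities. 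Your approach instead tracks the coordinate degrees directly through two applications of $\gamma^{-1}\circ\beta\circ\gamma\circ\nabla$, observing that after one step at least $m-1\geq 2$ entries share the top degree $D-1$, and hence after a second step every entry has degree $D-2$, so the limit in the sense of Definition~\ref{limit} is reached. This is more elementary, bypasses the rank machinery entirely, and in fact yields $\mathcal{L}(\mathcal{S})\leq 2$ unconditionally (your remark that the stated hypotheses are not genuinely needed for the bound is accurate). The paper's route, by contrast, keeps the argument inside its abstract rank/dimension formalism, which is convenient for the later isomorphism with multivariate expansions but is less transparent as a stand-alone proof of this lemma. The one point you should still nail down explicitly is the inheritance of the non-cancellation ``initial conditions'' from $\mathcal{S}$ to $\mathcal{S}^1$; the paper sidesteps this by treating those conditions as a blanket hypothesis ``at each phase,'' so you are entitled to assume it at step two as well rather than deriving it.
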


\subsection{The kernel of an expansion}

In this section, we introduce the notion of the \emph{kernel} of an expansion. It is similar to the notion of the boundary points of an expansion under the single variable theory. This choice of terminology is appropriate for this context, since we are no longer considering real tuples as solutions to our tuple equation but tuples consisting of solutions to certain partial differential equation. 

\begin{definition}\label{kernel}
Let $\mathcal{F}=\{\mathcal{S}_i\}_{i=1}^{\infty}$ be a collection of $l$-tuples of polynomials in the ring $\mathbb{R}[x_1,x_2,\ldots,x_n]$. By the \emph{kernel} of expansion $(\gamma^{-1}\circ\beta\circ\gamma\circ\nabla)^k_{[x_{i}]}(\mathcal{S})$, denoted by $\mathrm{Ker}[(\gamma^{-1}\circ\beta\circ\gamma\circ\nabla)^k_{[x_{i}]}(\mathcal{S})]$, we mean 
\begin{align}
\mathrm{Ker}[(\gamma^{-1}\circ\beta\circ\gamma\circ\nabla)^k_{[x_{i}]}(\mathcal{S})]&=\bigg \{ (f_1,f_2,\ldots,f_l)~|~\nonumber \\& f_r \in \mathbb{F}_\mathbb{C}[x_1,\ldots,x_{i-1},x_{i+1},\ldots,x_n],\nonumber \\&~(1\leq r \leq l),\mathrm{Id}_r[(\gamma^{-1}\circ\beta\circ\gamma\circ\nabla)^k_{[x_{i}](f_r)}(\mathcal{S})]=0\bigg\} \nonumber
\end{align}
where $\mathbb{F}_\mathbb{C}$ is a function field with complex number $\mathbb{C}$ base space. We call each tuple in the kernel an \emph{annihilator} of the given expansion.
\end{definition}
\bigskip

\begin{definition}\label{decomposition}
Let $\mathcal{F}=\{\mathcal{S}_i\}_{i=1}^{\infty}$ be a collection of $l$-tuples of polynomials in the ring $\mathbb{C}[x_1,x_2,\ldots,x_n]$. We denote by 
\begin{align}
\mathrm{Id}_r[(\gamma^{-1}\circ\beta\circ\gamma\circ\nabla)^k_{[x_{i}]}(\mathcal{S})](f_r)_{x_i}\nonumber
\end{align}
the value of 
\begin{align}
\mathrm{Id}_r[(\gamma^{-1}\circ\beta\circ\gamma\circ\nabla)^k_{[x_{i}]}(\mathcal{S})]\nonumber
\end{align}
at $x_i=f_r$.
\end{definition}
\bigskip

\begin{proposition}\label{kernel uniqueness}
Let $\mathcal{F}=\{\mathcal{S}_i\}_{i=1}^{\infty}$ be a collection of $l$-tuples of polynomials in the ring $\mathbb{R}[x_1,x_2,\ldots,x_n]$. If for $\mathcal{S}_1,\mathcal{S}_2\in \mathcal{F}$ 
\begin{align}
\mathrm{Ker}[(\gamma^{-1}\circ\beta\circ\gamma\circ\nabla)^k_{[x_{i}]}(\mathcal{S}_1)]&=\mathrm{Ker}[(\gamma^{-1}\circ\beta\circ\gamma\circ\nabla)^k_{[x_{i}]}(\mathcal{S}_2)]\nonumber
\end{align}
then $\mathcal{S}_1=\mathcal{S}_2+\mathcal{S}_{\mathbb{C}[x_1,\ldots,x_{i-1},x_{i+1},\ldots,x_n]}$ and where $\mathcal{S}_{\mathbb{C}[x_1,\ldots,x_{i-1},x_{i+1},\ldots,x_n]}$ is an $l$ tuple of polynomials in the ring $\mathbb{C}[x_1,\ldots,x_{i-1},x_{i+1},\ldots,x_n]$.
\end{proposition}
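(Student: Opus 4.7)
The plan is to reduce the claim to a vanishing statement via the linearity of Proposition~\ref{linearity} and then decode that vanishing. First, by linearity,
\[
(\gamma^{-1}\circ \beta \circ \gamma \circ \nabla)^{k}_{[x_i]}(\mathcal{S}_1-\mathcal{S}_2)=(\gamma^{-1}\circ \beta \circ \gamma \circ \nabla)^{k}_{[x_i]}(\mathcal{S}_1)-(\gamma^{-1}\circ \beta \circ \gamma \circ \nabla)^{k}_{[x_i]}(\mathcal{S}_2),
\]
so that any tuple $(f_1,\ldots,f_l)$ in the common kernel of the two right-hand expansions annihilates, entry by entry, the $k$-th expansion of $\mathcal{T}:=\mathcal{S}_1-\mathcal{S}_2$ in the direction $[x_i]$.

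Second, I would promote this to outright vanishing of the $k$-th expansion of $\mathcal{T}$. For each $r$, let $q_r\in\mathbb{R}[x_1,\ldots,x_n]$ be the $r$-th entry of $(\gamma^{-1}\circ\beta\circ\gamma\circ\nabla)^{k}_{[x_i]}(\mathcal{T})$, viewed as a univariate polynomial in $x_i$ with coefficients in the function field $\mathbb{F}_{\mathbb{C}}[x_1,\ldots,\widehat{x_i},\ldots,x_n]$. The equal-kernel hypothesis, via Definition~\ref{kernel}, supplies infinitely many specializations $x_i=f_r$ at which $q_r$ vanishes (the common kernel contains, by construction, all the annihilators of both original expansions, and this set is infinite as soon as either polynomial is nonconstant in $x_i$). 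A standard univariate vanishing argument over the algebraically closed fraction field then forces $q_r=0$ identically, yielding
\[
(\gamma^{-1}\circ\beta\circ\gamma\circ\nabla)^{k}_{[x_i]}(\mathcal{T})=\mathcal{S}_{0}.
\]

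Third, I would unroll this operator identity to conclude that $\mathcal{T}$ has no $x_i$-dependence. Since $\gamma^{-1}\beta\gamma$ is induced by the invertible matrix $J-I_n$ (invertible for $n\ge 2$) acting componentwise, the null $k$-fold expansion is equivalent to the operator $\partial^{k}/\partial x_i^{k}$ annihilating every entry of $\mathcal{T}$; equivalently, each entry of $\mathcal{T}$ is a polynomial of $x_i$-degree strictly less than $k$. The main obstacle is the gap between this degree bound and the stronger claim that $\mathcal{T}$ actually lies in $\mathbb{C}[x_1,\ldots,\widehat{x_i},\ldots,x_n]^{l}$. To close this gap, one re-applies the kernel hypothesis to the residual lower $x_i$-degree terms, arguing inductively that any nontrivial positive-$x_i$-degree contribution to $\mathcal{T}$ would manufacture annihilators belonging to exactly one of $\mathrm{Ker}[(\gamma^{-1}\circ\beta\circ\gamma\circ\nabla)^{k}_{[x_i]}(\mathcal{S}_1)]$ or $\mathrm{Ker}[(\gamma^{-1}\circ\beta\circ\gamma\circ\nabla)^{k}_{[x_i]}(\mathcal{S}_2)]$ but not both, contradicting the hypothesis. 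This inductive descent on $x_i$-degree, together with the invertibility of $\beta$, is the delicate bookkeeping step, and it is where I expect the proof to require the most care.
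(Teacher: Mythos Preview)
Your overall arc mirrors the paper's: deduce from equal kernels that the two $k$-th expansions coincide, then invoke linearity to constrain $\mathcal{T}:=\mathcal{S}_1-\mathcal{S}_2$. The paper carries out both steps tersely; you elaborate on each, and in your third step you correctly isolate a difficulty that the paper's proof simply asserts away --- namely, $(\gamma^{-1}\circ\beta\circ\gamma\circ\nabla)^k_{[x_i]}(\mathcal{T})=\mathcal{S}_0$ only forces each entry of $\mathcal{T}$ to have $x_i$-degree strictly less than $k$, not the claimed $x_i$-degree $0$.

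Your second step, however, has a gap of its own. You assert that the common kernel ``supplies infinitely many specializations $x_i=f_r$ at which $q_r$ vanishes,'' and then invoke a polynomial-with-infinitely-many-roots argument. But by Definition~\ref{kernel} the $r$-th entry of the expansion, viewed as a univariate polynomial in $x_i$ over the function field in the remaining variables, has only finitely many roots $f_r$; the projection of the kernel onto the $r$-th slot is therefore finite, and no infinitely-many-roots argument is available. What equal kernels genuinely gives is that the $r$-th entries of the two expansions share the same zero set over the algebraic closure of that function field --- hence the same irreducible factors --- but not a priori the same multiplicities, so one cannot conclude $q_r=0$ from this alone. The paper at this juncture simply declares that ``an expansion in a specific direction is uniquely determined by their kernel,'' which is essentially the content of the proposition itself; your version is more explicit about the mechanism, but the mechanism offered does not work as written, and some further input (a multiplicity-sensitive notion of kernel, or a separate argument excluding repeated factors in the entries) would be needed to close it.
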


\begin{proof}
Suppose that $\mathcal{S}_1,\mathcal{S}_2\in \mathcal{F}$ and 
\begin{align}
\mathrm{Ker}[(\gamma^{-1}\circ\beta\circ\gamma\circ\nabla)^k_{[x_{i}]}(\mathcal{S}_1)]&=\mathrm{Ker}[(\gamma^{-1}\circ\beta\circ\gamma\circ\nabla)^k_{[x_{i}]}(\mathcal{S}_2)].\nonumber
\end{align}
For any $(f_1,f_2,\ldots,f_l)\in \mathrm{Ker}[(\gamma^{-1}\circ\beta\circ\gamma\circ\nabla)^k_{[x_{i}]}(\mathcal{S}_1)]$ we get
$$
(f_1,f_2,\ldots,f_l)\in \mathrm{Ker}[(\gamma^{-1}\circ\beta\circ\gamma\circ\nabla)^k_{[x_{i}]}(\mathcal{S}_2)]
$$ 
so that we can write
\begin{align}
\mathrm{Id}_r[(\gamma^{-1}\circ\beta\circ\gamma\circ\nabla)^k_{[x_{i}](f_r)}(\mathcal{S}_1)]&=\mathrm{Id}_r[(\gamma^{-1}\circ\beta\circ\gamma\circ\nabla)^k_{[x_{i}](f_r)}(\mathcal{S}_2)]=0\nonumber
\end{align}
for $1\leq r \leq l$. Using the definition \ref{decomposition}, we write
\begin{align}
\mathrm{Id}_r[(\gamma^{-1}\circ\beta\circ\gamma\circ\nabla)^k_{[x_{i}]}(\mathcal{S}_1)](f_r)_{x_i}&=\mathrm{Id}_r[(\gamma^{-1}\circ\beta\circ\gamma\circ\nabla)^k_{[x_{i}](f_r)}(\mathcal{S}_1)]\nonumber \\&=0\nonumber \\&=\mathrm{Id}_r[(\gamma^{-1}\circ\beta\circ\gamma\circ\nabla)^k_{[x_{i}](f_r)}(\mathcal{S}_2)]\nonumber \\&=\mathrm{Id}_r[(\gamma^{-1}\circ\beta\circ\gamma\circ\nabla)^k_{[x_{i}]}(\mathcal{S}_2)](f_r)_{x_i}\nonumber
\end{align}
for $1\leq r \leq l$. We deduce
\begin{align}
(\gamma^{-1}\circ\beta\circ\gamma\circ\nabla)^k_{[x_{i}]}(\mathcal{S}_1)=(\gamma^{-1}\circ\beta\circ\gamma\circ\nabla)^k_{[x_{i}]}(\mathcal{S}_2)\nonumber
\end{align}
in 
\begin{align}
\mathrm{Ker}[(\gamma^{-1}\circ\beta\circ\gamma\circ\nabla)^k_{[x_{i}]}(\mathcal{S}_1)]&=\mathrm{Ker}[(\gamma^{-1}\circ\beta\circ\gamma\circ\nabla)^k_{[x_{i}]}(\mathcal{S}_2)].\nonumber
\end{align}
By the linearity of an expansion in a specific direction, we get 
\begin{align}
\mathcal{S}_1=\mathcal{S}_2+\mathcal{S}_{\mathbb{R}[x_1,\ldots,x_{i-1},x_{i+1},\ldots,x_n]}\nonumber
\end{align}
since an expansion in a specific direction is uniquely determined by their kernel.
\end{proof}
\bigskip

It may be possible that distinct non-equivalent expansions in separate directions at spots can have the same kernel. In the following, we show that all hybrid expansions have the same kernel of their expansions.

\begin{proposition}\label{kernel-diagonal}
Let $\mathcal{F}=\{\mathcal{S}_i\}_{i=1}^{\infty}$ be a collection of tuples of polynomials in the ring $\mathbb{R}[x_1,x_2,\ldots,x_n]$. If $k\neq j$ with $1\leq j,k\leq l$, then there exists some $\mathcal{S}_t,\mathcal{S}_r\in \mathcal{F}$ with $\mathcal{S}_t-\mathcal{S}_r \neq \mathcal{S}_{\mathbb{R}}$ and some $u,v\geq 1$ such that
\begin{align}
\mathrm{Ker}[(\gamma^{-1}\circ\beta\circ\gamma\circ\nabla)^u_{[x_{\sigma(i)}]}(\mathcal{S}_t)]&=\mathrm{Ker}[(\gamma^{-1}\circ\beta\circ\gamma\circ\nabla)^v_{[x_{\sigma(j)}]}(\mathcal{S}_r)].\nonumber
\end{align}
\end{proposition}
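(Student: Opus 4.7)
The plan is to reduce the statement to an application of Lemma \ref{exist diagonalization} to a single mixed expansion and then observe that two expressions equal to the same tuple must share their annihilator set.

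First I would form the mixed expansion
\[
M := (\gamma^{-1}\circ \beta \circ \gamma \circ \nabla)_{\otimes_{i=1}^{l}[x_{\sigma(i)}]}(\mathcal{S}).
\]
By Lemma \ref{exist diagonalization}, $M$ is diagonalizable in every direction $[x_{\sigma(i)}]$ appearing in the mixed expansion. Applying this for the two distinct indices $k\neq j$ produces spots $\mathcal{S}_t,\mathcal{S}_r\in\mathcal{F}$ and orders $u,v\geq 1$ such that
\[
(\gamma^{-1}\circ\beta\circ\gamma\circ\nabla)^{u}_{[x_{\sigma(k)}]}(\mathcal{S}_t)\ =\ M\ =\ (\gamma^{-1}\circ\beta\circ\gamma\circ\nabla)^{v}_{[x_{\sigma(j)}]}(\mathcal{S}_r).
\]
The non-triviality clause in Definition \ref{diagonalization} (spot differs from the source by a non-constant tuple) applied to each diagonalization forces $\mathcal{S}_t$ and $\mathcal{S}_r$ to be non-constant perturbations of $\mathcal{S}$ along genuinely different directions, so $\mathcal{S}_t-\mathcal{S}_r\neq \mathcal{S}_{\mathbb{R}}$.

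Second I would deduce the kernel equality. By Definition \ref{kernel}, a tuple $(f_1,\ldots,f_l)$ annihilates an expansion exactly when entry-wise substitution sends each coordinate of the expansion to zero; since both sides above are identically the same tuple $M$, the conditions imposed on an annihilator by the left-hand and the right-hand expansion are precisely the same conditions on $M$. Hence $\mathrm{Ker}[(\gamma^{-1}\circ\beta\circ\gamma\circ\nabla)^{u}_{[x_{\sigma(k)}]}(\mathcal{S}_t)]$ and $\mathrm{Ker}[(\gamma^{-1}\circ\beta\circ\gamma\circ\nabla)^{v}_{[x_{\sigma(j)}]}(\mathcal{S}_r)]$ coincide as sets of annihilators.

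The main obstacle is bookkeeping the ambient function field in which the annihilators live: the kernel of an expansion in direction $[x_{\sigma(k)}]$ is formally a set of tuples over $\mathbb{F}_{\mathbb{C}}[x_1,\ldots,\widehat{x_{\sigma(k)}},\ldots,x_n]$, while the kernel in direction $[x_{\sigma(j)}]$ lives over $\mathbb{F}_{\mathbb{C}}[x_1,\ldots,\widehat{x_{\sigma(j)}},\ldots,x_n]$. I would resolve this by identifying both annihilator sets with their common restriction to $\mathbb{F}_{\mathbb{C}}[x_1,\ldots,\widehat{x_{\sigma(k)}},\ldots,\widehat{x_{\sigma(j)}},\ldots,x_n]$: once both expansions collapse to the tuple $M$, which by the diagonalization has already absorbed both variables $x_{\sigma(k)}$ and $x_{\sigma(j)}$ through the composition, the annihilation condition is insensitive to which of the two variables is formally named as the substitution slot, and the two kernel descriptions agree. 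This matches the philosophy flagged in the remark preceding the proposition — that hybrid expansions (Definition \ref{hybrid}) should share kernels — which is essentially what is being verified here.
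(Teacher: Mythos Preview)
Your proposal is correct and follows essentially the same approach as the paper: diagonalize the mixed expansion $M$ in the two distinct directions via Lemma \ref{exist diagonalization}, obtain two equal expressions, and conclude the kernels coincide. The paper's proof is in fact terser than yours---it simply asserts $\mathcal{S}_t-\mathcal{S}_r\neq\mathcal{S}_{\mathbb{R}}$ and writes ``the claim follows immediately'' from the equality of the two expansions, without discussing the ambient function-field mismatch you flag; your extra paragraph on that point is more careful than the original.
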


\begin{proof}
Using the lemma \ref{exist diagonalization} and the assumption $k\neq j$ with $1\leq j,k\leq l$, there exists some $u,v\geq 1$ and $\mathcal{S}_t,\mathcal{S}_r\in \mathcal{F}$ with $\mathcal{S}_t-\mathcal{S}_r\neq \mathcal{S}_{\mathbb{R}}$ such that we can write for $\mathcal{S}\in \mathcal{F}$
\begin{align}
(\gamma^{-1}\circ\beta\circ\gamma\circ\nabla)_{\otimes_{i=1}^{l}[x_{\sigma(i)}]}(\mathcal{S})=(\gamma^{-1}\circ\beta\circ\gamma\circ\nabla)^u_{[x_{\sigma(k)}]}(\mathcal{S}_t)\nonumber
\end{align}
and 
\begin{align}
(\gamma^{-1}\circ\beta\circ\gamma\circ\nabla)_{\otimes_{i=1}^{l}[x_{\sigma(i)}]}(\mathcal{S})&=(\gamma^{-1}\circ\beta\circ\gamma\circ\nabla)^v_{[x_{\sigma(j)}]}(\mathcal{S}_r)\nonumber 
\end{align}
so that 
\begin{align}
(\gamma^{-1}\circ\beta\circ\gamma\circ\nabla)^u_{[x_{\sigma(k)}]}(\mathcal{S}_t)&=(\gamma^{-1}\circ\beta\circ\gamma\circ\nabla)^v_{[x_{\sigma(j)}]}(\mathcal{S}_r).\nonumber
\end{align}
This establishes the claim.
\end{proof}
\bigskip

\subsection{Singularity and singular points of an expansion}

In this section, we introduce the notion of \emph{singularity} and associated \emph{singular} points of an expansion in a specific direction.

\begin{definition}\label{singularity}
Let $\mathcal{F}=\{\mathcal{S}_i\}_{i=1}^{\infty}$ be a collection of $l$-tuples of polynomials in the ring $\mathbb{R}[x_1,x_2,\ldots,x_n]$ and $\mathrm{Ker}[(\gamma^{-1}\circ\beta\circ\gamma\circ\nabla)^k_{[x_{i}]}(\mathcal{S})]$ be the \emph{kernel} of expansion $(\gamma^{-1}\circ\beta\circ\gamma\circ\nabla)^k_{[x_{i}]}(\mathcal{S})$. By a \emph{singular} point of the expansion, we mean a tuple $\mathcal{S}=(a_1,\ldots a_{i-1},a_{i+1},\ldots,a_n)$ with $a_j\in \mathbb{C}$ such that for some annihilator
\begin{align}
(f_1,f_2,\ldots,f_l)\in \mathrm{Ker}[(\gamma^{-1}\circ\beta\circ\gamma\circ\nabla)^k_{[x_{i}]}(\mathcal{S})]\nonumber
\end{align}
we have 
\begin{align}
f_i[(a_1,\ldots a_{k-1},a_{k+1},\ldots,a_n)]=\infty \nonumber
\end{align}
for some $1\leq i\leq l$. We call the collection of all such points the \emph{singularity} of the expansion. We denote the singularity by
\begin{align}
\mathrm{Sing}[(\gamma^{-1}\circ\beta\circ\gamma\circ\nabla)^k_{[x_{i}]}(\mathcal{S})]&=\bigg \{(a_1,\ldots a_{i-1},a_{i+1},\ldots,a_n) \in \mathbb{C}^{n-1}|\nonumber \\&~f_i[(a_1,\ldots a_{k-1},a_{k+1},\ldots,a_n)]=\infty \bigg \}.\nonumber
\end{align}
\end{definition}

\begin{definition}
Let $\mathcal{F}=\{\mathcal{S}_i\}_{i=1}^{\infty}$ be a collection of tuples of polynomials in the ring $\mathbb{R}[x_1,x_2,\ldots,x_n]$ and $\mathcal{D}\subset \mathbb{C}^{n-1}$. We say that the expansion $(\gamma^{-1}\circ\beta\circ\gamma\circ\nabla)^k_{[x_{i}]}(\mathcal{S})$ is \emph{analytic} in $\mathcal{D}$ if 
\begin{align}
\mathcal{D}\cap \mathrm{Sing}[(\gamma^{-1}\circ\beta\circ\gamma\circ\nabla)^k_{[x_{i}]}(\mathcal{S})]=\emptyset.\nonumber
\end{align}
If the expansion is analytic in the entire $\mathcal{C}^{n-1}$, then we say it is \emph{analytic}.
\end{definition} 

\begin{definition}
Let $f_k\in \mathbb{R}[x_1,x_2,\ldots,x_n]$ be a polynomial. By the \emph{index} of $x_i$ for $1\leq i\leq n$ relative to $f_k$, denoted by $\mathrm{Ind}_{f_k}(x_i)$, we mean the largest power (exponent) of $x_i$ in the polynomial $f_k$.
\end{definition}
\bigskip

\begin{lemma}
Let $\mathcal{S}=(f_1,f_2,\ldots,f_s)$ be a tuple of polynomials such that $f_i\in \mathbb{R}[x_1,x_2,\ldots,x_n]$ for $1\leq i \leq s$. We have
\begin{align}
\Phi[(\gamma^{-1}\circ\beta\circ\gamma\circ\nabla)_{[x_{j}]}(\mathcal{S})]=\mathrm{max}\{\mathrm{Ind}_{f_i}(x_j)\}_{i=1}^{s}+1\nonumber
\end{align}
for any $1\leq j\leq n$.
\end{lemma}

\begin{definition}\label{unionization stage}
Let $\mathcal{F}=\{\mathcal{S}_i\}_{i=1}^{\infty}$ be a collection of tuples of polynomials in the ring $\mathbb{R}[x_1,x_2,\ldots,x_n]$ and $(\gamma^{-1}\circ\beta\circ\gamma\circ\nabla)_{[x_{i}]}(\mathcal{S})$ be an expansion. By the \emph{unionization} stage of the expansion, we mean the least of value of $j$ such that 
\begin{align}
(\gamma^{-1}\circ\beta\circ\gamma\circ\nabla)^j_{[x_{i}](0)}(\mathcal{S})=\mathcal{S}_0.\nonumber
\end{align}
\end{definition}
\bigskip

\begin{definition}\label{fibre}
Let $\mathcal{F}=\{\mathcal{S}_i\}_{i=1}^{\infty}$ be a collection of $l$-tuples of polynomials in the ring $\mathbb{R}[x_1,x_2,\ldots,x_n]$ and $(\gamma^{-1}\circ\beta\circ\gamma\circ\nabla)_{[x_{i}]}(\mathcal{S})$ be an expansion. By the \emph{normalization} stage of the expansion, denoted by $\varrho[(\gamma^{-1}\circ\beta\circ\gamma\circ\nabla)_{[x_{i}]}(\mathcal{S})]$, we mean the smallest value of $k$ such that 
\begin{align}
\mathrm{Ind}_{\mathrm{Id}_r[(\gamma^{-1}\circ\beta\circ\gamma\circ\nabla)^k_{[x_{i}]}(\mathcal{S})]}(x_i)=\mathrm{Ind}_{\mathrm{Id}_s[(\gamma^{-1}\circ\beta\circ\gamma\circ\nabla)^k_{[x_{i}]}(\mathcal{S})]}(x_i)\nonumber
\end{align}
for all $1\leq r,s\leq l$ with $r\neq s$. We call the corresponding expansion 
\begin{align}
(\gamma^{-1}\circ\beta\circ\gamma\circ\nabla)^k_{[x_{i}]}(\mathcal{S})\nonumber
\end{align}
the \emph{fiber} of the expansion $(\gamma^{-1}\circ\beta\circ\gamma\circ\nabla)_{[x_{i}]}(\mathcal{S})$.  
\end{definition}
\bigskip

We will assume the normalization stage of an expansion satisfies $\varrho[(\gamma^{-1}\circ\beta\circ\gamma\circ\nabla)_{[x_{i}]}(\mathcal{S})]>0$ by working with tuples of multivariate polynomials with at least two entries of distinct degree of the underlying direction. We will also implicitly assume that any two entries $f_i,f_j\in \mathbb{R}[x_1,x_2,\ldots,x_n]$ of the tuple $\mathcal{S}$ must satisfy $f_j\neq g\cdot f_i$ for any $g\in \mathbb{R}[x_1,x_2,\ldots,x_n]$.

\begin{proposition}\label{unionization stage proof}
Let $\mathcal{F}=\{\mathcal{S}_i\}_{i=1}^{\infty}$ be a collection of tuples of polynomials in the ring $\mathbb{R}[x_1,x_2,\ldots,x_n]$ and $(\gamma^{-1}\circ\beta\circ\gamma\circ\nabla)_{[x_{i}]}(\mathcal{S})$ be an expansion. The \emph{unionization} stage of the expansion satisfies the inequality
\begin{align}
j\geq \left \lfloor \frac{\Phi[(\gamma^{-1}\circ\beta\circ\gamma\circ\nabla)_{[x_{i}]}(\mathcal{S})]}{\varrho[(\gamma^{-1}\circ\beta\circ\gamma\circ\nabla)_{[x_{i}]}(\mathcal{S})]}\right \rfloor.\nonumber
\end{align}
\end{proposition}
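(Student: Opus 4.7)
The plan is to combine Lemma~\ref{index-totient} with a degree-tracking argument anchored at the normalization stage. First I would apply Lemma~\ref{index-totient} to rewrite the totient as $\Phi = M + 1$, where $M = \max_r \mathrm{Ind}_{f_r}(x_i)$ is the largest $x_i$-degree among the entries of $\mathcal{S}=(f_1,\ldots,f_l)$. A direct computation with the explicit formula for the $r$-th entry of the expansion, namely $\sum_{k \neq r} \partial f_k / \partial x_i$, shows that one application of $(\gamma^{-1}\circ \beta \circ \gamma \circ \nabla)_{[x_i]}$ strictly decreases the maximum $x_i$-index of a tuple by exactly one, as long as this maximum is positive. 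Iterating this observation and invoking Definition~\ref{fibre}, the common $x_i$-degree at the normalization (fibre) stage must equal $d^{*} = M - \varrho$, so that $\Phi = d^{*} + \varrho + 1$.

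Next I would analyze how the constant-in-$x_i$ term of each entry evolves under successive expansions. For the $j$-th expansion to evaluate to $\mathcal{S}_0$ at $x_i = 0$, every entry must have vanishing coefficient of $x_i^{0}$. By the linearity of differentiation and an induction on $j$, this coefficient is obtained as a linear combination of the coefficients of $x_i^{j}$ in the original polynomials, weighted by an iterate of the matrix $\beta$. Showing that this linear combination vanishes systematically requires that the tuple has been repeatedly flattened by the normalization mechanism, where each flattening cycle consumes $\varrho$ expansions.

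Finally, a counting argument completes the proof: the total $x_i$-degree budget of size $\Phi$ is dissipated at a rate of one unit per expansion once normalization is reached, so the number of full normalization cycles that must elapse before the evaluation at zero collapses permanently to $\mathcal{S}_0$ is at least $\lfloor \Phi/\varrho \rfloor$. Combining this with the reformulation $\Phi = d^{*}+\varrho+1$ from the first step yields $j \geq \lfloor \Phi / \varrho \rfloor$.

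The main obstacle will be to make the cycle bookkeeping rigorous, since the matrix $\beta$ couples the entries at every step and could induce unexpected cancellations. However, the hypothesis $\varrho > 0$ (at least two entries of distinct initial $x_i$-degree) together with the closed-form $\beta^{m} = (-1)^{m} I + l^{-1}((l-1)^{m} - (-1)^{m}) J$, where $J$ is the all-ones matrix, lets one identify the exact linear form involved at each stage and confirm that the cycles behave independently, so that the pigeonhole estimate goes through unchanged.
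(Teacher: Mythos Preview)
Your approach is far more elaborate than the paper's. The paper gives essentially a one-line justification: it declares that ``the unionization stage is the index of the normalization stage of the expansion'' and writes down the floor quotient $\lfloor \Phi/\varrho \rfloor$ directly, treating the inequality as an immediate structural fact about how the totient and the normalization stage interact, without any degree-tracking or matrix computation.

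Your detailed attempt, by contrast, rests on a notion---``normalization cycles,'' each allegedly of length $\varrho$---that is nowhere defined in the paper and does not follow from Definition~\ref{fibre}. The normalization stage $\varrho$ is a \emph{single} threshold (the first $k$ at which all entries of the $k$-th expansion share a common $x_i$-index), not a period that recurs. In fact, once all entries have the same $x_i$-index, a further application of $\nabla_{[x_i]}$ followed by $\beta$ generically preserves that equality, so there is no mechanism by which the tuple ``de-normalizes'' and then re-normalizes after another $\varrho$ steps. Consequently your central counting claim---that ``each flattening cycle consumes $\varrho$ expansions'' and that $\lfloor \Phi/\varrho \rfloor$ such cycles must elapse before the evaluation at $x_i=0$ collapses to $\mathcal{S}_0$---is unsupported. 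The unionization stage $j$ is governed by when $\beta^{j}$ applied to the vector of $x_i^{j}$-coefficients of the original entries first vanishes, and your closed form for $\beta^{m}$, while correct, does not by itself produce any periodicity tied to $\varrho$. Your first paragraph (rewriting $\Phi$ via Lemma~\ref{index-totient} and observing the unit drop in maximal $x_i$-degree per expansion) is fine, but the bridge from there to the claimed lower bound has a genuine gap.
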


\begin{proof}
The normalization stage of the expansion $(\gamma^{-1}\circ\beta\circ\gamma\circ\nabla)_{[x_{i}]}(\mathcal{S})$ is  $\varrho[(\gamma^{-1}\circ\beta\circ\gamma\circ\nabla)_{[x_{i}]}(\mathcal{S})]$ so that the unionization stage is the index of the normalization stage of the expansion 
\begin{align}
j\geq \left \lfloor \frac{\Phi[(\gamma^{-1}\circ\beta\circ\gamma\circ\nabla)_{[x_{i}]}(\mathcal{S})]}{\varrho[(\gamma^{-1}\circ\beta\circ\gamma\circ\nabla)_{[x_{i}]}(\mathcal{S})]}\right \rfloor.\nonumber
\end{align}
\end{proof}
\bigskip

Here, the \emph{normalization} stage is analogous to and parallels the notion of the limit of an expansion under the single variable theory. Similarly, the local number also parallels the notion of the \emph{unionization} stage in multivariate expansivity theory.

\begin{proposition}\label{isomorphism}
Let $\{(\gamma^{-1}\circ\beta\circ\gamma\circ\nabla)_{[x_{i}]}(\mathcal{S}_i)\}_{i=1}^{\infty}$ be a collection of expansions in the direction $x_i$ of $l$-tuples of polynomials in the ring $\mathbb{R}[x_1,x_2,\ldots,x_n]$ and $\{(\gamma^{-1}\circ\beta\circ\gamma\circ\nabla)(\mathcal{S}'_i)\}_{i=1}^{\infty}$ be a collection of expansions of tuples of polynomials in the ring $\mathbb{R}[x]$. The map 
\begin{align}
\chi_{(a_1,a_2,\ldots,a_{i-1},a_{i+1},\ldots,a_n)}:\{(\gamma^{-1}\circ\beta\circ\gamma\circ\nabla)_{[x_{i}]}(\mathcal{S}_i)\}_{i=1}^{\infty}\longrightarrow \{(\gamma^{-1}\circ\beta\circ\gamma\circ\nabla)(\mathcal{S}'_i)\}_{i=1}^{\infty}\nonumber
\end{align}
for a fixed $(a_1,a_2,\ldots,a_{i-1},a_{i+1},\ldots,a_n)\in \mathbb{R}^{n-1}$ such that for any $(\gamma^{-1}\circ\beta\circ\gamma\circ\nabla)_{[x_{i}]}(\mathcal{S})\in \{(\gamma^{-1}\circ\beta\circ\gamma\circ\nabla)_{[x_{i}]}(\mathcal{S}_i)\}_{i=1}^{\infty}$ then 
\begin{align}
\chi_{(a_1,a_2,\ldots,a_{i-1},a_{i+1},\ldots,a_n)}(\gamma^{-1}\circ\beta\circ\gamma\circ\nabla)_{[x_{i}]}(\mathcal{S})&=(\gamma^{-1}\circ\beta\circ\gamma\circ\nabla)(\mathcal{S})(a_1,a_2,\ldots, a_{i-1},\nonumber \\& a_{i+1},\ldots,a_n)\in \{(\gamma^{-1}\circ\beta\circ\gamma\circ\nabla)(\mathcal{S}'_i)\}_{i=1}^{\infty}\nonumber
\end{align}
is an isomorphism. We denote the isomorphism by 
\begin{align}
(\gamma^{-1}\circ\beta\circ\gamma\circ\nabla)_{[x_{i}]}(\mathcal{S}_i)\simeq (\gamma^{-1}\circ\beta\circ\gamma\circ\nabla)(\mathcal{S}'_i).\nonumber
\end{align}
\end{proposition}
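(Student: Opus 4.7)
The plan is to view $\chi_{(a_1,\ldots,a_{i-1},a_{i+1},\ldots,a_n)}$ as the partial-evaluation ring homomorphism $\mathrm{ev}:\mathbb{R}[x_1,\ldots,x_n]\longrightarrow \mathbb{R}[x_i]$ sending $f\mapsto f(a_1,\ldots,a_{i-1},x_i,a_{i+1},\ldots,a_n)$, extended componentwise to $l$-tuples, and to verify in turn: well-definedness, compatibility with the four expansion operators, and bijectivity. For well-definedness, each entry of an expansion $(\gamma^{-1}\circ \beta \circ \gamma \circ \nabla)_{[x_i]}(\mathcal{S})$ remains in $\mathbb{R}[x_1,\ldots,x_n]$ by Proposition \ref{linearity}, and substituting $a_k$ for $x_k$ whenever $k\neq i$ yields a valid element of $\mathbb{R}[x_i]$; after the tautological relabeling $x_i\mapsto x$ we land in a single-variable expansion as required.

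The heart of the argument will be the intertwining identity
\begin{equation*}
\chi_{(a_1,\ldots,a_{i-1},a_{i+1},\ldots,a_n)}\bigl((\gamma^{-1}\circ \beta \circ \gamma \circ \nabla)^{k}_{[x_i]}(\mathcal{S})\bigr)=(\gamma^{-1}\circ \beta \circ \gamma \circ \nabla)^{k}\bigl(\chi_{(a_1,\ldots,a_{i-1},a_{i+1},\ldots,a_n)}(\mathcal{S})\bigr),
\end{equation*}
which I would establish by induction on $k$ after checking commutation of $\mathrm{ev}$ with each of the four constituent operators at $k=1$. The nontrivial case is $\nabla_{[x_i]}$: since $\partial/\partial x_i$ treats every $x_k$ with $k\neq i$ as a constant, the identity $\mathrm{ev}\circ(\partial/\partial x_i)=(d/dx_i)\circ \mathrm{ev}$ is immediate from the definition of partial differentiation. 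The operators $\gamma$, $\beta$, and $\gamma^{-1}$ act only on tuple indices and not on the underlying variables, so they commute componentwise with $\mathrm{ev}$ by the ring-homomorphism (and in particular linearity) property already exploited in Proposition \ref{linearity}.

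For bijectivity, surjectivity is routine: given $\mathcal{S}'=(g_1,\ldots,g_l)\in \mathbb{R}[x]^{l}$, the tautological lift $\widetilde{g}_k(x_1,\ldots,x_n):=g_k(x_i)$ produces a multivariate tuple whose image under $\chi$ is $\mathcal{S}'$, and by the intertwining its multivariate expansion in the direction $[x_i]$ is sent to the single-variable expansion of $\mathcal{S}'$. The main obstacle will be injectivity, since two genuinely distinct multivariate tuples can agree after evaluation at a single point (for instance, $x_1-a_1$ vanishes after substituting $a_1$ for $x_1$). The resolution I would adopt is to pass to equivalence classes under the relation "having identical image under $\chi_{(a_1,\ldots,a_{i-1},a_{i+1},\ldots,a_n)}$"; the intertwining identity above guarantees that this equivalence is respected by iteration of the expansion operator, so the induced map on equivalence classes is a genuine bijection that carries the expansion structure faithfully, which is precisely the isomorphism asserted.
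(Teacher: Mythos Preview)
The paper does not supply a proof of this proposition at all: it is stated and immediately followed by the next proposition, with the isomorphism invoked later only as a black box (in the proof of Proposition~\ref{upper bound for normalization stage}) to transfer the bound on the local number from the single-variable theory. So there is no ``paper's own proof'' to compare against; your proposal already contains more argument than the source.

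On the substance of what you wrote: your intertwining identity is the right thing to check, and your verification that partial evaluation commutes with $\nabla_{[x_i]}$, $\gamma$, $\beta$, $\gamma^{-1}$ is exactly what is needed to make the map a morphism of expansion structures. You have also correctly identified that the map cannot literally be a bijection of tuples, since evaluation at a fixed point $(a_1,\ldots,a_{i-1},a_{i+1},\ldots,a_n)$ kills any polynomial in the ideal generated by the $x_k-a_k$ for $k\neq i$. Your equivalence-class repair is one honest way to salvage the word ``isomorphism''; an equivalent phrasing is that $\chi$ is a surjective homomorphism of expansion structures whose kernel consists of those $\mathcal{S}$ that evaluate to the zero tuple, so it induces an isomorphism on the quotient. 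For the paper's single application---pulling back the inequality $\mathcal{L}(\cdot)\leq 2$ to conclude $\varrho[(\gamma^{-1}\circ\beta\circ\gamma\circ\nabla)_{[x_i]}(\mathcal{S})]\leq 2$---only the intertwining and surjectivity are actually used, so the injectivity subtlety you flagged, while real, does not affect the downstream argument.
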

 \bigskip
 
\begin{proposition}\label{upper bound for normalization stage}
Let $\mathcal{F}=\{\mathcal{S}_i\}_{i=1}^{\infty}$ be a collection of $l$-tuples of polynomials in the ring $\mathbb{R}[x_1,x_2,\ldots,x_n]$ and $(\gamma^{-1}\circ\beta\circ\gamma\circ\nabla)_{[x_{i}]}(\mathcal{S})$ be an expansion. We have
\begin{align}
\varrho[(\gamma^{-1}\circ\beta\circ\gamma\circ\nabla)_{[x_{i}]}(\mathcal{S})]\leq 2.\nonumber
\end{align}
\end{proposition}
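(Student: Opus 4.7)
The strategy is to transport the multivariate claim to the single variable expansivity theory recalled earlier in the paper via the isomorphism established in Proposition \ref{isomorphism}. First I would pick a generic point $\vec{a}=(a_1, \ldots, a_{i-1}, a_{i+1}, \ldots, a_n) \in \mathbb{R}^{n-1}$ avoiding the vanishing loci of the leading coefficients in $x_i$ of every entry of the iterates $(\gamma^{-1} \circ \beta \circ \gamma \circ \nabla)^k_{[x_i]}(\mathcal{S})$ for $0\leq k\leq 2$; such a choice is possible because only finitely many nonzero polynomials in $\mathbb{R}[x_1,\ldots,x_{i-1},x_{i+1},\ldots,x_n]$ need to be avoided.

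Next I would invoke the isomorphism $\chi_{\vec{a}}$ so that $(\gamma^{-1} \circ \beta \circ \gamma \circ \nabla)_{[x_{i}]}(\mathcal{S})$ corresponds to a single variable expansion $(\gamma^{-1} \circ \beta \circ \gamma \circ \nabla)(\mathcal{S}')$ for some tuple $\mathcal{S}'$ of univariate polynomials. By construction $\chi_{\vec{a}}$ commutes with the expansion operator, and for $\vec{a}$ chosen as above the index of $x_i$ in each entry of $(\gamma^{-1} \circ \beta \circ \gamma \circ \nabla)^k_{[x_i]}(\mathcal{S})$ coincides with the degree of the corresponding entry of $(\gamma^{-1} \circ \beta \circ \gamma \circ \nabla)^k(\mathcal{S}')$. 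Consequently the condition defining the normalization stage, namely that all entries share a common index in $x_i$, matches exactly the condition defining the limit of a single variable expansion in Definition \ref{limit}. Therefore
\begin{align}
\varrho[(\gamma^{-1} \circ \beta \circ \gamma \circ \nabla)_{[x_{i}]}(\mathcal{S})]=\mathcal{L}(\mathcal{S}')\nonumber
\end{align}
under this correspondence.

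Finally I would apply Lemma \ref{local number}, which bounds $\mathcal{L}(\mathcal{S}')\leq 2$ under the standing regularity hypotheses on $\mathcal{S}$ (distinct degrees in the underlying direction and the non-multiplicative entry relations stipulated in the remark preceding the proposition, together with $\varrho>0$ which rules out the trivial case). Pulling back through the isomorphism yields $\varrho[(\gamma^{-1} \circ \beta \circ \gamma \circ \nabla)_{[x_{i}]}(\mathcal{S})]\leq 2$, as required.

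The principal obstacle I anticipate is ensuring that evaluation at $\vec{a}$ truly preserves the index data through both expansions without degenerate cancellations in the sums induced by $\beta$. Inspecting the explicit formula for the $j$th entry after one expansion, namely $\sum_{k \neq j}\partial f_k/\partial x_i$, the leading terms in $x_i$ can cancel only if two distinct entries share identical leading coefficients as polynomials in the remaining variables; the assumption $f_j \neq g\cdot f_i$ together with a further generic selection of $\vec{a}$ prevents this. A secondary issue is checking that the auxiliary hypotheses $\deg(\mathcal{S}')\geq 4$ and $\dim(\mathcal{S}')>2$ required by Lemma \ref{local number} are inherited from the standing multivariate assumptions, or else handled directly by a short case analysis when the indices in $x_i$ are small, which reduces in any event to the index-arithmetic already computed in Lemma \ref{index-totient}.
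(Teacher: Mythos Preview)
Your proposal is correct and follows essentially the same route as the paper: reduce to the single variable theory via the isomorphism of Proposition \ref{isomorphism}, identify the normalization stage with the local number, and then invoke Lemma \ref{local number}. If anything, your version is more careful than the paper's own proof, which simply fixes the other variables and appeals to the isomorphism without discussing the genericity of $\vec{a}$, the possible cancellations under $\beta$, or the auxiliary hypotheses $\deg(\mathcal{S}')\geq 4$ and $\dim(\mathcal{S}')>2$ in Lemma \ref{local number}.
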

\bigskip

\begin{proof}
Let $\mathcal{S}\in \mathcal{F}=\{\mathcal{S}_i\}_{i=1}^{\infty}$ then we fix all other directions $x_j$ for all $j\neq i$ so that the expansion 
\begin{align}
(\gamma^{-1}\circ\beta\circ\gamma\circ\nabla)_{[x_{i}]}(\mathcal{S})\simeq (\gamma^{-1}\circ\beta\circ\gamma\circ\nabla)(\mathcal{S})(a_1,a_2,\ldots, a_{i-1},a_{i+1},\ldots,a_n)\nonumber
\end{align}
where $(\mathcal{S})(a_1,a_2,\ldots, a_{i-1},a_{i+1},\ldots,a_n)$ is a tuple of polynomials in the ring $\mathbb{R}[x_i]$. Using the lemma \ref{local number}, we obtain the inequality 
\begin{align}
\mathcal{L}\bigg[(\gamma^{-1}\circ\beta\circ\gamma\circ\nabla)(\mathcal{S})(a_1,a_2,\ldots, a_{i-1},a_{i+1},\ldots,a_n)\bigg]\leq 2\nonumber
\end{align}
and by appealing to Proposition \ref{isomorphism}, we recover the following inequality
\begin{align}
\varrho[(\gamma^{-1}\circ\beta\circ\gamma\circ\nabla)_{[x_{i}]}(\mathcal{S})]\leq 2.\nonumber
\end{align}
\end{proof}
\bigskip

Here, we show that the unionization stage of a typical expansion cannot be too big. In other words, it cannot possibly be the case that the totient of an expansion in a specific direction coincides with the unionization stage.

\begin{theorem}[Analytic range]\label{analytic exitence law}
Let $\mathcal{F}=\{\mathcal{S}_i\}_{i=1}^{\infty}$ be a collection of tuples of polynomials in the ring $\mathbb{R}[x_1,x_2,\ldots,x_n]$ and $(\gamma^{-1}\circ\beta\circ\gamma\circ\nabla)_{[x_{i}]}(\mathcal{S})$ be an expansion. The expansion is analytic in the range
\begin{align}
j\geq \left \lfloor \frac{\Phi[(\gamma^{-1}\circ\beta\circ\gamma\circ\nabla)_{[x_{i}]}(\mathcal{S})]}{2}\right \rfloor.\nonumber
\end{align}
\end{theorem}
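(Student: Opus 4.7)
The plan is to chain together the two preceding propositions which give, respectively, a lower bound on the unionization stage in terms of the normalization stage, and a universal upper bound on the normalization stage, and then to interpret the resulting threshold as a cutoff past which the kernel can support no singular annihilators.

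First, I would invoke Proposition \ref{upper bound for normalization stage} to conclude that
\[
\varrho[(\gamma^{-1}\circ\beta\circ\gamma\circ\nabla)_{[x_i]}(\mathcal{S})]\le 2.
\]
The underlying fact is that once I fix all other variables $x_k$ (for $k\neq i$), the tuple $\mathcal{S}$ becomes a tuple of polynomials in the single variable $x_i$, and Proposition \ref{isomorphism} allows me to transport to the single variable theory, where Lemma \ref{local number} ceilings the local number at $2$. This is the bridge to the single variable results developed in \cite{agama2019proof}.

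Next, I would feed this into Proposition \ref{unionization stage proof}. That proposition asserts that the unionization stage $j$ of $(\gamma^{-1}\circ\beta\circ\gamma\circ\nabla)_{[x_i]}(\mathcal{S})$ satisfies
\[
j\ge\left\lfloor\frac{\Phi[(\gamma^{-1}\circ\beta\circ\gamma\circ\nabla)_{[x_i]}(\mathcal{S})]}{\varrho[(\gamma^{-1}\circ\beta\circ\gamma\circ\nabla)_{[x_i]}(\mathcal{S})]}\right\rfloor,
\]
and since $\varrho\le 2$ the floor function is monotonically decreasing in its denominator, yielding $j\ge\lfloor\Phi/2\rfloor$.

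The remaining, and hardest, step is to argue that any expansion at or beyond its unionization stage is automatically analytic in the sense of Definition \ref{singularity}, i.e.\ $\mathrm{Sing}[(\gamma^{-1}\circ\beta\circ\gamma\circ\nabla)^j_{[x_i]}(\mathcal{S})]=\emptyset$ for such $j$. The plan is as follows: by Definition \ref{unionization stage}, at the unionization stage each entry of the expanded tuple vanishes identically at $x_i=0$, so each component is divisible by $x_i$ when viewed as a polynomial in $x_i$ with coefficients in $\mathbb{R}[x_1,\ldots,x_{i-1},x_{i+1},\ldots,x_n]$. The defining equation $\mathrm{Id}_r[(\gamma^{-1}\circ\beta\circ\gamma\circ\nabla)^j_{[x_i](f_r)}(\mathcal{S})]=0$ for annihilators therefore factors as $f_r\cdot g_r(f_r)=0$, with $g_r$ of strictly smaller $x_i$-degree. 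The solutions contributed by the factor $f_r$ are the polynomial tuple $(0,\ldots,0)$ and the solutions of $g_r=0$; because $\varrho\le 2$ forces the leading coefficient of $g_r$ (in $x_i$) to be a nonzero real constant (not a polynomial vanishing on a nontrivial variety), the roots $f_r$ lie in $\mathbb{C}[x_1,\ldots,x_{i-1},x_{i+1},\ldots,x_n]$ rather than in the larger function field, so no point of $\mathbb{C}^{n-1}$ sends any annihilator component to $\infty$.

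The main obstacle will be this last step: translating the purely symbolic vanishing $(\gamma^{-1}\circ\beta\circ\gamma\circ\nabla)^j_{[x_i](0)}(\mathcal{S})=\mathcal{S}_0$ into the geometric statement that $\mathrm{Sing}=\emptyset$. In particular, the control of the leading coefficient of the quotient polynomial $g_r$ past the normalization stage — and hence a fortiori past $\lfloor\Phi/2\rfloor$ — is delicate because in the multivariate setting that coefficient is itself a polynomial in the ambient variables. Everything else is a routine composition of the two propositions already proved.
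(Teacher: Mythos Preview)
Your proposal is correct and follows essentially the same route as the paper: chain Proposition~\ref{unionization stage proof} with Proposition~\ref{upper bound for normalization stage} to obtain the threshold $\lfloor\Phi/2\rfloor$, and then use the fact that an expansion is analytic at (and beyond) its unionization stage. The paper's proof simply \emph{asserts} this last fact in a single sentence (``Every expansion $(\gamma^{-1}\circ\beta\circ\gamma\circ\nabla)_{[x_i]}(\mathcal{S})$ is always analytic at the unionization stage'') without the factorisation argument you sketch; so your version is, if anything, more detailed than the paper on precisely the point you flag as the main obstacle.
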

\bigskip

\begin{proof}
Every expansion $(\gamma^{-1}\circ\beta\circ\gamma\circ\nabla)_{[x_{i}]}(\mathcal{S})$ is always analytic at the unionization. Using the proposition \ref{unionization stage proof}, we get
\begin{align}
j&\geq\left\lfloor \frac{\Phi[(\gamma^{-1}\circ\beta\circ\gamma\circ\nabla)_{[x_{i}]}(\mathcal{S})]}{\varrho[(\gamma^{-1}\circ\beta\circ\gamma\circ\nabla)_{[x_{i}]}(\mathcal{S})]}\right\rfloor \nonumber \\& \geq \left \lfloor \frac{\Phi[(\gamma^{-1}\circ\beta\circ\gamma\circ\nabla)_{[x_{i}]}(\mathcal{S})]}{2}\right\rfloor \nonumber
\end{align}
using proposition \ref{upper bound for normalization stage}. The expansion $(\gamma^{-1}\circ\beta\circ\gamma\circ\nabla)^j_{[x_{i}]}(\mathcal{S})$ is analytic in the range 
\begin{align}
j\geq \left\lfloor\frac{\Phi[(\gamma^{-1}\circ\beta\circ\gamma\circ\nabla)_{[x_{i}]}(\mathcal{S})]}{2}\right \rfloor.\nonumber
\end{align}
\end{proof}

\subsection{An application to function theory}

\begin{corollary}\label{existence of singularity}
Let $\mathcal{F}=\{\mathcal{S}_i\}_{i=1}^{\infty}$ be a collection of tuples of polynomials in the ring $\mathbb{R}[x_1,x_2,\ldots,x_n]$ and $(\gamma^{-1}\circ\beta\circ\gamma\circ\nabla)^j_{[x_{i}]}(\mathcal{S})$ be an expansion. For
\begin{align}
j<\left \lfloor \frac{\Phi[(\gamma^{-1}\circ\beta\circ\gamma\circ\nabla)_{[x_{i}]}(\mathcal{S})]}{2}\right \rfloor \nonumber
\end{align}
there exist some $(f_1,f_2,\ldots,f_l)\in \mathrm{Ker}[(\gamma^{-1}\circ\beta\circ\gamma\circ\nabla)^j_{[x_{i}]}(\mathcal{S})]$ 
and 
\begin{align}
(a_1,a_2,\ldots,a_{k-1},a_{k+1},\ldots,a_n)\in\mathbb{C}^{n-1}\nonumber
\end{align} 
such that 
\begin{align}
f_i[(a_1,\ldots a_{k-1},a_{k+1},\ldots,a_n)]=\infty \nonumber
\end{align}
for some $1\leq i\leq l$.
\end{corollary}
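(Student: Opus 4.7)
The plan is to view this corollary as the companion to Theorem~\ref{analytic exitence law}: that theorem gives analyticity as a sufficient condition on $j$, and here we want to produce an actual singular point whenever $j$ falls strictly below the analytic range. I would proceed by using the unionization/normalization machinery to show that at stage $j<\lfloor\Phi/2\rfloor$ the expanded tuple still genuinely depends on $x_i$, and then extract the singularity via the classical argument that an algebraic function has poles precisely where the leading coefficient of its defining polynomial vanishes.

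First I would set $(\gamma^{-1}\circ\beta\circ\gamma\circ\nabla)^{j}_{[x_i]}(\mathcal{S})=(g_1,\dots,g_l)$ with $g_r\in\mathbb{R}[x_1,\dots,x_n]$, and use Proposition~\ref{unionization stage proof} together with the bound $\varrho\le 2$ from Proposition~\ref{upper bound for normalization stage} to argue that the unionization stage satisfies $\lfloor\Phi/\varrho\rfloor\ge\lfloor\Phi/2\rfloor>j$. Hence the tuple $(g_1,\dots,g_l)$ cannot have become trivialised at $x_i=0$, and combined with the standing hypothesis that the entries of $\mathcal{S}$ have distinct degrees in $x_i$ (so that $\varrho>0$ and the normalization has not yet been reached either), I can locate some index $r$ for which $g_r$ has positive degree $d_r\ge 1$ in $x_i$ with a non-constant leading coefficient in the remaining variables.

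Next I would write $g_r=a_{d_r}(y)\,x_i^{d_r}+\cdots+a_0(y)$, where $y=(x_1,\dots,x_{i-1},x_{i+1},\dots,x_n)$ and $a_{d_r}\in\mathbb{R}[y]$ is a nonconstant polynomial. The annihilator condition of Definition~\ref{kernel} for the $r$-th slot reads
\begin{align}
g_r\bigl(x_1,\dots,x_{i-1},f_r(y),x_{i+1},\dots,x_n\bigr)=0,\nonumber
\end{align}
which presents $f_r$ as one of the $d_r$ algebraic branches determined by $g_r$ over $\mathbb{C}(y)$. Since $a_{d_r}$ is nonconstant, its zero set $V(a_{d_r})\subset\mathbb{C}^{n-1}$ is a nonempty codimension-one variety. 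I would then pick $y_0=(a_1,\dots,a_{i-1},a_{i+1},\dots,a_n)\in V(a_{d_r})$ and observe that at $y_0$ the specialised polynomial $g_r(y_0,x_i)$ drops to degree strictly less than $d_r$; by the standard limiting argument for roots of polynomials with vanishing leading coefficient, at least one branch $f_r$ satisfies $f_r(y_0)=\infty$. Completing $(f_1,\dots,f_l)$ by choosing any solutions for the remaining slots (using $g_s\equiv 0$ when necessary, or other branches otherwise) gives the required annihilator, and $y_0$ is the sought singular point in the sense of Definition~\ref{singularity}.

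The main obstacle I expect is the second step: turning the abstract statement ``$j$ is below the unionization stage'' into the concrete structural statement ``some component $g_r$ has positive degree in $x_i$ with a nonconstant leading coefficient in $y$.'' The distinction matters because a component that has already become a pure polynomial in $y$ contributes no algebraic function at all, and a component whose leading coefficient is a nonzero constant contributes an algebraic function without poles in $\mathbb{C}^{n-1}$. Handling this cleanly will likely require a careful inductive accounting, using Lemma~\ref{index-totient} to control how the indices $\mathrm{Ind}_{g_r}(x_i)$ decrease as $j$ grows and combining it with the isomorphism of Proposition~\ref{isomorphism}, which reduces the matter to the single-variable picture where the local-number bound of Lemma~\ref{local number} already pins down exactly when the drop in index happens.
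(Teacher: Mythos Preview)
The paper gives no proof of this corollary at all; it is stated immediately after Theorem~\ref{analytic exitence law} and the document ends. The intended reading is evidently that the corollary is the ``other half'' of that theorem: the theorem asserts analyticity for $j\ge\lfloor\Phi/2\rfloor$, and the corollary asserts non-analyticity for $j<\lfloor\Phi/2\rfloor$. But as you correctly sensed, this is not a logical consequence of the theorem as written --- Theorem~\ref{analytic exitence law} only provides a \emph{sufficient} condition for analyticity via the unionization stage, so its contrapositive yields nothing about stages below $\lfloor\Phi/2\rfloor$.

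Your proposal therefore goes well beyond what the paper supplies, and the overall strategy --- locate a component $g_r$ whose leading coefficient in $x_i$ is a nonconstant polynomial in the remaining variables, then pick a zero of that coefficient and send a branch of the algebraic function to infinity --- is the natural and correct one. You have also put your finger on the genuine gap: nothing in the paper's machinery (Propositions~\ref{unionization stage proof}, \ref{upper bound for normalization stage}, Lemma~\ref{index-totient}) directly forces some $g_r$ to have a \emph{nonconstant} leading coefficient in the side variables when $j<\lfloor\Phi/2\rfloor$. The condition ``$j$ is below the unionization stage'' only says $(g_1,\dots,g_l)$ does not vanish identically at $x_i=0$, which is far weaker. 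A component could perfectly well have constant nonzero leading coefficient in $x_i$ (hence branches with no poles), and the paper's standing hypotheses on $\mathcal{S}$ do not visibly exclude this after $j$ rounds of the $\beta$-mixing. So your caution in the final paragraph is warranted: this step needs an additional structural argument that the paper does not provide, and without it the corollary does not follow from the stated results.
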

\bigskip


\begin{figure}[htbp]
\centering
\begin{tikzpicture}[
    font=\small,
    >=Latex,
    node distance=12mm and 18mm,
    box/.style={draw, rounded corners, align=center, minimum width=38mm, minimum height=11mm, inner sep=4pt},
    bigbox/.style={draw, rounded corners, align=center, minimum width=50mm, minimum height=13mm, inner sep=5pt},
    smallbox/.style={draw, rounded corners, align=center, minimum width=28mm, minimum height=9mm, inner sep=3pt}
]

\node[bigbox] (S) {$\mathcal S=(f_1,\dots,f_n)$\\Tuple of polynomials in $\mathbb R[x_1,\dots,x_n]$};
\node[box, right=28mm of S] (dir) {$T_{[x_i]}(\mathcal S)$\\Directional expansion};
\node[box, below=18mm of dir] (mix) {$T_{[x_{\sigma(1)}]}\otimes\cdots\otimes T_{[x_{\sigma(l)}]}(\mathcal S)$\\Mixed expansion};

\node[box, right=28mm of mix] (spec) {$x_j=a_j$ for $j\neq i$\\Specialization to one variable};
\node[bigbox, right=28mm of spec] (sv) {$\mathbb R[x]$\\Single-variable expansivity theory};

\node[smallbox, above left=10mm and -2mm of mix] (phi) {$\Phi$\\Totient};
\node[smallbox, above=10mm of mix] (dop) {$\mathcal I$\\Doppler intensity};
\node[smallbox, above right=10mm and -2mm of mix] (exa) {Exactness};
\node[smallbox, below left=10mm and -2mm of mix] (ker) {Kernel};
\node[smallbox, below=10mm of mix] (sing) {Singularity};
\node[smallbox, below right=10mm and -2mm of mix] (dom) {Dominating number};

\draw[->, thick] (S) -- node[above] {apply $T_{[x_i]}$} (dir);
\draw[->, thick] (dir) -- node[right] {iterate / mix directions} (mix);

\draw[->, thick] (mix) -- (phi);
\draw[->, thick] (mix) -- (dop);
\draw[->, thick] (mix) -- (exa);
\draw[->, thick] (mix) -- (ker);
\draw[->, thick] (mix) -- (sing);
\draw[->, thick] (mix) -- (dom);

\draw[->, thick] (mix) -- node[above] {fix all other variables} (spec);
\draw[->, thick] (spec) -- node[above] {recover the 1-variable theory} (sv);

\node[align=center, below=16mm of S, text width=48mm] (leftnote)
{The multivariate theory tracks how a tuple changes under repeated directional expansion.};

\node[align=center, below=16mm of sv, text width=48mm] (rightnote)
{Specialization collapses the multivariate picture to the one-variable setting.};

\end{tikzpicture}
\caption{Schematic intuition for multivariate expansivity theory. The theory starts with a tuple of polynomials, applies directional and mixed expansions, and then attaches invariants that measure extinction, interaction, stabilization, and singular behavior. Specializing the other variables recovers the single-variable theory.}
\label{fig:intuition-expansivity}
\end{figure}

\bibliographystyle{amsplain}

\end{document}